\def\titlerunning#1{\gdef\titrun{#1}}
\def\author#1{\gdef\autrun{\def\and{\unskip, }#1}\gdef\@author{#1}}
\def\address#1{{\def\and{\\\hspace*{18pt}}\renewcommand{\thefootnote}{}%
\footnote {#1}}%
\markboth{\autrun}{\titrun}}
\def\email#1{e-mail: #1}
\def\subjclass#1{{\renewcommand{\thefootnote}{}%
\footnote{\emph{Mathematics Subject Classification (2010):} #1}}}
\def\keywords#1{\par\medskip
\noindent\textbf{Keywords.} #1}
\newcommand{\resunip}{\operatorname{resunip}}
\newcommand{\lie}[1]{\mathfrak{#1}}
\newcommand{\clos}{\overline{\F}_p}
\newcommand{\Liec}{\mathbf{L}}
\newcommand{\grpc}{\mathbf{G}}
\newcommand{\SGRU}{\mathcal{SG}_{\operatorname{rug}}(G(\Z_p))}
\newcommand{\LSAN}{\mathcal{SA}_{\operatorname{rng}}(\lie{g}_{\Z_p})}
\newcommand{\nd}{non-degenerate}
\newcommand{\Res}{\operatorname{Res}}
\newcommand{\der}{\operatorname{der}}
\newcommand{\unip}{\operatorname{unip}}
\newcommand{\nilp}{\operatorname{nilp}}
\newcommand{\Exp}{\operatorname{Exp}}
\newcommand{\resuni}{\operatorname{resunip}}
\newcommand{\resnilp}{\operatorname{resnilp}}
\newcommand{\BG}{(BG)}
\newcommand{\gl}{\mathfrak{gl}}
\newcommand{\End}{\operatorname{End}}
\newcommand{\chr}{\operatorname{char}}
\newcommand{\bjct}[2]{\xrightleftharpoons[#2]{#1}}
\newcommand{\FC}{(FC)}
\newcommand{\maxfam}{\mathcal{MAX}}
\newcommand{\nmbr}{n}
\newcommand{\aff}{\mathbf A}
\newcommand{\Spec}{\operatorname{Spec}}
\newcommand{\C}{{\mathbb C}}
\newcommand{\R}{{\mathbb R}}
\newcommand{\Z}{{\mathbb Z}}
\newcommand{\Q}{{\mathbb Q}}
\newcommand{\A}{{\mathbb A}}
\newcommand{\K}{\mathbf{K}}
\newcommand{\bs}{\backslash}
\newcommand{\reg}{\operatorname{reg}}
\newcommand{\sprod}[2]{\left\langle#1,#2\right\rangle}
\newcommand{\QF}{\mathfrak{Q}}
\newcommand{\tr}{\operatorname{tr}}
\newcommand{\Ad}{\operatorname{Ad}}
\newcommand{\Lie}{\operatorname{Lie}}
\newcommand{\Hom}{\operatorname{Hom}}
\newcommand{\Ker}{\operatorname{Ker}}
\newcommand{\vol}{\operatorname{vol}}
\newcommand{\SL}{\operatorname{SL}}
\newcommand{\GL}{\operatorname{GL}}
\newcommand{\altp}{p'}
\newcommand{\card}[1]{\lvert#1\rvert}
\newcommand{\abs}[1]{\lvert#1\rvert}
\newcommand{\norm}[1]{\lVert#1\rVert}
\newcommand{\ad}{\operatorname{ad}}
\newcommand{\rest}{\big|}
\newcommand{\fin}{{\operatorname{fin}}}
\newcommand{\level}{\operatorname{lev}}
\newcommand{\sgrmx}{\mathcal{MSGR}}
\newcommand{\EXPG}{exp.~gen.}
\newcommand{\NILG}{nilp.~gen.}
\newcommand{\algnori}[1]{\tilde{#1}}
\newcommand{\SC}{\operatorname{sc}}
\newcommand{\sm}[4]{\left(\begin{smallmatrix}{#1}&{#2}\\{#3}&{#4}\end{smallmatrix}\right)}
\newcommand{\F}{\mathbb{F}}
\newcommand{\Pro}{\operatorname{Pr}}
\newtheorem{theorem}{Theorem}[section]
\newtheorem{lemma}[theorem]{Lemma}
\newtheorem{proposition}[theorem]{Proposition}
\newtheorem{remark}[theorem]{Remark}
\newtheorem{definition}[theorem]{Definition}
\newtheorem{corollary}[theorem]{Corollary}
\newenvironment{thmbis}[1]
  {%
   \addtocounter{theorem}{-1}%
   \begin{theorem}}
  {\end{theorem}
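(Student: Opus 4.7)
The plan is to establish the asserted correspondence by passing through the $p$-adic exponential and logarithm, which reduce the group-theoretic structure on one side to linear algebra on the Lie-algebra side. First I would invoke the integrality hypothesis on the residue characteristic that is fixed throughout the paper, so that $\Exp$ and $\log$ converge as $\Z_p$-integral power series and give mutually inverse set-theoretic bijections between $\lie{g}_{\Z_p}$ (or its relevant piece) and the appropriate principal congruence subgroup of $G(\Z_p)$. The question then reduces to showing that these bijections match the two families of sub-objects indexed by the statement.

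For the matching I would separate the Lie-theoretic closure from the rugged/saturated refinement. The former is handled by the Baker--Campbell--Hausdorff formula: the group commutator $\Exp(X)\Exp(Y)\Exp(X)^{-1}\Exp(Y)^{-1}$ is a universal integral series in iterated Lie brackets, so closedness of a submodule $\lie{h}\subset \lie{g}_{\Z_p}$ under $[-,-]$ is equivalent to closedness of $\Exp(\lie{h})$ under the group commutator; similarly closedness under $\Z_p$-scaling corresponds to closedness under the $n$-th power maps, via the identity $\Exp(nX)=\Exp(X)^n$. Preservation of inclusions, normalizers, and centralizers is then visible from the $\Ad$-equivariance and naturality of $\Exp$. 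For the rugged/saturated refinement I would reduce modulo $p$, where the condition becomes a condition on subspaces of $\Lie(\grpc_{\clos})$ or on closed subschemes of $\grpc_{\clos}$, apply the correspondence at the residual level, and lift back to $\Z_p$ by a Hensel-type argument along the filtration by principal congruence subgroups.

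The main obstacle will be controlling this lift on sub-objects that do not arise as $\Z_p$-points of closed subschemes of $\grpc$. I would proceed by induction on the congruence level: at each step the graded piece of $G(\Z_p)$ is abelian and $\Exp$ identifies it with a quotient of $\lie{g}_{\Z_p}$ by a power of $p$; the rugged/saturated condition at level $n$ becomes a linear condition on this quotient, so it lifts cleanly from level $n-1$ to level $n$, and ruggedness in the limit is equivalent to ruggedness at every finite level. Once the bijection is secured level-by-level, the remaining structural assertions of the statement --- preservation of inclusion, of the normal-subgroup relation, and of intersections --- are formal consequences of the naturality of $\Exp$ and $\log$, so no further work is required beyond the filtration argument.
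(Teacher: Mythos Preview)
The displayed ``statement'' is not a mathematical assertion: it is a fragment of the \texttt{thmbis} environment definition from the preamble (the opening and closing code of that environment), not a theorem. Your proposal appears instead to be aimed at the correspondence theorem between $\SGRU$ and $\LSAN$ (Theorem~\ref{thm: corrlev0}), so I will assess it against that.

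Your sketch treats the problem as essentially a pro-$p$ statement: you propose to match sub-objects via Baker--Campbell--Hausdorff together with an induction on congruence level, relying on the claim that ``at each step the graded piece of $G(\Z_p)$ is abelian.'' This is where the approach collapses. The groups in $\SGRU$ are \emph{not} pro-$p$; they are generated by residually unipotent elements, and the bottom graded piece $\K_p/\K_p(p)\simeq G(\F_p)$ is highly non-abelian. The pro-$p$ case of the correspondence is already Theorem~\ref{TheoremIlaniKlopsch} (Ilani--Klopsch); the whole content of Theorem~\ref{thm: corrlev0} lies in controlling the passage across this non-abelian bottom layer. The paper reduces this to the two identities \eqref{step: fromgroup} and \eqref{step: fromalgebra}, and each requires substantial input that your outline does not touch: the first needs Nori's description of $\bar H/\bar P$ as a quotient of $S^{\SC}(\F_p)$ together with Jantzen's semisimplicity theorem and a detailed weight-space analysis of the adjoint representation in characteristic~$p$ (Proposition~\ref{PropAdjoint}); the second needs the Curtis--Steinberg--Tits presentation of $S^{\SC}(\F_p)$ (Lemma~\ref{LemmaPresentation}) to construct an explicit section of $H/P\to\bar H/\bar P$. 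A Hensel-type filtration argument sees none of this structure and cannot bridge the step from level~$0$ to level~$1$.
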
}
\begin{document}


\baselineskip=17pt


\titlerunning{Approximation principle}

\title{An approximation principle for congruence subgroups}

\author{Tobias Finis
\and
Erez Lapid}

\date{\today}

\maketitle

\address{Tobias Finis: Universit\"at Leipzig, Mathematisches Institut, Postfach 100920, D-04009 Leipzig, Germany; \email{finis@math.uni-leipzig.de}
\and
Erez Lapid: Department of Mathematics, The Weizmann Institute of Science, Rehovot 7610001, Israel; \email{erez.m.lapid@gmail.com}}

\subjclass{Primary 20G25; Secondary 22E40, 22E60}


\begin{abstract}
The motivating question of this paper is roughly the following: given a flat group scheme $G$ over $\Z_p$,
$p$ prime, with semisimple generic fiber $G_{\Q_p}$, how far are open subgroups of $G(\Z_p)$ from subgroups of the form
$X(\Z_p)\K_p(p^n)$, where $X$ is a subgroup scheme of $G$ and $\K_p(p^n)$ is the principal
congruence subgroup $\Ker(G(\Z_p)\rightarrow G(\Z/p^n\Z))$?
More precisely, we will show that for $G_{\Q_p}$ simply connected there exist constants $J\ge1$ and $\varepsilon>0$, depending only on $G$,
such that any open subgroup of $G (\Z_p)$ of level $p^n$ admits an open subgroup of index $\le J$ which is contained
in $X(\Z_p)\K_p(p^{\lceil \varepsilon n\rceil})$ for some proper, connected algebraic subgroup $X$ of $G$ defined over $\Q_p$.
Moreover, if $G$ is defined over $\Z$, then $\varepsilon$ and $J$ can be taken independently of $p$.

We also give a correspondence between natural classes of $\Z_p$-Lie subalgebras of $\lie{g}_{\Z_p}$ and of closed subgroups of $G(\Z_p)$ that
can be regarded as a variant over $\Z_p$ of Nori's results on the structure of finite subgroups of $\GL(N_0,\F_p)$ for large $p$ \cite{MR880952}.

As an application we give a bound for the volume of the intersection of a
conjugacy class in the group $G (\hat{\Z}) = \prod_p G (\Z_p)$, for $G$ defined over $\Z$, with an arbitrary open subgroup.
In a companion paper, we apply this result to the limit multiplicity problem
for arbitrary congruence subgroups of the arithmetic lattice $G (\Z)$.

\keywords{Lattices in Lie groups, uniform pro-$p$ groups, Lie algebras}
\end{abstract}

\numberwithin{equation}{section}

\setcounter{tocdepth}{1}
\tableofcontents

\section{Introduction}

In this paper we study closed subgroups of the profinite groups $G (\Z_p)$, where $G$ is a semisimple algebraic group defined over $\Q$ with
a fixed $\Z$-model, and $p$ is a prime. The first main topic, which is treated in \S\S \ref{sectionapprox} and \ref{SectionApproxLevel0},
is the comparison between arbitrary open subgroups $H$ of $G (\Z_p)$ and
special open subgroups of the form $(X(\Q_p) \cap G (\Z_p)) \K_p (p^n)$, where $X$ is a proper, connected algebraic subgroup of $G$ defined over $\Q_p$, and
$\K_p(p^n) = \Ker(G(\Z_p)\rightarrow G(\Z/p^n\Z))$ is the principal congruence subgroup of $G (\Z_p)$ of level $p^n$.
These special open subgroups are ubiquitous in the literature, especially in the case where $X$ is a parabolic subgroup of $G$,

To motivate our theorem, recall that commensurability classes of \emph{closed} subgroups of $G (\Z_p)$ are in one-to-one correspondence with $\Q_p$-Lie subalgebras of
$\lie{g}_{\Q_p} = \Lie_{\Q_p} G$ (cf. \cite[Theorem 9.14]{MR1720368}),
and that by Chevalley's theorem \cite[Corollary 7.9]{MR1102012} any proper, $\Q_p$-Lie subalgebra of $\lie{g}_{\Q_p}$ is contained in the Lie algebra of a proper,
connected algebraic subgroup $X$ of $G$ defined over $\Q_p$.
These statements do not say anything non-trivial about open subgroups.
Recall that the level of an open subgroup $H$ of $G (\Z_p)$ is defined as $\min\{l=p^n\ge1:\K_p (l) \subset H\}$.
Our first main result (Theorem \ref{TheoremAlgebraicLevel0}) says that if $G$ is simply connected then there exist constants $J\ge1$ and $\varepsilon>0$, depending only on $G$,
such that any open subgroup of $G (\Z_p)$ of level $p^n$ admits an open subgroup of index $\le J$ which is contained
in $(X(\Q_p) \cap G (\Z_p)) \K_p(p^{\lceil \varepsilon n\rceil})$ for some proper, connected algebraic subgroup $X$ of $G$ defined over $\Q_p$.

Simple considerations show that it is not possible to take $\varepsilon=1$ here, and that in fact necessarily $\varepsilon \le \frac12$ for every $G$
(cf. Remark \ref{RemarkAlgebraic} and Lemma \ref{RemarkEpsilonG}).
The question of the optimal value of $\varepsilon$ for a given $G$ (or of the optimal asymptotic value
as $n \to \infty$) remains unresolved except in the case of $G = \SL (2)$ (cf. Lemma \ref{lem: SL2}).

We also have a variant of Theorem \ref{TheoremAlgebraicLevel0} (Theorem \ref{TheoremAlgebraic}) for subgroups of the pro-$p$ groups $\K_p (\altp)$,
where $\altp=p$ if $p$ is odd and $\altp=4$ if $p=2$. Here $G$ does not need to be simply connected, and we do not need to pass to a finite index subgroup.
In fact, we first prove Theorem \ref{TheoremAlgebraic} in \S \ref{sectionapprox} and then modify our arguments to deal with arbitrary open subgroups in \S \ref{SectionApproxLevel0}.
Our proofs are based on the general correspondence between (a large class of) subgroups of $\K_p(\altp)$ and
$\Z_p$-Lie subalgebras of $\altp \lie{g}_{\Z_p}$.
This correspondence reduces Theorem \ref{TheoremAlgebraic} to a $\Z_p$-Lie algebra analog (Theorem \ref{thm: Liealgebra}).
The latter is deduced from a general lifting result of M.~Greenberg \cite{MR0351994}, which in the case at hand allows one (under natural conditions) to lift Lie subalgebras modulo $p^\nu$
to Lie subalgebras in characteristic zero.
To infer Theorem \ref{TheoremAlgebraicLevel0} from Theorem \ref{TheoremAlgebraic} we use in addition the results of Nori \cite{MR880952} on the structure of finite subgroups of $G (\F_p)$ for large $p$.

The discussion above spurs the next result (\S \ref{sectionliealgebra}) which is a natural correspondence (valid for all $p$ large enough with respect to $G$) between the set of
$\Z_p$-Lie subalgebras of $\lie{g}_{\Z_p}$ that are generated by residually nilpotent elements and the set of closed subgroups of $G(\Z_p)$ that are generated by residually unipotent elements.
Note that the analogous correspondence over $\F_p$ is a consequence of Nori's results, which not surprisingly, play an important role in the proof of the $\Z_p$-analog.
In addition, our theorem generalizes (and relies on) the previously known correspondence between closed pro-$p$ subgroups and Lie subalgebras
consisting of residually nilpotent elements, which is given by the application of the logarithm and exponential maps \cite{MR1345293,MR2126210,MR2554763}.
Apart from these results, our proof of the correspondence uses Steinberg's algebraicity theorem \cite[Theorem 13.3]{MR0230728} and Jantzen's semisimplicity theorem \cite{MR1635685}
for characteristic $p$ representations of the groups $X (\F_p)$, where $X$ is semisimple and simply connected over $\F_p$,
as well as the Curtis--Steinberg--Tits presentations of these groups \cite{MR0153677,MR0188299,MR630615,MR637803}.
In a somewhat different direction, results on closed subgroups of $G (\Z_p)$ connected to Nori's theorems have been obtained by Larsen \cite{MR2832632}.

As an application of the approximation theorems, we give in \S \ref{MainApplication} a bound for the volume of the intersection of a
conjugacy class in the profinite group $G (\hat{\Z}) = \prod_p G (\Z_p)$ with an arbitrary open subgroup.
(The results of \S\ref{sectionliealgebra} are not logically necessary for either Theorem \ref{TheoremAlgebraicLevel0} or this application.)
The main technical result is Theorem \ref{thm: mainbnd} and the bound in question is given in
Corollary \ref{CorConjugacyClass} (which is stated for a slightly more general class of groups $G$). In Corollary \ref{CorLattices}, we
give a variant of this result which deals with arithmetic lattices in semisimple Lie groups.
We note that essentially the same result has been obtained independently in \cite[\S 5]{ABBGNRS} by a different method.
A rough form of the resulting bound can be stated as follows.
For an arbitrary group $\Gamma$, a finite index subgroup $\Delta$, and an element $\gamma \in \Gamma$ set
\[
c_\Delta (\gamma) =  \abs{\{ \delta \in \Gamma / \Delta \, : \, \delta^{-1} \gamma \delta \in \Delta\}},
\]
which is also the number of fixed points of $\gamma$ in the permutation representation of $\Gamma$ on the finite set $\Gamma / \Delta$.
Let $G$ be semisimple and simply connected and assume that for no $\Q$-simple factor $H$ of $G$ the group $H (\R)$ is compact.
Let $\K = G (\hat{\Z}) \subset G (\A_{\fin})$
and let $\Gamma = G (\Z) = G (\Q) \cap \K$, which is an arithmetic lattice in the connected semisimple Lie group $G (\R)$.
For any open subgroup $K \subset \K$ let $\Delta = G (\Q) \cap K$ be the associated finite index subgroup of $\Gamma$.
By definition, these subgroups are the congruence subgroups of $\Gamma$. Then
there exists a constant $\varepsilon > 0$, depending only on $\Gamma$, such that
for all congruence subgroups $\Delta$ of $\Gamma$ and all $\gamma \in \Gamma$ that are not contained in any proper normal subgroup of $G$ (which we may assume to be defined over $\Q$) we have
\[
\frac{c_\Delta (\gamma)}{[\Gamma : \Delta]} \ll_{\Gamma,\gamma} [\Gamma : \Delta]^{-\varepsilon}.
\]
Moreover, we can also control the dependence of the implied constant on $\gamma$ very explicitly.
In the companion paper \cite{1504.04795} we will use this result to extend the (partly conditional) results of \cite{MR3352530} about limit multiplicities
for subgroups of the lattice $\Gamma$ from the case of principal congruence subgroups to arbitrary congruence subgroups.

The appendix contains some elementary bounds on the number of solutions of polynomial congruences that are needed in \S \ref{MainApplication}.
These results are probably well known but we were unable to retrieve them in the literature in a form convenient for us.

It would be of interest to study the case of groups defined over function fields over finite fields and over the corresponding local fields (cf. \cite{MR1264349, MR1422889, MR1971298}).
We do not say anything about this here.

We thank the Centre Interfacultaire Bernoulli, Lausanne,
and the Max Planck Institute for Mathematics, Bonn, where a part of this paper was worked out.
We thank Yiftach Barnea, Vladimir Berkovich, Tsachik Gelander, Ehud Hrushovski, Michael Larsen, Alex Lubotzky and Aner Shalev for useful discussions and interest in the subject matter of this paper.

\section{The pro-$p$ approximation theorem} \label{sectionapprox}

\subsection{The general setup} \label{subspropsetup}
Throughout \S\S \ref{sectionapprox} -- \ref{sectionliealgebra} (except for \S \ref{subsectionfiniteness}, where we consider a more general situation)
let $G = G_{\Q}$ be a connected semisimple algebraic group defined over $\Q$, together with a fixed embedding
$\rho_0: G \to \GL (N_0)$ defined over $\Q$.
(Actually, only the image of $\rho_0$ inside $\GL (N_0)$ is relevant for our purposes.)
The choice of $\rho_0$ endows $G$ with the structure of a flat group scheme over $\Z$, which we denote by $G_{\Z}$.
Concretely, it is given as $\Spec \Z[\GL(N_0)]/(I\cap\Z[\GL(N_0)])$, where
$R[\GL(N_0)]=R[y,x_{ij},i,j=1,\dots,N_0]/(1-y\det x)$ for any ring $R$
and $I$ is the ideal of $\Q[\GL(N_0)]$ defining the image of $\rho_0$.
If $R$ is a ring which is flat over $\Z$, i.e., if the total quotient ring $Q(R)$ is a $\Q$-algebra, then
$G(R)=\{g\in G(Q(R)):\rho_0(g)\in\GL(N_0,R)\}$.
For any commutative ring $R$ let $G_R = G_{\Z} \times \Spec R$ be the base extension to $R$
of the group scheme $G_{\Z}$. If $R$ is a field
of characteristic zero, then $G_R$ is a connected semisimple group over $R$.
For almost all primes $p$, the group scheme $G_{\Z_p}$ is smooth over $\Z_p$ and $G_{\F_p}$ is a connected semisimple
group defined over $\F_p$ \cite[\S 3.8.1, \S 3.9.1]{MR546588}.

Note that we can always choose $\rho_0$ such that $G$ is smooth over $\Z$, and therefore smooth over $\Z_p$ for all $p$.
(Indeed, by [ibid., \S 3.4.1], for each $p$ there exists a smooth group scheme over $\Z_p$ with generic fiber $G_{\Q_p}$, and we can glue
these as in \cite{MR1369418} to obtain a smooth $\Z$-model for $G$.)
However, only under strong restrictions on $G$ it is possible to have $G$ smooth over $\Z$
and $G_{\F_p}$ connected semisimple for all $p$ \cite{MR1369418}.

For each prime $p$ we set
\[
\K_p := G (\Z_p) = \{g \in G (\Q_p) \, : \, \rho_0 (g) \in \GL (N_0, \Z_p) \},
\]
a compact open subgroup of $G (\Q_p)$.
The principal congruence subgroups of $\K_p$ are defined by
\[
\K_p (p^n) := \Ker \left( G(\Z_p) \rightarrow G(\Z/p^n\Z) \right) = \{g \in \K_p \, : \, \rho_0 (g) \equiv 1 \pmod{p^n} \}, \quad n \ge 1.
\]
They are clearly pro-$p$ normal subgroups of $\K_p$, and form a neighborhood base of the identity element. For $n=0$ we set $\K_p (1) := \K_p$.
\begin{definition}
Let $H$ be an open subgroup of $\K_p$. The level of $H$ is
\[
\min\{q=p^n:\K_p(q)\subset H\}.
\]
\end{definition}

The main result of this section is the following theorem on open subgroups of the pro-$p$ groups $\K_p (\altp)$
where $\altp=p$ if $p$ is odd and $\altp=4$ if $p=2$.
As usual, we denote by $\lfloor x \rfloor$ (resp., $\lceil x \rceil$) the largest integer $\le x$ (resp., the smallest integer $\ge x$).

\begin{theorem} \label{TheoremAlgebraic}
There exists a constant $\varepsilon>0$, depending only on $G$ and $\rho_0$, such that for any prime $p$ and any open subgroup
$H \subset \K_p (\altp)$ of level $p^n$ there exists a proper, connected algebraic subgroup $X$ of $G$ defined over $\Q_p$
such that $H\subset (X (\Q_p)\cap \K_p(p')) \K_p (p^{\lceil \varepsilon n \rceil})$.
\end{theorem}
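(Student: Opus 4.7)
The plan is to recast Theorem \ref{TheoremAlgebraic} as a statement about $\Z_p$-Lie subalgebras, prove the Lie-algebra version by combining the elementary divisor theorem with M.~Greenberg's lifting theorem, and transport the conclusion back to the group via the exponential map. Since $\K_p(\altp)$ is a saturable pro-$p$ group, the logarithm identifies open subgroups $H \subset \K_p(\altp)$ with $\Z_p$-Lie subalgebras $\lie{h} \subset \altp\,\lie{g}_{\Z_p}$, and the level condition translates into $p^n\lie{g}_{\Z_p} \subset \lie{h}$ together with $p^{n-1}\lie{g}_{\Z_p} \not\subset \lie{h}$. It therefore suffices to exhibit a proper $\Q_p$-Lie subalgebra $\lie{x} \subset \lie{g}_{\Q_p}$ with $\lie{h} \subset \lie{x} + p^{\lceil \varepsilon n \rceil} \lie{g}_{\Z_p}$: by Chevalley's theorem $\lie{x}$ then sits inside $\Lie X$ for a proper connected algebraic subgroup $X$ of $G$ over $\Q_p$, and applying $\Exp$ yields the required inclusion $H \subset (X(\Q_p) \cap \K_p)\K_p(p^{\lceil\varepsilon n\rceil})$.

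To construct $\lie{x}$, I would apply the elementary divisor theorem to $\lie{h} \subset \lie{g}_{\Z_p}$: this produces a $\Z_p$-basis $e_1, \dots, e_d$ of $\lie{g}_{\Z_p}$ and non-negative integers $a_1 \le \dots \le a_d$ with $\lie{h} = \bigoplus_i p^{a_i} \Z_p e_i$, where the level hypothesis forces $a_d = n$. If the $\Z_p$-rank of $\lie{h}$ is less than $d$, then $\lie{h} \otimes \Q_p$ is already a proper $\Q_p$-Lie subalgebra and Chevalley's theorem applies directly. In the main case $\lie{h}$ has full rank; here the elementary divisor data, combined with the constraints that bracket-closure of $\lie{h}$ imposes on the structure constants of $\lie{g}$ relative to $(e_i)$ (namely, for $a_k > a_i + a_j$ the $e_k$-coefficient of $[e_i,e_j]$ is divisible by $p^{a_k - a_i - a_j}$), should produce a \emph{proper-rank} Lie subalgebra structure inside $\lie{g}_{\Z_p}/p^\nu$, for some $\nu$ depending linearly on $n$, that agrees with $\lie{h}$ modulo $p^{\lceil\varepsilon n \rceil}$.

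At this point I invoke Greenberg's approximation/lifting theorem, applied to the closed subscheme of the appropriate Grassmannian over $\Z_p$ parametrizing $\Z_p$-Lie subalgebras of the rank just produced. Its defining equations are polynomial in the Lie bracket structure constants coming from the fixed $\Z$-object underlying $\rho_0$, and are therefore uniform in $p$. Greenberg's theorem yields an honest $\Z_p$-Lie subalgebra $\lie{h}'' \subset \lie{g}_{\Z_p}$ of rank strictly less than $d$, approximating the mod-$p^\nu$ structure up to a loss $c = c(G)$ independent of $p$ and $n$. Choosing $\varepsilon$ small enough that $\nu - c \ge \lceil \varepsilon n\rceil$ for all $n$ past a threshold depending only on $G$ (the finitely many remaining small $n$ being absorbed by shrinking $\varepsilon$ further) yields $\lie{h} \subset \lie{h}'' + p^{\lceil\varepsilon n\rceil}\lie{g}_{\Z_p}$, and $\lie{x} := \lie{h}'' \otimes \Q_p$ is the desired proper subalgebra.

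The main obstacle, I expect, is the extraction of the proper-rank Lie subalgebra structure modulo $p^\nu$ in the full-rank case: one must carefully exploit the bracket-closure constraints on $\lie{h}$ and the spread of the exponents $a_i$, and balance the gain from these constraints against the admissible loss $p^{\lceil\varepsilon n\rceil}$. A secondary concern is the uniformity of Greenberg's constant $c$ across all primes $p$, which relies on the Grassmannian of Lie subalgebras being defined by equations arising by base change from a fixed $\Z$-scheme — this is what ultimately ensures that the final $\varepsilon$ depends only on $G$ and $\rho_0$.
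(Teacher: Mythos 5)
Your strategy — linearize via $\log$, apply the elementary divisor theorem to the resulting $\Z_p$-subalgebra, lift an approximate proper subalgebra using Greenberg's theorem, enlarge it to a proper connected algebraic subgroup via Chevalley's theorem (using semisimplicity of $\lie{g}_{\Q_p}$ to keep it proper), and exponentiate back — is exactly the paper's route (Theorem \ref{thm: Liealgebra} combined with Proposition \ref{prop: unifg}). However, your very first step is not valid uniformly in $p$: the logarithm does \emph{not} identify arbitrary open subgroups of $\K_p(\altp)$ with Lie subalgebras of $\altp\lie{g}_{\Z_p}$. That correspondence is Ilani's theorem (cf. Remark \ref{rem: Ilani} and Theorem \ref{TheoremIlaniKlopsch}) and requires $p\ge\dim G$; for small primes $\log H$ need not be a subalgebra, or even a $\Z_p$-submodule, and the fact that the ambient group $\K_p(\altp)$ is uniform/saturable does not transfer to its subgroups. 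Since $\varepsilon$ must work for every prime, you need a substitute: the paper passes to a uniform characteristic open subgroup $V\subset H$ of index at most $p^{N(\dim G)}$ (part (6) of Lemma \ref{lem: basunif}), applies the Lie-algebra theorem to $\log V$, and then recovers $H$ from $H^{\{p^N\}}\subset V$, using that the lifted subalgebra $I$ is \emph{isolated} to divide out the factor $p^N$ at the cost of shrinking $\varepsilon$. Your proposal has no mechanism for this, and the assertion it rests on is false in the needed generality.

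The second gap is the step you yourself flag as "the main obstacle": it is the actual content of Theorem \ref{thm: Liealgebra} and cannot be left as "should produce". Given exponents $0\le a_1\le\dots\le a_d=n$ from the elementary divisor theorem, one must choose a truncation rank $r\le d-1$ so that \emph{simultaneously} (i) $M\subset \Z_p x_1+\dots+\Z_p x_r+p^{a_{r+1}}L$ with $a_{r+1}$ at least a fixed positive proportion of $n$, and (ii) the approximate-closure modulus, governed by $a_{r+1}-2a_r$, is also at least a fixed proportion of $n$, so that Greenberg's loss still leaves something linear in $n$. (Note also that Greenberg's theorem, Theorem \ref{thm: Greenberg}, loses a multiplicative constant $C$ in the exponent in addition to an additive one — the relevant variety of subalgebras is not smooth — so your "loss $c=c(G)$" bookkeeping is too optimistic, though harmless once $\nu$ is linear in $n$.) The paper achieves (i) and (ii) by fixing $c\in(0,\tfrac12)$ and taking the largest $r\le d-1$ with $a_r<c\,a_{r+1}$, which forces $a_{r+1}\ge c^{d-r-1}n$ and $a_{r+1}-2a_r\ge(1-2c)c^{d-r-1}n$. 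Without such a selection argument your sketch of the full-rank case (which is in fact the only case, since $\log H$ is open and hence of full rank) does not yet constitute a proof.
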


Alternatively, we can reformulate the theorem as a statement on the projective system consisting of the finite groups $G (\Z / p^n \Z)$ and the
canonical reduction homomorphisms $\pi_{n,m}: G (\Z / p^n \Z) \to G (\Z / p^m \Z)$, $n \ge m$.
Note that $\K_p=\varprojlim G(\Z/p^n\Z)$.
We write $\pi_{\infty,n}$ for the reduction homomorphism $\K_p \to G (\Z / p^n \Z)$. A subgroup $\bar{H}$ of
$G (\Z / p^n \Z)$ is called \emph{essential} if it is not the pullback via $\pi_{n,n-1}$ of a subgroup of
$G (\Z / p^{n-1} \Z)$, i.e. if $\bar{H}$ does not contain the kernel of $\pi_{n,n-1}$.
Then an equivalent form of Theorem \ref{TheoremAlgebraic} is the following

\begin{thmbis}{TheoremAlgebraic}
There exists a constant $\varepsilon>0$, depending only on $G$ and $\rho_0$, such that for any prime $p$, an integer $n\ge1$ and an essential subgroup
$\bar{H}$ of $\pi_{\infty, n} (\K_p (\altp)) \subset G (\Z / p^n \Z)$ there exists a
proper, connected algebraic subgroup $X$ of $G$ defined over $\Q_p$ such that
$\pi_{n,\lceil \varepsilon n \rceil} (\bar{H}) \subset \pi_{\infty,\lceil \varepsilon n \rceil} (X (\Q_p) \cap \K_p(p'))$.
\end{thmbis}

Note here that if $G_{\Z_p}$ is smooth (which is the case for almost all $p$) then the
$p$-group $\pi_{\infty, n} (\K_p (\altp))$ coincides with the kernel of
the homomorphism $\pi_{n,\epsilon_p}: G (\Z / p^n \Z) \to G (\Z / p' \Z)$
where $\epsilon_p=1$ if $p$ is odd and $\epsilon_2=2$.

\begin{remark} \label{RemarkAlgebraic}
In the statement of Theorem \ref{TheoremAlgebraic} there is no harm in assuming that $n\ge n_0$ for some fixed positive integer $n_0$ depending only on $G$.
In fact, for any integer $n \ge 1$ let $\nu (n)$ be the largest integer $\nu$ such that for any $p$ and any open subgroup
$H \subset \K_p (\altp)$ of level $p^n$ there exists a proper, connected algebraic subgroup $X$ of $G$ defined over $\Q_p$ such that
$H \subset (X (\Q_p)\cap \K_p (p')) \K_p (p^{\nu})$.
Then the content of Theorem \ref{TheoremAlgebraic} is that $\varepsilon_G := \liminf_{n \to \infty} \frac{\nu(n)}{n} > 0$.
It is natural to ask what further can be said about $\varepsilon_G$.
It is easy to see that $\varepsilon_G \le \frac12$ for any $G$ (see Lemma \ref{RemarkEpsilonG} below).
Thus, the introduction of the factor $\K_p (p^{\lceil \varepsilon n \rceil})$ instead of $\K_p (p^n)$ is essential
for Theorem \ref{TheoremAlgebraic} to hold.
For $G = \SL (2)$ we have $\varepsilon_G = \frac12$
(see Lemma \ref{lem: SL2} below).
With our current knowledge we cannot rule out the possibility that $\varepsilon_G=\frac12$ for all $G$.
However, our method of proof falls short of even proving that $\varepsilon_G$ is bounded away from $0$ independently of $G$.
\end{remark}

Theorem \ref{TheoremAlgebraic} will be proved in \S\ref{subsliealgebra} below.

\subsection{Uniform pro-$p$ groups} \label{subsuniform}
A key ingredient in the proof of Theorem \ref{TheoremAlgebraic} is a linearization argument.
In order to carry it out, we need to recall some further structural properties of the congruence subgroups $\K_p (p^n)$.
We first recall some standard terminology and basic results from \cite{MR1720368}
which have their root in the fundamental work of Lazard \cite{MR0209286}.

For any group $H$ and any integer $n$ we write $H^{\{n\}}=\{x^n:x\in H\}$ and denote by $H^n$ the group generated by $H^{\{n\}}$.
We will use the standard notation $[x,y]=xyx^{-1}y^{-1}$ for the commutator.
Also, for any subgroups $H_1$, $H_2$ of $H$ we write $[H_1,H_2]$ for the subgroup generated by the commutators
$[h_1,h_2]$, $h_1\in H_1$, $h_2\in H_2$.

In the following, all pro-$p$ groups that we consider will be implicitly assumed to be topologically finitely generated.

\begin{definition}
Let $L$ be a pro-$p$ group.
\begin{enumerate}
\item $L$ is called \emph{powerful} if $[L,L] \subset L^{\altp}$.
\item $L$ is called \emph{uniformly powerful} (or simply \emph{uniform}) if it is powerful and torsion-free (cf. [ibid., Theorem 4.5]).
\item The \emph{rank} of $L$ is the smallest integer $n$ such that every closed (or alternatively by [ibid., Proposition 3.11], every open) subgroup
of $L$ can be topologically generated by $n$ elements.\footnote{We say that $L$ is of infinite rank if no such integer exists. In this paper, we will only work with pro-$p$ groups of finite rank.}
\item A subgroup $S$ of $L$ is called \emph{isolated}
if the only elements $x \in L$ with $x^p \in S$ are the elements of $S$.
\end{enumerate}
Similarly, a free $\Z_p$-Lie algebra $M$ of finite rank is called powerful if $[M,M]\subset \altp M$
and a subalgebra $S$ of $M$ is called isolated if $x\in M$ and $px\in S$ implies $x\in S$.
\end{definition}

The prime examples of uniform pro-$p$ groups [ibid., Theorem 5.2] are the groups
\[
\Gamma (N_0, p^{n}) = \{g\in\GL (N_0, \Z_p) \, : \, g \equiv 1 \pmod{p^{n}}\}, \quad n \ge \epsilon_p.
\]

We list the following properties of powerful and uniform pro-$p$ groups. (The unreferenced parts are clear.)

\begin{lemma} \label{lem: basunif}
\begin{enumerate}
\item ([ibid., Theorem 3.6 and Proposition 1.16]) If $L$ is a powerful pro-$p$ group then $L^{p^i}=L^{\{p^i\}}$, $i\ge0$.
These groups form a neighborhood base of the identity.
\item ([ibid., Theorem 3.8]) The rank of a uniform pro-$p$ group $L$ is the minimum number of generators of $L$,
which is also the dimension of the $\mathbb{F}_p$-vector space $L/L^p$.
\item Let $L$ be a free $\Z_p$-module of finite rank.
Then the map $U\mapsto U\cap L$ defines a bijection between the set of vector subspaces of the $\Q_p$-vector space $L \otimes \Q_p$
and the set of isolated $\Z_p$-submodules of $L$.
Moreover, if $L$ is a $\Z_p$-Lie algebra then this map induces a bijection between the $\Q_p$-Lie subalgebras of $L\otimes\Q_p$
and the isolated closed Lie subalgebras of $L$.
\item An isolated subgroup of a uniform group is uniform.
Similarly, an isolated subalgebra of a powerful Lie algebra is powerful.
\item The only isolated open subgroup of a pro-$p$ group $L$ is the group $L$ itself.
\item ([ibid., Theorem 3.10]) \label{LemmaUniform}
For any positive integer $d$ there exists a non-negative integer $N=N(d)$ (in fact, we may take $N (d) = d (\lceil \log_2 d \rceil + \epsilon_p - 1)$) such that any open subgroup
$H$ of a uniform pro-$p$ group $L$ of rank $d$ contains a uniform characteristic open subgroup $V$ of index $\le p^{N(d)}$
(which implies in particular that $H^{p^N} \subset V$).
\end{enumerate}
\end{lemma}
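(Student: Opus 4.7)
The statement bundles several facts together; parts (1), (2), and (6) are explicit citations from \cite{MR1720368}, so the plan is to attribute those and focus the argument on the three unreferenced items (3)--(5), which are where the real (but short) work lies.

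For part (3), I would exploit the fact that isolated $\Z_p$-submodules of a free $\Z_p$-module of finite rank are precisely the pure (saturated) submodules, i.e., those $S$ such that $L/S$ is torsion-free, equivalently free. First I would check that if $U$ is a $\Q_p$-subspace of $L\otimes\Q_p$, then $U\cap L$ is an isolated $\Z_p$-submodule of $L$: if $p x\in U\cap L$ with $x\in L$, then $x\in U$ (since $U$ is a subspace) and $x\in L$, so $x\in U\cap L$. Conversely, given an isolated submodule $S\subset L$, the quotient $L/S$ is finitely generated and torsion-free, hence free, so $S$ is a direct summand; then $S\otimes\Q_p$ is a $\Q_p$-subspace of $L\otimes\Q_p$ whose intersection with $L$ is $S$ again. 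The two constructions are mutually inverse. The Lie algebra version is then formal: the bracket on $L$ extends $\Q_p$-linearly to $L\otimes\Q_p$, and under the bijection just described, closure of $S$ under the bracket is equivalent to closure of $S\otimes\Q_p$ under the extended bracket.

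For part (4), I would combine the isolated hypothesis with item (1), which gives $L^p = L^{\{p\}}$ in a powerful pro-$p$ group. An isolated subgroup $S$ of the torsion-free $L$ is itself torsion-free, so I only need to verify powerfulness: $[S,S]\subset S^p$. For any $s_1,s_2\in S$, the commutator $[s_1,s_2]$ lies in $[L,L]\subset L^p = L^{\{p\}}$, so $[s_1,s_2]=x^p$ for some $x\in L$. Since $x^p\in S$, the isolated property forces $x\in S$, so $[s_1,s_2]\in S^{\{p\}}\subset S^p$. As $[S,S]$ is generated by such commutators, we conclude $[S,S]\subset S^p$. Finite rank is inherited from $L$ since closed subgroups of a finite rank pro-$p$ group have finite rank.

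Part (5) is a simple iteration: if $S$ is an open isolated subgroup of the pro-$p$ group $L$, then $L/S$ is a finite $p$-group, so every $x\in L$ satisfies $x^{p^k}\in S$ for some $k\ge0$. Applying the isolated property to $y=x^{p^{k-1}}$ yields $x^{p^{k-1}}\in S$, and induction on $k$ gives $x\in S$; hence $S=L$. The main obstacle throughout is purely bookkeeping: making sure that the notions of isolation, purity, and the distinction between $L^p$ and $L^{\{p\}}$ are used consistently, and that the cited results from \cite{MR1720368} (in particular the identification $L^p=L^{\{p\}}$ in the powerful case) are invoked at the right moment.
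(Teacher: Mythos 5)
Your proposal is correct in substance, and it cannot diverge from the paper's argument because the paper gives none: parts (1), (2), (6) are quoted from the cited book, and the remaining parts are dismissed there as clear, so your write-up simply supplies the standard verifications. Two small points deserve patching. In part (4), for $p=2$ the definition of powerful in this paper requires $[L,L]\subset L^{4}$, not $L^{2}$; your argument as written only yields $[S,S]\subset S^{\{2\}}$. The fix is immediate: by part (1), $[s_1,s_2]\in L^{4}=L^{\{4\}}$, so $[s_1,s_2]=x^{4}=(x^{2})^{2}$ with $x\in L$, and two applications of the isolation property give first $x^{2}\in S$ and then $x\in S$, hence $[S,S]\subset S^{\{4\}}\subset S^{4}$. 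In part (5), $S$ is not assumed normal, so the phrase ``$L/S$ is a finite $p$-group'' is not literally available; instead choose an open normal subgroup $N\subset S$ of $L$ (which exists since $S$ is open), note that the image of any $x\in L$ in the finite $p$-group $L/N$ has $p$-power order, so $x^{p^{k}}\in N\subset S$ for some $k$, and then your downward induction using isolation goes through unchanged. With these trivial adjustments the argument is complete; the verification in part (3) that the two constructions are mutually inverse (in particular that $(U\cap L)\otimes\Q_p=U$, since every element of $U$ can be scaled by a power of $p$ into $L$) is routine and in the spirit of what the paper leaves unsaid.
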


For any commutative ring $R$ let $\gl(N_0,R)$ be the Lie algebra of $N_0\times N_0$ matrices over $R$ with the usual Lie bracket.
Let $\lie{g}$ be the Lie algebra of $G$ over $\Q$, regarded as a subalgebra of $\gl (N_0, \Q)$, and let
$\lie{g}_\Z = \lie{g} \cap \gl(N_0, \Z)$ be its $\Z$-form, which is
a free $\Z$-module of rank $d = \dim G$. Let $\lie{g}_{R} = \lie{g}_\Z \otimes R$ for any ring $R$.

Let $\exp$ and $\log$ be the power series
$\exp x=\sum_{n=0}^\infty x^n/n!$ and $\log x=-\sum_{n=1}^\infty (1-x)^n/n$, whenever convergent.

If $F$ is a field of characteristic zero and $\xi \in \gl (N_0,F)$, then for
any regular function $f$ on $\GL (N_0)$ over $F$ we can form the formal power series
$f (\exp (t \xi)) \in F [[t]]$.
The following lemma is probably well known. For convenience we provide a proof.

\begin{lemma} \label{lem: formalps}
Let $X$ be an algebraic subgroup of $\GL(N_0)$ defined over a field $F$ of characteristic zero and $I_F(X)$ the ideal
of all regular functions on $\GL (N_0)$ over $F$ vanishing on $X$. Let $\xi \in \gl (N_0,F)$.
Then $\xi \in \Lie_F X$ if and only if the formal power series $f (\exp (t \xi)) \in F [[t]]$ vanishes for all $f \in I_F (X)$.

In particular, if $F=\Q_p$ and $\xi\in\gl(N_0,\Q_p)$ is such that the power series $\exp (t \xi)$ converges for all $t\in\Z_p$, then
$\xi \in \Lie_{\Q_p} X$ is equivalent to $\exp \xi\in X (\Q_p)$.
\end{lemma}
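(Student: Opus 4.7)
The plan is to reformulate the condition via the left-invariant derivation on $F[\GL(N_0)]$ attached to $\xi$, obtain the Taylor expansion of $f(\exp(t\xi))$ in terms of its iterates, and then read off both implications. For $\xi \in \gl(N_0,F)$ I would define the left-invariant derivation
\[
(D_\xi f)(g) = \frac{d}{dt}\Big|_{t=0} f\bigl(g(1+t\xi)\bigr), \quad f \in F[\GL(N_0)], \ g \in \GL(N_0).
\]
The key elementary identity is that for every $f \in F[\GL(N_0)]$ the formal expansion
\[
f\bigl(g\exp(t\xi)\bigr) = \sum_{n \ge 0} \frac{t^n}{n!}\,(D_\xi^n f)(g) \in F[[t]]
\]
holds, which can be checked directly by expanding $\exp(t\xi)$ as a matrix power series and comparing coefficients (or, equivalently, by noting that both sides are the unique solution of $\phi'(t) = (D_\xi\phi)(t)$ with $\phi(0)=f(g)$). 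Specialising to $g=1$ gives that the formal series $f(\exp(t\xi))$ vanishes in $F[[t]]$ if and only if $(D_\xi^n f)(1) = 0$ for all $n \ge 0$.

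For the implication $\Leftarrow$ I would use only the coefficient of $t$: the vanishing of $(D_\xi f)(1)$ for every $f \in I_F(X)$ is precisely the condition that the tangent vector $\xi$ at the identity lies in $T_1 X = \Lie_F X$. For the implication $\Rightarrow$ I would invoke the standard consequence of the fact that $X$ is a subgroup scheme of $\GL(N_0)$: since the derivation $D_\xi$ is left-invariant and $\xi \in \Lie_F X$, left-translation by elements of $X$ preserves $X$, so $D_\xi$ preserves $I_F(X)$; thus $D_\xi^n f \in I_F(X)$ for all $n$, whence $(D_\xi^n f)(1)=0$ and the formal power series $f(\exp(t\xi))$ is identically zero. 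The main (and only mildly subtle) point is this stability of $I_F(X)$ under $D_\xi$, which rests on the group structure; the rest is formal manipulation with power series.

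For the ``in particular'' part over $F=\Q_p$, under the hypothesis that $\exp(t\xi)$ converges for $t \in \Z_p$, the formal power series $f(\exp(t\xi))$ is the Taylor expansion at $0$ of the $\Q_p$-analytic function $t \mapsto f(\exp(t\xi))$ on $\Z_p$. If $\xi \in \Lie_{\Q_p} X$, the formal series vanishes by the first part, hence the analytic function is identically zero on $\Z_p$, and evaluation at $t=1$ gives $\exp\xi \in X(\Q_p)$. Conversely, if $\exp\xi \in X(\Q_p)$, then $\exp(n\xi) = (\exp\xi)^n \in X(\Q_p)$ for all $n \in \Z$; since $X(\Q_p)$ is closed in $\GL(N_0,\Q_p)$ for the $p$-adic topology and $\Z$ is dense in $\Z_p$, we obtain $\exp(t\xi) \in X(\Q_p)$ for all $t \in \Z_p$, so the analytic function $f(\exp(t\xi))$ vanishes on the infinite set $\Z_p$. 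All its Taylor coefficients therefore vanish, so the formal power series is zero, and the first part yields $\xi \in \Lie_{\Q_p} X$.
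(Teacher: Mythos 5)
Your proof is correct and follows essentially the same route as the paper: your left-invariant derivation $D_\xi$ is exactly the paper's convolution $f\mapsto f*\xi$, the stability of $I_F(X)$ under it (which the paper quotes from Borel, Proposition 3.8(ii)) and the reading of the converse from the linear coefficient (Borel, Proposition 3.8(i)) match the paper's argument step for step. The $p$-adic supplement is likewise handled the same way, via the vanishing of a convergent power series that is zero on the integers (your detour through density of $\Z$ in $\Z_p$ is an equivalent formulation).
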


\begin{proof}
Using the notation of \cite[\S3.7]{MR1102012},
we have the differentiation formula
\[
\frac{d}{dt} g (\exp (t \xi)) = (g*\xi) (\exp (t\xi))
\]
for any regular function $g$ on $\GL (N_0)$ over $F$. To see this, write $g (x \cdot y) = \sum_i u_i (x) v_i (y)$
with regular functions $u_i$ and $v_i$ as in [loc.~cit.], which gives
the relation $g (\exp ((t_1+t_2)\xi)) = \sum_i u_i (\exp (t_1 \xi)) v_i (\exp (t_2 \xi))$ in $F[[t_1,t_2]]$.
Taking the derivative with respect to $t_2$ at $t_2 = 0$ yields
$\frac{d}{dt} g (\exp (t \xi))\rest_{t=t_1}=\sum_i u_i (\exp (t_1 \xi)) (\xi v_i) = (g*\xi) (\exp (t_1 \xi))$ (by formula (2) in [loc. cit.]), as claimed.

It follows that the coefficient of $t^i$ in the power series of $f (\exp (t\xi))$ is $\frac{1}{i!} f_i (1)$ where
$f_0=f$ and $f_i=f_{i-1}*\xi$, $i>0$.
By [ibid., Proposition 3.8 (ii)], for $f \in I_F (X)$ and $\xi \in \Lie_F X$ the functions $f_i$ vanish on $X$ for all $i$,
whence the `only if' part of the lemma.
For the `if' part, note that the vanishing of $f (\exp (t\xi))$ for all $f \in I_F (X)$ implies the vanishing of the linear terms
of these series, i.e., of $\xi f = (f*\xi) (1)$, for all such $f$. By [ibid., Proposition 3.8 (i)],
this gives $\xi \in \Lie_F X$.

The last assertion follows since a convergent power series on $\Z_p$ which vanishes on the rational integers is identically zero.
\end{proof}

If $A$ and $B$ are sets and $f:A\rightarrow B$ and $g:B\rightarrow A$ are functions, we write
\[
A\bjct{f}{g}B
\]
to mean that $f$ and $g$ are mutually inverse bijections, i.e., $g\circ f=1_A$ and $f\circ g=1_B$.

The following proposition summarizes the correspondence between subgroups of $\Gamma(N_0,\altp)$ and
Lie subalgebras of $\altp\gl(N_0,\Z_p)$. It is discussed in \cite{MR1720368}.

\begin{proposition} \label{prop: unif}
\begin{enumerate}
\item ([ibid., Proposition 6.22 and Corollary 6.25]) \label{explogconvergence}
The power series $\exp$ (resp. $\log$) converge
on the $\Z_p$-Lie algebra $\altp \gl(N_0, \Z_p)$ (resp., the uniform group $\Gamma (N_0, \altp)$)
and define mutually inverse bijections
\[
\altp \gl (N_0, \Z_p)\bjct{\exp}{\log}\Gamma (N_0, \altp).
\]
\item \label{part: expcongruence1}
The maps $\exp$ and $\log$ induce mutually inverse bijections
\[
\altp \gl (N_0, \Z_p)/p^n\gl (N_0, \Z_p)\bjct{\exp}{\log}\Gamma(N_0,\altp)/\Gamma(N_0,p^n),
\quad n\ge\epsilon_p.
\]
\item ([ibid., Corollary 7.14 and Theorem 9.10])
Applying the maps $\exp$ and $\log$ to subsets gives rise to bijections
\begin{gather*}
\{\text{powerful closed Lie subalgebras of }\altp \gl(N_0,\Z_p)\}\\
\bjct{\exp}{\log}\\
\{\text{uniform closed subgroups of }\Gamma(N_0,\altp)\}.
\end{gather*}
\item ([ibid., Theorem 4.17 and scholium to Theorem 9.10]) Under these bijections the rank of a uniform subgroup
$H$ of $\Gamma(N_0,\altp)$ is $\dim_{\Q_p}\log H\otimes\Q_p$ and
\begin{gather*}
\{\text{isolated closed Lie subalgebras of }\altp \gl(N_0,\Z_p)\}\\
\bjct{\exp}{\log}\\
\{\text{isolated closed subgroups of }\Gamma(N_0,\altp)\}.
\end{gather*}
\item \label{eqnalgebraicsubgroup} For any algebraic subgroup $X$ of $\GL(N_0)$ defined over $\Q_p$ we have
\[
\exp (\Lie_{\Q_p} X \cap p^n\gl(N_0,\Z_p)) = X (\Q_p) \cap \Gamma(N_0,p^n), \quad n \ge \epsilon_p.
\]
In particular, for all $n\ge\epsilon_p$ we have $\exp (p^n \lie{g}_{\Z_p}) = \K_p (p^n)$
and $\K_p(p^n)$ is a uniform group.
\end{enumerate}
\end{proposition}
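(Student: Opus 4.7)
The plan is to observe that parts (1), (3), (4) are verbatim from the cited sections of the Dixon--du Sautoy--Mann--Segal book, so nothing needs to be added for them. The real work is to establish part (\ref{eqnalgebraicsubgroup}), after which part (\ref{part: expcongruence1}) becomes a trivial consequence and so does the ``in particular'' clause.

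First I would prove part (\ref{eqnalgebraicsubgroup}). For the inclusion $\exp (\Lie_{\Q_p} X \cap p^n \gl(N_0,\Z_p)) \subset X(\Q_p) \cap \Gamma(N_0,p^n)$, take $\xi \in \Lie_{\Q_p} X$ with $\xi \in p^n \gl(N_0,\Z_p)$, $n \ge \epsilon_p$. The standard $p$-adic estimate $v_p(\xi^k/k!) \ge nk - (k-1)/(p-1) \ge n$ (the second inequality using $n \ge \epsilon_p$) shows at once that $\exp \xi \in 1 + p^n \gl(N_0,\Z_p)$, i.e.\ $\exp \xi \in \Gamma(N_0,p^n)$; Lemma \ref{lem: formalps} gives $\exp \xi \in X(\Q_p)$. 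For the reverse inclusion, start from $g \in X(\Q_p) \cap \Gamma(N_0,p^n)$, write $g = 1 + p^n u$ with $u \in \gl(N_0,\Z_p)$, and use the analogous valuation bound for the $\log$ series to conclude $\xi := \log g \in p^n \gl(N_0,\Z_p)$. To see that $\xi \in \Lie_{\Q_p} X$, observe that $\exp(m\xi) = g^m \in X(\Q_p)$ for every $m \in \Z$; hence for any $f$ in the ideal $I_{\Q_p}(X)$ of Lemma \ref{lem: formalps} the power series $f(\exp(t\xi)) \in \Q_p[[t]]$ (which converges on $\Z_p$ because $\xi \in \altp \gl(N_0,\Z_p)$) vanishes at every non-negative integer and is therefore identically zero; Lemma \ref{lem: formalps} then yields $\xi \in \Lie_{\Q_p} X$.

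Part (\ref{part: expcongruence1}) is then immediate: apply part (\ref{eqnalgebraicsubgroup}) with $X = \GL(N_0)$ to see that the bijection $\exp$ of part (\ref{explogconvergence}) carries the additive subgroup $p^n \gl(N_0,\Z_p)$ onto the normal subgroup $\Gamma(N_0,p^n)$, hence induces a bijection of quotient/coset sets as stated. For the ``in particular'' part, apply (\ref{eqnalgebraicsubgroup}) with $X = G$: by the very definition of the $\Z$-form one has $\Lieg_{\Q_p} \cap \gl(N_0,\Z_p) = \Lieg_{\Z_p}$, so $\Lieg_{\Q_p} \cap p^n \gl(N_0,\Z_p) = p^n \Lieg_{\Z_p}$, and $\K_p(p^n) = G(\Q_p) \cap \Gamma(N_0,p^n) = \exp(p^n \Lieg_{\Z_p})$. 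Finally, uniformity of $\K_p(p^n)$ follows from the correspondence in part (3) together with the uniformity of the $\Z_p$-Lie algebra $p^n \Lieg_{\Z_p}$, since $[p^n \Lieg_{\Z_p}, p^n \Lieg_{\Z_p}] \subset p^{2n}\Lieg_{\Z_p} \subset \altp\cdot p^n\Lieg_{\Z_p}$ whenever $n \ge \epsilon_p$.

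The only delicate point is the bookkeeping of $p$-adic valuations in the $\exp$ and $\log$ series, where the factor $\altp$ (and the condition $n \ge \epsilon_p$) are precisely what is needed to absorb the denominators $k!$ and $k$. Everything else is either a direct citation or a formal consequence of part (\ref{eqnalgebraicsubgroup}) and Lemma \ref{lem: formalps}.
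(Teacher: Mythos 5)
Your treatment of part \ref{eqnalgebraicsubgroup} is sound and is essentially the paper's own route (the valuation estimates for $\exp$ and $\log$ together with Lemma \ref{lem: formalps}), and your derivation of the ``in particular'' clause, including uniformity of $\K_p(p^n)$ via part (3), is fine. The gap is in your deduction of part \ref{part: expcongruence1}. Knowing that the bijection $\exp$ of part \ref{explogconvergence} carries the additive subgroup $p^n\gl(N_0,\Z_p)$ onto $\Gamma(N_0,p^n)$ does not by itself produce induced maps on the quotients: $\exp$ and $\log$ are not group homomorphisms, and a set bijection that matches a subgroup with a subgroup need not send cosets to cosets. What is actually required is the coset compatibility
\[
\exp(\altp x+p^n y)\equiv\exp(\altp x)\pmod{p^n\gl(N_0,\Z_p)},\qquad x,y\in\gl(N_0,\Z_p),\ n\ge\epsilon_p,
\]
together with the analogous congruence for $\log$; equivalently, $\exp\bigl(a+p^n\gl(N_0,\Z_p)\bigr)\subset\exp(a)\,\Gamma(N_0,p^n)$ for every $a\in\altp\gl(N_0,\Z_p)$. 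Since $x$ and $y$ do not commute, one cannot simply write $\exp(a+b)=\exp(a)\exp(b)$, and this congruence is precisely the content of the paper's proof of part (2): one expands the series into noncommutative monomials in $x$ and $y$ and checks $v_p(k!)\le n(i-1)+\epsilon_p(k-i)$ for every term containing $i\ge1$ factors of $p^n y$ (alternatively one can quote the uniform-group facts from Dixon--du Sautoy--Mann--Segal, Theorem 5.2, Corollary 7.14 and Corollary 4.15). Without such an argument your phrase ``hence induces a bijection of quotient/coset sets'' is unjustified, and it is exactly the main analytic point of the statement. Note also that the paper runs the logic in the opposite order, proving (2) by this estimate and then obtaining (5) from part (1) and Lemma \ref{lem: formalps}; reversing the order as you do is harmless for (5), but your shortcut from (5) back to (2) does not work as written.
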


\begin{remark}
In \S \ref{SectionApproxLevel0} below we will consider the exponential and logarithm maps on
larger domains, provided that $p$ is sufficiently large with respect to $N_0$.
\end{remark}

\begin{proof}
To prove part \ref{part: expcongruence1},
we need to show that
\[
\exp(\altp x+p^ny) \equiv \exp(\altp x)\pmod {p^n\gl(N_0,\Z_p)}
\]
for any $x,y\in\gl(N_0,\Z_p)$ and $n\ge\epsilon_p$. Expanding the power series as an infinite linear
combination of products of $x$ and $y$ (which in general do not commute), it is enough to show that
\[
-v_p(k!)+\epsilon_p(k-i)+ni\ge n
\]
for any $0<i\le k$. Equivalently,
\[
v_p(k!)\le n(i-1)+\epsilon_p(k-i),
\]
which holds since $v_p(k!)\le (k-1)/(p-1) = (i-1)/(p-1) + (k-i)/(p-1)$ and $1/(p-1) \le n$, $\epsilon_p$.
Similarly, the weaker inequality
\[
-v_p(k)+\epsilon_p(k-i)+ni\ge n
\]
shows that
\[
\log((1+\altp x)(1+p^ny))\equiv\log(1+ \altp x)\pmod {p^n\gl(N_0,\Z_p)}\\
\]
for any $x,y\in\gl(N_0,\Z_p)$ and $n\ge\epsilon_p$.
Part \ref{part: expcongruence1} follows.
(Alternatively, this part also follows from [ibid., Theorem 5.2 and Corollary 7.14] together with the first part of [ibid., Corollary 4.15].)

Given part \ref{explogconvergence},
part \ref{eqnalgebraicsubgroup}
follows from Lemma \ref{lem: formalps} applied to the elements of $p^n \gl (N_0, \Z_p)$.
\end{proof}

\begin{remark}
In fact, one can intrinsically endow any uniform pro-$p$ group with the structure of a Lie algebra over $\Z_p$
which is free of finite rank as a $\Z_p$-module [ibid., \S4.5] and from which we can recover the group structure.
We will not recall this construction here, since for our purposes it is advantageous to use the realization of the Lie algebra
inside a matrix space via $\rho_0$ (cf. [ibid., \S7.2]).
\end{remark}

We can immediately deduce analogous results for subgroups of $\K_p(\altp)$.

\begin{proposition} \label{prop: unifg}
\begin{enumerate}
\item \label{part: expcongruence}
The maps $\exp$ and $\log$ induce mutually inverse bijections
\[
\altp\lie{g}_{\Z_p}/p^n\lie{g}_{\Z_p}\bjct{\exp}{\log}\K_p(\altp)/\K_p(p^n),\quad n\ge\epsilon_p.
\]
\item
The application of $\exp$ and $\log$ to subsets gives rise to bijections
\[
\{\text{powerful closed Lie subalgebras of }\altp \lie{g}_{\Z_p}\}
\bjct{\exp}{\log}
\{\text{uniform closed subgroups of }\K_p (\altp)\}
\]
and
\[
\{\text{isolated closed Lie subalgebras of }\altp \lie{g}_{\Z_p}\}\bjct{\exp}{\log}
\{\text{isolated closed subgroups of }\K_p (\altp)\}.
\]
\item \label{eqnalgebraicsubgroup2} For any algebraic subgroup $X$ of $G$ defined over $\Q_p$ we have
\[
\exp (\Lie_{\Q_p} X \cap p^n\lie{g}_{\Z_p}) = X (\Q_p) \cap \K_p (p^n), \quad n \ge \epsilon_p.
\]
\end{enumerate}
\end{proposition}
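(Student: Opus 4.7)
The plan is to deduce all three parts of Proposition \ref{prop: unifg} from the corresponding statements in Proposition \ref{prop: unif} by restriction along the embedding $\rho_0: G \hookrightarrow \GL(N_0)$. The crucial identifications are $\K_p(p^n) = \K_p \cap \Gamma(N_0, p^n)$ (by the definition of the principal congruence subgroups, using $\K_p = G(\Q_p) \cap \GL(N_0, \Z_p)$) and $p^n \Lieg_{\Z_p} = \Lie_{\Q_p} G \cap p^n \gl(N_0, \Z_p)$; the latter is implicit in the ``in particular'' clause of Proposition \ref{prop: unif} part \ref{eqnalgebraicsubgroup}.

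The central step, to which everything else reduces, is to establish that $\exp$ and $\log$ restrict to mutually inverse bijections between $\altp \Lieg_{\Z_p}$ and $\K_p(\altp)$. For the forward direction, given $\xi \in \altp \Lieg_{\Z_p} \subset \altp \gl(N_0, \Z_p)$, the series $\exp \xi$ converges to an element of $\Gamma(N_0, \altp)$ by Proposition \ref{prop: unif} part \ref{explogconvergence}, and since $\xi \in \Lie_{\Q_p} G$ Lemma \ref{lem: formalps} forces $\exp \xi \in G(\Q_p)$; hence $\exp \xi \in G(\Q_p) \cap \Gamma(N_0, \altp) = \K_p(\altp)$. Conversely, for $g \in \K_p(\altp)$ the series $\log g$ converges to an element $\xi \in \altp \gl(N_0, \Z_p)$ by the same reference; since $\exp(t\xi)$ then converges for every $t \in \Z_p$ and $\exp \xi = g \in G(\Q_p)$, Lemma \ref{lem: formalps} gives $\xi \in \Lie_{\Q_p} G$, whence $\xi \in \altp \Lieg_{\Z_p}$ via the identification above.

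Given this ambient bijection, the three items are formal. For part \ref{part: expcongruence} one passes to cosets, using that $\exp$ sends $p^n \Lieg_{\Z_p}$ onto $\K_p(p^n)$; this is just the specialization to $X = G$ of part \ref{eqnalgebraicsubgroup2}, which in turn is the ``in particular'' clause of Proposition \ref{prop: unif} part \ref{eqnalgebraicsubgroup}. The uniform and isolated correspondences of the second item follow by restricting the corresponding bijections in parts 3--4 of Proposition \ref{prop: unif}: the ambient bijection $\altp \Lieg_{\Z_p} \leftrightarrow \K_p(\altp)$ ensures that a closed Lie subalgebra of $\altp \gl(N_0, \Z_p)$ lies inside $\altp \Lieg_{\Z_p}$ precisely when its $\exp$-image lies inside $\K_p(\altp)$, and the properties of being uniform or isolated are intrinsic. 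Finally, part \ref{eqnalgebraicsubgroup2} follows by applying part \ref{eqnalgebraicsubgroup} of Proposition \ref{prop: unif} to $X$ viewed as an algebraic subgroup of $\GL(N_0)$ via $\rho_0$: on the left, $\Lie_{\Q_p} X \subset \Lie_{\Q_p} G$ replaces $p^n \gl(N_0, \Z_p)$ by $p^n \Lieg_{\Z_p}$, and on the right, $X(\Q_p) \subset G(\Q_p)$ replaces $\Gamma(N_0, p^n)$ by $\K_p(p^n)$. I anticipate no substantive obstacle: the proof is essentially a bookkeeping exercise that uses Lemma \ref{lem: formalps} as the bridge between the group and its Lie algebra on the subvariety $G \subset \GL(N_0)$.
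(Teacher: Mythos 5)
Your proposal is correct and is essentially the paper's own route: the paper gives no separate argument for Proposition \ref{prop: unifg}, simply remarking that it follows immediately from Proposition \ref{prop: unif} by restriction along $\rho_0$, with Lemma \ref{lem: formalps} serving as the bridge between $G(\Q_p)$ and $\Lie_{\Q_p}G$ exactly as you use it. (One small caveat: ``isolated'' is relative to the ambient object rather than intrinsic, but your ambient bijection shows that $\altp\Lieg_{\Z_p}$ and $\K_p(\altp)$ are themselves isolated in $\altp\gl(N_0,\Z_p)$ and $\Gamma(N_0,\altp)$, so the two notions agree and the restriction of parts 3--4 of Proposition \ref{prop: unif} goes through as you claim.)
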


\begin{remark} \label{rem: Ilani}
In general, a closed subgroup of $\K_p (\altp)$ is not necessarily uniform.
Nevertheless, by a result of Ilani \cite{MR1345293} (combined with \cite[Corollary 7.14]{MR1720368}) for $p \ge \dim G$
the application of $\exp$ and $\log$ to subsets gives rise to bijections
\[
\{\text{closed Lie subalgebras of }\altp \lie{g}_{\Z_p}\}
\bjct{\exp}{\log}
\{\text{closed subgroups of }\K_p (\altp)\}.
\]
If moreover $p>N_0+1$, then this bijection can be extended to all closed pro-$p$ subgroups of $\K_p$ (\cite{MR2126210, MR2554763}).
See \S \ref{sectionliealgebra}, where we also give a further generalization that is valid for all $p$ large enough with respect to $G$.

Note that Ilani's theorem implies (together with Proposition \ref{prop: unif}, part \ref{part: expcongruence1})
that for $p \ge \dim G$ we have $[\K_p (p) : H] \ge p^{n-1}$ for all open subgroups $H$ of $\K_p (p)$ of level $p^n$.
If $G$ is simply connected and $p$ is sufficiently large, then by \cite[Window 9, Lemma 5]{MR1978431}
\begin{multline} \label{eq: fratinni}
\text{for any proper open subgroup $H$ of $\K_p$ the image $\bar H$ of $H$ in $G (\F_p) \simeq \K_p / \K_p (p)$} \\ \text{is a proper subgroup of $G (\F_p)$.}
\end{multline}
Since $G (\F_p)$ is generated by elements of $p$-power order,
we have $[G (\F_p) : \bar H] \ge p$, and therefore if $H$ is of level $p^n$, $n \ge 1$, then $[\K_p : H] = [G (\F_p) : \bar H] [\K_p (p) : H \cap \K_p (p)] \ge p^n$, which is
a well-known result of Lubotzky \cite[Lemma 1.6]{MR1312501}.
By Lemma \ref{lem: basunif}, part \ref{LemmaUniform}, and Proposition \ref{prop: unifg}, for every $p$ we have at least $[\K_p : H] \ge c p^n$
for all open subgroups $H$ of $\K_p$ of level $p^n$, where $c>0$ depends only on $G$.
\end{remark}

\subsection{Proof of Theorem \ref{TheoremAlgebraic}} \label{subsliealgebra}
The essential step in the proof of Theorem \ref{TheoremAlgebraic} is to establish the following Lie algebra analog.
If $L_p$ is a free $\Z_p$-module of finite rank, we say that
an open submodule $M \subset L_p$ has level $p^n$ if $n$ is the minimal integer such that $p^n L_p \subset M$.

\begin{theorem} \label{thm: Liealgebra}
Let $L$ be a Lie algebra over $\Z$ which is free of finite rank as a $\Z$-module.
Then there exist an integer $D > 0$ and a constant $0 < \varepsilon \le 1$ with the following property.
For any prime $p$ and an open Lie subalgebra $M \subset L_p = L \otimes \Z_p$ of level $p^n$, where $n > v_p (D)$,
there exists a proper, isolated closed subalgebra $I$ of $L_p$ such that
$M \subset I + p^{\lceil \varepsilon n \rceil} L_p$.
\end{theorem}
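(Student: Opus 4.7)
The plan is to combine the elementary divisor theorem applied to the pair $M \subset L_p$ with Greenberg's lifting theorem, linked by a simple dichotomy argument that produces a useful gap in the elementary divisors of $M$.

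First, I would choose a $\Z_p$-basis $v_1, \ldots, v_d$ of $L_p$ adapted to $M$, so that $M = \bigoplus_{i=1}^d p^{e_i} \Z_p v_i$ with $0 \le e_1 \le \cdots \le e_d$; the level hypothesis forces $e_d = n$. Next I would run the following iterative dichotomy against the thresholds $T_k := n/3^{d-k}$ for $k = 1, \ldots, d$: starting with $r = 0$ (and the convention $e_0 := 0$), check whether $e_{r+1} > T_{d-r-1+1}= n/3^{d-r-1}$; if so, stop, otherwise increment $r$. At worst this produces $r = d-1$ with $e_{d-1} \le n/3$, so in all cases one obtains $r \in \{0, \ldots, d-1\}$ satisfying
\[
e_r \le n/3^{d-r}, \qquad e_{r+1} \ge n/3^{d-r-1},
\]
from which both $e_{r+1} \ge n/3^{d-1}$ and $e_{r+1} - 2e_r \ge n/3^{d-1}$ follow.

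Translating the Lie subalgebra condition $[M, M] \subset M$ into a bound on the structure constants $[v_i, v_j] = \sum_l c_{ij}^l v_l$ via the elementary divisor decomposition yields $v_p(c_{ij}^l) \ge \max(0, e_l - e_i - e_j)$ for all $i,j,l$. Setting $V_r = \bigoplus_{i \le r} \Z_p v_i$ (with $V_0 = \{0\}$), this gives $[V_r, V_r] \subset V_r + p^{\nu_0} L_p$ where $\nu_0 := e_{r+1} - 2 e_r \ge n/3^{d-1}$, meaning that $V_r$ is a genuine isolated rank-$r$ Lie subalgebra of $L_p$ modulo $p^{\nu_0}$. I then invoke Greenberg's theorem for the closed subscheme of the Grassmannian $\operatorname{Gr}(r, L)$ cut out by the Lie bracket equations: this produces an actual isolated rank-$r$ Lie subalgebra $I \subset L_p$, necessarily proper since $r < d$, with $I \equiv V_r \pmod{p^\mu}$ for some $\mu$ of order $\nu_0/N$, where $N$ is Greenberg's constant. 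Combining the resulting equality $I + p^\mu L_p = V_r + p^\mu L_p$ with the elementary-divisor inclusion $M \subset V_r + p^{e_{r+1}} L_p$ then yields $M \subset I + p^{\min(\mu, e_{r+1})} L_p$, and both $\mu$ and $e_{r+1}$ are at least a fixed positive multiple of $n$, producing the required $\varepsilon$.

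The main delicate point is arranging Greenberg's theorem uniformly in $p$: its constants $N,c$ a priori depend on $p$, but since the scheme of isolated rank-$r$ Lie subalgebras of $L$ is defined over $\Z$, they can be controlled uniformly outside a finite set of bad primes of this scheme. Those bad primes, together with the finitely many small-$n$ configurations where either Greenberg's own input bound or the dichotomy would fail, are absorbed into the integer $D$ via the condition $n > v_p(D)$. Once this setup is in place, the elementary divisor step, the dichotomy producing the gap $\nu_0 \ge n/3^{d-1}$, and the verification that $V_r$ is an approximate Lie subalgebra are all routine.
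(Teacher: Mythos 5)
Your proposal is correct and follows essentially the same route as the paper's proof: elementary divisors for $M\subset L_p$, a threshold argument producing an index $r$ with $e_{r+1}\gg n$ and $e_{r+1}-2e_r\gg n$ (the paper uses the relative condition $\alpha_r<c\,\alpha_{r+1}$ with $0<c<\tfrac12$, which is equivalent in effect to your absolute thresholds $n/3^{d-k}$), the observation that $\Z_p v_1+\dots+\Z_p v_r$ is a Lie subalgebra modulo $p^{\nu_0}$, and a lift via Greenberg's theorem with constants uniform in $p$ absorbed into $v_p(D)$. The only cosmetic differences are that the paper applies Greenberg to $r$-tuples of vectors with minor equations $D_S(x_1,\dots,x_r,[x_i,x_j])$ rather than to a chart of the Grassmannian, and your stopping rule should use a non-strict inequality so that it terminates at $r=d-1$ when $e_d=n$.
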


Before proving Theorem \ref{thm: Liealgebra} we show how it implies Theorem \ref{TheoremAlgebraic}.
\begin{proof}[Proof of Theorem \ref{TheoremAlgebraic}]
As in Remark \ref{RemarkAlgebraic}, we are free to assume throughout that $n \ge n_0$, where $n_0$ depends only on $G$
(and will be determined below).

Let $H \subset \K_p (\altp)$ be an arbitrary subgroup of level $p^n$ and $V \subset H$ a uniform subgroup
as in part \ref{LemmaUniform} of Lemma \ref{lem: basunif}. Clearly, $V$ has level at least $p^n$.
Since $V$ is uniform, $\log V$ is by part 2 of
Proposition \ref{prop: unifg} a Lie subalgebra of $\altp \lie{g}_{\Z_p}$ of level at least $p^n$ (inside $\lie{g}_{\Z_p}$).
We now apply Theorem \ref{thm: Liealgebra} to $L = \lie{g}_{\Z}$ and $M = \log V$,
and obtain under the assumption $n>v_p(D)$
the existence of a proper, isolated closed subalgebra $I$ of $\lie{g}_{\Z_p}$ with
\[
\log V \subset I + p^{\lceil \varepsilon n \rceil} \lie{g}_{\Z_p}.
\]
Since $V\supset H^{\{p^N\}}$, where $N = N (\dim G)$ depends only on $G$, we get
\[
p^N\log H=\log H^{\{p^N\}}\subset I + p^{\lceil \varepsilon n \rceil} \lie{g}_{\Z_p},
\]
and hence, if $\lceil \varepsilon n \rceil \ge N+\epsilon_p$, the set $\log H$ is contained in
\[
(\Q_p I+p^{\lceil \varepsilon n \rceil-N} \lie{g}_{\Z_p})\cap \altp \lie{g}_{\Z_p}=
\altp I+p^{\lceil \varepsilon n \rceil-N} \lie{g}_{\Z_p},
\]
where the last equality holds since $I$ is isolated in $\lie{g}_{\Z_p}$.
It follows from part \ref{part: expcongruence} of Proposition \ref{prop: unifg} that
\begin{equation} \label{eq: hspep}
H=\exp\log H \subset \exp (\altp I) \K_p (p^{\lceil \varepsilon n \rceil-N}).
\end{equation}

Using the notation of \cite[\S 7.1]{MR1102012}, let $X = \mathcal{A} (\Q_p I)$ be the
smallest algebraic subgroup of $G$ defined over $\Q_p$ such that $\Lie_{\Q_p} X \supset \Q_p I$. Note
that $X$ is connected and that its Lie algebra $\Lie_{\Q_p} X = \mathfrak{a} (\Q_p I)$
is the smallest algebraic subalgebra of $\lie{g}_{\Q_p}$ containing the proper subalgebra $\Q_p I$.
Since $\lie{g}_{\Q_p}$ is semisimple,\footnote{This is the only place in the proof where we use the semisimplicity of $G$.}
it follows from [ibid., Corollary 7.9] that $\Lie_{\Q_p} X$ is still a \emph{proper} subalgebra of $\lie{g}_{\Q_p}$, and we may therefore
replace $I$ by $\Lie_{\Q_p} X \cap \lie{g}_{\Z_p}$.
Combining part \ref{eqnalgebraicsubgroup2} of Proposition \ref{prop: unifg} with \eqref{eq: hspep}, we obtain
\[
H\subset (X (\Q_p) \cap \K_p (\altp)) \K_p (p^{\lceil \varepsilon n \rceil-N}).
\]
Since we are free to replace $\varepsilon$ by a smaller positive constant, we conclude Theorem \ref{TheoremAlgebraic}.
\end{proof}

\begin{remark} \label{rem: sameps}
If $p\ge\dim G$ then using Ilani's theorem (see Remark \ref{rem: Ilani}) we can avoid using $V$ in the argument above and keep the same value
of $\varepsilon$ as in Theorem \ref{thm: Liealgebra}.
\end{remark}

It remains to prove Theorem \ref{thm: Liealgebra}. We first need an abstract lifting result for isolated subalgebras.

\begin{lemma} \label{LemmaLifting}
Let $L$ be a Lie algebra over $\Z$ which is free of rank $d$ as a $\Z$-module.
Then there exist an integer $D_1 > 0$ and a constant $0 < \varepsilon_1 \le 1$ with the following property.
Let $p$ be a prime and $r=1,\dots,d-1$.
Assume that $x_1, \ldots, x_r$ are elements of $L_p = L \otimes \Z_p$ which are linearly independent modulo $p$ and satisfy
\[
[x_i,x_j] \in \sum_{k=1}^r \Z_p x_k + p^\nu L_p, \quad 1 \le i, j \le r,
\]
with $\nu > v_p (D_1)$.
Then there exist $y_1, \ldots, y_r \in L_p$ such that $y_i \equiv x_i \pmod{p^{\lceil \varepsilon_1 \nu \rceil}}$
and $I = \Z_p y_1 + \dots + \Z_p y_r$ is an isolated closed subalgebra of $L_p$ of rank $r$.
\end{lemma}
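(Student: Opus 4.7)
The plan is to encode ``spans a Lie subalgebra of rank $r$'' as a polynomial condition over $\Z$ and apply Greenberg's lifting theorem \cite{MR0351994}. Fix a $\Z$-basis $e_1, \ldots, e_d$ of $L$, so $[e_i, e_j] = \sum_k \gamma_{ij}^k e_k$ for integers $\gamma_{ij}^k$. Consider the affine $\Z$-scheme
\[
Y \subset \aff^{rd + r^3}
\]
parameterizing tuples $(y_1, \ldots, y_r;\, (c_{ij}^k))$ with $y_i \in L$ and $c_{ij}^k$ scalars, cut out by the polynomial equations
\[
[y_i, y_j] - \sum_{k=1}^r c_{ij}^k y_k = 0, \qquad 1 \le i, j \le r,
\]
in the coordinates of $y_i$ (using the $\gamma_{ij}^k$). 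The hypothesis yields elements $\tilde{c}_{ij}^k \in \Z_p$ with $[x_i, x_j] \equiv \sum_k \tilde{c}_{ij}^k x_k \pmod{p^\nu L_p}$, so $(x_1, \ldots, x_r;\, \tilde{c}_{ij}^k)$ defines a point of $Y(\Z_p / p^\nu \Z_p)$.

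Next I apply Greenberg's theorem to the scheme $Y$. This theorem gives, for each prime $p$, integers $a_p \ge 1$ and $b_p \ge 0$ such that for $\nu \ge b_p$, every point of $Y(\Z_p/p^\nu)$ is the reduction modulo $p^{\lceil (\nu - b_p)/a_p \rceil}$ of a genuine point of $Y(\Z_p)$. Since $Y$ is of finite type over $\Z$ with complexity (degrees of defining equations, dimension) determined purely by $L$, the constants $a_p, b_p$ can be taken to be bounded independently of $p$ outside a finite set $S$ of ``bad'' primes; the contributions of the $p \in S$ are then absorbed into the integer $D_1$ by requiring $\nu > v_p(D_1)$. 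Choosing $\varepsilon_1 = 1/(2a)$ (with $a$ a uniform bound) and $D_1$ divisible by a sufficiently high power of each $p \in S$, we obtain, for every $\nu > v_p(D_1)$, a $\Z_p$-point $(y_1, \ldots, y_r;\, (c_{ij}^k))$ of $Y$ with $y_i \equiv x_i \pmod{p^{\lceil \varepsilon_1 \nu \rceil}}$ and $[y_i, y_j] = \sum_k c_{ij}^k y_k$ exactly in $L_p$.

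Finally I verify that $I := \Z_p y_1 + \cdots + \Z_p y_r$ has the required properties. Since $\lceil \varepsilon_1 \nu \rceil \ge 1$, the reduction $y_i \bmod p$ agrees with $x_i \bmod p$, so $y_1, \ldots, y_r$ are linearly independent modulo $p$. By Nakayama's lemma this tuple extends to a $\Z_p$-basis of $L_p$, so $I$ is free of rank $r$ and a direct summand of $L_p$; in particular $I$ is isolated. The built-in relations $[y_i, y_j] = \sum_k c_{ij}^k y_k \in I$ show that $I$ is closed under the bracket, hence a Lie subalgebra.

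The main obstacle is the uniformity in $p$ of the Greenberg constants: Greenberg's theorem is stated for a fixed Henselian valuation ring, but the constants $a, b$ depend only on geometric complexity invariants of $Y$ that are inherited from the $\Z$-model. This yields bounds valid at all but finitely many $p$, and the few exceptional primes are dealt with by enlarging $D_1$ (which only affects the hypothesis $\nu > v_p(D_1)$, not the asymptotic constant $\varepsilon_1$).
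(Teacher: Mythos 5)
Your proof is correct and follows essentially the same route as the paper: encode the condition that $x_1,\ldots,x_r$ span a Lie subalgebra by polynomial equations over $\Z$, lift the approximate solution by Greenberg's theorem \cite{MR0351994}, and deduce isolatedness from linear independence modulo $p$; the only difference is that you keep the structure constants $c_{ij}^k$ as auxiliary unknowns, whereas the paper eliminates them by imposing the vanishing of the $(r+1)\times(r+1)$ minors $D_S(x_1,\dots,x_r,[x_i,x_j])$. One remark: the form of Greenberg's theorem quoted in the paper (Theorem \ref{thm: Greenberg}) is already uniform in $p$ for polynomials over $\Z$ — a single pair $C$, $D$ works for every prime, the $p$-dependence entering only through $v_p(D)$ — so your ``bad primes'' discussion is unnecessary, and as written it is slightly off: if an exceptional prime had local exponent $a_p>2a$, enlarging $v_p(D_1)$ would not rescue the choice $\varepsilon_1=1/(2a)$, and you would need to include those finitely many primes in the maximum defining $\varepsilon_1$ (the paper simply takes $\varepsilon_1=(C+\max_p v_p(D))^{-1}$).
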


\begin{remark}
An analogous result in the archimedean case was proved in \cite{MR2507639} using the Lojasiewicz inequality.
\end{remark}

The proof is an application of the following general lifting theorem of M.~Greenberg \cite{MR0351994}.

\begin{theorem}[Greenberg] \label{thm: Greenberg}
Let $f_1,\dots,f_m$ be polynomials in $N$ variables over $\Z$. Then there exist positive integers $C$ and $D$ with the following property.
Suppose that $x_1, \ldots, x_N \in\Z_p$ and $\nu > v_p (D)$ are such that $f_i (x_1, \dots, x_N) \equiv 0 \pmod{p^\nu}$ for all $i$.
Then there exist $y_1, \ldots, y_N \in\Z_p$ such that $y_i \equiv x_i \pmod{p^{\lceil \frac{\nu-v_p (D)}{C} \rceil}}$
and $f_i(y_1,\dots,y_N)=0$ for all $i$.
\end{theorem}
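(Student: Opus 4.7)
The strategy is to combine a quantitative version of Hensel's lemma, valid on the smooth locus of the scheme cut out by the $f_i$, with a Noetherian induction on the singular stratification of that scheme. Let $Y=\Spec\Z[X_1,\dots,X_N]/(f_1,\dots,f_m)$; the goal is to lift approximate $\Z_p$-points of $Y$ to honest $\Z_p$-points, with a fixed linear loss of precision.

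First I would establish a multivariable quantitative Hensel's lemma: there is a constant $C_0$, depending only on $N$ and $\max_i\deg f_i$, with the following property. If $x\in\Z_p^N$ satisfies $f_i(x)\equiv 0\pmod{p^\nu}$ for all $i$, and some $r\times r$ minor $\Delta$ of the Jacobian matrix $J(x)=(\partial f_i/\partial x_j)(x)$ has $v_p(\Delta)=\delta$ with $\nu>2\delta$, then Newton iteration applied to the $r\times r$ square subsystem selected by the rows and columns of $\Delta$ converges to a true zero $y$ of this subsystem, with $y\equiv x\pmod{p^{\nu-\delta}}$; here $r$ is the expected codimension of $Y$ at the reduction of $x$. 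One then verifies that the remaining $f_i$ also vanish at $y$ by Taylor expansion, using that they lie in the ideal generated by the selected $r$ polynomials near a smooth point of rank $r$.

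Next I would argue by Noetherian induction on the dimension of $Y_\Q\subset\A^N_\Q$. Let $g_1,\dots,g_M$ be the polynomials obtained by adjoining to $f_1,\dots,f_m$ all $r\times r$ minors of the Jacobian, for each $r$ between $1$ and $\min(m,N)$; these cut out a proper closed subscheme $Y'\subsetneq Y_\Q$ of strictly smaller dimension, to which the inductive hypothesis provides constants $C',D'$. Given an approximate zero $x$ of level $p^\nu$, I split into two cases: either some Jacobian minor at $x$ has $v_p$ below a threshold chosen proportional to $\nu$, and the quantitative Hensel step produces the desired true zero; or every such minor has large valuation, in which case $x$ is an approximate zero, of comparable level, of the enlarged system defining $Y'$, and the inductive hypothesis lifts $x$ to a true point of $Y'\subset Y$.

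The step I expect to be the most delicate is the bookkeeping of precision losses. One must calibrate the threshold in the dichotomy so that both branches produce an approximation whose $p$-adic precision is at least a fixed affine function $(\nu-v_p(D))/C$ of $\nu$, and one must verify that after iterating through the finitely many stages of the stratification $Y_\Q\supsetneq Y'\supsetneq Y''\supsetneq\cdots$ the compounded constants remain controlled in terms of the original data. Uniformity in $p$ is built in automatically: each successive layer of the stratification is defined by fixed polynomials over $\Z$ whose number and degrees depend only on $N$, $m$, and $\max_i\deg f_i$, while the finitely many primes at which Jacobian minors behave anomalously, or at which the reduction $Y_{\F_p}$ acquires components that are not the reduction of geometric components of $Y_\Q$, are absorbed into the single integer $D$ appearing in the statement.
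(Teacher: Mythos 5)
The paper does not prove this statement at all: it is quoted from Greenberg's paper \cite{MR0351994} (with the remark that in the smooth case one can take $C=1$, going back to \cite{MR0207700}), so there is no internal proof to compare with. Your overall strategy — a quantitative Newton/Hensel step on a quantitatively smooth locus combined with Noetherian induction over a Jacobian stratification, with the finitely many anomalous primes absorbed into $D$ — is indeed the standard route to such results. But as written the argument has genuine gaps at its two load-bearing points.

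First, the treatment of the equations outside the chosen $r\times r$ square subsystem does not work. The claim that ``the remaining $f_i$ lie in the ideal generated by the selected $r$ polynomials near a smooth point of rank $r$'' is false in general, and your dichotomy is not exhaustive. Take $N=2$, $f_1=X_2-X_1^2$, $f_2=X_2$, and $x=(p^{\lceil \nu/2\rceil},0)$, which satisfies $f_i(x)\equiv 0 \pmod{p^\nu}$. The $1\times 1$ minor $\partial f_1/\partial X_2=1$ is a unit, but Newton applied to the subsystem $\{f_1\}$ produces $y=(p^{\lceil\nu/2\rceil},p^{2\lceil\nu/2\rceil})$ with $f_2(y)\neq 0$; and indeed $f_2$ does not lie in the ideal generated by $f_1$ in the local ring at the relevant point. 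Your safeguard ``$r=$ expected codimension'' does not mesh with the dichotomy: if branch 1 is triggered only by minors of size $r$ ($=2$ here), the sole $2\times2$ minor $-2X_1$ has valuation $\approx\nu/2$, so the Hensel hypothesis $\nu>2\delta$ fails, while branch 2 cannot be entered either because the $1\times1$ minor $1$ is not small — so neither branch applies. To make this step correct one needs either Greenberg's own inductive scheme or a Tougeron--Elkik type lemma in which the relevant ``Jacobian ideal'' incorporates the ideal quotient $(J:I)$ (equivalently, an effective, uniform-in-$p$ ideal-membership statement $u\,f_i=\sum_j a_j f_{i_j}$ together with control of $v_p(u(y))$ at the Newton limit). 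Second, the assertion that adjoining all Jacobian minors cuts out $Y'\subsetneq Y_\Q$ of strictly smaller dimension is false without preliminary reductions: if $I=(f_1,\dots,f_m)$ is not radical (e.g. $I=(X_1^2)$), or if $Y_\Q$ has components of different dimensions, the minor locus need not drop dimension, so the stated induction does not advance; moreover ``the expected codimension at the reduction of $x$'' is then ill-defined (and refers to the special fibre rather than $Y_\Q$). The standard fix is to reduce first to the radical, using $\sqrt{I}^{\,e}\subset I$, and then to the individual minimal primes, using that a product of one generator from each minimal prime lies in $I$ so that an approximate zero of $I$ is an approximate zero of some minimal prime of comparable level; only after these reductions does the smooth-locus/singular-locus induction run as you describe. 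With these two repairs your outline becomes essentially Greenberg's argument, but as it stands the key Hensel step and the termination of the induction are both unjustified.
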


\begin{remark}
If the affine variety over $\Q$ defined by $f_1,\dots,f_m$ is smooth, then the proof in \cite{MR0351994} shows that we can in fact take $C=1$
(cf. \cite{MR0207700}, where the analogous result over $\Z_p$ is credited to N\'{e}ron).
The point of Theorem \ref{thm: Greenberg} is that it holds without any restriction on $f_1,\dots,f_m$.
\end{remark}

\begin{proof}[Proof of Lemma \ref{LemmaLifting}]
By fixing a $\Z$-basis for $L$, we can identify $L$ with $\Z^d$. The space of $k$-tuples $(x_1, \dots, x_k)$ of
vectors in $L \otimes \Q$ can then be identified with an affine space $X_k$ of dimension $kd$.
For any subset $S \subset \{1, \ldots, d\}$ of cardinality $k$ let
$D_S (y_1, \dots, y_k)$ be the corresponding $k\times k$-minor of the $k\times d$-matrix
formed by the vectors $y_1, \dots, y_k \in L$.

In the statement of the lemma we can clearly fix $1\le r<d$.
Let $k=r+1$ and consider the polynomials
$f_{ij,S} (x_1, \dots, x_r) := D_S (x_1, \dots, x_r, [x_i,x_j])$ on $X_r$ for all $1 \le i,j \le r$ and
subsets $S \subset \{1, \ldots, d\}$ of cardinality $k$. These polynomials have coefficients in $\Z$ and
they define a closed subvariety $V_r$ of $X_r$.

Whenever $x_1, \ldots, x_r \in L_p$ are linearly independent, and $(x_1, \dots, x_r) \in V_r (\Q_p)$, the
associated free $\Z_p$-submodule
$M = \Z_p x_1 \oplus \dots \oplus\Z_p x_r$ of $L_p$ (of rank $r$) is a closed Lie subalgebra. If furthermore
the reductions modulo $p$ of the vectors $x_1, \ldots, x_r$ are already linearly independent, then
$M=\Z_p x_1 \oplus \dots \oplus\Z_p x_r$ is an isolated closed Lie subalgebra of $L_p$.

Let now $x_1, \ldots, x_r \in L_p$ be as in the statement of Lemma \ref{LemmaLifting}.
Then $f_{ij,S} (x_1, \dots, x_r) \equiv 0 \pmod{p^\nu}$ for all $i$, $j$ and $S$.
Applying Theorem \ref{thm: Greenberg} to the polynomials $f_{ij,S}$, if
$\nu > v_p (D)$ there exists $(y_1, \ldots, y_r) \in V_r (\Q_p)$ with $y_i \equiv x_i \pmod{p^\mu}$, where
\[
\mu = \left\lceil \frac{\nu-v_p (D)}{C} \right\rceil \ge \left\lceil \frac{\nu}{C+v_p(D)} \right\rceil.
\]
In particular, the vectors $y_1, \dots, y_r$ are congruent modulo $p$ to the vectors $x_1,\dots,x_r$, which implies that they are
linearly independent modulo $p$.
Therefore $I = \Z_p y_1 \oplus \dots \oplus\Z_p y_r$ is an isolated closed Lie subalgebra of $L_p$.
We obtain the assertion with $\varepsilon_1 = (C + \max_p v_p (D))^{-1}$.
\end{proof}

\begin{proof}[Proof of Theorem \ref{thm: Liealgebra}]
Let $M$ be an open Lie subalgebra of $L_p$ of level $p^n$. In particular, $M$ is a $\Z_p$-submodule of finite index.
By the elementary divisor theorem, there exist a $\Z_p$-basis
$x_1, \ldots, x_d$ of $L_p$ and integers $0 \le \alpha_1 \le \dots \le \alpha_d$ such that
$(p^{\alpha_i} x_i)_{1 \le i \le d}$ is a basis of $M$.
The fact that $M$ is of level $p^n$ means that $\alpha_d=n$.

Fix an arbitrary constant $0<c<\frac12$. Let $0<r \le d-1$ be the maximal index such that $\alpha_r<c\alpha_{r+1}$, if such an index exists;
otherwise set $r=0$. Note that
\[
\alpha_{r+1} \ge c\alpha_{r+2}\ge\dots\ge c^{d-r-1}\alpha_d=c^{d-r-1} n.
\]
Write $[x_i,x_j] = \sum_k c_{ijk} x_k$ with $c_{ijk} \in \Z_p$. Since $M$ is a Lie subalgebra of $L_p$,
we have $p^{\alpha_i+\alpha_j} c_{ijk} \in p^{\alpha_k} \Z_p$. For $i,j \le r$ and $k > r$ we obtain
$c_{ijk} \in p^{\alpha_k-\alpha_i-\alpha_j} \Z_p \subset p^{\alpha_{r+1} - 2 \alpha_r} \Z_p$.
Here,
\[
\alpha_{r+1}-2\alpha_r > (1-2c) \alpha_{r+1} \ge (1-2c) c^{d-r-1} n.
\]
Summing up, we obtain the existence of $0 \le r \le d-1$ such that $[x_i,x_j] \in \sum_{k=1}^r \Z_p x_k + p^\nu L$
for $1 \le i, j \le r$, where
\begin{equation} \label{eqndefinitionnu}
\nu = \left\lceil \left( 1-2c\right) c^{d-r-1} n \right\rceil.
\end{equation}
We may apply Lemma \ref{LemmaLifting} to the elements $x_1, \ldots, x_r$
and obtain an isolated Lie subalgebra $I = \Z_p y_1 \oplus \dots \oplus\Z_p y_r$ of $L_p$ with $x_i \equiv y_i \pmod{p^{\lceil \varepsilon_1 \nu \rceil}}$
whenever $\nu > v_p (D_1)$. The last condition is evidently satisfied when $n > v_p (D)$, where $D = D_1^e$ with $e = \lceil (1-2c)^{-1} c^{1-d} \rceil$.
Under this condition we get
\[
M \subset \Z_p x_1 \oplus \dots \oplus \Z_p x_r + p^{\alpha_{r+1}} L \subset I + p^{\lceil \varepsilon n \rceil} L
\]
for $\varepsilon = \varepsilon_1 (1-2c) c^{d-1}$,
since $\alpha_{r+1}\ge c^{d-r-1} n\ge\varepsilon n$.
\end{proof}

We remark that Theorem \ref{thm: Liealgebra} admits a local counterpart:
\begin{theorem} \label{thm: Liealgebrap}
Let $p$ be a prime number and let $L$ be a Lie algebra over $\Z_p$ which is free of finite rank as a $\Z_p$-module.
Then there exist an integer $n_0\ge 0$ and a constant $0 < \varepsilon \le 1$ with the following property.
For any open Lie subalgebra $M \subset L$ of level $p^n$, where $n > n_0$,
there exists a proper, isolated closed subalgebra $I$ of $L$ such that
$M \subset I + p^{\lceil \varepsilon n \rceil} L$.
\end{theorem}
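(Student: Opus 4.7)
The plan is to imitate the proof of Theorem \ref{thm: Liealgebra} almost verbatim, but working over $\Z_p$ throughout rather than over $\Z$. Two inputs must be transposed to the local setting, and the rest is mechanical.

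First, I would establish a local version of Greenberg's lifting theorem: given polynomials $f_1,\dots,f_m$ in $N$ variables with coefficients in $\Z_p$, there exist a constant $C\ge 1$ and a non-negative integer $c$ such that for any $x_1,\dots,x_N\in\Z_p$ with $f_i(x_1,\dots,x_N)\equiv 0\pmod{p^\nu}$ and $\nu>c$, one can find $y_1,\dots,y_N\in\Z_p$ with $y_i\equiv x_i\pmod{p^{\lceil(\nu-c)/C\rceil}}$ and $f_i(y_1,\dots,y_N)=0$ for all $i$. This is the local form actually proved in \cite{MR0351994}; the global statement in Theorem \ref{thm: Greenberg} is obtained by aggregating over primes, and so the necessary content is already available. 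With this in hand, the local analog of Lemma \ref{LemmaLifting} follows by the same argument: fixing a $\Z_p$-basis of $L$, the minors $D_S$ and bracket relations give polynomials $f_{ij,S}$ in $rd$ variables with coefficients in $\Z_p$, and approximate subalgebra data $(x_1,\dots,x_r)$, linearly independent modulo $p$ and satisfying $[x_i,x_j]\in\sum_k\Z_p x_k+p^\nu L$, lift (for $\nu$ larger than some $\nu_0$ depending on $p$ and $L$) to $(y_1,\dots,y_r)$ spanning an isolated closed subalgebra $I$ of rank $r$, with $y_i\equiv x_i\pmod{p^{\lceil\varepsilon_1\nu\rceil}}$.

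Second, the main argument of Theorem \ref{thm: Liealgebra} transplants directly. Since $\Z_p$ is a PID, the elementary divisor theorem produces a $\Z_p$-basis $x_1,\dots,x_d$ of $L$ and exponents $0\le\alpha_1\le\cdots\le\alpha_d=n$ such that $(p^{\alpha_i}x_i)$ is a $\Z_p$-basis of $M$. Fixing $0<c<\tfrac12$ and choosing $r$ maximal with $\alpha_r<c\alpha_{r+1}$ (or $r=0$ if no such index exists), one gets $\alpha_{r+1}\ge c^{d-r-1}n$ and, from the fact that $M$ is closed under the bracket, the structure constants $c_{ijk}$ with $i,j\le r<k$ lie in $p^{\alpha_{r+1}-2\alpha_r}\Z_p\subset p^{\nu}\Z_p$ for $\nu=\lceil(1-2c)c^{d-r-1}n\rceil$. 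Applying the local lifting lemma to $x_1,\dots,x_r$, provided $n$ is sufficiently large that $\nu>\nu_0$, yields an isolated closed subalgebra $I\subset L$ of rank $r<d$ (hence proper) with $M\subset I+p^{\lceil\varepsilon n\rceil}L$ for $\varepsilon=\varepsilon_1(1-2c)c^{d-1}$. The threshold $n_0$ is then the minimal $n$ for which the corresponding $\nu$ exceeds $\nu_0$; both $n_0$ and $\varepsilon$ depend on $p$ and $L$, as permitted.

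I do not expect any real obstacle: the only conceptual point is extracting the local form of Greenberg's theorem, and since Greenberg's proof is genuinely local in nature (it proceeds via Hensel's lemma after a resolution of singularities step carried out over $\Z_p$), this is immediate from the original reference rather than requiring any new argument. Everything else — the elementary divisor theorem, the combinatorial optimization over $r$, and the identification of $I$ as proper and isolated — is formally identical to the global case.
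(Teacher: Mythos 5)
Your proposal is correct and follows the same route as the paper: the paper proves Theorem \ref{thm: Liealgebrap} by repeating the proof of Theorem \ref{thm: Liealgebra} verbatim, invoking the variant of Theorem \ref{thm: Greenberg} with $\Z$ replaced by $\Z_p$ (which, as you note, is contained in \cite{MR0351994} and indeed already in \cite[Theorem 1]{MR0207700}).
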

The proof is the same except that we use the natural variant of Theorem \ref{thm: Greenberg} where the
ring $\Z$ is replaced by $\Z_p$ (which is contained in \cite{MR0351994}, but also already in \cite[Theorem 1]{MR0207700}).
For our purposes, the additional uniformity in $p$ of $\varepsilon$ and $n_0$ in Theorem \ref{thm: Liealgebra} is important.

Similarly, we have the following analog of Theorem \ref{TheoremAlgebraic} for arbitrary uniform pro-$p$ groups.

\begin{theorem}
Let $L$ be a uniform pro-$p$ group.
Then there exist an integer $n_0\ge 0$ and a constant $0 < \varepsilon \le 1$ such that
for any open subgroup $H$ of $L$ with $n=\min\{m\ge0:L^{p^m}\subset H\}>n_0$ there exists a proper, isolated closed subgroup $I$ of $L$ with
$H \subset I L^{p^{\lceil \varepsilon n \rceil}}$.
\end{theorem}

\begin{proof}
Let $H$ be a subgroup of $L$ and $n$ as above.
We use the Lie algebra structure on $L$ (and in particular the additive structure) defined in \cite[\S4]{MR1720368}.
In particular, $x^m=mx$ for any $x\in L$ and $m\in\Z$ [ibid., Lemma 4.14].
Let $V \subset H$ be as in part \ref{LemmaUniform} of Lemma \ref{lem: basunif}.
Since $V$ is uniform, it is a Lie subalgebra of $L$ of level $\ge p^n$ [ibid., Theorem 9.10].
Applying Theorem \ref{thm: Liealgebrap}, there exists a proper, isolated closed subalgebra $I$ of $L$ such that
\[
V \subset I + p^{\lceil \varepsilon n \rceil} L.
\]
Since $V\supset H^{\{p^N\}}=p^NH$ we get
\[
p^NH\subset I + p^{\lceil \varepsilon n \rceil}L,
\]
and hence, if $\lceil\varepsilon n\rceil\ge N$,
\[
H\subset I+p^{\lceil \varepsilon n \rceil-N} L,
\]
since $I$ is isolated in $L$. For any integer $m\ge0$ the identity map induces a bijection between the
quotient group $L/L^{p^m}$ and the quotient $L/p^mL$ with respect to the additive structure on $L$
[ibid., Corollary 4.15]. We obtain
\[
H\subset IL^{p^{\lceil \varepsilon n \rceil-N}},
\]
from which the theorem follows. (Recall that by [ibid., Scholium to Theorem 9.10], $I$ is an isolated subgroup of $L$.)
\end{proof}

We end up the discussion of Theorem \ref{TheoremAlgebraic} with a couple of comments about
the function $\nu (n)$ and the number $\varepsilon_G$ of Remark \ref{RemarkAlgebraic} above.

\begin{lemma} \label{RemarkEpsilonG}
We have $\nu (n) \le \lceil \frac{n}2 \rceil$ and hence $\varepsilon_G \le \frac12$.
\end{lemma}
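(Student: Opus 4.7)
The plan is to show $\nu(n) \le \lceil n/2 \rceil$ by exhibiting, for each $n \ge 1$, a prime $p$ and an open subgroup $H \subset \K_p(\altp)$ of level $p^n$ which is not contained in $(X(\Q_p) \cap \K_p) \K_p(p^m)$ for any proper connected algebraic subgroup $X$ of $G$ defined over $\Q_p$ and any integer $m \ge \lceil n/2 \rceil + 1$. The bound $\varepsilon_G \le \tfrac12$ is then immediate from $\varepsilon_G = \liminf_n \nu(n)/n$.

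The construction will use two elements $\xi_1, \xi_2 \in \Lieg_\Z$ generating $\Lieg_\Q$ as a $\Q$-Lie algebra, which exist since $\Lieg$ is semisimple. For all but finitely many primes $p$, the reductions $\bar\xi_1, \bar\xi_2 \in \Lieg_{\F_p}$ will be $\F_p$-linearly independent and will still generate $\Lieg_{\F_p}$ as an $\F_p$-Lie algebra; I will pick any such $p$. Setting $k = \lceil n/2 \rceil$, I will consider the $\Z_p$-submodule
\[
M = p^k \Z_p \xi_1 + p^k \Z_p \xi_2 + p^n \Lieg_{\Z_p}
\]
of $\altp \Lieg_{\Z_p}$. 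The inequality $2k \ge n$ makes $M$ a Lie subalgebra (because $[p^k\xi_1, p^k\xi_2] \in p^{2k}\Lieg_{\Z_p} \subset M$), and using $\dim \Lieg \ge 3$ a direct valuation computation shows that the level of $M$ as an open $\Z_p$-submodule of $\Lieg_{\Z_p}$ is exactly $p^n$. Since $2k \ge n$, the valuation estimates in the proof of Proposition~\ref{prop: unif}(\ref{part: expcongruence1}) show that Baker--Campbell--Hausdorff reduces to ordinary addition modulo $p^n$ on $p^k \Lieg_{\Z_p}$, so $\exp$ maps $M/p^n\Lieg_{\Z_p}$ bijectively onto a subgroup of $\K_p(\altp)/\K_p(p^n)$. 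I will take $H$ to be the preimage of this subgroup in $\K_p(\altp)$.

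The key step will be to rule out $H \subset (X(\Q_p) \cap \K_p) \K_p(p^m)$ for $m \ge k+1$. For each $i \in \{1,2\}$, writing $\exp(p^k \xi_i) = x_i h_i$ with $x_i \in X(\Q_p) \cap \K_p$ and $h_i \in \K_p(p^m)$ will place $x_i$ in $X(\Q_p) \cap \K_p(\altp)$, so by Proposition~\ref{prop: unifg}(\ref{eqnalgebraicsubgroup2}) the element $y_i := \log x_i$ lies in $\Lie_{\Q_p} X \cap \altp \Lieg_{\Z_p}$. Applying the BCH congruence from the proof of Proposition~\ref{prop: unif}(\ref{part: expcongruence1}), and using that $\Lieg_{\Z_p}$ is isolated in $\gl(N_0,\Z_p)$, I obtain
\[
p^k \xi_i = \log(x_i h_i) \equiv y_i \pmod{p^m \Lieg_{\Z_p}}.
\]
Dividing by $p^k$ inside $\Lieg_{\Q_p}$ and invoking the $\Q_p$-linearity of $\Lie_{\Q_p} X$, I would deduce $\xi_i \in \mathfrak{l}_X + p^{m-k}\Lieg_{\Z_p} \subset \mathfrak{l}_X + p\Lieg_{\Z_p}$, where $\mathfrak{l}_X := \Lie_{\Q_p} X \cap \Lieg_{\Z_p}$ is an isolated $\Z_p$-Lie subalgebra of $\Lieg_{\Z_p}$.

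Reducing modulo $p$ places both $\bar\xi_1$ and $\bar\xi_2$ in the image $\bar{\mathfrak{l}}_X$ of $\mathfrak{l}_X$ in $\Lieg_{\F_p}$. Since $\mathfrak{l}_X$ is isolated in $\Lieg_{\Z_p}$, $\bar{\mathfrak{l}}_X$ is an $\F_p$-Lie subalgebra of dimension $\dim_{\Q_p} \Lie_{\Q_p} X < \dim \Lieg$, hence proper in $\Lieg_{\F_p}$. This contradicts our choice of $p$, under which $\bar\xi_1, \bar\xi_2$ generate all of $\Lieg_{\F_p}$ as a Lie algebra. The delicate step is the BCH congruence in the third paragraph, but this is essentially the content of Proposition~\ref{prop: unif}(\ref{part: expcongruence1}); the rest of the argument is elementary divisor theory together with some $\F_p$-linear algebra.
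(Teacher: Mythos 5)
Your proof is correct, and it rests on the same decisive observation as the paper's: for odd $p$ and $2k\ge n$ the truncated Baker--Campbell--Hausdorff expansion makes $\log$ an isomorphism of \emph{abelian} groups $\K_p(p^{k})/\K_p(p^n)\simeq p^{k}\Lieg_{\Z_p}/p^n\Lieg_{\Z_p}$, so that arbitrary $\Z_p$-submodules of this quotient (not just subalgebras) give open subgroups of level up to $p^n$, which a proper algebraic subgroup cannot approximate beyond depth $\lceil n/2\rceil$. Where you differ is in the choice of witness and the closing contradiction: the paper considers all essential subgroups, specializes to kernels of arbitrary surjections $\Lieg_{\Z_p}/p^{\lfloor n/2\rfloor}\Lieg_{\Z_p}\to\Z/p^{\lfloor n/2\rfloor}\Z$ (corank-one subgroups) and finishes with a genericity remark that for almost all $p$ these cannot all project into reductions of proper Lie subalgebras, whereas you build a single explicit rank-two witness $M=p^k\Z_p\xi_1+p^k\Z_p\xi_2+p^n\Lieg_{\Z_p}$ from a generating pair of $\Lieg_{\Q}$ and conclude from the fact that $\bar\xi_1,\bar\xi_2$ cannot lie in a proper subalgebra of $\Lieg_{\F_p}$. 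Your route is more explicit and self-contained at the final step, at the cost of invoking two-generation of semisimple Lie algebras over $\Q$; this is true and standard (generating pairs form a nonempty Zariski-open set, already nonempty over $\overline{\Q}$, hence have $\Q$-points), but deserves a sentence or a reference. Two small points to make explicit: choose $p$ odd (you may, since you already discard finitely many primes), and note that the congruence $\log(x_ih_i)\equiv\log x_i\pmod{p^m\gl(N_0,\Z_p)}$ is exactly the second estimate in the proof of part \ref{part: expcongruence1} of Proposition \ref{prop: unif}, while the additivity of $\log$ modulo $p^n$ on $\K_p(p^{\lceil n/2\rceil})$ that you use is a similar (easy) estimate, proved in the paper's own argument; combined with isolatedness of $\Lieg_{\Z_p}$ in $\gl(N_0,\Z_p)$ and $\dim G\ge3$ for the level computation, all steps check out.
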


\begin{proof}
For $n \ge 2\epsilon_p-1$ the logarithm map induces an isomorphism
\[
\K_p (p^{\lceil \frac{n}2 \rceil}) / \K_p (p^n) \simeq p^{\lceil \frac{n}2 \rceil} \lie{g}_{\Z_p} / p^n \lie{g}_{\Z_p}
\simeq \lie{g}_{\Z_p} / p^{\lfloor \frac{n}2 \rfloor} \lie{g}_{\Z_p}
\]
of abelian groups.
(To see this, note that we have $\log x \equiv x - 1 - \frac{(x-1)^2}2 \pmod{p^n}$
and therefore $\log (xy) \equiv \log x + \log y \pmod{p^n}$ for $x,y \in \Gamma (N_0, p^{\lceil \frac{n}2 \rceil})$.
The bijectivity follows from part \ref{part: expcongruence} of Proposition \ref{prop: unifg}.)

Therefore, the map $H\mapsto p^{-\lceil \frac{n}2 \rceil}\log H$ gives rise to a bijection between
the open subgroups of $\K_p (p^{\lceil \frac{n}2 \rceil}) \subset \K_p (\altp)$ of level $p^n$ and the
essential subgroups of $\lie{g}_{\Z_p} / p^{\lfloor \frac{n}2 \rfloor} \lie{g}_{\Z_p}$,
namely those which do not contain $p^{\lfloor \frac{n}2 \rfloor - 1} \lie{g}_{\Z_p} / p^{\lfloor \frac{n}2 \rfloor} \lie{g}_{\Z_p}$.

Assume on the contrary that $\nu = \nu (n) > \lceil \frac{n}2 \rceil$.
Then it follows from this discussion and Proposition \ref{prop: unifg} that
every essential subgroup ${\bar H}$ of $\lie{g}_{\Z_p} / p^{\lfloor \frac{n}2 \rfloor} \lie{g}_{\Z_p}$ is contained in
$(\Lie_{\Q_p} X \cap \lie{g}_{\Z_p}) + p^{\nu - \lceil \frac{n}2 \rceil} \lie{g}_{\Z_p} / p^{\lfloor \frac{n}2 \rfloor} \lie{g}_{\Z_p}$
for some proper, connected algebraic subgroup $X$ of $G$.
Equivalently, the projection of ${\bar H}$ to $\lie{g}_{\Z_p} / p^{\nu - \lceil \frac{n}2 \rceil} \lie{g}_{\Z_p}$
is contained in the reduction modulo $p^{\nu - \lceil \frac{n}2 \rceil}$ of
$\Lie_{\Q_p} X \cap \lie{g}_{\Z_p}$, which is a proper Lie subalgebra of $\lie{g}_{\Z_p} / p^{\nu - \lceil \frac{n}2  \rceil} \lie{g}_{\Z_p}$.
Considering the subgroups $\bar H$ obtained as kernels of arbitrary surjective group homomorphisms
$\lie{g}_{\Z_p} / p^{\lfloor \frac{n}2  \rfloor} \lie{g}_{\Z_p} \to \Z / p^{\lfloor \frac{n}2  \rfloor} \Z$,
one sees that at least for almost all $p$ this is impossible.

We conclude that $\nu (n) \le \lceil \frac{n}2  \rceil$.
\end{proof}

In general, the proof of Theorem \ref{thm: Liealgebra} does not give a good value for $\varepsilon_G$, since we used Greenberg's theorem to lift a Lie algebra modulo $p^\nu$ to characteristic zero.
We note however that for $L = \mathfrak{sl}_2 (\Z)$ and $p$ odd one gets the optimal value (and that even for $p=2$ we get an almost optimal result).
\begin{lemma} \label{lem: SL2}
Consider the Lie algebra $L = \mathfrak{sl}_2 (\Z)$.
Then for $p$ odd, Theorem \ref{thm: Liealgebra} is true for all $n > 0$ with $\varepsilon = \frac12$, which is optimal.
Similarly, Theorem \ref{TheoremAlgebraic} holds for $G = \SL (2)$ over $\Z$ and all odd primes $p$ with $\varepsilon = \frac12$.

For $p = 2$, Theorem \ref{thm: Liealgebra} holds for all $n \ge 3$ with $\lceil \varepsilon n \rceil$ replaced by
$\lceil \frac{n}2 \rceil - 1$. We have $\nu (n) \ge \lceil \frac{n}2 \rceil - 10$ for $G = \SL (2)$ over $\Z$,
and
in particular $\varepsilon_G = \frac12$.\footnote{Using \cite[Proposition 2.9, Theorem 2.10]{MR749678} instead of Lemma \ref{lem: basunif}, one may improve this bound to $\nu (n) \ge \lceil \frac{n-3}{2} \rceil$.
We omit the details.}
\end{lemma}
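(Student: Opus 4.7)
The upper bound $\varepsilon_G \le \frac12$ and the optimality of $\varepsilon = \frac12$ are contained in Lemma \ref{RemarkEpsilonG}, so the task is to prove matching lower bounds. The heart of the argument is Theorem \ref{thm: Liealgebra} for $L = \mathfrak{sl}_2(\Z)$. Given an open Lie subalgebra $M \subset \mathfrak{sl}_2(\Z_p)$ of level $p^n$, I use the elementary divisor theorem to fix a $\Z_p$-basis $(x_1,x_2,x_3)$ of $L_p = \mathfrak{sl}_2(\Z_p)$ and exponents $0 \le \alpha_1 \le \alpha_2 \le \alpha_3 = n$ with $M = \bigoplus_{i=1}^3 p^{\alpha_i}\Z_p x_i$, and split into two cases. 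If $\alpha_2 \ge \lceil n/2 \rceil$ the rank-one isolated subalgebra $I := \Z_p x_1$ already satisfies $M \subset I + p^{\lceil n/2 \rceil}L_p$. In the substantive case $\alpha_1, \alpha_2 \le \lfloor n/2 \rfloor$, writing $[x_1,x_2] = c_1 x_1 + c_2 x_2 + c_3 x_3$ the subalgebra property of $M$ forces $v_p(c_3) \ge \mu := n - \alpha_1 - \alpha_2 \ge 1$; equivalently, $\Z_p x_1 \oplus \Z_p x_2$ reduces modulo $p^\mu$ to a rank-two Lie subalgebra of $\mathfrak{sl}_2(\Z/p^\mu\Z)$, an approximate Borel.

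The plan for odd $p$ is to upgrade this approximate Borel to an exact one with no loss. The tool is the trace form on $\mathfrak{sl}_2$ and the identity $\det[x_1,x_2] = \det G_V$ (with $G_V$ the trace-form Gram matrix of $V := \Q_p x_1 + \Q_p x_2$), which follows from $\tr(Z^2) = -2\det Z$ for $Z \in \mathfrak{sl}_2$; combined with the Schur complement for the $3 \times 3$ Gram matrix $G$ it yields $(\det G_V)^2 = -c_3^2\det G/2$, so $v_p(\det G_V) = v_p(c_3) \ge \mu$. Let $v$ be a primitive generator of $V^\perp \cap L_p$ and write $[x_1,x_2] = \lambda v$. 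Since $\mathfrak{sl}_2(\F_p)$ has no two-dimensional abelian subalgebra for odd $p$, the primitive linearly independent pair $x_1,x_2$ brackets to a nonzero element modulo $p$, forcing $v_p(\lambda) = 0$; hence from $\det G_V = \lambda^2\det v$ one obtains $v_p(\det v) \ge \mu$. Hensel's lemma applied to the nondegenerate quadratic form $\det$ on $\mathfrak{sl}_2$ (whose gradient is a unit vector at any primitive point for odd $p$) then produces a nilpotent $v' \in L_p$ with $v' \equiv v \pmod{p^\mu L_p}$, and $I := (v')^\perp \cap L_p$ is the desired proper isolated Borel. The identity $\tr(v' x_i) = \tr((v'-v)x_i) \in p^\mu\Z_p$ translates, using primitivity of $v'$, to $x_i \in I + p^\mu L_p$ for $i=1,2$, whence $M \subset I + p^{\lceil n/2 \rceil}L_p$ since $\alpha_i + \mu \ge \lceil n/2 \rceil$ in this case. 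For $G = \SL(2)$ and odd $p$ the group statement of Theorem \ref{TheoremAlgebraic} then follows without loss via Remark \ref{rem: sameps}, as $p \ge 3 = \dim\SL(2)$.

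For $p = 2$ the argument is more delicate: $\mathfrak{sl}_2(\F_2)$ has commuting linearly independent pairs (e.g.\ $h$ and $e$), so $v_p(\lambda)$ can be positive; a short computation in $\mathfrak{sl}_2(\F_2)$ shows $v_2(\lambda) \le 1$ nonetheless, and carrying out the same Hensel lift while tracking this loss and handling the boundary case $\alpha_2 = \lceil n/2 \rceil - 1$ by the rank-one subalgebra $\Z_2 x_1$ yields Theorem \ref{thm: Liealgebra} with $\lceil n/2 \rceil - 1$ in place of $\lceil \varepsilon n \rceil$, valid for $n \ge 3$ so that the input lies in $\K_2(4) = \K_2(\altp)$. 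The group-theoretic bound $\nu(n) \ge \lceil n/2 \rceil - 10$ for every prime $p$ then follows from the standard deduction of Theorem \ref{TheoremAlgebraic} from Theorem \ref{thm: Liealgebra} in \S\ref{subsliealgebra}: embed $H$ in a uniform characteristic subgroup of index at most $p^{N(3)}$ via Lemma \ref{lem: basunif} (with $N(3) = 9$ for $p = 2$), apply the Lie-algebra bound, and translate back; the total loss is at most $N(3) + 1 \le 10$. Combined with Lemma \ref{RemarkEpsilonG} this gives $\varepsilon_G = \frac12$. The main obstacle is the $p = 2$ bookkeeping in the Hensel-type lifting of the approximate Borel and in the passage from Lie algebras to groups, but only a bounded prime-independent loss is incurred and the leading constant $\frac12$ is unaffected.
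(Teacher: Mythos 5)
Your odd-$p$ argument is correct and is essentially the paper's proof: after the elementary-divisor normalization and the dichotomy according to the size of $\alpha_2$, both arguments lift the approximate rank-two subalgebra $J=\Z_p x_1\oplus\Z_p x_2$ to an exact Borel by passing to its trace-form annihilator and applying a Hensel-type lift on the isotropy quadric $\det=0$; your detour through $\det[x_1,x_2]=\det G_V$ and the Schur-complement identity is just another packaging of the paper's criterion that the submodule annihilated by $c$ reduces to a subalgebra modulo $p^m$ if and only if $2\tr(c^2)\equiv0\pmod{p^m}$. The deduction of the group statements (Remark \ref{rem: sameps} for odd $p$; the proof of Theorem \ref{TheoremAlgebraic} with $N=N(3)=9$ for the bound on $\nu(n)$) also coincides with the paper.

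The gap is at $p=2$, where the quantitative claims are asserted rather than proved, and the one justification you do give fails there: your Hensel step rests on the gradient $(-2a,-c,-b)$ of $\det$ being primitive at primitive points, which is false at $p=2$ (e.g.\ at $v=h$). If the lifting on the quadric cost a factor $2^2$ (as it does in the paper's version, which parametrizes rank-two isolated submodules by half-integral $c$ with $\min(v_2(2c_1),v_2(c_2),v_2(c_3))=0$ and lifts points mod $2^m$ only modulo $2^{m-2}$), your total loss $2v_2(\lambda)+2\le4$ would yield only $\lceil n/2\rceil-2$ and $\nu(n)\ge\lceil n/2\rceil-11$, weaker than the constants you claim. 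Your integral-annihilator route can be repaired so that the loss is exactly $2v_2(\lambda)\le2$: if $v$ is primitive and $\det v$ is even, then one of the off-diagonal entries $b,c$ must be odd (otherwise $a$ is odd and $\det v=-a^2-bc$ is odd), and $\det$ is linear in that entry with unit coefficient, so the nilpotent $v'$ exists with $v'\equiv v\pmod{2^{v_2(\det v)}}$ with no Hensel loss; moreover, since the trace form is no longer unimodular at $2$, the final step ``$\tr(v'x_i)\in2^{k}\Z_2\Rightarrow x_i\in I+2^{k}L_2$'' needs not just primitivity of $v'$ but the fact that its reduction is a nonzero nilpotent of $\mathfrak{sl}_2(\F_2)$, so that $\tr(v'\cdot)$ is still surjective. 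None of this $2$-adic bookkeeping appears in your write-up, and the specific constants $\lceil n/2\rceil-1$ and $\lceil n/2\rceil-10$ are exactly what depends on it; the paper carries it out explicitly via the half-integral parametrization and the threshold $\alpha_2\ge n/2-1$.
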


\begin{proof}
Consider an open Lie subalgebra $M = p^{\alpha_1} \Z_p x_1 \oplus p^{\alpha_2} \Z_p x_2 \oplus p^n \Z_p x_3$ of $L_p$, where
$x_1$, $x_2$, $x_3$ form a $\Z_p$-basis of $L_p$ and $0 \le \alpha_1 \le \alpha_2 \le n$.
Assume first that $p$ is odd. If $\alpha_2 \ge \frac{n}2$ we set $I = \Z_p x_1$ and obtain $M \subset I + p^{\alpha_2} L_p$.
Otherwise, the rank two isolated submodule $J = \Z_p x_1 \oplus \Z_p x_2$ of $L_p$ maps to a Lie subalgebra of $L / p^{n-\alpha_1-\alpha_2} L$, where
$n-\alpha_1-\alpha_2>0$. We claim that we can lift $J$ to a rank two isolated subalgebra $I \subset L_p$ with
$J + p^{n-\alpha_1-\alpha_2} L_p = I + p^{n-\alpha_1-\alpha_2} L_p$.
Granted this claim, we get $M \subset I + p^{n-\alpha_2} L_p$.
Thus, in both cases we have $M \subset I + p^{\lceil \frac{n}2 \rceil} L_p$.
The statement for $\SL(2)$ follows then from Remark \ref{rem: sameps}.

For $p = 2$ we modify the argument slightly and distinguish the cases $\alpha_2 \ge \frac{n}2 - 1$ and $\alpha_2 < \frac{n}2 - 1$. In the second case
$J$ maps to a Lie subalgebra of $L / 2^m L$ with $m = n - \alpha_1 - \alpha_2 \ge n - 2 \alpha_2 > 2$, and we claim that there exists a
rank two isolated subalgebra $I \subset L_2$ with
$J + 2^{m-2} L_2 = I + 2^{m-2} L_2$. We obtain $M \subset I + 2^{\lceil \frac{n}2 \rceil - 1} L_2$. The statement for $\SL (2)$
follows from the proof of Theorem \ref{TheoremAlgebraic}, noting that we can take $N = N (3) = 9$
in part \ref{LemmaUniform} of Lemma \ref{lem: basunif}.

It remains to establish our claim on the lifting of subalgebras (for arbitrary $p$).
We may parametrize rank two isolated submodules $J$ of $L_p$ by elements $c = \sm{c_1}{c_2}{c_3}{-c_1} \in \mathfrak{sl}_2 (\Q_p)$ with
$\min  ( v_p (2c_1), v_p (c_2), v_p (c_3) ) = 0$, considered up to multiplication by scalars in $\Z_p^\times$, by setting
$J = J (c) = \{ x \in L_p \, : \, \tr (cx) = 0 \}$. It is easy to check that the submodule $J (c)$ is a Lie subalgebra of $L_p$ if and only if $2 \tr (c^2) = (2c_1)^2 + 4c_2 c_3 = 0$,
and that for $m \ge 1$ the module $J (c)$ maps to a Lie subalgebra of $L_p /p^m L_p$ if and only if $2 \tr (c^2) = (2c_1)^2 + 4c_2 c_3 \equiv 0 \pmod{p^m}$.
For $p > 2$ the quadric defined by this equation is smooth over $\Z_p$ and points modulo $p^m$ lift to $\Z_p$-points.
For $p = 2$ it is easy to see that for $m \ge 3$, points modulo $2^m$ are congruent modulo $2^{m-2}$ to $\Z_2$-points.
Therefore, whenever $J(c)$ maps to a Lie subalgebra modulo $p^m$ there exists
a rank two isolated Lie subalgebra $I \subset L_p$ with $J (c) + p^m L_p = I + p^m L_p$ for $p$ odd (resp., $J (c) + 2^{m-2} L_2 = I + 2^{m-2} L_2$ for $p=2$).
\end{proof}

As one sees from the proof, it is not difficult to obtain an explicit parametrization of open Lie subalgebras of $\mathfrak{sl}_2 (\Z_p)$ and of
open subgroups of the principal congruence subgroups of $\SL (2, \Z_p)$, at least for odd $p$. For more details and the explicit computation of the corresponding counting functions
see \cite{MR1679595,MR1740889,MR1866324,MR2529493}.

\subsection{A finiteness property for maximal connected algebraic subgroups} \label{subsectionfiniteness}
For a linear algebraic group $G$ defined over a field $F$ and a field extension $E \supset F$,
we denote by $\sgrmx_E(G)$ the set of maximal (proper) connected algebraic subgroups of $G_E$ defined over $E$.
Note that in the statement of Theorem \ref{TheoremAlgebraic} we can clearly take $X$ to be in $\sgrmx_{\Q_p}(G)$.
As a supplement to this theorem, we now consider a certain simple finiteness property of the family or algebraic groups $\sgrmx_{\Q_p}(G)$, where $p$ ranges over all prime numbers.
This property is crucial for the application of our results in \S \ref{MainApplication} below.

We first quote a basic finiteness property of affine varieties \cite[\S 65]{MR0349648}.
Denote by $\aff_R^n=\Spec R[x_1,\dots,x_n]$ the $n$-dimensional affine space over a ring $R$.

\begin{lemma}[Seidenberg] \label{LemmaGeometricFiniteness}
Let $n$ and $d$ be positive integers. Then there exists a constant $C=C(n,d)$ with the following property.
Let $F$ be an algebraically closed field and let $V$ be a closed subvariety of $\aff_F^n$
whose defining ideal $I(V)$ is generated in degrees $\le d$. Then
the number of irreducible components of the variety $V$ is at most $C$ and the defining ideal of any
irreducible component $W$ is generated in degrees $\le C$.
Moreover, whenever $I$ is an arbitrary ideal of $F[X_1,\ldots,X_n]$ generated in degrees $\le d$, the radical $\sqrt{I}$, which is the
defining ideal of the zero set of $I$, is generated in degrees $\le C$.
\end{lemma}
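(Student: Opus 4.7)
The plan is to reduce all three claims to effective polynomial ideal theory in the style of Hermann and Seidenberg. The starting observation is that an ideal $I \subset F[x_1,\dots,x_n]$ generated in degrees $\le d$ is determined by its intersection with the finite-dimensional $F$-vector space $F[x_1,\dots,x_n]_{\le d}$ of dimension $\binom{n+d}{n}$. Hence the set of such ideals is parametrized (non-uniquely) by points of a finite disjoint union of Grassmannians of bounded dimension, which are schemes of finite type over $\Z$. Any bound produced by algebraic manipulation of generators on this parameter space will therefore automatically be independent of $F$.

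I would first bound the number of irreducible components of $V$ by a Bezout-type estimate: any closed subvariety of $\aff_F^n$ cut out by polynomials of degree $\le d$ has degree (in the intersection-theoretic sense, after projectivizing and taking a suitable cycle-theoretic refinement) at most $d^n$, and since every irreducible component contributes at least one to the total degree, the number of irreducible components is at most $d^n$. The bound on the degrees of generators of the defining ideal of each individual component $W$ would then come from the theory of effective primary decomposition: starting from generators of degree $\le d$, one iterates elimination, saturation, and ideal quotient operations to isolate each minimal prime, and each such operation is known to produce polynomials whose degree is bounded by an explicit function of $n$ and $d$. The bound on generators of the radical $\sqrt{I}$ is then Seidenberg's theorem itself, which can be proved by the same circle of ideas, or alternatively by intersecting the defining ideals of the components produced above.

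The main obstacle is controlling the degree growth through the successive elimination and saturation steps, since the resulting bound is typically doubly exponential in $n$ and $d$. For the purely qualitative statement of the lemma, however, only the \emph{existence} of some constant $C = C(n,d)$ is needed, so the crude bounds one extracts from Hermann's original method are sufficient. The uniformity of $C$ in the algebraically closed field $F$ is essentially automatic once one observes that all ideal-theoretic operations in question are defined over $\Z$ and commute with base change along $\Z \to F$.
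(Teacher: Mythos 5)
The paper does not prove this lemma at all — it simply quotes it from Seidenberg's \emph{Constructions in algebra} — and your sketch follows essentially the same route as that source: Hermann--Seidenberg effective bounds for elimination, ideal quotient, radical and primary decomposition, whose degree bounds depend only on $n$ and $d$ and not on the field, together with a refined B\'ezout bound for the number of components. One small caution: your closing justification of field-uniformity via ``operations defined over $\Z$ commute with base change'' is not literally correct (minimal primes and primary decompositions do not commute with field extension), but it is also unnecessary, since the Hermann--Seidenberg degree bounds are already stated uniformly in the (algebraically closed) ground field.
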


\begin{remark} \label{FinitenessSubfield}
Let $F$ be an arbitrary field and $V$ be a closed subvariety of $\aff_{\bar F}^n$ defined over $F$
with defining ideal $I (V) \subset {\bar F} [X_1, \ldots, X_n]$. Then the following
are equivalent:
\begin{itemize}
\item The ideal $I_F (V) = I (V) \cap F[X_1,\ldots,X_n] \subset F[X_1,\ldots,X_n]$ is generated in degrees $\le d$.
\item $I (V)$ is generated in degrees $\le d$.
\end{itemize}
\end{remark}

This is simply because $I(V)$ is generated as an $\bar F$-vector space by $I_F (V)$.

\begin{definition}
Let $Y = \Spec A$ be an affine scheme over a ring $R$.
An \emph{admissible family} $\mathcal{X}$ is a family of pairs $(E,X)$, where $E$ is a field together with a ring homomorphism $R \to E$
and $X \subset Y_E = Y \times_{\Spec R} \Spec E$ is a closed subscheme.
We say that $\mathcal{X}$ is \BG\ (boundedly generated)
if there exists a finitely generated $R$-submodule $M \subset A$ such that
for all $(E,X) \in \mathcal{X}$
the defining ideal $I_E (X)$ of the subscheme $X$ is generated by elements of $M \otimes_R E$.
\end{definition}

\begin{remark}
If $Y$ is of finite type over $R$, then we can reformulate this definition as follows.
Fix a closed embedding of $Y$ into $\aff_R^n$ (equivalently, a choice of generators $a_1, \ldots, a_n$ of $A$ as an $R$-algebra).
We then have corresponding closed embeddings of $Y_E$ into $\aff_E^n$ for any $E$, and can define the notion of the degree of a regular function on $Y_E$.
An admissible family $\mathcal{X}$ is \BG\ if and only if there exists an integer $N$ such that for all $(E, X) \in \mathcal{X}$ the defining ideal $I_E (X)$ of the subscheme $X$
is generated by regular functions defined over $E$ of degree at most $N$. Note that this property is actually independent of the choice of embedding.
\end{remark}

We will mostly use this definition in the situation where we are given a linear algebraic group $G$ defined over a field $F$
and a family of algebraic subgroups defined over field extensions $E \supset F$.
We will also consider the case of a flat affine group scheme over $\Z$ and of the fields $E = \F_p$ as in \S \ref{subspropsetup}.

\begin{lemma} \label{LemmaFiniteness}
Let $G$ be an affine group scheme of finite type over a ring $R$
and let $\mathcal{X}$ be an admissible family such that for all
$(E,X) \in \mathcal{X}$ the base extension $G_E$ is geometrically reduced (i.e., a linear algebraic group defined over $E$) and
$X$ is an algebraic subgroup of $G_E$ defined over $E$. Fix a closed embedding of $G$ into $\aff_R^n$ for some $n$.

Then $\mathcal{X}$ is \BG\ if and only if
there exist an integer $N$ and for every $(E,X) \in \mathcal{X}$ an $E$-rational representation
$(\rho, V)$ of $G_E$, where $V$ is an $E$-vector space, and a line $D\subset V$, such that the matrix coefficients $v^\vee (\rho (g) v)$, $v \in V$, $v^\vee \in V^\vee$, have
degree $\le N$ and $X=\{g\in G_E:\rho(g)D=D\}$.
Moreover, if this is the case, then we may require in addition that $\rho$ is injective,
that $\dim V \le N$
and that the associated orbit map $G \to \mathbb{P} (V)$, $g \mapsto \rho (g) D$, is separable,
i.e., that $\Lie_E X=\{ v \in \lie{g}_E \, : \, d\rho (v) D \subset D \}$ \cite[Proposition 6.7]{MR1102012}.
\end{lemma}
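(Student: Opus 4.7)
The plan is to prove the two directions of the equivalence separately, with quite different techniques.

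For the backward direction (\emph{representations imply $\BG$}), assume we have $N$ and representations $(\rho_X,V_X)$ with distinguished lines $D_X$ as stated. I would first write the stabilizer condition $\rho_X(g)D_X=D_X$ as the vanishing of bilinear expressions in the matrix coefficients of $\rho_X(g)$ paired against linear functionals annihilating $D_X$; since the matrix coefficients are of degree at most $N$ on $G_E\subset\aff_E^n$ and $\dim V_X\le N$, these equations are of degree bounded in terms of $N$ only. Because $X$ is reduced, $I_E(X)$ is the radical of the ideal they generate, and Lemma~\ref{LemmaGeometricFiniteness} combined with Remark~\ref{FinitenessSubfield} would bound the degrees of generators of the radical uniformly in $E$. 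The $\BG$ property would then hold with $M$ taken as the $R$-submodule of $A$ spanned by the images of monomials in the ambient coordinates of degree at most this uniform bound.

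For the forward direction I would follow a uniform version of Chevalley's classical construction. Starting from a finitely generated $M\subset A$ witnessing $\BG$, I would first enlarge $M$ to a finitely generated $R$-submodule $W\subset A$ that is stable under right translation by $G$, using that for any $f\in A$ the coproduct $\Delta(f)=\sum_i f_i\otimes g_i$ forces every translate of $f$ to lie in the $R$-span of the finite set $\{f_i\}$; applying this to a set of generators of $M$ yields a finitely generated translation-invariant $W$. I would also include in $W$ the ambient coordinate functions $a_1,\dots,a_n$ of $G\hookrightarrow\aff_R^n$, which guarantees that the translation action of $G_E$ on $W_E=W\otimes_R E$ separates points and is therefore faithful. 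For each $(E,X)\in\mathcal{X}$ I would set $W_X=W_E\cap I_E(X)$, which by construction generates $I_E(X)$ as an ideal and is stable under right translation by $X$. Taking $d_X=\dim_E W_X$ and the line $L_X=\bigwedge^{d_X}W_X$ inside $V_X:=\bigwedge^{d_X}W_E\oplus W_E$, standard bookkeeping should identify $X$ with the stabilizer of $L_X$: if $g$ stabilizes $L_X$ then it preserves $W_X$ as a subspace, so $(R_gf)(e)=f(g)=0$ for all $f\in W_X$, forcing $g\in X$; the summand $W_E$ carries the faithfulness. The dimension of $V_X$ and the degrees of its matrix coefficients are controlled by the rank of $W$ and the maximum degree of its generators, both independent of $(E,X)$, so the uniform bound $N$ drops out.

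The main obstacle will be the separability of the orbit map in positive characteristic. The construction just sketched gives the correct reduced stabilizer and, \emph{a priori}, only the inclusion $\Lie_E X\subset\{v\in\Lieg_E:d\rho(v)L_X\subset L_X\}$. To upgrade to equality I would invoke \cite[Proposition~6.7]{MR1102012}: after a further, bounded enlargement of $W$ (still translation-invariant), the submodule $W_X$ captures the full infinitesimal information about $X$, so that the tangent stabilizer of $L_X$ in $\Lieg_E$ coincides with $\Lie_E X$. Since the size of the enlargement is dictated only by the initial $M$ and by the embedding $G\hookrightarrow\aff_R^n$, the uniformity across $\mathcal{X}$ would survive, yielding a representation with all four additional properties: faithfulness, bounded dimension, matrix coefficients of bounded degree, and separable orbit map.
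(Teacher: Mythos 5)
Your proposal follows essentially the same route as the paper. The ``if'' direction is identical: the ideal of $X$ over $\bar E$ is the radical of the ideal generated by the matrix coefficients $v^\vee(\rho(g)v)$, $v\in D$, $v^\vee\in D^\perp$, and Lemma \ref{LemmaGeometricFiniteness} together with Remark \ref{FinitenessSubfield} bounds the degrees of generators of $I_E(X)$. For the ``only if'' direction the paper simply defers to the proofs of \cite[Theorem 5.1, Propositions 1.9 and 1.10]{MR1102012}, observing that they can be run uniformly in $(E,X)$; your construction (a finitely generated translation-invariant $W\supset M$ containing the ambient coordinates, $W_X=W_E\cap I_E(X)$, the line $\bigwedge^{d_X}W_X$ inside $\bigwedge^{d_X}W_E\oplus W_E$) is exactly that construction written out, and your bookkeeping for $\dim V$ and the degrees of the matrix coefficients is the same uniformity observation.

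The one step that is not actually proved in your write-up is separability. The ``further, bounded enlargement of $W$'' is never specified, and citing \cite[Proposition 6.7]{MR1102012} there is circular: that proposition only translates separability of the orbit map into the equality $\Lie_E X=\{v\in\Lieg_E: d\rho(v)D\subset D\}$, which is precisely what must be shown. In fact no enlargement is needed, because $W_X$ generates $I_E(X)$: if $v\in\Lieg_E$ satisfies $d\rho(v)W_X\subset W_X$, then for $f=\sum_j h_jf_j$ with $f_j\in W_X$ one has $(f*v)(1)=\sum_j h_j(1)\,(f_j*v)(1)+\sum_j (h_j*v)(1)\,f_j(1)=0$, since $f_j$ and $f_j*v$ lie in $I_E(X)$ and vanish at the identity; hence $v\in\Lie_E X$ by \cite[Proposition 3.8]{MR1102012}. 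The reverse inclusion holds because $W_E$ is stable under convolution with $\Lieg_E$, so $d\rho(v)W_X\subset W_E\cap I_E(X)=W_X$ for $v\in\Lie_E X$; finally the infinitesimal stabilizer of the line $\bigwedge^{d_X}W_X$ agrees with that of the subspace $W_X$. This is exactly how the proof of Borel's Theorem 5.1 concludes, so your argument is completed by quoting it rather than by enlarging $W$.
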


\begin{proof}
The `only if' part (with the additional injectivity and separability conditions and the boundedness of $\dim V$) follows from the proof of \cite[Theorem 5.1]{MR1102012}.
The main point is the existence of an integer $N'$ such that for all $(E,X) \in \mathcal{X}$ there exists a right $G_E$-invariant $E$-subspace $U$
of the space of regular functions on $G_E$ of degree $\le N'$, which generates the ideal of $X$.
This claim follows from the proof of [ibid., Proposition 1.9].
The representation $\rho$ is then given as the direct sum of a suitable exterior power of $U$ and some injective representation $\rho_0$.
If we obtain the representation $\rho_0$ by the construction of [ibid., Proposition 1.10]
from the coordinate functions provided by the fixed closed embedding of $G$ into $\aff_R^n$, then the degrees of the matrix coefficients of $\rho_0$ and
its dimension are bounded in terms of $G$.
Therefore the degrees of the matrix coefficients of $\rho$ and its dimension are bounded in terms of $G$ and $N'$.

To see the `if' part, note that the defining ideal $I(X)$ of $X$ over $\bar E$
is the radical of the ideal generated by the matrix coefficients $v^\vee (\rho (g) v)$, $v \in D$, $v^\vee \in D^\perp$, and hence
by Lemma \ref{LemmaGeometricFiniteness} $I(X)$ is generated in degrees bounded in terms of $G$ and $N$ only. On the other hand,
$I(X)$ is by assumption generated by regular functions defined over $E$. In view of Remark \ref{FinitenessSubfield}, this shows that $I_E(X)$ is generated
in degrees bounded in terms of $G$ and $N$.
\end{proof}

For the following lemma, recall that the connected component of the identity $X^\circ$ of an algebraic group $X$ defined over $E$
is again defined over $E$ \cite[Proposition 1.2]{MR1102012}.

\begin{lemma} \label{LemmaComponent}
Let $G$ and $\mathcal{X}$ be as in Lemma \ref{LemmaFiniteness}.
Suppose that the family $\mathcal{X}$ is \BG. Then the family of its identity components $\{ (E,X^\circ):(E,X)\in\mathcal{X} \}$
is also \BG\ and the sizes of the component groups $X/X^\circ$, $(E,X)\in\mathcal{X}$, are bounded.
\end{lemma}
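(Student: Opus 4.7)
The plan is to deduce both parts of the lemma from Seidenberg's Lemma \ref{LemmaGeometricFiniteness} applied to $X_{\bar E} \subset \aff_{\bar E}^n$, combined with the descent statement of Remark \ref{FinitenessSubfield}. First I would fix a closed embedding $G \hookrightarrow \aff_R^n$, which base-changes to closed embeddings $G_E \hookrightarrow \aff_E^n$ and hence supplies a uniform notion of degree of defining ideals across the family. Since $\mathcal{X}$ is \BG, there is an integer $N$ such that $I_E(X)$ is generated in degrees $\le N$ for every $(E,X) \in \mathcal{X}$, and by Remark \ref{FinitenessSubfield} the same bound $N$ holds for $I(X) \subset \bar E[x_1,\ldots,x_n]$.

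I would then invoke Lemma \ref{LemmaGeometricFiniteness} to obtain a constant $C = C(n,N)$, depending only on $n$ and $N$, such that $X_{\bar E}$ has at most $C$ irreducible components and the defining ideal of each such component is generated in degrees $\le C$. Since $X$ is an algebraic group, the connected component $X^\circ_{\bar E}$ is irreducible, and the remaining irreducible components of $X_{\bar E}$ are its translates by representatives of $X(\bar E)/X^\circ(\bar E)$; consequently $\abs{X/X^\circ}$ equals the number of irreducible components of $X_{\bar E}$ and is bounded by $C$, which yields the second assertion of the lemma.

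For the first assertion, I would use that $X^\circ$ is already defined over $E$ by \cite[Proposition 1.2]{MR1102012}. Its defining ideal over $\bar E$ is generated in degrees $\le C$ by the previous step, so applying Remark \ref{FinitenessSubfield} in the converse direction shows that $I_E(X^\circ)$ is likewise generated in degrees $\le C$. Since $C$ depends only on $n$ and $N$, and not on the particular pair $(E,X)$, the family $\{(E,X^\circ):(E,X)\in\mathcal{X}\}$ is \BG, completing the argument. I expect no substantial obstacle here; the only conceptual point is the identification of irreducible components with connected components for an algebraic group, which is what converts the geometric finiteness of Seidenberg's theorem into the desired uniform control of $\abs{X/X^\circ}$ and of the defining ideal of $X^\circ$.
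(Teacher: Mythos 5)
Your proof is correct and follows exactly the route the paper intends: the paper's proof of Lemma \ref{LemmaComponent} is simply the observation that it follows from Lemma \ref{LemmaGeometricFiniteness}, and your argument spells out that deduction (bounded-degree generation over $\bar E$ via Remark \ref{FinitenessSubfield}, Seidenberg's bound on the number and degrees of irreducible components, identification of these components with the cosets of $X^\circ$, and descent of the degree bound for $I(X^\circ)$ back to $E$). No gaps.
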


\begin{proof}
This follows immediately from Lemma \ref{LemmaGeometricFiniteness}.
\end{proof}

\begin{lemma} \label{LemmaNormalizerFiniteness}
Let $G$ be an affine group scheme of finite type over a ring $R$ and $\mathcal{E}$ a collection of perfect fields $E$ together with homomorphisms $R \to E$
for which the schemes $G_E$ are geometrically reduced.
Then the family of all pairs $(E,N_{G_E} (U))$, where $E \in \mathcal{E}$ and $U$ ranges over the subspaces of
the $E$-vector space $\lie{g}_E = \Lie_R G \otimes E$, is \BG.
The family of all pairs $(E,N_{G_E} (U)^\circ)$ is also \BG.
\end{lemma}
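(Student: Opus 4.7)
The plan is to apply Lemma \ref{LemmaFiniteness} via the classical realization of a subspace normalizer as the stabilizer of a line in an exterior power. Explicitly, for any $(E, U)$ with $k = \dim_E U$, one has
\[
N_{G_E}(U) = \{ g \in G_E \, : \, \wedge^k \Ad(g) (\wedge^k U) = \wedge^k U \},
\]
where $G_E$ acts on $\wedge^k \Lieg_E$ via the $k$-th exterior power of its adjoint representation. I therefore take $\rho = \wedge^k \Ad$, $V = \wedge^k \Lieg_E$ and $D = \wedge^k U$ in the setup of Lemma \ref{LemmaFiniteness}. It then suffices to produce, uniformly in $(E,U)$, bounds on $\dim_E V$ and on the degrees of the matrix coefficients of $\rho$.

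Since $G$ is affine of finite type over $R$, the $R$-module $\Lie_R G$ is finitely generated, say by $d = d(G)$ elements $e_1,\dots,e_d$. Then $\dim_E \Lieg_E \le d$ for every $E \in \mathcal{E}$, so $k \le d$ and $\dim_E V \le 2^d$, uniformly. For the degree bound I use that the adjoint action is a morphism of schemes over $R$: relative to the chosen generators $e_i$ it is given by a matrix $(a_{ij})$ of regular functions on $G$ of some fixed degree $M = M(G)$ depending only on $G$, the embedding $G \hookrightarrow \aff_R^n$, and the choice of the $e_i$. Base change to $E$ preserves these degrees, and the matrix coefficients of $\wedge^k \Ad_E$ on $\wedge^k \Lieg_E$ are $k \times k$ minors of the matrix $(a_{ij} \otimes 1)$, hence of degree at most $kM \le dM$, uniformly in $(E,U)$.

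Applying Lemma \ref{LemmaFiniteness} yields that the family $\{(E, N_{G_E}(U))\}$ is \BG; the analogous statement for the identity components $N_{G_E}(U)^\circ$ follows immediately by invoking Lemma \ref{LemmaComponent}. The only mildly subtle point is the degree bound for the $a_{ij}$: if $\Lie_R G$ fails to be locally free over $R$, the generators $e_i$ admit relations and the matrix $(a_{ij})$ is not canonically determined. However, this is harmless, since any choice of such $a_{ij}$ provides, after base change to $E$, a description of the action of $\Ad_E$ on the generating family $e_i \otimes 1$ of $\Lieg_E$, with the same bounded degree; passing to any honest basis of $\Lieg_E$ (or directly to $\wedge^k \Lieg_E$) only improves the situation.
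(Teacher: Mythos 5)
Your proposal is correct and follows essentially the same route as the paper: realize $N_{G_E}(U)$ as the stabilizer of the line $\wedge^k U$ in the $k$-th exterior power of the adjoint representation, note that the dimensions and the degrees of the matrix coefficients are bounded in terms of $G$ alone, and then apply Lemma \ref{LemmaFiniteness}, with Lemma \ref{LemmaComponent} handling the identity components. The extra care you take with degree bounds and the non-locally-free case is fine but not a different argument.
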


Note here that $N_{G_E} (U)$ is defined over $E$ because $E$ is perfect.

\begin{proof}
Let $d = \dim U$ and consider the $d$-th exterior power $V_d = \wedge^d \Ad$ of the adjoint representation of $G_E$.
Then the group $N_{G_E} (U)$ is the stabilizer of the line $\wedge^d U \subset V_d \otimes E$ (cf. \cite[\S 5.1, Lemma]{MR1102012}).
The degrees of the matrix coefficients of the representations $V_d$ are clearly bounded in terms of $G$ only.
Hence we can use Lemma \ref{LemmaFiniteness}.
The last assertion follows immediately from Lemma \ref{LemmaComponent}.
\end{proof}

\begin{corollary} \label{CorollaryMaximalFiniteness}
Let $G$ be a semisimple algebraic group defined over a field $F$ of characteristic zero
and $\mathcal{E}$ a collection of field extensions of $F$.
Then the family
\[
\maxfam_{\mathcal{E}}=\{(E,X):E \in \mathcal{E}, X\in\sgrmx_E (G)\}
\]
is \BG.
\end{corollary}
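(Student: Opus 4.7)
The plan is to describe each $(E,X)\in\maxfam_{\mathcal{E}}$ in terms of its Lie algebra $U=\Lie_E X\subset\Lieg_E$ and then invoke the preceding lemmas. Since $X$ is connected, it normalizes $U$, so $X\subset N_{G_E}(U)^\circ$, and I would split into two cases according to whether $U$ is an ideal of $\Lieg_E$.

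In the first case, where $U$ is not an ideal, we have $N_{G_E}(U)^\circ\subsetneq G_E$, and the maximality of $X$ forces $X=N_{G_E}(U)^\circ$. Lemma \ref{LemmaNormalizerFiniteness} then shows that this subfamily of $\maxfam_{\mathcal{E}}$ is \BG.

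In the second case $U$ is an ideal; since $G$ is semisimple and $\chr F=0$, the Lie algebra $\Lieg_E$ is semisimple and decomposes as $\Lieg_E=U\oplus U'$ with $U'$ a complementary ideal. Let $Y\subset G_E$ be the connected subgroup with $\Lie Y=U'$. A direct computation in $\Lieg_E$ shows that the centralizer of $U'$ is exactly $U$ (any $v=u+u'\in U\oplus U'$ with $[v,U']=0$ satisfies $u'\in Z(U')=0$, since $U'$ is semisimple), hence $X=Z_{G_E}(Y)^\circ$. Because $Y$ is connected, $Z_{G_E}(Y)=\{g\in G_E:\Ad(g)v=v\text{ for all }v\in U'\}$ is the pointwise fixator of $U'$ in the adjoint representation. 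Fixing a basis $v_1,\dots,v_k$ of $U'$ (with $k\le\dim G$) and letting $G_E$ act on $V=E\oplus\Lieg_E^{\oplus k}$ trivially on the first summand and by $\Ad$ on the remaining ones, the stabilizer of the line $E\cdot(1,v_1,\dots,v_k)$ is precisely this fixator. Since $\dim V\le 1+(\dim G)^2$ and the matrix coefficients of $V$ have degree bounded in terms of $G$ alone, Lemma \ref{LemmaFiniteness} applies to give a \BG\ family of stabilizers, and Lemma \ref{LemmaComponent} passes this to their identity components.

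The union of the two subfamilies is \BG, which establishes the corollary. The main obstacle is the second case: Lemma \ref{LemmaNormalizerFiniteness} collapses there because $N_{G_E}(U)=G_E$ whenever $U$ is an ideal, so one must replace the normalizer by the centralizer of the complementary factor, and then encode pointwise fixation as a single line stabilizer by appending the auxiliary trivial summand to $\Lieg_E^{\oplus k}$.
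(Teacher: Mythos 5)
Your proof is correct, and its essential mechanism is the same as the paper's: identify $X$ with the identity component of the normalizer of its Lie algebra and invoke Lemma \ref{LemmaNormalizerFiniteness}. The difference is in how the ideal case is handled. The paper disposes of it by observing that it never occurs: in characteristic zero $N_{G}(\Lie_E X)=N_G(X)$, so if this normalizer were all of $G$ then $X$ would be a proper connected \emph{normal} subgroup of the semisimple group $G$, and such a subgroup is never maximal (one can enlarge it by, say, a maximal torus of the complementary almost direct factor). Hence only your first case arises and the corollary follows from Lemma \ref{LemmaNormalizerFiniteness} alone. Your second case, where $U=\Lie_E X$ is an ideal, is therefore vacuous; but the argument you give for it is nonetheless sound: the complementary ideal $U'$ (e.g.\ the Killing-orthogonal complement, defined over $E$) has centralizer exactly $U$, so $X=Z_{G_E}(Y)^\circ$, and realizing the pointwise fixator of $U'$ as the stabilizer of the line $E\cdot(1,v_1,\dots,v_k)$ in $E\oplus\Lieg_E^{\oplus k}$ with bounded dimension and bounded-degree matrix coefficients lets Lemmas \ref{LemmaFiniteness} and \ref{LemmaComponent} apply; the union of two \BG\ families is \BG. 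So there is no gap — your route just carries an extra branch that maximality plus semisimplicity already rules out, while the paper's observation shortens the proof to a few lines. Your fixator-as-line-stabilizer device would only become genuinely useful in a setting where the subgroups considered are allowed to be normal (e.g.\ without the maximality hypothesis), which is outside the scope of this corollary.
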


By Remark \ref{RemarkPositiveChar} below, the analogous statement for $G$ over $R = \Z$ (as in \S \ref{subspropsetup})
and the family $\{(\F_p,X) : X \in \sgrmx_{\F_p} (G) \}$ is not true in general.

\begin{proof}
Let $E \in \mathcal{E}$ and $X \in \sgrmx_E (G)$.
Consider the normalizer $N = N_G (\mathfrak{x})$ of the Lie algebra $\mathfrak{x}$ of $X$.
Since we are in characteristic zero, we have $N=N_G(X) \supset X$. If $N = G$, then $X$ is a normal subgroup of $G$,
which cannot be maximal, since $G$ is semisimple.
Therefore necessarily $N^\circ = X$. It remains to apply Lemma \ref{LemmaNormalizerFiniteness}.
\end{proof}

In fact, in the setting of Corollary \ref{CorollaryMaximalFiniteness} one can show a stronger finiteness property for the family $\maxfam_{\mathcal{E}}$.
(This will not be used in the sequel.)
Let us say that a family $\mathcal{X}$ satisfies \FC\ if
there exists a finite set $S$ of subgroups of $G_{\bar F}$ such that for
any $(E,X)\in\mathcal{X}$ the group $X_{\bar E}$ is $G({\bar E})$-conjugate to $Y_{\bar E}$ for some $Y \in S$.
(By Remark \ref{FinitenessSubfield}, this clearly implies that $\mathcal{X}$ is \BG.)
We claim that the following families satisfy \FC:
\begin{enumerate}
\item $\{(E,X):E\in\mathcal{E}, X\text{ contains a maximal torus of $G$ defined over $E$}\}$,
\item $\{(E,X):E\in\mathcal{E}, X\text{ is a semisimple connected subgroup of $G$ defined over $E$}\}$,
\item $\maxfam_{\mathcal{E}}$.
\end{enumerate}
The first case follows from \cite[Corollary 11.3 and Proposition 13.20]{MR1102012} (for any field $E$).
The second case follows from Richardson's rigidity theorem \cite{MR0206170},
or alternatively from Dynkin's description of the connected semisimple subgroups over an algebraically closed field, which is based on the classification of semisimple groups
\cite{MR0047629, MR0049903}.
The third case follows from the first two together with the fact (valid for any field $E$) that
for $X\in\sgrmx_E(G)$ there are three possibilities:
\begin{enumerate}
\item $X$ is not reductive,
\item $X$ is the centralizer of a torus defined over $E$,
\item $X$ is semisimple.
\end{enumerate}
(To see this, note that for $X$ reductive but not semisimple, $Z(X)^\circ$ is a non-trivial torus defined over $E$ \cite[Proposition 11.21, Theorem 18.2 (ii)]{MR1102012},
and that $X \subset C_G(Z(X)^\circ)$, which is
defined over $E$ [ibid., Corollary 9.2] and connected [ibid., Corollary 11.12]. Therefore $X = C(Z(X)^\circ)$ and we are in the second case.)
In the first case $X$ is a maximal parabolic subgroup of $G_E$ defined over $E$
(See \cite{MR0088488}; in fact, by \cite[Corollaire 3.3]{MR0294349} this is true for any perfect field $E$.)

\begin{remark}
Corollary \ref{CorollaryMaximalFiniteness} does not hold in general if $G$ is not semisimple.
For example, if $G$ a torus of dimension $\ge 2$ which is split over $E$
then $\sgrmx_E (G)$ is the set of all codimension one subtori, and it is easy to see that this set is not \BG.
\end{remark}

\begin{remark} \label{RemarkPositiveChar}
Corollary \ref{CorollaryMaximalFiniteness} does not hold in the positive characteristic case.
Indeed, even if $F$ is algebraically closed the set $\sgrmx_F (G)$ may not be \BG\ (see \cite[Lemma 2.4]{MR2043006}
for a description of $\sgrmx_F (G)$ in this case).
On the other hand, if $G$ is a \emph{simple} algebraic group over (an algebraically closed field) $F$ then
the finiteness of the number of $G(F)$-conjugacy classes in $\sgrmx_F(G)$
(or in fact, the finiteness of the number of $G(F)$-conjugacy classes of maximal subgroups of $G$ of positive dimension)
has been established in \cite{MR2044850}. However, even in that case the set of all
connected semisimple subgroups is usually not \BG\ (cf. \cite[esp. Corollary 4.5]{MR2043006}).
\end{remark}

\begin{remark}
In the situation where $F = \Q$ and $\mathcal{E}$ is the family of the $p$-adic fields $E = \Q_p$, finiteness
of Galois cohomology \cite[Theorem 6.14]{MR1278263} implies that each of the sets
$\sgrmx_{\Q_p}(G)$ consists of finitely many classes under conjugation by $G(\Q_p)$ (instead of $G(\overline{\Q_p})$).
\end{remark}

\section{The approximation theorem for open subgroups of $G (\Z_p)$} \label{SectionApproxLevel0}

\subsection{Statement of the result}
Let $G$ and $\rho_0$ be as in \S \ref{subspropsetup} above.
We will now prove an extension of Theorem \ref{TheoremAlgebraic}
to arbitrary open subgroups of $\K_p$ in the case where $G$ is simply connected.

\begin{theorem} \label{TheoremAlgebraicLevel0}
Suppose that $G$ is simply connected.
Then there exist constants $J \ge 1$ and $\varepsilon > 0$, depending only on $G$ and $\rho_0$, with the property that
for every open subgroup $H \subset \K_p$ of level $p^n$ there exists
a proper, connected algebraic subgroup $X \subset G$
defined over $\Q_p$ such that $[H:H\cap (X (\Q_p) \cap \K_p) \K_p (p^{\lceil \varepsilon n \rceil})]\le J$.
\end{theorem}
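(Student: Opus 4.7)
The strategy is to deduce Theorem \ref{TheoremAlgebraicLevel0} from Theorem \ref{TheoremAlgebraic} by using Nori's structure theorem to control the image of $H$ in $G(\F_p)$, handling the finitely many small primes separately. The bounded index $J$ will absorb two contributions: the loss $[\K_p:\K_p(\altp)] = |G(\Z/\altp\Z)|$ for $p$ below Nori's threshold, and the index of Nori's subgroup $\bar H^+ \triangleleft \bar H$ (together with a uniform pro-$p$ refinement) for large $p$.

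For primes $p$ below a threshold $p_0 = p_0(G)$ depending only on $G$, the quotient $\K_p/\K_p(\altp)$ has uniformly bounded order, so Theorem \ref{TheoremAlgebraic} applied to $H \cap \K_p(\altp)$ (whose level equals $p^n$ for all $n$ sufficiently large, by the same elementary argument used to show that the level is preserved) yields the desired conclusion after taking $J$ large enough. For $p \ge p_0$, let $\bar H = \pi_{\infty,1}(H) \subset G(\F_p)$. Lubotzky's theorem cited in Remark \ref{rem: Ilani} shows, using simple-connectedness of $G$, that $\bar H$ is a proper subgroup of $G(\F_p)$ whenever $n \ge 1$. Nori's theorem \cite{MR880952} then provides a normal subgroup $\bar H^+ \triangleleft \bar H$ of index at most $C_1 = C_1(G)$, together with a connected algebraic subgroup $X_0 \subset G_{\F_p}$ satisfying $\bar H^+ = X_0(\F_p)^+$. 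Since $G$ is simply connected and $p$ is large, $G(\F_p) = G(\F_p)^+$, so $X_0$ must be a proper subgroup. Nori's construction of $X_0$ via exponentials of nilpotent matrices lifts naturally to a smooth $\Z_p$-model (each nilpotent $\bar v \in \Lieg_{\F_p}$ with $\bar v^p = 0$ lifts to $\Z_p$ and its exponential makes sense $p$-adically after scaling), giving a proper connected algebraic subgroup $X \subset G$ over $\Q_p$ whose reduction mod $p$ surjects onto $X_0(\F_p)$. Letting $H^\sharp$ be the preimage of $\bar H^+$ in $H$, we obtain $[H : H^\sharp] \le C_1$ and $H^\sharp \subset (X(\Q_p) \cap \K_p)\K_p(p)$.

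It remains to improve the factor $\K_p(p)$ to $\K_p(p^{\lceil \varepsilon n \rceil})$, at the cost of one further bounded index loss. For this we rerun the Lie algebra argument of Theorem \ref{TheoremAlgebraic} in the present relative setting: the pro-$p$ intersection $H \cap \K_p(\altp) = H^\sharp \cap \K_p(\altp)$ has level $p^n$, so passing via Lemma \ref{lem: basunif} to a uniform open subgroup $V$ of bounded index and applying Theorem \ref{thm: Liealgebra} to $\log V \subset \altp\Lieg_{\Z_p}$ produces a proper isolated subalgebra $I$ of $\Lieg_{\Z_p}$ with $\log V \subset I + p^{\lceil \varepsilon n\rceil}\Lieg_{\Z_p}$. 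Combining $\Q_p I$ with $\Lie_{\Q_p} X$ and taking the algebraic envelope inside $G$ yields a single proper connected algebraic subgroup of $G$ over $\Q_p$ (still proper by Chevalley's theorem together with the semisimplicity of $G$) which, via Proposition \ref{prop: unifg} and the argument of the proof of Theorem \ref{TheoremAlgebraic}, contains $H^\sharp$ modulo $\K_p(p^{\lceil \varepsilon n\rceil})$, up to bounded index. The product of all the bounded index losses (from small primes, from Nori's quotient, and from the passage to a uniform subgroup) yields the uniform constant $J$.

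The main obstacle is to make compatible Nori's characteristic-$p$ construction of $X_0$ with the characteristic-zero Lie algebra analysis underlying Theorem \ref{TheoremAlgebraic}, so that one single proper connected algebraic subgroup $X$ over $\Q_p$ captures both the mod-$p$ structure coming from $\bar H^+$ and the higher-order Lie algebra information from the pro-$p$ part, uniformly in $p$. The finiteness results of \S \ref{subsectionfiniteness} (notably Lemma \ref{LemmaFiniteness} and Corollary \ref{CorollaryMaximalFiniteness}) are used to control the degrees of the defining equations of $X$ and to keep $\varepsilon$ and $J$ independent of $p$; handling the case $p = 2$ separately via the replacement of $\altp = p$ by $\altp = 4$ introduces only minor numerical adjustments.
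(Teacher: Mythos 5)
There is a genuine gap, and it is exactly at the point your plan tries to wave away: the prime-to-$p$ part of $H$. You assert that Nori's theorem provides a normal subgroup $\bar H^+\triangleleft\bar H$ of index at most $C_1(G)$. That is false: $\bar H^+$ is the subgroup generated by the elements of order $p$, so $[\bar H:\bar H^+]$ is prime to $p$ but in general unbounded in $p$ — e.g.\ if $\bar H$ lies in a maximal torus of $G(\F_p)$ then $\bar H^+$ is trivial while $\abs{\bar H}$ can be of size comparable to $p^{\operatorname{rank}G}$. Consequently, showing that $H^\sharp$ (the preimage of $\bar H^+$) lies in $(X(\Q_p)\cap\K_p)\K_p(p^{\lceil\varepsilon n\rceil})$ does not bound $[H:H\cap(X(\Q_p)\cap\K_p)\K_p(p^{\lceil\varepsilon n\rceil})]$ by a constant, so the theorem does not follow. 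This is precisely why the paper's proof does more: it writes $H=H_1H^+$ by the Frattini argument with $H_1$ finite of order prime to $p$, uses the Jordan-type Proposition \ref{PropositionJordan} to place a bounded-index subgroup $A=\gen{a_1,\dots,a_s}$ of $H_1$ inside $T(\Q_p)$ for a maximal torus $T$, and then proves an \emph{equivariant} refinement of the Greenberg-lifting step (Lemmas \ref{LemmaLiftingWithai} and \ref{LemmaLieAlgebraWithA}) so that the proper isolated subalgebra $I$ can be chosen together with elements $b_i\in a_i\K_p(p^{\lceil\varepsilon n\rceil})$ normalizing it. Only then can one pass to $X=N_G(\mathcal A(\Q_p I))^\circ$ (or $YT$ in the normal case) and use bounded powers $b_i^e\in X(\Q_p)\cap\K_p$ to conclude that $A^eH^+$, of bounded index in $H$, lies in $(X(\Q_p)\cap\K_p)\K_p(p^{\lceil\varepsilon n\rceil})$. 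Your proposal contains no mechanism forcing the algebraic subgroup to be (approximately) normalized by the prime-to-$p$ generators, and without it the bounded-index claim collapses.

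Two further steps are also unjustified as written. First, your final "combination" of $\Q_p I$ with $\Lie_{\Q_p}X$: the subalgebra generated by two proper subalgebras of $\Lieg_{\Q_p}$ can be all of $\Lieg_{\Q_p}$, so there is no reason the joint algebraic envelope is proper; the paper never needs such a combination (for $n$ large it works only with $\log(H\cap\K_p(p))$, and Nori enters only for bounded $n$). Second, even granting a proper $Y$ containing $H\cap\K_p(p)$ modulo $\K_p(p^{\lceil\varepsilon n\rceil})$, promoting this containment from $H\cap\K_p(p)$ to $H^+$ is not a "bounded index" matter ($[H^+:H^+\cap\K_p(p)]$ is unbounded); the paper does this via the extended exponential/logarithm on residually unipotent elements (Lemma \ref{lem: explog}) and the lifting Lemma \ref{LiftingHPlus}, using that $H^+$ is generated by elements $h$ with $h^p\in H\cap\K_p(p)$. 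Your small-prime reduction and the use of Lubotzky's lemma to see that $\bar H$ is proper do match the paper, but the core of the argument — controlling the prime-to-$p$ part and lifting from the pro-$p$ level to $H^+$ — is missing.
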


\begin{remark}
\begin{enumerate}
\item Note that Theorem \ref{TheoremAlgebraicLevel0} implies the existence of an open \emph{normal} subgroup $H'$ of $H$ of index
$\le J!$ which is contained in the group $(X (\Q_p) \cap \K_p) \K_p (p^{\lceil \varepsilon n \rceil})$.

\item It is clear that just like Theorem \ref{TheoremAlgebraic}, we can rephrase Theorem \ref{TheoremAlgebraicLevel0} in terms of essential
subgroups of the finite groups $G (\Z / p^n \Z)$. Note here that when $G_{\Z_p}$ is smooth over $\Z_p$ (which is the case for almost all $p$), the reduction maps
$\pi_{\infty,n}: \K_p \to G (\Z / p^n \Z)$ are surjective for all $n$.

\item
As will be explained in \S \ref{subsprooflevel0} below, the case $n = 1$ (or $n$ bounded) follows rather directly from results of Nori on subgroups of $G (\F_p)$ \cite{MR880952}.
(A variant of these results has been obtained independently by Gabber \cite[Ch. 12]{MR955052}.)
\end{enumerate}
\end{remark}

\subsection{Consequences of Nori's Theorem}
We first prove a corollary of an algebraization theorem of Nori for subgroups of $G (\F_p)$ \cite{MR880952}.
While this corollary is probably known (cf.~\cite[Proposition 8.1]{MR1329903} for a less explicit result), we
include the deduction from Nori's result (and Jordan's classic theorem on finite subgroups of $\GL (N, F)$ of order prime to the
characteristic of $F$), since we will use the essential part of the argument also in \S \ref{subsprooflevel0} below.
One may in fact consider the following as an introduction to the proof of Theorem \ref{TheoremAlgebraicLevel0}.
Also, for our application in \S \ref{MainApplication} below, Proposition \ref{PropositionApproxLevelp} is in fact sufficient.
Note that a more general algebraization theorem for finite subgroups of $\GL (N, F)$, $F$ an arbitrary field, has been obtained by
Larsen--Pink \cite{MR2813339}. Of course, one can also deduce such theorems (with optimal constants) from the classification of
finite simple groups \cite{MR2334748,MR2456628}.
However, the proofs of Nori and Larsen--Pink are independent of the classification.

We write $\nmbr(G)$ for a constant (which may change from one occurrence to the other) depending only on $G$ and $\rho_0$.

Note first that for $p>\nmbr(G)$
the group scheme $G_{\Z_p}$ is smooth over $\Z_p$ and $G_{\F_p}$ is semisimple and simply connected.
In particular, for those $p$ the reduction map $\K_p \to G (\F_p)$ is surjective.

\begin{proposition} \label{PropositionApproxLevelp}
Assume that $G$ is simply connected.
Then there exists a \BG\ family $\mathcal{X}$
such that for any prime $p>\nmbr(G)$ and any proper subgroup $H$ of $G (\F_p)$ there exists $(\F_p,X)\in\mathcal{X}$
such that $X$ is a proper, connected algebraic subgroup of $G_{\F_p}$ defined over $\F_p$ and $[H:H\cap X(\F_p)]\le\nmbr(G)$.
\end{proposition}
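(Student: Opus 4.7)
The plan is to invoke Nori's algebraization theorem for finite subgroups of $\GL(N_0,\F_p)$ and then, in order to secure a BG family, to replace the resulting algebraic subgroup by either its normalizer or by a direct factor of $G_{\F_p}$, using Lemmas~\ref{LemmaNormalizerFiniteness} and \ref{LemmaComponent}. Throughout I take $p>\nmbr(G)$ large enough so that $G_{\F_p}$ is a connected semisimple simply connected algebraic group over $\F_p$, the reduction $\K_p\to G(\F_p)$ is surjective, $G(\F_p)$ is generated by its elements of order $p$, the Lie algebra $\Lieg_{\F_p}$ is the direct sum of its simple ideals $\Lieg_1,\dots,\Lieg_r$ with corresponding simple subgroups $G_1,\dots,G_r$, and the truncated exponential and logarithm series yield mutually inverse bijections between the nilpotent and unipotent elements of $\Mat(N_0,\F_p)$.

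First I would let $H^+$ denote the (normal) subgroup of $H$ generated by elements of order $p$, and apply Nori's theorem to $H\hookrightarrow\GL(N_0,\F_p)$. This produces a connected algebraic subgroup $\tilde H$ of $\GL(N_0)_{\F_p}$ defined over $\F_p$ with $\tilde H(\F_p)^+=H^+$, together with the Jordan-type bound $[H:H^+]\le\nmbr(G)$. Because Nori constructs $\tilde H$ as generated by the one-parameter unipotent subgroups $t\mapsto\exp(t\log u)$ attached to the unipotent elements $u\in H\subset G(\F_p)$, and the smoothness of $G_{\F_p}$ guarantees that each such one-parameter subgroup lies in $G_{\F_p}$, we have $\tilde H\subset G_{\F_p}$. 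The case $\tilde H=G_{\F_p}$ is excluded, since it would force $H^+=G(\F_p)^+=G(\F_p)$, contradicting the properness of $H$.

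The core step is then to replace $\tilde H$---whose defining equations in $G_{\F_p}$ a priori have no controlled degree---by a subgroup belonging to a BG family. Set $\mathfrak{h}=\Lie\tilde H\subset\Lieg_{\F_p}$. Since $H$ normalizes $H^+$, it normalizes $\tilde H$ and hence $\mathfrak{h}$, so $H\subset N_{G_{\F_p}}(\mathfrak{h})(\F_p)$. If $N_{G_{\F_p}}(\mathfrak{h})\ne G_{\F_p}$, I would take $X=N_{G_{\F_p}}(\mathfrak{h})^\circ$; Lemma~\ref{LemmaNormalizerFiniteness} gives that the family of such $X$ is BG, Lemma~\ref{LemmaComponent} bounds the index $[N_{G_{\F_p}}(\mathfrak{h})(\F_p):X(\F_p)]\le\nmbr(G)$, and hence $[H:H\cap X(\F_p)]\le\nmbr(G)$. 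Otherwise $\mathfrak{h}$ is a proper ideal of $\Lieg_{\F_p}$, hence equals $\bigoplus_{i\in I}\Lieg_i$ for some proper subset $I\subsetneq\{1,\dots,r\}$; I would then set $X=\prod_{i\in I}G_i$, a proper connected subgroup that contains $\tilde H$ because every generating one-parameter subgroup of $\tilde H$ has tangent vector in $\mathfrak{h}=\Lie X$ and is therefore contained in $X$. Since only finitely many such factor-products occur, they trivially form a BG family, and $H^+\subset\tilde H(\F_p)\subset X(\F_p)$ gives $[H:H\cap X(\F_p)]\le[H:H^+]\le\nmbr(G)$. The desired family $\mathcal{X}$ is the union of these two BG subfamilies.

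The main obstacle I anticipate is the careful bookkeeping around Nori's theorem: verifying the explicit numerical hypotheses relating $p$ and $N_0$, deducing the containment $\tilde H\subset G_{\F_p}$ from Nori's construction (rather than merely from the existence of $\tilde H$), and checking scheme-theoretically the inclusion $\tilde H\subset\prod_{i\in I}G_i$ in the second case, where one cannot rely on the characteristic-zero principle that a connected algebraic subgroup is determined by its Lie algebra.
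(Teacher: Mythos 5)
There is a genuine gap in your second case, and it stems from the claim that Nori's theorem comes ``together with the Jordan-type bound $[H:H^+]\le\nmbr(G)$''. No such bound exists: $H/H^+$ has order prime to $p$, but its order is unbounded as $p\to\infty$. For instance, take $G=\SL(2)$ and $H=T(\F_p)$ a maximal split torus, of order $p-1$. Then $H$ has no elements of order $p$, so $H^+=\{1\}$, $\tilde H$ is trivial, $\lie{h}=0$ and $N_{G_{\F_p}}(\lie{h})=G_{\F_p}$; your dichotomy lands in the normal case with $X$ the trivial (empty) product of factors, and $[H:H\cap X(\F_p)]=p-1$, which is not $O_G(1)$. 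So the inequality $[H:H\cap X(\F_p)]\le[H:H^+]$ is correct but useless, and the second branch of your argument fails exactly where the properness of $H$ gives you no leverage through $H^+$ alone. (Your first case, $N_{G_{\F_p}}(\lie{h})\ne G_{\F_p}$, is fine and coincides with the paper's argument: the family of normalizers of Lie subspaces is \BG\ by Lemma \ref{LemmaNormalizerFiniteness}, and Lemma \ref{LemmaComponent} handles components.)

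The missing idea, which is how the paper treats the case where $\tilde H$ is normal in $G_{\F_p}$, is to control the prime-to-$p$ part of $H$ separately. By the Frattini argument (choose a $p$-Sylow subgroup $P$ of $H$ and a complement of $P$ in $N_H(P)$), one writes $H=H_1H^+$ with $H_1$ of order prime to $p$; then Proposition \ref{PropositionJordan} (the variant of Jordan's theorem) produces a subgroup $A\le H_1$ of index bounded in terms of $G$ only, contained in $T(\F_p)$ for some maximal torus $T$ of $G_{\F_p}$ defined over $\F_p$. One then takes $X=N_{G}(\Lie(T\tilde H))$: since for $p>\nmbr(G)$ the $\Ad$-invariant subspaces of $\Lieg_{\F_p}$ are exactly the Lie algebras of connected normal subgroups, and $\Lie(T\tilde H)$ contains $\Lie T$, this space is not $G$-invariant, so $X$ lies in the same \BG\ normalizer family, is proper, and contains $AH^+$, whence $[H:H\cap X(\F_p)]\le[H:AH^+]\le\nmbr(G)$. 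Without this step (or some substitute that places a bounded-index subgroup of the prime-to-$p$ part inside a group from a \BG\ family), the proposition cannot be deduced from Nori's theorems alone.
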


\begin{remark}
As above, we obtain from Proposition \ref{PropositionApproxLevelp} the existence of a \emph{normal} subgroup $H'$ of $H$ of index
$\le\nmbr(G)$ that is contained in the group $X (\F_p)$.
\end{remark}

To prove this proposition, we need to recall the results of Nori and Jordan.
We will use the following variant of Jordan's theorem to deal with subgroups of order prime to $p$.

\begin{proposition} \label{PropositionJordan}
For any integer $n$ there exists a constant $J=J(n)$ with the following property.
Let $G$ be a reductive group defined over a field $F$ with a faithful representation of degree $n$.
Then for all finite subgroups $H$ of $G(F)$ with $\chr F\not|\abs{H}$ there exists a maximal torus $T$ of $G$
defined over $F$ such that $[H:H\cap T(F)]\le J$.
\end{proposition}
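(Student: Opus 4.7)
The plan is to combine the classical Jordan theorem (in the form valid for finite linear groups of order prime to the characteristic) with the observation that commuting semisimple elements lie in a common maximal torus, while being careful to produce a maximal torus defined over $F$ rather than over $\bar F$. First I would apply the positive-characteristic form of Jordan's theorem (due to Brauer--Feit, and in sharper form to Larsen--Pink) to the faithful realization $H \subset \GL(n,F)$: because $\chr F \nmid |H|$, there exists a normal abelian subgroup $A \triangleleft H$ of index bounded by a constant $J_0(n)$ depending only on $n$. Since $|A|$ is coprime to $\chr F$, each element of $A$ is semisimple (any unipotent element has order a power of $\chr F$).

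Form the connected centralizer $C := Z_G(A)^\circ$, which is defined over $F$ (because $A \subset G(F)$) and, being the identity component of the centralizer of pairwise commuting semisimple elements in a reductive group, is itself reductive. Over $\bar F$ the elements of $A$ lie in a common maximal torus $T_0$ of $G$, and $T_0 \subset C$, so $C$ has maximal rank in $G$. Next I would check that $A \subset Z(C)$: each $a \in A$ centralizes $Z_G(A) \supset C$, so $A \subset Z_G(C)$, and because $C$ contains the maximal torus $T_0$ of $G$ one has $Z_G(C) \subset Z_G(T_0) = T_0 \subset C$, whence $Z_G(C) = Z(C)$. By the standard existence theorem for maximal tori of connected reductive groups over an arbitrary field, $C$ admits a maximal torus $T$ defined over $F$; as $C$ is of maximal rank in $G$, $T$ is also a maximal torus of $G$. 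The central torus $Z(C)^\circ$ is contained in every maximal torus of $C$, so $Z(C)^\circ \subset T$, and since $Z(C)^\circ$ is defined over $F$ and $A \subset G(F) \cap Z(C)$, we obtain $A \cap Z(C)^\circ \subset Z(C)^\circ(F) \subset T(F)$.

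To conclude, the component group $Z(C)/Z(C)^\circ$ has order bounded by some constant $J_1(n)$ depending only on $n$: $C$ is a reductive subgroup of $\GL(n)$ and hence of rank at most $n$, which puts $C$ into one of finitely many isogeny types and so bounds the order of $\pi_0(Z(C))$. Therefore
\[
[H:H\cap T(F)] \le [H:A]\cdot [A:A\cap Z(C)^\circ] \le J_0(n)\cdot J_1(n),
\]
yielding the proposition with $J(n)=J_0(n)J_1(n)$. The main obstacle is ensuring that all constants are uniform in the pair $(G,F)$ and depend only on the embedding dimension $n$: for $J_0(n)$ this is classical, while for $J_1(n)$ one must justify that the number of components of the center of a reductive subgroup of $\GL(n)$ is bounded in terms of $n$ alone, which is the reason for phrasing the hypothesis on $G$ in terms of a faithful representation of degree $n$.
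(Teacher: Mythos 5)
Your reduction to an abelian subgroup $A$ of index $\le J_0(n)$ is fine (and is also the paper's first step), but the next sentence contains a genuine gap: it is \emph{not} true that the elements of a finite abelian group of semisimple elements lie in a common maximal torus over $\bar F$. Commuting semisimple elements need not be simultaneously toral once $G$ is not of simply connected type (and even then non-toral finite abelian subgroups exist): in $\mathrm{PGL}_2$, $\chr F\neq 2$, the images of $\mathrm{diag}(1,-1)$ and $\left(\begin{smallmatrix}0&1\\1&0\end{smallmatrix}\right)$ generate a Klein four-group of commuting semisimple elements whose full centralizer is that finite group itself; similarly the image in $\mathrm{PGL}_m$ of the group generated by the cyclic permutation matrix and $\mathrm{diag}(1,\zeta,\dots,\zeta^{m-1})$ is abelian with finite centralizer, and simply connected examples such as a non-toral $(\Z/2)^3$ in $G_2$ exist as well. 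Once this step fails, everything downstream collapses: $C=Z_G(A)^\circ$ need not contain a maximal torus of $G$ (in the examples it is trivial), so $T$ need not be maximal in $G$, the inclusion $Z_G(C)\subset Z_G(T_0)=T_0$ is unavailable, $A$ need not lie in $Z(C)$ (nor even in $C$), and the final estimate $[A:A\cap Z(C)^\circ]\le|\pi_0(Z(C))|$ has no basis --- in the examples $Z(C)^\circ$ is trivial while $A$ can be large.

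This is exactly the point where the paper argues differently: instead of trying to put $A$ inside a torus, it invokes the classical results of Borel--Serre, Borel--Mostow and Springer--Steinberg, which say that any supersolvable (in particular abelian) finite subgroup of $G(F)$ of order prime to $\chr F$ can be embedded in $N_G(T)(F)$ for some maximal torus $T$ of $G$ defined over $F$; one then only needs that $[N_G(T):T]$, the order of the Weyl group, is bounded in terms of the $\bar F$-rank of $G$, hence in terms of $n$, and the index bound follows with $J=J_0(n)\cdot|W|$. If you want to salvage your approach you would have to prove (or cite) a statement of this normalizer type; the torus statement you assumed is simply false, so the proposal as written does not prove the proposition.
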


\begin{proof}
Jordan's theorem implies the existence of an abelian subgroup of $H$ of index at most $ J' (n)$.
On the other hand, by \cite{MR0054612, MR0068531, MR0268192} we can embed any supersolvable subgroup of $G(F)$ into $N_G (T) (F)$ for some maximal torus $T$ of $G$ defined
over $F$. It remains to note that the index $[N_G(T):T]$ is bounded in terms of the $\bar{F}$-rank of $G$, i.e., in terms of $n$.
\end{proof}

We now state Nori's main result, which describes (for $p>\nmbr(G)$) the subgroups of $G (\F_p)$
generated by their elements of order $p$ in terms of certain connected algebraic subgroups of $G$ defined over $\F_p$,
or equivalently by certain Lie subalgebras of $\lie{g}_{\F_p}$.
It is based on the following construction. Recall the truncated logarithm and exponential functions
\[
\log^{(p)} x = - \sum_{i=1}^{p-1} \frac{(1-x)^i}{i}, \quad \exp^{(p)} y = \sum_{i=0}^{p-1} \frac{y^i}{i!},
\]
which are defined over any ring in which the primes $< p$ are invertible.
Fix $n$ and let $F$ be a field of characteristic $p\ge n$.
Denote by $\gl(n,F)_{\nilp}$ (resp., $\GL(n,F)_{\unip}$) the set of nilpotent (resp., unipotent) elements of $\gl (n, F)$
(resp., $\GL (n,F)$).
Note that $\gl(n,F)_{\nilp}=\{x\in\gl(n,F):x^p=0\}$ and $\GL(n,F)_{\unip}=\{x\in\GL(n,F):x^p=1\}$ since $p\ge n$.
Then
\[
\gl(n,F)_{\nilp}\bjct{\exp^{(p)}}{\log^{(p)}}\GL(n,F)_{\unip}.
\]

We say that an algebraic subgroup of $\GL (n)$ over $F$ is \emph{exponentially generated} (\EXPG), if it is generated by
one-dimensional unipotent subgroups of the form $t \mapsto \exp^{(p)} (t y)$, where $y \in \gl (n, F)_{\nilp}$.
By \cite[Proposition 2.2]{MR1102012} such a group is automatically connected.
For any subset $S \subset \gl (n, F)$ write $\Exp S$
for the group generated by $t \mapsto \exp^{(p)} (t y)$, $y\in S_{\nilp}=S\cap \gl (n, F)_{\nilp}$.
We call a Lie subalgebra of $\gl (n,F)$ \emph{nilpotently generated} (\NILG), if it is generated as a vector space by nilpotent elements.

We now quote the main part of \cite[Theorem A]{MR880952}.

\begin{theorem}[Nori] \label{TheoremNoriA}
Assume that $p>\nmbr(G)$. Then
\[
\{\text{\NILG\ Lie subalgebras of }\lie{g}_{\F_p}\}
\bjct{L\mapsto\Exp L}{\Lie_{\F_p}X\mapsfrom X}
\{\text{\EXPG\ subgroups of }G_{\F_p}\}.
\]
Moreover, if $X$ is an \EXPG\ subgroup of $G$ over $\F_p$ then
$\Lie_{\F_p} X$ is spanned by the elements $\log^{(p)} x$, where $x \in X (\F_p)_{\unip}=X(\F_p)\cap \GL(n,\F_p)_{\unip}$.
\end{theorem}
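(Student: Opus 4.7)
The plan is to follow Nori's strategy, with the truncated exponential and logarithm serving as the linking maps between nilpotents in the Lie algebra and unipotents in the group.

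First I would set up the fundamental construction. Working inside $\GL(N_0)$ via $\rho_0$ and using $p>\nmbr(G)$ (which in particular ensures $p\ge N_0+2$ and that $G_{\F_p}$ is smooth and semisimple simply connected), for each nilpotent $y\in\gl(N_0,\F_p)$ the polynomial $t\mapsto\exp^{(p)}(ty)$ defines a group homomorphism $\phi_y\colon\aff^1_{\F_p}\to\GL(N_0)_{\F_p}$ (since $sy$ and $ty$ commute), whose image is a smooth one-dimensional unipotent subgroup with Lie algebra $\F_p\cdot y$. When $y\in\Lieg_{\F_p}$, I would verify that $\phi_y$ factors through $G_{\F_p}$ via a characteristic-$p$ analog of Lemma \ref{lem: formalps}: for $f$ in the defining ideal of $G$, the formal expansion of $f(\exp^{(p)}(ty))$ is a polynomial in $t$ of degree $<p$ whose coefficients are the successive derivatives $\frac{1}{i!}f_i(1)$ that vanish by virtue of $y\in\Lieg$ and $p>\nmbr(G)$. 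Denote the resulting one-parameter subgroup by $U_y\subset G_{\F_p}$.

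Next I would handle the direction $L\mapsto\Exp L$. Given a \NILG\ Lie subalgebra $L\subset\Lieg_{\F_p}$, the group $\Exp L$ is by construction generated by the smooth subgroups $U_y$ with $y\in L$ nilpotent, hence connected and \EXPG. The tangent vectors show $\Lie(\Exp L)$ contains every nilpotent element of $L$; as $L$ is \NILG\ and a Lie subalgebra, we obtain $\Lie(\Exp L)\supseteq L$. For the reverse inclusion I would use the standard fact that the Lie algebra of a smooth connected algebraic group generated by closed connected subgroups $H_i$ is spanned by iterated brackets of the $\Lie H_i$: since all generating Lie algebras $\F_p\cdot y$ lie in $L$ and $L$ is closed under brackets, $\Lie(\Exp L)=L$.

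For the direction $X\mapsto\Lie X$, if $X$ is \EXPG\ then by definition it is generated by groups $U_{y_i}$ with $y_i$ nilpotent, so $\Lie X$ contains the \NILG\ subalgebra $L_X$ generated by these $y_i$. By the previous paragraph, $\Exp L_X\subseteq X$ has Lie algebra $L_X$. The heart of the theorem is the opposite inclusion $\Lie X\subseteq L_X$, i.e.\ that $\Lie X$ is itself \NILG. Granting this, the second assertion follows immediately: any nilpotent $y\in\Lie X$ satisfies $\exp^{(p)}(y)\in U_y(\F_p)\subseteq X(\F_p)_{\uni}$ with $\log^{(p)}(\exp^{(p)}(y))=y$, so $\Lie X$ is spanned by logarithms of unipotent $\F_p$-points. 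Mutual inverseness of the maps is then formal: $\Lie\circ\Exp=\mathrm{id}$ was shown above, while $\Exp(\Lie X)=\Exp L_X=X$ since both sides are \EXPG\ subgroups with the same Lie algebra.

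The main obstacle is precisely the statement that $\Lie X$ is \NILG\ whenever $X$ is \EXPG. This is the technical core of \cite[Theorem A]{MR880952}. Nori's approach combines (i) a reduction via the adjoint representation to the study of subgroups of $\GL(\Lieg_{\F_p})$, (ii) the uniform boundedness (for $p>\nmbr(G)$) of the degrees of the polynomial identities cutting out the subgroups $U_y$, and (iii) a careful induction on $\dim X$ in which the residue of $\Lie X$ modulo the \NILG\ part is analyzed using the semisimple parts of Jordan decompositions, showing that any hypothetical non-nilpotent excess would force $X$ to contain additional unipotent one-parameter subgroups not accounted for in its generating set—a contradiction. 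Once this structural input is secured, everything else in the theorem reduces to bookkeeping with the formal identities $\exp^{(p)}\circ\log^{(p)}=\mathrm{id}$ and $\log^{(p)}\circ\exp^{(p)}=\mathrm{id}$ on the nilpotent/unipotent matrices.
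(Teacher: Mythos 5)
First, note that the paper does not prove this statement at all: it is quoted verbatim as the main part of \cite[Theorem A]{MR880952}, so the paper's ``proof'' is a citation to Nori, and deferring the hard core to Nori is not in itself a defect of your write-up. The genuine problem is that your sketch also claims to dispose of half of the theorem by elementary means, and that step fails. In the paragraph on $L\mapsto\Exp L$ you derive $\Lie_{\F_p}(\Exp L)=L$ from ``the standard fact that the Lie algebra of a smooth connected algebraic group generated by closed connected subgroups $H_i$ is spanned by iterated brackets of the $\Lie H_i$.'' That is a characteristic-zero fact; in characteristic $p$ it is false, and its failure is exactly why Theorem A is nontrivial and needs $p>\nmbr(G)$. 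For instance, the images of $t\mapsto(t,t^p)$ and $t\mapsto(t,0)$ in $\mathbb{G}_a^2$ generate the full two-dimensional group, while both tangent lines coincide and generate only a one-dimensional subalgebra. Whether this phenomenon can be excluded for subgroups of the special form $t\mapsto\exp^{(p)}(ty)$ once $p$ is large relative to $G$ is not a standard fact but essentially the content of Nori's theorem; tellingly, your argument never uses the hypothesis $p>\nmbr(G)$ at this point. Moreover, if the ``standard fact'' were available, the other direction (that $\Lie_{\F_p}X$ is \NILG\ for $X$ \EXPG), which you correctly single out as the technical core, would follow at once as well --- so the proposal assumes the heart of the theorem in one paragraph while acknowledging it as the main obstacle in the next.

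The remaining content of your sketch --- the factorization of $t\mapsto\exp^{(p)}(ty)$ through $G_{\F_p}$, the mutual inverseness of $\exp^{(p)}$ and $\log^{(p)}$ on nilpotent/unipotent matrices, and the deduction of the ``Moreover'' clause once nilpotent generation of $\Lie_{\F_p}X$ is known --- is fine but is bookkeeping. The substance, namely both equalities $\Lie_{\F_p}(\Exp L)=L$ and $\Exp(\Lie_{\F_p}X)=X$, must come from Nori's actual argument; your three-point paraphrase of it is too vague to check and does not constitute a proof. Since the paper itself simply quotes \cite{MR880952}, the consistent fix is to cite Nori for both inclusions and delete the characteristic-zero shortcut, or else genuinely reprove the theorem, which requires using the largeness of $p$ in an essential way precisely where your sketch does not.
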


Following Nori, for any subgroup $H$ of $G (\F_p)$ we define the algebraic subgroup
$\algnori{H} = \Exp \, \{ \log^{(p)} x \, : \, x \in H_{\unip} \}$ of $G_{\F_p}$.
Also, we denote by $H^+$ the subgroup of $H$ generated by $H_{\unip}$, i.e., by the $p$-Sylow subgroups of $H$.
Thus, $H^+$ is a characteristic subgroup of $H$ of index prime to $p$.
The following result is \cite[Theorem B]{MR880952}.

\begin{theorem}[Nori] \label{TheoremNori}
For $p>\nmbr(G)$ and any subgroup $H$ of $G (\F_p)$ we have $\algnori{H}(\F_p)^+=H^+$
and $\Lie_{\F_p}\algnori{H}$ is spanned by $\log^{(p)}x$, $x\in H_{\unip}$.
\end{theorem}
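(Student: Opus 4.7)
The plan is to prove the two assertions in turn: establish the Lie-algebra statement first, then deduce the group equality from it.

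Set $\mathfrak{h} := \Lie_{\F_p}\algnori{H}$ and let $V \subset \mathfrak{h}$ denote the $\F_p$-span of $\{\log^{(p)}(x) : x \in H_{\uni}\}$. The containment $V \subset \mathfrak{h}$ is immediate by construction of $\algnori{H}$: each $\log^{(p)}(x)$ is the tangent vector at the identity of the one-parameter subgroup $t \mapsto \exp^{(p)}(t\log^{(p)}(x))$, which by definition lies in $\algnori{H}$. To obtain the reverse inclusion $\mathfrak{h} \subset V$, I would show that $V$ is closed under the Lie bracket. Being spanned by nilpotent elements, $V$ would then be a \NILG\ Lie subalgebra, so Theorem \ref{TheoremNoriA} produces the \EXPG\ subgroup $\Exp V$ with $\Lie(\Exp V) = V$; and since the one-parameter unipotent subgroups generating $\algnori{H}$ occur among the generators of $\Exp V$, we would have $\algnori{H} \subset \Exp V$ and hence $\mathfrak{h} = \Lie \algnori{H} \subset \Lie (\Exp V) = V$.

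The only real computation is the bracket closure of $V$. Fix $x,y\in H_{\uni}$ with $X = \log^{(p)}(x)$, $Y = \log^{(p)}(y)$; since $y$ has order dividing $p$, one has $y^t = \exp^{(p)}(tY)$ for all $t\in\F_p$, and direct manipulation of the defining series gives $\log^{(p)}(y^t x y^{-t}) = \Ad(y^t)(X)$. For $p>\nmbr(G)$ large enough (relative to $N_0$), the truncated exponentials coincide with their formal counterparts on the nilpotent $Y$, so this identity becomes the polynomial identity
\[
\log^{(p)}(y^t x y^{-t}) = e^{t\,\ad Y}(X) = \sum_{k\ge 0} \frac{t^k}{k!} (\ad Y)^k(X),
\]
whose degree in $t$ is bounded by the nilpotency index of $\ad Y$ on $\gl(N_0,\F_p)$ and is in particular strictly less than $p$. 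As $y^t x y^{-t} \in H_{\uni}$ for each $t \in \F_p$, this polynomial takes values in $V$ at the $p$ points of $\F_p$; a Vandermonde argument then forces every coefficient into $V$, and in particular the linear coefficient $[Y,X]$.

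For the group assertion, the inclusion $H^+ \subset \algnori{H}(\F_p)^+$ is clear: each $x \in H_{\uni}$ is a unipotent element of $\algnori{H}(\F_p)$. The reverse inclusion is equivalent to the statement $\algnori{H}(\F_p)_{\uni}\subset H$. With the Lie-algebra assertion in hand, I would choose $x_1,\dots,x_d\in H_{\uni}$ whose logarithms form an $\F_p$-basis of $\mathfrak h$, and invoke the structural fact, implicit in the proof of Theorem \ref{TheoremNoriA}, that for $p>\nmbr(G)$ an \EXPG\ group $X$ over $\F_p$ is covered as a variety by an iterated product map $(t_1,\dots,t_N)\mapsto \prod_j \exp^{(p)}(t_j Z_j)$ built from one-parameter subgroups along any sufficiently repeated spanning set of nilpotents in $\Lie X$, surjectively on $\F_p$-points. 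Applied to $\algnori{H}$ with the chosen basis, this would write every element of $\algnori{H}(\F_p)^+$ as a product $x_{i_1}^{t_1}\cdots x_{i_N}^{t_N}$ with $t_j\in\F_p$, hence in $H^+$. The main obstacle is precisely this last covering step, whose verification over $\F_p$ rather than $\overline{\F_p}$ relies on the Chevalley-Steinberg-Tits generation of the reductive quotient of $\algnori{H}$ by root subgroups, together with the straightforward product presentation of its unipotent radical in terms of the chosen basis.
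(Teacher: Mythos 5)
A preliminary remark: the paper does not prove this statement at all — it is quoted from Nori \cite{MR880952} (his Theorem B), so the only meaningful comparison is with Nori's original argument. Your first half is fine and is essentially Nori's own: for $x,y\in H_{\uni}$ with $X=\log^{(p)}x$, $Y=\log^{(p)}y$, the identity $\log^{(p)}(y^txy^{-t})=\sum_{k}\frac{t^k}{k!}(\ad Y)^kX$ is a polynomial in $t$ of degree $<p$ taking values in the span $V$ of $\log^{(p)}H_{\uni}$ at all $t\in\F_p$, so Vandermonde gives $[Y,X]\in V$; hence $V$ is a \NILG\ subalgebra, and the bijection of Theorem \ref{TheoremNoriA} then forces $\Lie_{\F_p}\algnori{H}=V$. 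That part of the proposal is correct.

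The genuine gap is in the group-theoretic half. The ``covering step'' you rely on — that for a spanning set of nilpotents $Z_j\in\Lie_{\F_p}\algnori{H}$ some iterated product map $(t_1,\dots,t_N)\mapsto\prod_j\exp^{(p)}(t_jZ_j)$ is surjective \emph{on $\F_p$-points} — is not implicit in the proof of Theorem \ref{TheoremNoriA}, and it is not a formal fact; it is, in substance, the hard content of Theorem \ref{TheoremNori} itself. What does follow from the standard argument (a connected group generated by connected subgroups is an iterated product of finitely many of them) is surjectivity on $\overline{\F}_p$-points. That does not yield surjectivity on $\F_p$-points: the fibers of the product map are positive-dimensional, and an $\F_p$-point of $\algnori{H}$ need not have an $\F_p$-point in its fiber. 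Nor does the appeal to Curtis--Steinberg--Tits generation repair this: the root subgroups of the reductive quotient are taken relative to a maximal torus defined over $\F_p$ and have no a priori relation to the particular one-parameter subgroups $t\mapsto x_i^t$ coming from $H_{\uni}$, and it is only for those particular subgroups that you know the $\F_p$-points lie in $H$.

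Nori's actual proof of $\algnori{H}(\F_p)^+=H^+$ needs substantially more input than Theorem A: roughly, a fiber-dimension point count for the product map (giving $\abs{H^+}\gg p^{\dim\algnori{H}}$ and hence a bound, uniform in $p$, for the index of $H^+$ in $\algnori{H}(\F_p)^+$), combined with the structure of the finite groups $\algnori{H}(\F_p)^+$ for large $p$ (unipotent radical part plus quasi-simple pieces whose orders grow with $p$ and which admit no proper subgroups of bounded index) and an induction to handle the unipotent radical, in order to upgrade the bounded index to equality. None of this is supplied by your argument, so as written the second assertion — which is the deep one — is not proved; the proposal defers it to an unavailable surjectivity claim.
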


\begin{proof}[Proof of Proposition \ref{PropositionApproxLevelp}]
We first remark that using Lemma \ref{LemmaComponent} we may drop the connectedness statement in the conclusion of the proposition, since we may
pass to the family of identity components.
We will take $\mathcal{X}$ to be the family
\[
\{(\F_p,N_G(I)):\text{$I$ is a subspace of $\Lie_{\F_p}G$ with $N_G (I) \neq G$}\}.
\]
By Lemma \ref{LemmaNormalizerFiniteness}, $\mathcal{X}$ is \BG.

Let $H$ be a proper subgroup of $G (\F_p)$ and let $\algnori{H}$ be as before.
Note that $\algnori{H}$ is a proper (connected) algebraic subgroup of $G_{\F_p}$ since otherwise
we would have $H^+ = \algnori{H}(\F_p)^+ = G(\F_p)^+ = G (\F_p)$, because $G_{\F_p}$ is simply connected.
Clearly, $H$ is contained in the group of $\F_p$-points of $N_G (\algnori{H})$, which coincides
with $N_G (\Lie\algnori{H})$ by Theorem \ref{TheoremNoriA}.
If $\algnori{H}$ is not normal in $G$ then $N_G(\Lie\algnori{H}) = N_G (\algnori{H}) \in\mathcal{X}$ and we are done.
Otherwise, as in the proof of \cite[Theorem C]{MR880952},
using the Frattini argument we can write $H = H_1 H^+$ where $H_1$ is a subgroup of $H$ of order prime to $p$.
(Namely, we choose a $p$-Sylow subgroup $P$ of $H$ and take $H_1$ to be a complement of $P$ in $N_H(P)$.)
By Proposition \ref{PropositionJordan}, there exist a subgroup $A$ of $H_1$ of bounded index
and a maximal torus $T$ of $G$ defined over $\F_p$ such that $A \subset T (\F_p)$.
Then we take $X =N_G(\Lie(T\algnori{H}))$. For $p > \nmbr (G)$ the $G$-invariant subspaces of $\lie{g}_{\F_p}$ are precisely the
Lie algebras of the connected normal subgroups of $G_{\F_p}$.
Therefore, $\Lie (T \algnori{H}) \subset \lie{g}_{\F_p}$ is not $G$-invariant, and $X \in\mathcal{X}$.
\end{proof}

\subsection{Proof of Theorem \ref{TheoremAlgebraicLevel0}} \label{subsprooflevel0}
We now prove Theorem \ref{TheoremAlgebraicLevel0}.
We first need to modify our previous intermediate results
Theorem \ref{thm: Liealgebra} and Lemma \ref{LemmaLifting}
by incorporating the action of a finite abelian subgroup of $\K_p$ of
order prime to $p$. The following is the analog of Lemma \ref{LemmaLifting} in this setting.

\begin{lemma} \label{LemmaLiftingWithai}
Let $G$ be as above and $s$ a positive integer.
Then there exist an integer $D_1 > 0$ and a constant $0 < \varepsilon_1 \le 1$, depending only on $G$, $\rho_0$, and $s$, with the following property.
Suppose that we are given $x_1, \ldots, x_r \in \lie{g}_{\Z_p}$, $1\le r<d$, $a_1,\dots,a_s\in\K_p$ and $\nu > v_p (D_1)$ such that
\begin{enumerate}
\item $x_1,\dots,x_r$ are linearly independent modulo $p$,
\item $[x_i,x_j] \in \sum_{k=1}^r \Z_p x_k + p^\nu \lie{g}_{\Z_p}$, $1 \le i, j \le r$,
\item $\Ad (a_i) x_j \in \sum_{k=1}^r \Z_p x_k + p^\nu \lie{g}_{\Z_p}$, $1 \le i \le s$, $1 \le j \le r$.
\end{enumerate}
Then there exist $y_i\in x_i+p^{\lceil \varepsilon_1 \nu \rceil}\lie{g}_{\Z_p}$, $i=1,\dots,r$,
and $b_j\in a_j \K_p (p^{\lceil \varepsilon_1 \nu \rceil})$, $j=1,\dots,s$, such that
\begin{enumerate}
\item $I = \Z_p y_1 + \dots + \Z_p y_r$ is an isolated closed subalgebra of $\lie{g}_{\Z_p}$ of rank $r$,
\item $\Ad (b_j) I = I$, $j = 1, \ldots, s$.
\end{enumerate}
\end{lemma}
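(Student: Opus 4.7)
The plan is to mimic the proof of Lemma \ref{LemmaLifting}, enlarging the auxiliary affine variety so as to also encode the conjugation action by the $a_j$'s. For each fixed $1\le r\le d-1$ (with $d=\dim G$), I introduce a closed subvariety $V_r$ of the affine $\Z$-scheme $\aff^{rd}_{\Z}\times G_{\Z}^s$, where the first factor parametrizes $r$-tuples of vectors in $\Lieg_{\Z}\simeq\Z^d$ (via a fixed basis) and the second parametrizes $s$-tuples of group elements, embedded through $\rho_0$ into $\GL(N_0)^s$. The defining equations of $V_r$, beyond those defining $G^s$, are the vanishing of all $(r+1)\times(r+1)$-minors of the $d\times(r+1)$-matrices $(y_1,\ldots,y_r,[y_i,y_j])$ for $1\le i,j\le r$ and of $(y_1,\ldots,y_r,\Ad(b_k)y_j)$ for $1\le k\le s$, $1\le j\le r$. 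Geometrically, a point of $V_r$ at which the $y_i$'s are linearly independent corresponds precisely to a configuration in which the span of the $y_i$'s is a Lie subalgebra preserved by each $\Ad(b_k)$.

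Next I verify that $(x_1,\ldots,x_r,a_1,\ldots,a_s)$ is an approximate $\Z_p$-point of $V_r$ modulo $p^\nu$. Since each $a_j$ is exactly a point of $G(\Z_p)$, only the minor equations need checking; writing $[x_i,x_j]=\sum_k c_{ijk}x_k+p^\nu e_{ij}$ with $e_{ij}\in\Lieg_{\Z_p}$, the alternating multilinearity of the minors kills the $x_k$-contributions, so each minor lies in $p^\nu\Z_p$. The conjugation case is identical. Applying Greenberg's theorem (Theorem \ref{thm: Greenberg}) to the polynomials cutting out $V_r$ then yields constants $C_r, D_r\ge 1$ such that, whenever $\nu>v_p(D_r)$, the approximate point lifts to an exact one $(y_1,\ldots,y_r,b_1,\ldots,b_s)\in V_r(\Z_p)$ with $y_i\equiv x_i$ and $b_j\equiv a_j$ modulo $p^{\lceil(\nu-v_p(D_r))/C_r\rceil}$. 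Taking the worst constants over $r=1,\ldots,d-1$ supplies the desired uniform $D_1$ and $\varepsilon_1>0$, as at the end of the proof of Lemma \ref{LemmaLifting}.

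It remains to read off the conclusions. The congruences $y_i\equiv x_i\pmod p$ guarantee that the $y_i$ are linearly independent modulo $p$, so $I:=\Z_p y_1\oplus\cdots\oplus\Z_p y_r$ is a free $\Z_p$-submodule of $\Lieg_{\Z_p}$ of rank $r$ which is isolated in $\Lieg_{\Z_p}$. The variety equations force $[y_i,y_j],\Ad(b_k)y_j\in(I\otimes\Q_p)\cap\Lieg_{\Z_p}=I$, the last equality by isolatedness of $I$; hence $I$ is a closed Lie subalgebra and $\Ad(b_k)I\subseteq I$. Since $b_k\in\K_p$ makes $\Ad(b_k)$ an automorphism of $\Lieg_{\Z_p}$, the image $\Ad(b_k)I$ is again a rank-$r$ isolated submodule with the same $\Q_p$-span as $I$, and two such submodules of $\Lieg_{\Z_p}$ must coincide; thus $\Ad(b_k)I=I$. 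The only subtle step, modest as it is, is the encoding of the hypothesized approximate bracket and conjugation relations as polynomial congruences of bounded degree, which the minor formulation accomplishes uniformly in $p$, allowing Greenberg's theorem to be applied.
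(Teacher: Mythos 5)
Your proof is correct and follows essentially the same route as the paper: you enlarge the auxiliary variety from $X_r$ to $X_r\times G^s$, add the $(r+1)\times(r+1)$-minor equations $D_S(y_1,\dots,y_r,\Ad(b_k)y_j)$ encoding the adjoint action, and apply Greenberg's theorem (Theorem \ref{thm: Greenberg}) exactly as in Lemma \ref{LemmaLifting}. Your concluding verification that $\Ad(b_k)I\subseteq I$ upgrades to $\Ad(b_k)I=I$ via isolatedness is a correct (and slightly more explicit) rendering of what the paper leaves implicit.
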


\begin{proof}
Modify the proof of Lemma \ref{LemmaLifting} by considering instead of $X_r$ the affine variety
$X_r \times G^s$, with the $\Z$-structure induced from the $\Z$-structure of the affine variety $G$,
and its subvariety $V_{r,s} \subset V_r \times G^s$ defined by the polynomials
$f_{ij,S} (x_1, \dots, x_r)$ introduced above together with the regular functions
$D_S (x_1, \dots, x_r, \Ad (y_i) x_j)$, $i = 1, \ldots, s$, $j=1,\ldots,r$,
of the variables $(x_1, \ldots, x_r) \in X_r$ and $y_1,\ldots,y_s \in G$.
An application of Theorem \ref{thm: Greenberg} yields the assertion.
\end{proof}

The following is the analog of Theorem \ref{thm: Liealgebra}.

\begin{lemma} \label{LemmaLieAlgebraWithA}
Let $G$ be as above and $s$ a positive integer.
Then there exist an integer $D > 0$ and a constant $0 < \varepsilon \le 1$ with the following property.
For any open Lie subalgebra $M \subset \lie{g}_{\Z_p}$ of level $p^n$, where $n > v_p (D)$,
and elements $a_1, \ldots, a_s \in \K_p$
with $\Ad (a_i) M = M$, $i = 1, \ldots, s$,
there exist a proper, isolated closed subalgebra $I$ of $\lie{g}_{\Z_p}$ such that
$M \subset I + p^{\lceil \varepsilon n \rceil} \lie{g}_{\Z_p}$, and elements
$b_j \in a_j \K_p (p^{\lceil \varepsilon n \rceil})$, $j=1,\dots,s$
such that $\Ad (b_j) I = I$.
\end{lemma}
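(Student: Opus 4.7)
The plan is to mimic exactly the proof of Theorem \ref{thm: Liealgebra}, but invoke Lemma \ref{LemmaLiftingWithai} in place of Lemma \ref{LemmaLifting} at the lifting stage, so that the adjoint action of the $a_j$ is incorporated into the algebraic lifting problem.

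First I would apply the elementary divisor theorem to the pair $M \subset \Lieg_{\Z_p}$ to obtain a $\Z_p$-basis $x_1, \ldots, x_d$ of $\Lieg_{\Z_p}$ and integers $0 \le \alpha_1 \le \dots \le \alpha_d = n$ such that $(p^{\alpha_i} x_i)_i$ is a $\Z_p$-basis of $M$. Fix a constant $0 < c < \tfrac12$ and let $0 \le r \le d-1$ be the maximal index with $\alpha_r < c \alpha_{r+1}$ (taking $r = 0$ if none exists), so that $\alpha_{r+1} \ge c^{d-r-1} n$, as in the proof of Theorem \ref{thm: Liealgebra}.

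Next I would verify both hypotheses (2) and (3) of Lemma \ref{LemmaLiftingWithai} for the system $x_1, \ldots, x_r, a_1, \ldots, a_s$ with
\[
\nu = \lceil (1-2c) c^{d-r-1} n \rceil.
\]
Hypothesis (2) follows verbatim from the computation in the proof of Theorem \ref{thm: Liealgebra}. For hypothesis (3), since $\Ad(a_i) M = M$, writing $\Ad(a_i) x_j = \sum_k c_{ijk} x_k$ one gets $p^{\alpha_j} c_{ijk} \in p^{\alpha_k} \Z_p$, hence for $j \le r < k$
\[
c_{ijk} \in p^{\alpha_k - \alpha_j} \Z_p \subset p^{\alpha_{r+1} - \alpha_r} \Z_p,
\]
and $\alpha_{r+1} - \alpha_r > (1-c) \alpha_{r+1} \ge (1-c) c^{d-r-1} n \ge \nu$. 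The $x_1, \ldots, x_r$ are also linearly independent mod $p$ since they extend to a $\Z_p$-basis.

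Then, assuming $n$ large enough that $\nu > v_p(D_1)$ (which, as in the proof of Theorem \ref{thm: Liealgebra}, amounts to $n > v_p(D)$ for a suitable integer $D$ depending only on $G$, $\rho_0$, $s$, and the constant $c$), I would apply Lemma \ref{LemmaLiftingWithai} to obtain $y_1, \ldots, y_r \in \Lieg_{\Z_p}$ and $b_1, \ldots, b_s \in \K_p$ with $y_i \equiv x_i \pmod{p^{\lceil \varepsilon_1 \nu \rceil}}$, $b_j \in a_j \K_p(p^{\lceil \varepsilon_1 \nu \rceil})$, and $I := \Z_p y_1 \oplus \cdots \oplus \Z_p y_r$ a rank-$r$ isolated closed subalgebra of $\Lieg_{\Z_p}$ (necessarily proper since $r < d$) which is stabilized by $\Ad(b_j)$. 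To finish I would combine
\[
M \subset \Z_p x_1 \oplus \dots \oplus \Z_p x_r + p^{\alpha_{r+1}} \Lieg_{\Z_p}
\subset I + p^{\lceil \varepsilon n \rceil} \Lieg_{\Z_p}
\]
for $\varepsilon = \varepsilon_1 (1-2c) c^{d-1}$, noting $\alpha_{r+1} \ge c^{d-r-1} n \ge \varepsilon n$ and $\varepsilon_1 \nu \ge \varepsilon n$, so the same $\varepsilon$ works simultaneously for the containment of $M$ and for the congruence $b_j \in a_j \K_p(p^{\lceil \varepsilon n \rceil})$.

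I do not expect serious obstacles here; the only delicate point is recognizing that the elementary divisor structure of $M$ automatically produces approximate invariance of the $\Z_p$-span $\Z_p x_1 \oplus \cdots \oplus \Z_p x_r$ under $\Ad(a_i)$ modulo a high power of $p$, with essentially the same gain $\nu$ as for the bracket condition. Once this is in hand, Lemma \ref{LemmaLiftingWithai} does all the lifting work uniformly in $p$.
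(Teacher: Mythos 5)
Your argument is correct and is essentially the paper's own proof: it reuses the elementary-divisor and threshold-index setup from the proof of Theorem \ref{thm: Liealgebra} verbatim, and simply feeds the additional $\Ad(a_i)$-invariance of $M$ into Lemma \ref{LemmaLiftingWithai}, checking that the coefficients $c_{ijk}$ for $j\le r<k$ lie in $p^{\nu}\Z_p$. One cosmetic point: the inequality $(1-c)c^{d-r-1}n\ge\nu$ need not hold literally as real numbers since $\nu$ is a ceiling, but because $\alpha_{r+1}-\alpha_r$ is an \emph{integer} strictly exceeding $(1-2c)c^{d-r-1}n$ it is automatically $\ge\nu$, which is exactly how the paper phrases the chain of inclusions.
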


\begin{proof}
This is proved exactly like Theorem \ref{thm: Liealgebra} except that we use Lemma \ref{LemmaLiftingWithai} instead of
Lemma \ref{LemmaLifting}.
We only need to check that the extra condition of Lemma \ref{LemmaLiftingWithai} is satisfied for the elements
$x_1,\dots,x_r$ constructed in the proof above.
Write
\[
\Ad(a_i)x_j=\sum_{k=1}^d d_{ijk} x_k, \quad d_{ijk}\in\Z_p,
\]
for $i = 1,\dots,s$ and $j=1,\dots,d$.
Then $p^{\alpha_j}\Ad(a_i)x_j\in M$ and therefore $d_{ijk} \in p^{\alpha_k - \alpha_j} \Z_p$.
For $1 \le i \le s$ and $1 \le j \le r < k \le d$ we have
\[
d_{ijk}\in p^{\alpha_k-\alpha_j}\Z_p\subset p^{\alpha_{r+1}-\alpha_r}\Z_p\subset p^{\lceil (1-c)\alpha_{r+1} \rceil}\Z_p\subset p^{\lceil (1-c)c^{d-r-1}n \rceil}\Z_p
\subset p^\nu \Z_p,
\]
where $\nu$ is as in \eqref{eqndefinitionnu}. Therefore, $\Ad (a_i) x_j \in
\sum_{k=1}^r \Z_p x_k + p^\nu \lie{g}_{\Z_p}$, $1 \le i \le s$, $1 \le j \le r$, as required.
\end{proof}

We also need to know that for $p$ large we can extend the domain of definition of the exponential and logarithm functions.
Let
\begin{eqnarray*}
\GL (N_0, \Z_p)_{\resuni} & = &
\{ x \in \GL (N_0, \Z_p) \, : \, x^{p^n} \equiv 1 \pmod{p} \text{ for some $n$} \} \\
& = &
\{ x \in \GL (N_0, \Z_p) \, : \, x^{p^n} \to 1, \quad n \to \infty \}
\end{eqnarray*}
be the open set of residually unipotent elements of $\GL (N_0, \Z_p)$. Thus, for $p\ge N_0$ we have
\[
\GL (N_0, \Z_p)_{\resuni}=\{ x \in \GL (N_0, \Z_p) \, : \, x^p \equiv 1 \pmod{p} \}.
\]
Similarly, let
\begin{eqnarray*}
\gl (N_0, \Z_p)_{\resnilp}
& = &
\{ y \in \gl (N_0, \Z_p) \, : \, y^{n} \equiv 0 \pmod{p} \text{ for some $n$} \} \\
& = &
\{ y \in \gl (N_0, \Z_p) \, : \, y^{n} \to 0, \quad n \to \infty \}
\end{eqnarray*}
be the set of residually nilpotent elements of $\gl (N_0, \Z_p)$,
so that for $p\ge N_0$ we have
\[
\gl (N_0, \Z_p)_{\resnilp}=\{ y \in \gl (N_0, \Z_p) \, : \, y^p \equiv 0 \pmod{p} \}.
\]

\begin{lemma} \label{lem: explog}
The power series $\exp$ (resp., $\log$) converges in the domain
$\gl (N_0, \Z_p)_{\resnilp}$ (resp., $\GL (N_0, \Z_p)_{\resuni}$)
provided that $p>N_0+1$.
Assume that $p>2N_0$. Then
\begin{enumerate}
\item We have
\[
\gl (N_0, \Z_p)_{\resnilp}\bjct{\exp}{\log}\GL (N_0, \Z_p)_{\resuni}.
\]
\item The diagrams
\begin{equation} \label{eq: expcomm}
\begin{CD}
\gl (N_0, \Z_p)_{\resnilp} @>\exp>> \GL (N_0, \Z_p)_{\resuni} \\
@VVV                                   @VVV \\
\gl (N_0, \F_p)_{\nilp} @>\exp^{(p)}>>  \GL(N_0,\F_p)_{\unip}
\end{CD}
\end{equation}
\begin{equation} \label{eq: logcomm}
\begin{CD}
\GL (N_0, \Z_p)_{\resuni} @>\log>> \gl (N_0, \Z_p)_{\resnilp}\\
@VVV @VVV\\
\GL(N_0,\F_p)_{\unip} @>\log^{(p)}>> \gl (N_0, \F_p)_{\nilp}
\end{CD}
\end{equation}
commute, where the vertical arrows denote reduction modulo $p$.
\item For any $n\ge1$ we get induced maps
\begin{equation} \label{eq: explogmodpn}
\gl (N_0, \Z_p)_{\resnilp}/p^n\gl (N_0, \Z_p)\bjct{\exp}{\log}
\GL (N_0, \Z_p)_{\resuni}/\Gamma (N_0, p^n).
\end{equation}
\end{enumerate}
\end{lemma}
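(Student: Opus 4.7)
The three parts of the lemma reduce to $p$-adic valuation estimates for the individual terms $y^n/n!$ and $u^n/n$, where $u=x-1$. For Part 1, the basic input is Cayley--Hamilton: if $y\in\gl(N_0,\Z_p)_{\resnilp}$, then the reduction $\bar y\in\gl(N_0,\F_p)$ is nilpotent of size $N_0$, so $\bar y^{N_0}=0$ and hence $y^{N_0}\in p\gl(N_0,\Z_p)$. Iterating yields $v_p(y^n)\ge\lfloor n/N_0\rfloor$, and together with Legendre's bound $v_p(n!)\le(n-1)/(p-1)$ this forces $v_p(y^n/n!)\to\infty$ as soon as $p>N_0+1$. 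The analogous bound $v_p(u^n)\ge\lfloor n/N_0\rfloor$ combined with the weaker denominator $v_p(n)\le\log_p n$ gives convergence of $\log$ even more easily.

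For the commutative diagrams in Part 2, I would check that for $p>2N_0$ every term $y^n/n!$ with $n\ge p$ lies in $p\gl(N_0,\Z_p)$. The borderline case $n=p$ requires $\lfloor p/N_0\rfloor-1\ge 1$, which is precisely $p\ge 2N_0$; for $n>p$ the gap between $\lfloor n/N_0\rfloor$ and $(n-1)/(p-1)$ only widens. For $n<p$ the factor $n!$ is a unit and $y^n/n!\bmod p=\bar y^n/n!$ is exactly the corresponding term of $\exp^{(p)}(\bar y)$, so reducing $\exp(y)$ modulo $p$ yields $\exp^{(p)}(\bar y)$, proving the commutativity of \eqref{eq: expcomm}; an analogous check gives \eqref{eq: logcomm}. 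These diagrams also ensure that $\exp$ and $\log$ take values in the asserted target sets. Bijectivity in Part 1 then follows because the formal identities $\log\circ\exp=\mathrm{id}$ and $\exp\circ\log=\mathrm{id}$ in $\Q_p[[T]]$ transfer to our setting: the relevant double series converge absolutely under the above hypotheses, so rearrangement is legitimate.

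For Part 3 the content is well-definedness of the induced maps on quotients; bijectivity then follows from Part 1. Given $y\in\gl(N_0,\Z_p)_{\resnilp}$ and $z\in\gl(N_0,\Z_p)$, I would expand
\[
\exp(y+p^n z)-\exp(y)=\sum_{k\ge 1}\frac{1}{k!}\bigl((y+p^n z)^k-y^k\bigr)
\]
and decompose each $(y+p^n z)^k-y^k$ as a sum of noncommutative monomials in $y$ and $p^n z$ containing at least one factor $p^n z$. Grouping consecutive occurrences of $y$ into runs and applying $v_p(y^m)\ge\lfloor m/N_0\rfloor$ to each run, a monomial with $j\ge 1$ factors $p^n z$ and $k-j$ factors $y$ has valuation at least $jn+\lfloor(k-j)/N_0\rfloor-j$, and after division by $k!$ this exceeds $n$ term by term when $p>2N_0$. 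An entirely parallel argument proves $\log(x(1+p^n w))\equiv\log(x)\pmod{p^n\gl(N_0,\Z_p)}$. The main obstacle throughout is the same as in part~\ref{part: expcongruence1} of Proposition~\ref{prop: unif}: one has to control valuations of long noncommutative products, and here individual factors of $y$ carry no $p$-adic gain on their own (unlike the uniform gain of $\epsilon_p$ from each $\altp x$ in that proof), so the gain has to be extracted from consecutive runs, which is the source of the weaker bound $\lfloor m/N_0\rfloor$ and of the need for $p>2N_0$.
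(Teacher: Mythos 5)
Your treatment of the convergence statement and of Parts 1 and 2 follows essentially the same route as the paper: the bound $v_p(y^k)\ge\lfloor k/N_0\rfloor$ from residual nilpotence against $v_p(k!)\le (k-1)/(p-1)$, and for the diagrams the observation that all terms of degree $k\ge p$ vanish modulo $p$ once $p\ge 2N_0$ while the terms of degree $k<p$ reduce to those of $\exp^{(p)}$, $\log^{(p)}$. The only deviation is that for the mutual-inverse property the paper simply cites Bourbaki (\S II.8.4, Proposition 4), whereas you transfer the formal identities by a summability/rearrangement argument; that is legitimate (the terms of the composed double series tend to $0$ by the same valuation estimates, and in a complete ultrametric group summability is all one needs), just state summability rather than ``absolute convergence''.

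Part 3 as written has a genuine gap: the inequality you assert, namely that $jn+\lfloor(k-j)/N_0\rfloor-j-v_p(k!)\ge n$ for every monomial with $j\ge 1$ factors $p^nz$, is false. For the monomial $(p^nz)^k/k!$ (so $j=k$) with $k\ge p$ and $n=1$ your bound equals $-v_p(k!)<n$, and already for $k<p$, $j=1$ and $k-1<N_0$ (e.g.\ the term $y\,p^nz/2$) it equals $n-1<n$; in both cases the actual terms are harmless, but the estimate you rely on does not show it, because the loss ``$-j$'' from splitting the $y$'s into runs is too crude (the loss is at most the number of nonempty runs minus one, which is bounded by $\min(j,k-j-1)$, not by $j$), and even with the correct run count a uniform term-by-term inequality over all $k$ requires a case analysis you have not carried out. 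The paper avoids this entirely: it first truncates to total degrees $k<n/(N_0^{-1}-(p-1)^{-1})$, where the tails of both $\exp(y)$ and $\exp(y+p^nz)$ are individually $\equiv 0\pmod{p^n}$, then disposes of monomials containing $p^nz$ at least twice by the crude bound $2n-v_p(k!)>n$ (valid because $v_p(k!)<k/(p-1)<n$ in the truncated range), and only for the monomials $y^i(p^nz)y^{k-1-i}/k!$ with a single $z$ does it invoke the floor estimate $\lfloor i/N_0\rfloor+\lfloor(k-1-i)/N_0\rfloor-v_p(k!)\ge 0$ for $k\ge p$. You should either adopt this truncation-plus-case-split, or redo your run accounting and verify the resulting inequality separately for $j=1$ and $j\ge 2$ (and for $k<p$ versus $k\ge p$); as it stands the asserted ``term by term'' bound fails.
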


\begin{proof}
We will prove the first part assuming only $p > N_0 + 1$.
Consider $\exp x=\sum_{n=0}^\infty x^k/k!$. If $x \in\gl (N_0, \Z_p)_{\resnilp}$ then $x^{N_0}\in p\gl(N_0,\Z_p)$.
It follows that $\norm{x^k}\le p^{-\lfloor k/N_0\rfloor}$ for all $k$.
Since $v_p(k!)\le (k-1)/(p-1)$ for $k>0$, we get
\[
\norm{x^k/k!}\le p^{-\lfloor k/N_0\rfloor+\lfloor (k-1)/(p-1)\rfloor}\rightarrow0\ \ \ \text{as }k\rightarrow\infty,
\]
provided that $p-1>N_0$, which gives the convergence of $\exp x$. Moreover, under the same restriction on $p$
we have $\norm{x^k/k!} \le 1$ for all $k$, which shows that the matrices $x_k = x^k/k!$ and their sum $\exp x$ have integral entries.
Since the elements $x^k$, $k>0$ are residually nilpotent and pairwise commute, we conclude that $\exp x \in \GL (N_0, \Z_p)_{\resuni}$.
A similar reasoning applies to $\log$. That the maps $\exp: \gl (N_0, \Z_p)_{\resnilp} \to \GL (N_0, \Z_p)_{\resuni}$ and
$\log: \GL (N_0, \Z_p)_{\resuni} \to \gl (N_0, \Z_p)_{\resnilp}$ so obtained are mutually inverse follows from \cite[\S II.8.4, Proposition 4]{MR1728312} (applied to
the algebra of all square matrices of size $N_0$ with entries in $\C_p$ equipped with the operator norm).\footnote{In fact, [ibid.] also gives the convergence of $\exp$ and $\log$ on
$\gl (N_0, \Z_p)_{\resnilp}$ and $\GL (N_0, \Z_p)_{\resuni}$, respectively.}

If moreover $p\ge 2N_0$ then we have $\lfloor k/N_0\rfloor\ge 1+\lfloor (k-1)/(p-1)\rfloor$
for any $k\ge p$. Thus, if $p>2N_0$ then the diagram \eqref{eq: expcomm} commutes, and similarly we obtain \eqref{eq: logcomm}.

The existence of the maps in \eqref{eq: explogmodpn} is proved like
part \ref{part: expcongruence1} of Proposition \ref{prop: unif}. We need to show that
\begin{equation} \label{eq: expcongruencesection3}
\exp(x+p^ny) \equiv \exp x \pmod {p^n\gl(N_0,\Z_p)}
\end{equation}
for any $x \in \gl (N_0, \Z_p)_{\resnilp}$, $y\in\gl(N_0,\Z_p)$ and $n\ge1$.
Expanding the power series as an infinite linear combination of products of $x$ and $y$ (which do not commute in general),
we may first observe that we only need to consider terms of total degree
\begin{equation} \label{eq: kbound}
1 \le k < \frac{n}{N_0^{-1} - (p-1)^{-1}},
\end{equation}
since for all larger $k$ we have
$\norm{x^k/k!}$, $\norm{(x+p^n y)^k / k!} \le p^{-\lfloor k/N_0\rfloor+\lfloor (k-1)/(p-1)\rfloor} \le p^{-n}$.
We will show that all summands in the range \eqref{eq: kbound} involving $y$ are actually $\equiv 0 \pmod{p^n}$.
If $y$ occurs at least twice, then the corresponding summand has norm $\le p^{-2n + v_p (k!)} = p^{-n} p^{-n + v_p (k!)} < p^{-n}$,
since $v_p (k!) < k / (p-1) < n$ for $k$ as in \eqref{eq: kbound} and $p > 2 N_0$. It remains to consider the terms
$x^i y x^{k-1-i} / k!$ for $0 \le i \le k-1$. We can bound the norm of this term by
$p^{-n - \lfloor i / N_0 \rfloor - \lfloor (k-1-i)/N_0 \rfloor + v_p (k!)}$. For $k < p$ this is clearly $\le p^{-n}$. For $k \ge p$ we have
\[
\left\lfloor \frac{i}{N_0} \right\rfloor + \left\lfloor \frac{k-1-i}{N_0} \right\rfloor - v_p (k!) > \frac{k-1}{N_0} - 2 - \frac{k-1}{p-1}
\ge \frac{p-1}{N_0} - 3 \ge -1,
\]
and therefore $-n - \lfloor i / N_0 \rfloor - \lfloor (k-1-i)/N_0 \rfloor + v_p (k!) \le -n$,
which establishes the congruence \eqref{eq: expcongruencesection3}.
The analogous congruence for $\log$ can be proven similarly.
\end{proof}

Using these extensions of $\log$ and $\exp$, we have the following consequence.

\begin{lemma} \label{LiftingHPlus}
Assume $p > 2N_0$. Let $X \subset G$ be an algebraic subgroup defined over $\Q_p$, and $h \in \K_p$
with $h^p \in (X (\Q_p) \cap \K_p (p)) \K_p (p^m)$ for some $m \ge 1$.
Then $h \in (X (\Q_p) \cap \K_p) \K_p (p^{m-1})$.
\end{lemma}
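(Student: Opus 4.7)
The plan is to linearize via the extended logarithm and exponential maps of Lemma~\ref{lem: explog}, exploiting the key identity $\log(h^p)=p\log h$, which holds because $h$ commutes with itself.

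First I would dispose of the trivial case $m=1$, where the conclusion just reads $h\in\K_p$. Assume henceforth $m\ge2$. The hypothesis $h^p\in\K_p(p)$ forces $h$ to be residually unipotent, so since $p>2N_0$ Lemma~\ref{lem: explog} lets us set $\xi:=\log h\in\gl(N_0,\Z_p)_{\resnilp}$. Applying Lemma~\ref{lem: formalps} to $G$ itself places $\xi$ in $\Lieg_{\Q_p}$; and since $\Lieg_\Z$ is a saturated (hence direct) summand of $\gl(N_0,\Z)$, one has $\xi\in\Lieg_{\Q_p}\cap\gl(N_0,\Z_p)=\Lieg_{\Z_p}$.

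Next I would exploit the factorisation $h^p=xk$ with $x\in X(\Q_p)\cap\K_p(p)$ and $k\in\K_p(p^m)$. By part~\ref{eqnalgebraicsubgroup2} of Proposition~\ref{prop: unifg}, $\log x\in\Lie_{\Q_p}X\cap p\Lieg_{\Z_p}$, so $\eta:=p^{-1}\log x$ lies in $\Lie_{\Q_p}X\cap\Lieg_{\Z_p}$. Feeding the congruence $h^p\equiv x\pmod{\K_p(p^m)}$ into part~\ref{part: expcongruence} of Proposition~\ref{prop: unifg} (with $\altp=p$) translates it into $p\xi\equiv\log x\pmod{p^m\Lieg_{\Z_p}}$, i.e.\ $\xi\equiv\eta\pmod{p^{m-1}\Lieg_{\Z_p}}$.

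Because $m\ge2$, this last congruence reduces to $\bar\eta=\bar\xi$ in $\gl(N_0,\F_p)$, so $\eta\in\gl(N_0,\Z_p)_{\resnilp}$ and $x':=\exp\eta$ is defined by Lemma~\ref{lem: explog}. Lemma~\ref{lem: formalps} then yields $x'\in X(\Q_p)$, and since $x'\in\GL(N_0,\Z_p)$ automatically, $x'\in X(\Q_p)\cap\K_p$. Finally, the bijection~\eqref{eq: explogmodpn} with $n=m-1$ converts $\xi\equiv\eta\pmod{p^{m-1}\gl(N_0,\Z_p)}$ into $h\equiv x'\pmod{\Gamma(N_0,p^{m-1})}$, whence $(x')^{-1}h\in G(\Q_p)\cap\Gamma(N_0,p^{m-1})=\K_p(p^{m-1})$, giving the desired factorisation $h\in(X(\Q_p)\cap\K_p)\K_p(p^{m-1})$. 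The main thing to keep straight is simply that every element lies in the correct domain for each convergent power-series manipulation (especially the use of the extended $\exp$, $\log$ for residually (uni/nil)potent elements); the rest is a bookkeeping chase through the cited results, with no new obstacle.
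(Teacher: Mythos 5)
Your proof is correct and follows essentially the same route as the paper: linearize via the extended $\log$ of Lemma~\ref{lem: explog}, use the identity $\log(h^p)=p\log h$ together with parts \ref{part: expcongruence} and \ref{eqnalgebraicsubgroup2} of Proposition~\ref{prop: unifg} to find $\eta\in\Lie_{\Q_p}X\cap\Lieg_{\Z_p}$ with $\log h\equiv\eta\pmod{p^{m-1}}$, and then exponentiate, invoking Lemma~\ref{lem: formalps} and the mod-$p^{m-1}$ compatibility of $\exp$. The only cosmetic difference is your extra verification that $\log h\in\Lieg_{\Z_p}$, which is harmless but not actually needed for the congruence argument.
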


\begin{proof}
The lemma is trivially true for $m=1$, so we can assume that $m>1$.
Note that $h^p \in \K_p (p)$, so that $h\in \GL (N_0, \Z_p)_{\resuni}$.
By the first part of Lemma \ref{lem: explog} we can write
$h = \exp y$ where $y = \log h\in\gl (N_0, \Z_p)_{\resnilp}$.
Then $py = \log (h^p) \in \log ((X (\Q_p) \cap \K_p (p)) \K_p (p^m))
= p (\Lie_{\Q_p} X \cap \lie{g}_{\Z_p}) + p^m \lie{g}_{\Z_p}$
by parts \ref{part: expcongruence} and \ref{eqnalgebraicsubgroup2}
of Proposition \ref{prop: unifg}, and therefore $y \in (\Lie_{\Q_p} X \cap \lie{g}_{\Z_p}) + p^{m-1} \lie{g}_{\Z_p}$.
Let $y' \in \Lie_{\Q_p} X \cap \lie{g}_{\Z_p}$ be such that $y-y' \in p^{m-1} \lie{g}_{\Z_p}$.
Clearly, $y'\in\gl (N_0, \Z_p)_{\resnilp}$.
By the last part of Lemma \ref{lem: explog} we have $\exp y \in (\exp y') \K_p (p^{m-1})$.
On the other hand, by Lemma \ref{lem: formalps} we have $\exp (y') \in X (\Q_p)$.
The lemma follows.
\end{proof}

In the following, for any closed subgroup $H \subset \K_p$ we will denote by $H^+$ the open normal subgroup of $H$ generated by the set $H_{\resuni}$.

\begin{proof}[Proof of Theorem \ref{TheoremAlgebraicLevel0}]
First note that we may assume without loss of generality that $p>\nmbr(G)$
since otherwise we can apply Theorem \ref{TheoremAlgebraic} to $H' = H \cap \K_p (\altp)$ as long as
$J \ge [\K_p : \K_p (\altp)]$. In particular, we may assume in the following that $p \ge \max(2 N_0+1,\dim G)$.

Let $H \subset \K_p$ be an arbitrary open subgroup.
The quotient $H/H^+$ is finite of order prime to $p$.
Repeating the argument of the proof of \cite[Theorem C]{MR880952} in the setting of profinite groups,
we see that $H = H_1 H^+$ for a finite subgroup $H_1$ of $H$ of order prime to $p$.
Applying Proposition \ref{PropositionJordan} to $H_1$, there exists a maximal torus $T$ of $G$ defined over $\Q_p$ such that
$[H:AH^+] \le J$ where $A=T(\Q_p)\cap H_1$ and $J$ is a constant depending only on $G$.
Clearly, $A$ can be generated by elements $a_1, \ldots, a_s$, where $s$
is bounded in terms of $G$ only, namely by the $\overline{\Q}$-rank of $G$.

Let $\varepsilon$ be as in Lemma \ref{LemmaLieAlgebraWithA}.
We will show below that there exists a proper, connected algebraic subgroup $Y$ of $G$ defined over $\Q_p$ such that
\begin{equation} \label{eq: existsY}
H^+ \subset (Y (\Q_p) \cap \K_p) \K_p(p^{\lceil \varepsilon n / 2 \rceil})
\text{ and }N_G(Y)(\Q_p)\cap a_i\K_p(p^{\lceil \varepsilon n / 2 \rceil})\ne\emptyset, i=1,\dots,s.
\end{equation}

Let us see how to obtain the theorem from this assertion.
If $Y$ is normal in $G$, we simply take $X=YT$.
Otherwise, let $X=N_G (Y)^\circ\supset Y$ (which holds since $Y$ is connected) so that
$H^+ \subset (X (\Q_p) \cap \K_p) \K_p(p^{\lceil \varepsilon n / 2 \rceil})$. The index of $X$ in $N_G(Y)$ is bounded
in terms of $G$ only by Lemma \ref{LemmaGeometricFiniteness}.
Therefore, choosing $b_i\in N_G(Y)(\Q_p)\cap a_i\K_p(p^{\lceil \varepsilon n / 2 \rceil})$, $i=1,\dots,s$,
there exists an exponent $e$, depending only on $G$, such that $b_i^e \in X (\Q_p) \cap \K_p$, $i = 1, \ldots, s$.
The projection of $A H^+$ to the factor group $\K_p / \K_p (p^{\lceil \varepsilon n / 2 \rceil})$
is generated by the image of the group $H^+$ and the images of the elements $b_i$.
We conclude that the open subgroup $A^e H^+$ of $H$, which has index at most $J' = e^s J$, is contained in the group
$(X (\Q_p) \cap \K_p) \K_p (p^{\lceil \varepsilon n / 2 \rceil})$.

It remains to prove \eqref{eq: existsY}. Assume first that $n \ge \frac{2}{\varepsilon}$.
Consider the open subgroup $H \cap \K_p (p)$ of $\K_p (p)$, which is normalized by the elements $a_1, \ldots, a_s$.
Since we assume that $p \ge \dim G$, the subset $M = \log (H \cap \K_p (p)) \subset p \lie{g}_{\Z_p}$ is by Remark \ref{rem: Ilani} a Lie subalgebra, which
is clearly of level $p^n$ in $\lie{g}_{\Z_p}$ and stable under $\Ad (a_1), \ldots, \Ad (a_s)$.
(Alternatively, we may use part \ref{LemmaUniform} of Lemma \ref{lem: basunif} and imitate the
proof of Theorem \ref{TheoremAlgebraic} above, noting that the uniform subgroup $V$ is characteristic in $H \cap \K_p (p)$.)
From Lemma \ref{LemmaLieAlgebraWithA} we obtain a proper, isolated Lie subalgebra $I \subset \lie{g}_{\Z_p}$  such that
\[
M\subset (I+p^{\lceil \varepsilon n \rceil} \lie{g}_{\Z_p})\cap p \lie{g}_{\Z_p}=pI+p^{\lceil \varepsilon n \rceil} \lie{g}_{\Z_p},
\]
and elements $b_i\in a_i\K_p (p^{\lceil \varepsilon n \rceil})$, $i=1,\dots,s$, such that $\Ad (b_i) I = I$.
We infer that $H \cap \K_p (p) \subset (\exp p I) \K_p (p^{\lceil \varepsilon n \rceil})$.
As in the proof of Theorem \ref{TheoremAlgebraic}, consider the algebraic subgroup $Y = \mathcal{A} (\Q_p I)$ of $G$, which is
a proper, connected algebraic subgroup of $G$ defined over $\Q_p$.
Clearly, $b_1, \ldots, b_s \in N_G (Y) (\Q_p)$.
We can now invoke Lemma \ref{LiftingHPlus} to lift the relation
$H \cap \K_p (p) \subset \exp (pI) \K_p (p^{\lceil \varepsilon n \rceil})
\subset (Y (\Q_p) \cap \K_p (p)) \K_p (p^{\lceil \varepsilon n \rceil})$ to $H^+$ and obtain
$H^+ \subset (Y (\Q_p) \cap \K_p) \K_p(p^{\lceil \varepsilon n \rceil - 1})$.
Thus, \eqref{eq: existsY} holds by the assumption on $n$.

Consider now the case where $1 \le n < \frac{2}{\varepsilon}$.
Let $\bar{H}$ be the image of $H \neq \K_p$ in $\K_p / \K_p (p) \simeq G (\F_p)$.
Since $G$ is simply connected, $\bar H$ is a proper subgroup of $G (\F_p)$ (for $p>\nmbr(G)$) by \eqref{eq: fratinni}
and Nori's algebraic envelope $\bar{Y}$ of $\bar H$ is a proper subgroup of $G_{\F_p}$ (cf.~the proof of Proposition \ref{PropositionApproxLevelp}).
It follows from Theorems \ref{TheoremNoriA} and \ref{TheoremNori} that $\Lie_{\F_p}\bar Y$ is spanned by $\log^{(p)}x$, $x\in\bar H_{\unip}$,
and is a proper Lie subalgebra of $\lie{g}_{\F_p}$.
Let $M$ be the inverse image of $\Lie_{\F_p}\bar Y$ under $\lie{g}_{\Z_p}\rightarrow\lie{g}_{\Z_p}/p\lie{g}_{\Z_p}$,
so that $M$ is a proper $\Z_p$-Lie subalgebra of $\lie{g}_{\Z_p}$.
Since $H_{\resuni}$ is the inverse image of $\bar H_{\unip}$ under the reduction map $H \to \bar H$,
the commutativity of \eqref{eq: logcomm} gives
$M= \sum_{h\in H_{\resuni}} \Z_p \log h + p\lie{g}_{\Z_p}$.

Since $p>\nmbr(G)$, we can use Lemma \ref{LemmaLieAlgebraWithA} to obtain the inclusion
$M \subset I + p \lie{g}_{\Z_p}$ for a proper, isolated subalgebra $I \subset \lie{g}_{\Z_p}$ and elements
$b_i\in a_i\K_p(p)$, $i=1,\dots,s$, such that $\Ad (b_i) I = I$. As above, we can assume that
$I = \Lie_{\Q_p} Y \cap \lie{g}_{\Z_p}$ for a proper, connected algebraic subgroup $Y$ of $G$ defined over $\Q_p$ and retain the property that $I$
is stable under the operators $\Ad (b_i)$.
Since $H^+$ is generated by elements $h$ with $h^p \in \K_p (p)$,
and for such an $h$ we have $h^p \in \exp (pM) \subset \exp (pI) \K_p (p^2) = (Y (\Q_p) \cap \K_p (p)) \K_p(p^2)$, we obtain from Lemma \ref{LiftingHPlus} that
$H^+ \subset (Y (\Q_p) \cap \K_p) \K_p(p)$, so that \eqref{eq: existsY} holds in this case as well. (Alternatively, Nori's proof shows that
$\bar{Y}$ extends to a smooth group scheme over $\Z_p$, and in particular lifts to characteristic zero.)
\end{proof}

If we are willing to use Theorem \ref{thm: corrlev0}, the main result of \S \ref{sectionliealgebra}, then we may simplify the proof above (see Remark \ref{RemarkUnification} below).

\section{Closed subgroups of $G (\Z_p)$ and closed subalgebras of $\lie{g}_{\Z_p}$} \label{sectionliealgebra}

\subsection{Statement of the correspondence}
We continue to use the conventions of \S \ref{subspropsetup}.
In this section we will establish (for $p$ large with respect to $G$) a correspondence
between subgroups of $G(\Z_p)$ which are residually exponentially generated and $\Z_p$-Lie subalgebras of $\lie{g}_{\Z_p}$
which are residually nilpotently generated. On the one hand, this bijection extends
previously known results on pro-$p$ subgroups of $G (\Z_p)$, and on the other hand it is compatible, via reduction modulo $p$, with Nori's correspondence between subgroups
of $G(\F_p)$ and subalgebras of $\lie{g}_{\F_p}$.

We first recall that Nori's two theorems (Theorems \ref{TheoremNoriA} and \ref{TheoremNori}) establish for $p > n (G)$ bijective correspondences between three different collections of objects:
subgroups $\bar{H}$ of $G (\F_p)$ that are generated by their elements of order $p$ (i.e. for which $\bar{H}^+ = \bar{H}$), nilpotently generated subalgebras of $\lie{g}_{\F_p}$,
and exponentially generated algebraic
subgroups $\mathcal{H}$ of $G_{\F_p}$. We will focus on the bijection between the first two collections, which we can describe as follows: to a subgroup $\bar{H}$ of $G (\F_p)$
we associate $\bar{\Liec} (\bar{H})$, the $\F_p$-span of $\log^{(p)} \bar{H}_{\unip}$ (which is a Lie algebra), and to
$\bar{\lie{h}}$ the subgroup $\bar{\grpc} (\bar{\lie{h}})$ of $G (\F_p)$ generated by $\exp^{(p)} \bar{\lie{h}}_{\nilp}$.
Regarding the third collection, to an algebraic group $\mathcal{H} \subset G_{\F_p}$ correspond its Lie algebra $\Lie_{\F_p} \mathcal{H}$ and the finite group $\mathcal{H} (\F_p)^+$.
The opposite bijections were described in \S \ref{SectionApproxLevel0} above.

Next, we recall the following correspondence due to Klopsch \cite{MR2126210} (see also \cite{MR2554763}) generalizing Ilani's correspondence.

\begin{theorem}[Lazard--Ilani--Klopsch] \label{TheoremIlaniKlopsch}
Assume that $p\ge\max(N_0+2,\dim G)$.
Then the exponential and logarithm maps (applied to subsets of $\lie{g}_{\Z_p,\resnilp}$ and $G(\Z_p)_{\resunip}$, respectively) induce mutually inverse bijections between
the set of all closed pro-$p$ subgroups $H \subset G (\Z_p)$ and the set of all closed Lie subalgebras $\lie{h} \subset \lie{g}_{\Z_p}$ consisting of residually nilpotent elements.
\end{theorem}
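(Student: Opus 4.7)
The strategy is to combine the three bodies of results already on the table: (i) the set-theoretic exp/log bijection between $\Lieg_{\Z_p,\resnilp}$ and $G(\Z_p)_{\resuni}$ coming from Lemma \ref{lem: explog} and Lemma \ref{lem: formalps}; (ii) the Lazard--Ilani correspondence inside the pro-$p$ congruence subgroup $\K_p(p)$, quoted in Remark \ref{rem: Ilani}; and (iii) Nori's theorems (\ref{TheoremNoriA} and \ref{TheoremNori}) over $\F_p$. First I check that the maps actually land in the claimed target. Every element of a closed pro-$p$ subgroup $H\subset G(\Z_p)$ reduces to a $p$-power-order element of $G(\F_p)$; since $p>2N_0\ge N_0$, such an element is automatically unipotent in $\GL(N_0,\F_p)$, so $H\subset G(\Z_p)_{\resuni}$ and $\log H$ is well-defined. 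Moreover, by Lemma \ref{lem: formalps}, $\log h\in\Lie_{\Q_p}G\cap\gl(N_0,\Z_p)=\Lieg_{\Z_p}$ for every $h\in H$, and this element is residually nilpotent. Conversely, if $\Lieh\subset\Lieg_{\Z_p}$ is a closed Lie subalgebra of residually nilpotent elements, the same lemmas give $\exp\Lieh\subset G(\Z_p)_{\resuni}$, and it remains to show that $\exp\Lieh$ is a subgroup and that the two assignments are mutually inverse.

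The core work is to show that $\Lieh:=\log H$ is a closed Lie subalgebra of $\Lieg_{\Z_p}$ (and, in the other direction, that $\exp\Lieh$ is closed under multiplication). For this I filter $H$ through the open normal subgroup $H_0:=H\cap\K_p(p)$, with finite quotient $\bar H:=H/H_0\hookrightarrow G(\F_p)$. Since $\bar H$ is a $p$-subgroup consisting of unipotent elements, $\bar H=\bar H^+$, so Nori's Theorem \ref{TheoremNori} produces a nilpotently generated Lie subalgebra $\bar{\Lieh}:=\Lie_{\F_p}\algnori{\bar H}\subset\Lieg_{\F_p}$, which is the $\F_p$-span of $\log^{(p)}\bar H$ and corresponds bijectively to $\bar H$ via Theorem \ref{TheoremNoriA}. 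On the other hand, $H_0$ is a closed pro-$p$ subgroup of the uniform group $\K_p(p)$, so Ilani's theorem (Remark \ref{rem: Ilani}) exhibits $\Lieh_0:=\log H_0$ as a closed $\Z_p$-Lie subalgebra of $p\Lieg_{\Z_p}$. The commutative diagram \eqref{eq: logcomm} of Lemma \ref{lem: explog} then shows that reduction modulo $p$ sends $\Lieh$ onto $\bar{\Lieh}$ with kernel precisely $\Lieh_0$.

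Closure of $\Lieh$ under addition and Lie bracket follows by treating three cases separately. For $x,y\in\Lieh_0$, closure holds by Ilani. Modulo $p$, closure holds by Nori's description of $\bar{\Lieh}$. For the mixed case, normality of $H_0$ in $H$ gives $\Ad(h)H_0=H_0$ for every $h\in H$, so $\Ad(h)\Lieh_0=\Lieh_0$; writing a general element of $\Lieh$ as $x+z$ with $z\in\Lieh_0$ and $x$ a fixed lift of an element of $\bar{\Lieh}$, the bracket $[x+z,x'+z']$ is controlled by the previous two cases and by the stability of $\Lieh_0$ under $\Ad(\exp x)$. The reverse direction is strictly parallel: $\Lieh\cap p\Lieg_{\Z_p}$ exponentiates to a pro-$p$ subgroup $H_0\subset\K_p(p)$ by Ilani, while the reduction $\bar{\Lieh}$ is nilpotently generated (its residually nilpotent elements reduce to nilpotent ones) and therefore by Nori integrates to a subgroup $\bar H\subset G(\F_p)$ of unipotents; lifting $\bar H$ through $\exp$ and combining with $H_0$ produces the closed pro-$p$ subgroup $\exp\Lieh$.

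The principal obstacle is the mixed-case verification, where one cannot directly invoke convergence of the Baker--Campbell--Hausdorff series because elements outside $\K_p(p)$ are not small enough in the $p$-adic norm. The key technical input is the commutativity of \eqref{eq: expcomm}--\eqref{eq: logcomm} together with the fact (from Nori) that Lie operations on $\bar{\Lieh}$ are realized by group-theoretic manipulations in $\bar H$; lifting these manipulations to $H$ and correcting the error terms, which land in $\K_p(p)$, via Ilani gives the desired closure. Once this is established in both directions, the fact that $\exp$ and $\log$ are mutually inverse as set maps (Lemma \ref{lem: explog}) immediately upgrades to a bijection of the two classes described in the theorem.
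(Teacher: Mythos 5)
Your reduction to the congruence filtration is where the real difficulty of this theorem sits, and the "mixed case" — which you correctly identify as the principal obstacle — is not actually resolved by your sketch. Note first that the paper does not prove this statement at all: it is quoted from Klopsch and Gonz\'alez-S\'anchez--Klopsch, whose proofs go through Lazard's theory of $p$-saturable (potently filtered) pro-$p$ groups, applied to a Sylow pro-$p$ subgroup of $G(\Z_p)$, rather than through a Nori-plus-Ilani gluing. Two specific steps in your outline fail as stated. First, you write a general element of $\Lieh=\log H$ as $x+z$ with $z\in\Lieh_0=\log(H\cap\K_p(p))$ and $x$ a fixed lift; this amounts to the identity $\log(hH_0)=\log h+\Lieh_0$, which is precisely the content of Lemma \ref{lem: untitled} of the paper, and its proof requires convergence and integrality of the Hausdorff series in exactly this mixed range, obtained from Klopsch's iterated-commutator estimate — the quantitative BCH input you explicitly announce you will avoid. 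It is not a formal consequence of $\Ad(h)\Lieh_0=\Lieh_0$ together with Corollary \ref{cor: expsinv}.

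Second, and more seriously, even granting that coset decomposition, "correcting the error terms, which land in $\K_p(p)$, via Ilani" does not yield closure. For $\Lieh$ to be closed under addition you need, for $h_1,h_2\in H$, some $h_3\in H$ with $\log h_1+\log h_2\in\log h_3+\Lieh_0$. The mod-$p$ step produces at best an $h_3$ with $\log h_1+\log h_2-\log h_3\in p\Lieg_{\Z_p}$ (and even this uses more than Nori as quoted: Theorems \ref{TheoremNoriA} and \ref{TheoremNori} say only that $\bar{\Lieh}$ is the \emph{span} of $\log^{(p)}\bar{H}$, so the claim that the reduction of $\Lieh$ is closed under addition already needs the characteristic-$p$ Lazard correspondence for $p$-groups of class $<p$, or the exp/log isomorphism for the unipotent algebraic envelope). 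But the error must then be absorbed into $\Lieh_0$, which is in general a very small subalgebra of $p\Lieg_{\Z_p}$; Ilani's theorem applied to $H\cap\K_p(p)$ provides no mechanism for that, and iterating modulo higher powers of $p$ runs into the same circularity: at each stage one needs the error to be the logarithm of an element of $H\cap\K_p(p^n)$, which is the statement being proved. The identical obstruction appears in the opposite direction when showing that $\exp\Lieh$ is closed under multiplication. This is why the published proofs first establish saturability/potency of the ambient pro-$p$ group and then invoke Lazard's correspondence for all its closed subgroups; your two-layer patching argument does not substitute for that step.
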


\begin{remark}
Just like in Ilani's work, the correspondence in \cite{MR2126210} is stated in terms of the intrinsic Lie algebra structure on a saturable pro-$p$ group
in the sense of Lazard.
However, by \cite[Proposition IV.3.2.3]{MR0209286} the intrinsic Lie algebra is isomorphic to the one defined using the logarithm map
(cf. \cite[\S 7.2]{MR1720368}).
\end{remark}

Throughout we assume that $p\ge\max(N_0+2,\dim G)$.
In fact, we also assume that $p > 2 N_0$, so that the exponential and logarithm bijections satisfy the properties of Lemma \ref{lem: explog}.

The basic construction underlying our extension of this theorem to more general subgroups of $G (\Z_p)$ is a generalization of Nori's correspondence.
Let $H$ be a subgroup of $G (\Z_p)$.
We associate to $H$ the $\Z_p$-submodule $\Liec(H)\subset \lie{g}_{\Z_p}$ generated by the set $\log H_{\resunip}$.
We will show in Proposition \ref{PropLieAlgebraLevel0} below that $\Liec(H)$ is in fact always a $\Z_p$-Lie algebra provided that $p>\dim G+1$.
Conversely, to any Lie subalgebra $\lie{h} \subset \lie{g}_{\Z_p}$ we associate the closed subgroup $\grpc(\lie{h})$ of $G (\Z_p)$
topologically generated by the set $\exp \lie{h}_{\resnilp}$.

By Theorem \ref{TheoremIlaniKlopsch}, for pro-$p$ groups $H$ we have $\Liec(H) = \log H$ and for subalgebras $\lie{h}$ contained in
$\lie{g}_{\Z_p,\resnilp}$ we have $\grpc (\lie{h}) = \exp \lie{h}$.

We extend Theorem \ref{TheoremIlaniKlopsch} to a correspondence between the following classes of subgroups and subalgebras.
On one side, we consider the set $\SGRU$ of closed subgroups $H \subset G (\Z_p)$ which
are topologically generated by their residually unipotent elements.
Clearly, $H \in \SGRU$ precisely when $H^+ = H$, where $H^+$ is as in \S \ref{SectionApproxLevel0} above.
It is equivalent to demand that $\bar{H} \subset G (\F_p)$, the reduction modulo $p$ of $H$, is generated by its elements of order $p$.

On the other side, we consider the set $\LSAN$ of all closed Lie subalgebras of $\lie{g}_{\Z_p}$ which are spanned over $\Z_p$ by their
residually nilpotent elements.
Clearly, $\lie{h}\in\LSAN$ if and only if the reduction $\bar{\lie{h}}$ is a nilpotently generated
subalgebra of $\lie{g}_{\F_p}$.

\begin{theorem} \label{thm: corrlev0}
Suppose that $p$ is large with respect to $G$. Then
\[
\SGRU\bjct{\Liec}{\grpc}\LSAN.
\]
These bijections preserve the level of open subgroups and subalgebras, extend those of Theorem \ref{TheoremIlaniKlopsch},
and are compatible with those of Nori under reduction modulo $p$,
that is if $H\in\SGRU$ and $\lie{h}=\Liec(H)$ then the image $\bar{H}$ of $H$ in $G(\F_p)$ corresponds under Nori to the image $\bar{\lie{h}}$
of $\lie{h}$ in $\lie{g}_{\F_p}$, i.e., $\bar{\lie{h}} = \bar{\Liec} (\bar{H})$.
\end{theorem}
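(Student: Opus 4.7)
The plan is to prove the bijection by decomposing the data along exact sequences separating the ``mod $p$'' and ``pro-$p$'' pieces, then gluing Nori's correspondence (Theorems \ref{TheoremNoriA}, \ref{TheoremNori}) with the Lazard--Ilani--Klopsch correspondence (Theorem \ref{TheoremIlaniKlopsch}). Throughout I shall assume $p$ is large enough relative to $G$ so that Nori's theorems, Theorem \ref{TheoremIlaniKlopsch}, and the exp/log bijections of Lemma \ref{lem: explog} are all available.

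First I would verify well-definedness together with two of the extra compatibilities asserted in the theorem. For $H \in \SGRU$, Proposition \ref{PropLieAlgebraLevel0} guarantees that $\Liec(H)$ is a closed $\Z_p$-Lie subalgebra of $\Lieg_{\Z_p}$. The commutative diagram \eqref{eq: logcomm} shows that reduction modulo $p$ sends the generators $\log H_{\resunip}$ of $\Liec(H)$ precisely to $\log^{(p)} \bar H_{\unip}$, whose $\F_p$-span is by Theorem \ref{TheoremNori} the nilpotently generated subalgebra of $\Lieg_{\F_p}$ that Nori attaches to $\bar H$. Thus $\Liec(H) \in \LSAN$, and simultaneously the Nori compatibility $\overline{\Liec(H)} = \bar\Liec(\bar H)$ is established. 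The symmetric argument using \eqref{eq: expcomm} shows $\grpc(\lie{h}) \in \SGRU$ whenever $\lie{h} \in \LSAN$. The extension of Theorem \ref{TheoremIlaniKlopsch} is automatic because for a closed pro-$p$ subgroup $H \subset \K_p(p)$ one has $H_{\resunip} = H$ and $\log H = \Liec(H)$.

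The easy halves of the mutual inversion are immediate: given $\lie{h} \in \LSAN$ and $x \in \lie{h}_{\resnilp}$, we have $x = \log\exp x \in \log(\grpc(\lie{h})_{\resunip}) \subset \Liec(\grpc(\lie{h}))$, so the hypothesis that $\lie{h}$ is the $\Z_p$-span of $\lie{h}_{\resnilp}$ yields $\lie{h} \subset \Liec(\grpc(\lie{h}))$; the dual inclusion $H \subset \grpc(\Liec(H))$ is analogous. For the reverse containments I would pass to the short exact sequence
\[
1 \to H \cap \K_p(p) \to H \to \bar H \to 1
\]
and its Lie-algebra analog $0 \to \lie{h} \cap p\Lieg_{\Z_p} \to \lie{h} \to \bar{\lie{h}} \to 0$. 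The pro-$p$ kernel $H \cap \K_p(p)$ corresponds by Theorem \ref{TheoremIlaniKlopsch} to a closed Lie subalgebra of $p\Lieg_{\Z_p}$, while the reduction $\bar H$ corresponds by Nori to $\overline{\Liec(H)}$. The key identity to establish is
\[
\log\bigl(H \cap \K_p(p)\bigr) = \Liec(H) \cap p\Lieg_{\Z_p}
\]
together with its dual on the algebra side; granted this, a five-lemma-style diagram chase across the two exact sequences yields both halves of the bijection at once.

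The main obstacle is this key identity. The inclusion $\log(H \cap \K_p(p)) \subset \Liec(H) \cap p\Lieg_{\Z_p}$ is immediate from the definitions, so the content is the reverse: given $x \in \Liec(H) \cap p\Lieg_{\Z_p}$, one must produce $\exp x \in H$. Here the remaining ingredients from the introduction enter. Nori exhibits $\bar H$ as $\bar X(\F_p)^+$ for an exponentially generated algebraic subgroup $\bar X \subset G_{\F_p}$ with $\Lie_{\F_p}\bar X = \overline{\Liec(H)}$; after reducing to the simply connected semisimple case, the Curtis--Steinberg--Tits presentation allows one to generate $\bar X(\F_p)$ from rank-$1$ root subgroups, which by Lemma \ref{lem: explog} admit canonical lifts via $\exp$ inside $H$. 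Steinberg's algebraicity theorem ensures these lifts are compatible with the ambient algebraic structure over $\Z_p$, and Jantzen's semisimplicity theorem for characteristic $p$ representations of $\bar X(\F_p)$ controls the pro-$p$ kernel of the lifted homomorphism, identifying it precisely with $\exp(\Liec(H) \cap p\Lieg_{\Z_p})$. Level preservation is then a direct corollary: $\K_p(p^n) \subset H$ forces $p^n \Lieg_{\Z_p} = \log\K_p(p^n) \subset \Liec(H)$, and minimality of $n$ transports across the bijection.
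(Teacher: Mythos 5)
Your reduction of the theorem is the same as the paper's: well-definedness of $\Liec$ and $\grpc$ via Proposition \ref{PropLieAlgebraLevel0}, Nori compatibility via the diagrams of Lemma \ref{lem: explog}, the easy inclusions $H\subset\grpc(\Liec(H))$ and $\lie{h}\subset\Liec(\grpc(\lie{h}))$, and the observation that everything follows once one knows $\Liec(H)\cap p\Lieg_{\Z_p}=\log(H\cap\K_p(p))$ for $H\in\SGRU$ and $\grpc(\lie{h})\cap\K_p(p)=\exp(\lie{h}\cap p\Lieg_{\Z_p})$ for $\lie{h}\in\LSAN$; these are exactly the paper's statements \eqref{step: fromgroup} and \eqref{step: fromalgebra}, and your gluing across the mod-$p$/pro-$p$ exact sequences is how the paper concludes as well. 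The problem is that essentially all of the content of the theorem lives in those two identities, and your final paragraph does not prove them; it names the ingredients listed in the introduction but assigns them roles they cannot play. For \eqref{step: fromgroup} the difficulty is concrete: $\Liec(H)$ is only the $\Z_p$-span of $\log H_{\resunip}$, so an element $x\in\Liec(H)\cap p\Lieg_{\Z_p}$ is an arbitrary linear combination of such logarithms, and nothing in your sketch produces a group element of $H$ with logarithm $x$. The paper handles this by passing to the maximal normal pro-$p$ subgroup $P\subset H$ with $\lie{p}=\log P$, and proving that the surjection $\pi\colon\lie{s}=\lie{h}/\lie{p}\to\bar{\lie{h}}/\bar{\lie{p}}$ of $S^{\SC}(\F_p)$-modules is injective (Proposition \ref{PropAdjoint}). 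That is where Steinberg's algebraicity theorem and Jantzen's semisimplicity theorem actually enter: they let one decompose $\lie{s}$ into irreducibles $L(\lambda)$ and, together with the weight analysis of Lemmas \ref{LemmaTorus}, \ref{LemmaMultiplicitiesLlambda} and the root-system Lemma \ref{LemmaLongRoots}, rule out the only possible extra constituents $L(p^i\tilde\alpha_{\rm short})$. Your statements that Steinberg's theorem ``ensures these lifts are compatible with the ambient algebraic structure over $\Z_p$'' and that Jantzen's theorem ``controls the pro-$p$ kernel of the lifted homomorphism'' are not what those theorems say, and no argument replaces the representation-theoretic step they are needed for.

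The second identity \eqref{step: fromalgebra} is likewise left unproved: this is where the Curtis--Steinberg--Tits presentation is genuinely used, but in the opposite direction from your sketch. One does not generate $\bar{X}(\F_p)$ by lifting root subgroups into $H$; rather, one takes the abstract group $\Gamma(S^{\SC})$ presented by generators $\gamma_u$, $u\in S^{\SC}(\F_p)_{\unip}$, with restricted-multiplication and conjugation relations (Lemma \ref{LemmaPresentation}), and constructs a surjective homomorphism $\Gamma(S^{\SC})\to H/P$ by $u\mapsto(\exp v)P$, checking well-definedness and the relations by means of Lemma \ref{lem: untitled}; comparison with the covering map then forces $\Ker(H/P\to\bar H/\bar P)$, which is a $p$-group, to be trivial, giving $H\cap\K_p(p)\subset P$. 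Neither this construction nor any substitute for it appears in your proposal, so as it stands the proof has a genuine gap precisely at the two inclusions on which everything else depends. (The peripheral parts — well-definedness, Nori compatibility, extension of Theorem \ref{TheoremIlaniKlopsch}, and level preservation granted the bijection — are fine.)
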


The theorem will be proved below. Clearly, the special case of subgroups and subalgebras of level $p$ reduces to Nori's correspondence.

\begin{remark} \label{RemarkUnification}
Using Theorem \ref{thm: corrlev0} we can streamline the proof of Theorem \ref{TheoremAlgebraicLevel0}.
Namely, to prove \eqref{eq: existsY} (with $\varepsilon$ instead of $\varepsilon/2$) we apply Lemma \ref{LemmaLieAlgebraWithA}
to $M=\Liec(H^+)$, which is of level $p^n$ in $\lie{g}_{\Z_p}$ and stable under $\Ad (a_1), \ldots, \Ad (a_s)$.
We obtain a proper, isolated Lie subalgebra $I \subset \lie{g}_{\Z_p}$ such that $M\subset I+p^{\lceil \varepsilon n \rceil} \lie{g}_{\Z_p}$
and elements $b_i\in a_i \K_p (p^{\lceil \varepsilon n \rceil})$, $i=1,\dots,s$, such that $\Ad (b_i) I = I$.
We infer that $H^+\subset \grpc(I) \K_p (p^{\lceil \varepsilon n \rceil})$.
Taking the algebraic subgroup $Y = \mathcal{A} (\Q_p I)$ of $G$, which is
a proper, connected algebraic subgroup of $G$ defined over $\Q_p$, we see that
$b_1, \ldots, b_s \in N_G (Y) (\Q_p)$ and $H^+\subset Y(\Q_p)\K_p (p^{\lceil \varepsilon n \rceil})$, as required.
We opted to include the original proof of Theorem \ref{TheoremAlgebraicLevel0} since it is simpler and more self-contained.
\end{remark}

\subsection{The Lie algebra associated to a subgroup}
We start with some simple considerations (in the spirit of \cite[\S1]{MR880952}) and establish that $\Liec(H)$ is indeed a Lie subalgebra of $\lie{g}_{\Z_p}$ (for $p > \dim G +1$).

\begin{lemma} \label{LemmaSubmoduleResnilp}
Let $V$ be a finitely generated free $\Z_p$-module, $U$ a submodule of $V$ and $s\in\End_{\Z_p}(V)_{\resnilp}$.
Assume that $sU\subset U$. Then $s\rest_U\in\End_{\Z_p}(U)_{\resnilp}$. In particular, $s^{\operatorname{rk} U}(U)\subset pU$.
\end{lemma}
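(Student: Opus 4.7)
The plan is to reduce the assertion to a direct calculation with the elementary divisor theorem for the inclusion $U\subset V$. Unwinding the definition, the hypothesis $s\in\End_{\Z_p}(V)_{\resnilp}$ means precisely that $s^{n_0}\in p\End_{\Z_p}(V)$ for some $n_0\ge1$, i.e.\ $s^{n_0}V\subset pV$, and iterating $s^{Nn_0}V\subset p^NV$ for every $N\ge1$. What we have to prove is the analogous statement for $s\rest_U$, namely $(s\rest_U)^mU\subset pU$ for some $m\ge1$. Since $sU\subset U$, this amounts to controlling the intersection $s^mV\cap U$ inside $U$.

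Next, I would apply the elementary divisor theorem to the inclusion $U\subset V$ to obtain a $\Z_p$-basis $e_1,\dots,e_d$ of $V$ (where $d$ is the rank of $V$) together with integers $0\le a_1\le\dots\le a_r$ (where $r$ is the rank of $U$) such that $u_i:=p^{a_i}e_i$, $i=1,\dots,r$, form a $\Z_p$-basis of $U$. Setting $a:=a_r$, an arbitrary element of $U\cap p^NV$ is a $\Z_p$-linear combination of the vectors $p^Ne_i=p^{N-a_i}u_i$, $i=1,\dots,r$, so that
\[
U\cap p^NV\subset p^{N-a}U\quad\text{for all }N\ge a.
\]
This is the key geometric input: it quantifies the fact that the subspace topology induced on $U$ from $V$ agrees with the intrinsic $p$-adic topology of $U$ up to the shift by $a$.

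Combining the two steps, take $N=a+1$ and $m=(a+1)n_0$. Then
\[
s^mU\subset s^mV\cap U\subset p^{a+1}V\cap U\subset pU,
\]
which exactly says $(s\rest_U)^m\in p\End_{\Z_p}(U)$, and hence $s\rest_U\in\End_{\Z_p}(U)_{\resnilp}$.

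There is no real obstacle here: the only substantive ingredient beyond the definitions is the elementary divisor structure of $U\subset V$, and the rest is a single inclusion chase. The lemma should be viewed as a quantitative reminder that restricting a residually nilpotent endomorphism to a not-necessarily-saturated submodule costs only finitely many applications of $s$ to absorb the elementary divisors.
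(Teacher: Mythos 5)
Your argument is correct and is essentially the paper's proof made explicit: the paper deduces $(s\rest_U)^n\to 0$ directly from $s^n\to 0$, the point hidden in its ``clearly'' being that the $p$-adic topology on $U$ agrees with the topology induced from $V$, which is exactly what your elementary-divisor inclusion $U\cap p^N V\subset p^{N-a}U$ quantifies. (Note only that the elementary divisor step tacitly assumes $V$ has finite rank, which is the setting in which the lemma is applied.)
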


\begin{proof}
We know that $s^n \to 0$ as $n \to \infty$. Clearly, this implies that $s^n\rest_U = (s\rest_U)^n \to 0$ as $n \to \infty$, or
$s\rest_U\in\End_{\Z_p}(U)_{\resnilp}$.
\end{proof}

\begin{corollary} \label{cor: expsinv}
Let $V$ be a free $\Z_p$-module of rank $r<p-1$, $U$ a submodule of $V$ and $s\in\End_{\Z_p}(V)_{\resnilp}$.
Then $U$ is $s$-invariant if and only if it is $\exp(s)$-invariant.
\end{corollary}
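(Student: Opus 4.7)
The plan is to prove both directions by restricting $s$ to $U$, using Lemma~\ref{LemmaSubmoduleResnilp} to preserve residual nilpotence under restriction, and then invoking the $p$-adic exp--log correspondence. As a preliminary, I would note that, as a submodule of the free $\Z_p$-module $V$ of finite rank, $U$ is itself free of rank at most $r$; since the convergence estimate of Lemma~\ref{lem: explog} depends only on the rank being $<p-1$, the series $\exp$ converges on $\End_{\Z_p}(W)_{\resnilp}$ and $\log$ on $1+\End_{\Z_p}(W)_{\resnilp}$ for every free $\Z_p$-module $W$ of rank $\le r$, in particular for $W=U$ and $W=V$.

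For the forward direction I would argue as follows: if $s(U)\subset U$, then $s|_U\in\End_{\Z_p}(U)$ is residually nilpotent by Lemma~\ref{LemmaSubmoduleResnilp}, hence $\exp(s|_U)$ converges in $\End_{\Z_p}(U)$; the termwise identity
\[
\exp(s)(u)=\sum_{k\ge0}\frac{s^k(u)}{k!}=\exp(s|_U)(u)\in U,
\]
valid for $u\in U$, then gives $\exp(s)$-invariance of $U$.

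For the reverse direction, assuming $\exp(s)(U)\subset U$, my plan is first to show that $\exp(s)$ is residually unipotent on $V$. I would use the formal identity $\exp(X)^p=\exp(pX)$ in $\Q[[X]]$ and specialize it inside the commutative, $p$-adically complete subalgebra of $\End_{\Z_p}(V)$ generated by $s$, where absolute $p$-adic convergence justifies the rearrangement; this yields $\exp(s)^p=\exp(ps)\equiv 1\pmod{p}$, because $\exp(ps)-1=\sum_{k\ge 1}p^k s^k/k!\in p\,\End_{\Z_p}(V)$ by the same integrality bounds as in Lemma~\ref{lem: explog}. Consequently $\overline{\exp(s)}^{\,p}=1$ in $\End_{\F_p}(\bar V)$, so $\exp(s)-1$ is residually nilpotent on $V$; since $\exp(s)-1$ preserves $U$, Lemma~\ref{LemmaSubmoduleResnilp} gives that $(\exp(s)-1)|_U$ is residually nilpotent on $U$, whence $\log(\exp(s)|_U)=\log(1+(\exp(s)-1)|_U)$ converges in $\End_{\Z_p}(U)$. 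A second specialization of the formal identity $\log(\exp(X))=X$ in $\Q[[X]]$ then gives $\log(\exp(s))=s$ and $\log(\exp(s)|_U)=s|_U$, so $s|_U\in\End_{\Z_p}(U)$, i.e., $s(U)\subset U$. The main point requiring (mild) care beyond the cited lemmas will be the $p$-adic specialization of the formal identities $\exp(X)^p=\exp(pX)$ and $\log(\exp(X))=X$; this is routine because all operations occur in the commutative, $p$-adically complete subalgebra generated by $s$, where absolute convergence is ensured by the rank hypothesis $r<p-1$.
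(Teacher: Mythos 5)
Your proposal is correct and follows essentially the same route as the paper: the forward direction restricts $s$ to $U$ via Lemma \ref{LemmaSubmoduleResnilp} and uses convergence of $\exp$ on $\End_{\Z_p}(U)_{\resnilp}$, and the converse applies $\log$ to $\exp(s)|_U$ after noting $(\exp(s)-1)|_U$ is residually nilpotent, exactly as the paper's "similar way using $\log$". Your detour through $\exp(s)^p=\exp(ps)\equiv 1\pmod p$ to get residual unipotence is a harmless variant of the observation already made in the proof of Lemma \ref{lem: explog}.
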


\begin{proof}
Suppose that $U$ is $s$-invariant. Then it follows from Lemma \ref{LemmaSubmoduleResnilp} that for all $n\ge1$ we have
$(n!)^{-1}(s\rest_U)^n\in p^{k_n}\End_{\Z_p}(U)$ where $k_n=\lfloor n/\operatorname{rk} U\rfloor-\lfloor(n-1)/(p-1)\rfloor\ge0$.
Since $k_n\rightarrow\infty$ as $n\rightarrow\infty$ and $U$ is closed, we conclude that $U$ is invariant under $\exp s$.
The converse is proved in a similar way using $\log$.
\end{proof}

\begin{proposition} \label{PropLieAlgebraLevel0}
Assume that $p>\dim G+1$ and let $H$ be a subgroup of $G(\Z_p)$.
Then the $\Z_p$-submodule $\lie{h} = \Liec (H)$ of $\lie{g}_{\Z_p}$ generated by the set $\log H_{\resuni}$ is a $\Z_p$-Lie subalgebra.

For $p > n (G)$, the image of $\lie{h}$ in $\lie{g}_{\F_p}$ is the Lie algebra $\Lie_{\F_p} \tilde H$ of
Nori's algebraic envelope $\tilde H \subset G_{\F_p}$ of the image $\bar H$ of $H$ in $G (\F_p)$.
\end{proposition}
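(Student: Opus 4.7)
The plan is to reduce the first statement to the assertion that, for every $h\in H_{\resuni}$, the operator $\ad(\log h)$ preserves the closed $\Z_p$-submodule $\Lieh$. Once this is established, bilinearity and the closedness of $\Lieh$ yield $[\log h,\log h']=\ad(\log h)(\log h')\in\Lieh$ for all $h,h'\in H_{\resuni}$, and hence $[\Lieh,\Lieh]\subset\Lieh$ since the elements $\log h$, $h\in H_{\resuni}$, topologically generate $\Lieh$ as a $\Z_p$-module.

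The key step is to bypass $\ad$ by first establishing the easier fact that $\Ad(h)$ preserves $\Lieh$ for each $h\in H_{\resuni}$, and then using Corollary \ref{cor: expsinv} to pass from $\Ad(h)$ to $\ad(\log h)$. For the former, I will use that $H_{\resuni}$ is stable under $H$-conjugation (residual unipotence is preserved by conjugation, since it is detected by the image in $G(\F_p)$), combined with the naturality identity $\Ad(h)\log(h')=\log(hh'h^{-1})$ valid on $G(\Z_p)_{\resuni}$; thus $\Ad(h)$ maps the generating set $\log H_{\resuni}$ into itself and therefore preserves the closed $\Z_p$-submodule $\Lieh$. For the application of Corollary \ref{cor: expsinv}, I take $V=\Lieg_{\Z_p}$ (of rank $\dim G<p-1$ by hypothesis), $U=\Lieh$, and $s=\ad(\log h)$. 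The operator $s$ lies in $\End_{\Z_p}(\Lieg_{\Z_p})_{\resnilp}$ because $\log h$ is residually nilpotent as an element of $\gl(N_0,\Z_p)$, and $\ad$ of a nilpotent matrix is nilpotent on $\gl(N_0,\F_p)$, hence on $\Lieg_{\F_p}$. Moreover, the standard identity $\Ad(\exp X)=\exp(\ad X)$ (valid on $\gl(N_0)$ once the series converge, and therefore on the invariant subalgebra $\Lieg$) gives $\exp(s)=\Ad(h)$. Since $\Lieh$ is $\Ad(h)$-invariant, Corollary \ref{cor: expsinv} then gives $\ad(\log h)\Lieh\subset\Lieh$, as required. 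I expect the main subtlety to be this transfer from group-theoretic to Lie-theoretic invariance, which is precisely what the rank hypothesis $p>\dim G+1$ buys us.

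For the second assertion, I will reduce modulo $p$ and invoke Nori's theorems. The commutativity of diagram \eqref{eq: logcomm} in Lemma \ref{lem: explog} identifies the image of $\log H_{\resuni}$ in $\Lieg_{\F_p}$ with $\log^{(p)}(\text{image of }H_{\resuni})$. The image of $H_{\resuni}$ in the reduction $\bar H\subset G(\F_p)$ is exactly $\bar H_{\unip}$: any unipotent $\bar h\in\bar H$ satisfies $\bar h^{p^n}=1$, so every lift $h\in H$ of $\bar h$ is residually unipotent. Consequently, the image $\bar{\Lieh}$ of $\Lieh$ in $\Lieg_{\F_p}$ is the $\F_p$-span of $\log^{(p)}\bar H_{\unip}$. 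For $p>\nmbr(G)$, Theorem \ref{TheoremNori} identifies this span with $\Lie_{\F_p}\tilde H$, completing the proof.
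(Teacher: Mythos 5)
Your proof is correct and follows essentially the same route as the paper: you establish $\Ad(h)$-invariance of $\Liec(H)$, pass to $\ad(\log h)$-invariance via Corollary \ref{cor: expsinv} (using $\Ad(h)=\exp(\ad(\log h))$ and $p>\dim G+1$), and for the second assertion reduce modulo $p$ via \eqref{eq: logcomm} and invoke Theorems \ref{TheoremNoriA} and \ref{TheoremNori}. You merely spell out details (conjugation-stability of $H_{\resuni}$, residual nilpotence of $\ad(\log h)$, identification of the image of $H_{\resuni}$ with $\bar H_{\uni}$) that the paper treats as immediate.
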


We remark that under our standing assumption $p > 2 N_0$ the image of $\lie{h}$ in $\lie{g}_{\F_p}$ is always a Lie algebra (to see this, combine \cite[Lemma 1.6]{MR880952} with \eqref{eq: logcomm}).

\begin{proof}
Clearly, $\lie{h}$ is $\Ad(h)$-invariant for any $h\in H$, and in particular for any $h\in H_{\resuni}$.
We have $\Ad(h)=\exp(\ad(\log h))$ for any $h\in H_{\resuni}$. By Corollary \ref{cor: expsinv} we conclude
that $\lie{h}$ is $\ad(\log h)$-invariant for any $h\in H_{\resuni}$. The first claim follows.

The second assertion follows from the commutativity of \eqref{eq: logcomm} and from Theorems \ref{TheoremNoriA} and \ref{TheoremNori}
as in the proof of Theorem \ref{TheoremAlgebraicLevel0}.
\end{proof}

\begin{remark}
At this stage we can already prove that if $H$ is an open subgroup of $G(\Z_p)$ of level $p^n$, then $\lie{h} = \Liec (H)$ has level $p^n$ or $p^{n-1}$.
(Eventually we will prove that $\lie{h}$ has level $p^n$.)
Indeed, $\log(H\cap \K_p(p))$ is a Lie subalgebra of $\lie{g}_{\Z_p}$ of level $p^n$ by Remark \ref{rem: Ilani},
and $h^p \in H \cap \K_p (p)$ for all $h \in H_{\resuni}$.
Therefore $\log (H \cap \K_p (p))\subset\lie{h} \subset p^{-1}\log (H \cap \K_p (p))$, which shows that $\lie{h}$ has level $p^n$ or $p^{n-1}$.
Moreover, it is also clear at this point that if $H$ has level $p$, then $\lie{h}$ has level $p$.
\end{remark}

The remaining parts of Theorem \ref{thm: corrlev0} are easily deduced from the following two statements which will be proved below.
\begin{subequations}
\begin{gather}
\label{step: fromgroup} \text{For any }H\in\SGRU\text{ we have }\Liec(H)\cap p \lie{g}_{\Z_p} =\log (H \cap \K_p (p)).\\
\label{step: fromalgebra} \text{For any }\lie{h}\in\LSAN\text{ we have }\grpc(\lie{h})\cap\K_p(p)=\exp(\lie{h}\cap p\lie{g}_{\Z_p}).
\end{gather}
\end{subequations}
Indeed, given $H\in\SGRU$ let $\lie{h}=\Liec(H)$ and $H'=\grpc(\lie{h})$. It is clear that $H'\supset H$.
By Nori's theorems, the reductions modulo $p$ of $H'$ and $H$ coincide.
Moreover, by \eqref{step: fromgroup} and \eqref{step: fromalgebra} we have $H' \cap \K_p (p) = \exp (\lie{h} \cap p \lie{g}_{\Z_p}) = H \cap \K_p (p)$.
Therefore, $H' = H$. The other direction can be shown in a similar way.

The following lemma provides a complement to Theorem \ref{TheoremIlaniKlopsch} that will be useful below.

\begin{lemma} \label{lem: untitled}
Let $p > \dim G + 1$.
Let $P = \exp \lie{p}$ be a pro-$p$ subgroup of $G (\Z_p)$ and $h \in G(\Z_p)_{\resunip}$ a residually unipotent element normalizing $P$.
Then we have $[\log h, \lie{p}] \subset \lie{p}$, $h P \subset G(\Z_p)_{\resunip}$ and $\log (P h) = \log (h P) = \log h + \lie{p}$.

Similarly, if $\lie{p}$ is a closed Lie subalgebra of $\lie{g}_{\Z_p}$ contained in
$\lie{g}_{\Z_p,\resnilp}$, and a residually nilpotent element $u \in \lie{g}_{\Z_p,\resnilp}$ satisfies $[u,\lie{p}] \subset \lie{p}$, we have $u+\lie{p} \subset \lie{g}_{\Z_p,\resnilp}$ and
$\exp (u+\lie{p}) = \exp (u) P = P \exp (u)$, where $P = \exp \lie{p}$.
\end{lemma}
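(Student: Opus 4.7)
The plan is to establish the Lie-algebra half of the lemma first and deduce the group half by specialising $u=\log h$. For the residual-nilpotency statement $u+\lie p\subset\Lieg_{\Z_p,\resnilp}$, I reduce modulo $p$: $\bar u\in\Lieg_{\F_p}$ is a nilpotent matrix, $\bar{\lie p}$ is a Lie subalgebra of nilpotent matrices, and $[\bar u,\bar{\lie p}]\subset\bar{\lie p}$. Engel's theorem applied to $\bar{\lie p}$ acting on $V=\F_p^{N_0}$ yields $W_0:=\bigcap_{\bar y\in\bar{\lie p}}\ker\bar y\neq 0$; the identity $\bar y(\bar u v)=\bar u(\bar y v)-[\bar u,\bar y]v$ together with $[\bar u,\bar{\lie p}]\subset\bar{\lie p}$ shows that $W_0$ is $\bar u$-stable, so $W_1:=\ker(\bar u|_{W_0})$ is a nonzero subspace annihilated by $\F_p\bar u+\bar{\lie p}$. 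Iterating on $V/W_1$ produces a complete flag in which $\F_p\bar u+\bar{\lie p}$ acts by strictly upper triangular matrices, so every element of $u+\lie p$ is residually nilpotent.

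Set $\lie{h}':=\Z_p u+\lie p$. Since $[u,u]=0$, $[u,\lie p]\subset\lie p$ and $[\lie p,\lie p]\subset\lie p$, this is a closed Lie subalgebra of $\Lieg_{\Z_p}$, and by the previous step it consists of residually nilpotent elements; Theorem~\ref{TheoremIlaniKlopsch} then produces a closed pro-$p$ subgroup $\hat H:=\exp\lie{h}'\supset P$. Because $\lie p$ is a Lie ideal of $\lie{h}'$ and $\ad x\in\End_{\Z_p}(\Lieg_{\Z_p})_{\resnilp}$ for every $x\in\lie{h}'$, Corollary~\ref{cor: expsinv} gives $\Ad(\exp x)\lie p=\exp(\ad x)\lie p=\lie p$, so $P\triangleleft\hat H$; in particular $\exp(u)P=P\exp(u)$. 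To identify the two cosets I invoke the $p$-adic Baker--Campbell--Hausdorff series on $\lie{h}'$, whose convergence on residually nilpotent elements for $p>2N_0$ is the main technical input here (it is part of the analytic content of Theorem~\ref{TheoremIlaniKlopsch} and can be checked by norm estimates of the same kind as in Lemma~\ref{lem: explog}). For $z\in\lie p$ the series reads $\log(\exp(u)\exp(z))=u+z+\tfrac12[u,z]+\cdots$, and every iterated bracket of degree $\ge 2$ in $u$ and $z$ must contain at least one $z$ (since $[u,u]=0$) and therefore lies in $\lie p$ by the ideal property; hence $\log(\exp(u)\exp(z))\in u+\lie p$, giving $\exp(u)P\subset\exp(u+\lie p)$. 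The opposite inclusion is obtained by the same calculation applied to $\log(\exp(-u)\exp(u+y))$ for $y\in\lie p$, using $[\lie{h}',\lie{h}']\subset\lie p$.

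For the group half, set $u:=\log h\in\Lieg_{\Z_p,\resnilp}$. Since $u^{N_0}\in p\,\gl(N_0,\Z_p)$ one has $(\ad u)^{2N_0-1}\in p\End_{\Z_p}(\Lieg_{\Z_p})$, so $\ad u\in\End_{\Z_p}(\Lieg_{\Z_p})_{\resnilp}$. The hypothesis that $h$ normalises $P$ translates under Theorem~\ref{TheoremIlaniKlopsch} into $\Ad(h)\lie p=\lie p$, i.e.\ $\exp(\ad u)\lie p=\lie p$, and Corollary~\ref{cor: expsinv} (applied with $V=\Lieg_{\Z_p}$ of rank $d<p-1$, $U=\lie p$ and $s=\ad u$) upgrades this to $[u,\lie p]\subset\lie p$. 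The inclusion $hP\subset G(\Z_p)_{\resunip}$ follows from reduction mod $p$: $\bar h$ has $p$-power order and normalises the $p$-group $\bar P$, so $\langle\bar h\rangle\bar P$ is a $p$-group and every element of $\bar h\bar P$ is unipotent. Finally, the Lie-algebra half applied with this $u$ gives $hP=\exp(u)P=\exp(u+\lie p)$, whence taking $\log$ (well-defined on $G(\Z_p)_{\resunip}$ by Lemma~\ref{lem: explog}) yields $\log(hP)=u+\lie p=\log h+\lie p$; the identity $\log(Ph)=\log h+\lie p$ follows since $Ph=hP$.
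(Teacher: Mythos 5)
Most of your proposal runs parallel to the paper's own argument and those parts are fine: the Engel argument modulo $p$ for $u+\lie{p}\subset\lie{g}_{\Z_p,\resnilp}$ (the paper dismisses this as ``clear''), the use of Corollary \ref{cor: expsinv} to get $[\log h,\lie{p}]\subset\lie{p}$ and the normality of $P$ in $\exp(\Z_p u+\lie{p})$, and the pro-$p$ (equivalently, mod $p$) argument for $hP\subset G(\Z_p)_{\resunip}$.

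The genuine gap is in the Hausdorff-series step, which is exactly where the lemma's real content lies. From ``every iterated bracket of degree $\ge 2$ in $u$ and $z$ contains a $z$ and hence lies in $\lie{p}$'' you conclude $\log(\exp u\exp z)\in u+\lie{p}$; but $\Phi_n(x,y)$ is a \emph{rational} linear combination of such brackets, and for $n\ge p$ its coefficients have denominators divisible by $p$ (the $p$-adic valuation of the denominator grows like $\lfloor\frac{n-1}{p-1}\rfloor$). So each term $\Phi_n(u,z)$ is a priori only in $p^{-\lfloor (n-1)/(p-1)\rfloor}\lie{p}$, and neither the termwise claim nor convergence of the series (which you also only sketch, by appeal to ``the analytic content'' of Theorem \ref{TheoremIlaniKlopsch}) puts the sum in $u+\lie{p}$. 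The same problem affects your treatment of $\Phi(-u,u+y)$. The paper closes precisely this gap: by \cite[Lemma 4.2]{MR2126210} applied with $K=\lie{p}$, $H=\Z_p u+\lie{p}$, an iterated commutator of one element of $\lie{p}$ with $n-1$ elements of $\Z_p u+\lie{p}$ lies in $p^{\lfloor\frac{n-1}{p-1}\rfloor}\lie{p}$ — this is where the hypothesis $p>\dim G+1$ is actually used, and notice that your BCH step never invokes it — and combining this gain with the standard denominator bound (that $p^{\lfloor\frac{n-1}{p-1}\rfloor}\Phi_n$ is a $\Z_p$-combination of iterated commutators, \cite[\S II.8.1, Proposition 1]{MR1728312}) yields $\Phi_n(u,\lie{p}),\,\Phi_n(-u,u+\lie{p})\subset\lie{p}$ for $n\ge 2$, and hence the coset identities. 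You need to supply this quantitative commutator estimate (or an equivalent) to make your argument complete.
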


\begin{proof}
Since $h$ normalizes $P$, the group $Q$ generated by $h$ and $P$ is a pro-$p$ group. Therefore $hP \subset Q \subset G(\Z_p)_{\resunip}$.
Furthermore, $hPh^{-1} = P$ implies that $\Ad (h) \lie{p} = \lie{p}$, and by Corollary \ref{cor: expsinv} we obtain $[\log h, \lie{p}] \subset \lie{p}$.

Analogously, if we have $\lie{p}$ contained in $\lie{g}_{\Z_p,\resnilp}$, and $u \in \lie{g}_{\Z_p,\resnilp}$ satisfies $[u,\lie{p}] \subset \lie{p}$, then
the Lie algebra generated by $\bar{u}$ and $\bar{\lie{p}}$ inside $\lie{g}_{\F_p}$ clearly consists of nilpotent elements, and therefore
$u+\lie{p} \subset \lie{g}_{\Z_p,\resnilp}$.

Let $\lie{p} \subset \lie{g}_{\Z_p,\resnilp}$ and $u \in \lie{g}_{\Z_p,\resnilp}$ with
$[u, \lie{p}] \subset \lie{p}$.
Denote by $\Phi (x,y) = \sum_{n=1}^\infty \Phi_n (x,y)$ the Hausdorff series \cite[\S II.6.4]{MR1728312},
where $\Phi_n$ is a homogeneous Lie polynomial of degree $n$, and $\Phi_1 (x,y) = x+y$.
Since $\Z_p u + \lie{p}$ is a $\Z_p$-Lie algebra contained in $\gl (N_0, \Z_p)_{\resnilp}$, the Hausdorff series converges on $(\Z_p u + \lie{p})^2$
and we have $\Phi (x,y ) = \log(\exp x\exp y)$ for all $x$, $y \in \Z_p u + \lie{p}$ (\cite[\S II.8.3,II.8.4]{MR1728312}).

It remains to show that $\Phi (u, \lie{p}) \subset u + \lie{p}$ and $\Phi (-u,u+\lie{p}) \subset \lie{p}$.
as these conditions are equivalent to $\exp(u)P\subset\exp(u+\lie{p})$ and $\exp(u+\lie{p})\subset\exp(u)P$, respectively.
(The relation $\log (P h) = \log (h P) = \log h + \lie{p}$ will then follow by taking $\log$.)
For this, it suffices to show that $\Phi_n (u, \lie{p})$, $\Phi_n (-u,u+\lie{p}) \subset \lie{p}$ for all $n \ge 2$.
By \cite[Lemma 4.2]{MR2126210} applied to $K = \lie{p}$, $H = \Z_p u + \lie{p}$ and $j = 1$, and taking into account that $p > \dim G + 1$,
the iterated commutator of an element of $\lie{p}$ and $n-1$ elements of $\Z_p u + \lie{p}$ lies in $p^{\lfloor \frac{n-1}{p-1} \rfloor} \lie{p}$.
On the other hand, by a standard estimate for the denominator of $\Phi_n$ \cite[\S II.8.1, Proposition 1]{MR1728312},
we know that $p^{\lfloor \frac{n-1}{p-1} \rfloor} \Phi_n (x,y)$ is a $\Z_p$-linear combination of iterated commutators of degree $n$ in $x$ and $y$.
This implies that indeed $\Phi_n (u, \lie{p})$, $\Phi_n (-u,u+\lie{p}) \subset \lie{p}$, which finishes the proof.
\end{proof}

\subsection{Application of Nori's theorem}
The proof of \eqref{step: fromgroup} uses crucially Nori's correspondences recalled above. We first summarize some easy complements to Nori's theorems in the following lemma.

\begin{lemma} \label{LemmaComplementNori}
Let $\bar{H}$ be a subgroup of $G (\F_p)$ with $\bar{H}^+=\bar{H}$, $\bar{\lie{h}} = \bar{\Liec} (\bar{H}) \subset \lie{g}_{\F_p}$ the associated Lie algebra, and $\tilde H$ the
associated exponentially generated algebraic subgroup of $G_{\F_p}$. Let $\bar P$ be the maximal normal $p$-subgroup of $\bar H$ and $\tilde{P}$ the unipotent radical of $\tilde H$
(which is also the solvable radical of $\tilde H$). Finally, let $S^{\SC}$ be the simply connected covering group of the semisimple group $\tilde H / \tilde P$. Then
\begin{enumerate}
\item The following four conditions are equivalent: $\bar H$ is a $p$-group; $\tilde H$ is a unipotent algebraic group; $\bar{\lie{h}}$ is contained in $\lie{g}_{\F_p,\nilp}$;
$\bar{\lie{h}}$ is a nilpotent Lie algebra.
\item $\bar H_1$ is a normal subgroup of $\bar H_2$ if and only if $\tilde H_1$ is a normal subgroup of $\tilde H_2$ if and only if
$\bar{\lie{h}}_1$ is an ideal of $\bar{\lie{h}}_2$.
\item $\bar P$ corresponds to $\tilde{P}$ and to the nilradical of $\lie{h}$ (which equals the radical of $\bar{\lie{h}}$ as well as the largest ideal of $\bar{\lie{h}}$ contained in $\lie{g}_{\F_p,\nilp}$).
\item \label{part: cover} The covering map $\kappa:S^{\SC}\rightarrow\tilde H/\tilde P$ induces a surjection $S^{\SC}(\F_p)\rightarrow\bar H/\bar P$
(cf. \cite[Remark 3.6]{MR880952}). The adjoint action of $\bar H$ on $\bar{\lie{h}}$ induces an action of $\bar H/\bar P$ on
$\bar{\lie{h}}/\bar{\lie{p}}$ which is compatible under the above map with the adjoint action of $S^{\SC}$ on its Lie algebra.
\item A Sylow $p$-subgroup of $\bar H$ corresponds to a maximal unipotent subgroup of $\tilde H$ (i.e., to the unipotent radical of a Borel subgroup)
and to a maximal nilpotent subalgebra of $\bar{\lie{h}}$ contained in $\lie{g}_{\F_p,\nilp}$, or equivalently, to the nilradical of a maximal solvable subalgebra of $\bar{\lie{h}}$.
\end{enumerate}
\end{lemma}

\begin{proof}
We only need to observe that if $\lie{h}$ is a nilpotently generated Lie subalgebra of $\gl (N_0,\F_p)$ then $\lie{h}$ is nilpotent if and only if $\lie{h}$ is solvable if and only if
$\lie{h}$ is contained in $\gl (N_0,\F_p)_{\nilp}$. This follows from Engel's and Lie's theorems (the latter is valid over the algebraic closure for $p>N_0$).
\end{proof}

\renewcommand{\arraystretch}{1.5}
\begin{table}[h]
\caption{Nori's correspondence}
\centering
\begin{tabularx}{\textwidth}{ X|X|X }

\hline\hline
subgroups $H$ of $G (\F_p)$ with $H^+=H$ & exponentially generated algebraic subgroups of $G_{\F_p}$ & nilpotently generated Lie subalgebras of $\lie{g}_{\F_p}$ \\
$p$-groups & unipotent groups & Lie subalgebras contained in $\lie{g}_{\F_p,\nilp}$\\
normal subgroups & normal subgroups & ideals\\
maximal normal $p$-subgroup & unipotent (or solvable) radical & nilradical (or radical) \\
a Sylow $p$-subgroup & a maximal unipotent subgroup & the nilradical of a maximal solvable subalgebra
\end{tabularx}
\end{table}

Let now $H\in\SGRU$ and write $\lie{h}=\Liec(H)$. Let $\bar{H}$ be the image of $H$ in $G (\F_p)$. Clearly, $H$ acts on $\lie{h}$ by $\Ad$.
Let $P$ be the largest normal pro-$p$ subgroup of $H$ and $\bar{P}$ the image of $P$ in $G (\F_p)$.
Write $S := H /P$. It is clear that $P$ contains $H \cap \K_p (p)$. Thus, under reduction modulo $p$ we have the isomorphism $S \simeq\bar H/\bar P$.
Also, $\lie{p} = \log P$ is a Lie subalgebra of $\lie{h}$ by Theorem \ref{TheoremIlaniKlopsch}, and
by Corollary \ref{cor: expsinv} in fact an ideal.
It remains to show that $\lie{h} \cap p \lie{g}_{\Z_p}\subset \lie{p}$, since in this case
\[
\lie{h} \cap p \lie{g}_{\Z_p} = \lie{p}\cap p\lie{g}_{\Z_p}=\log(P\cap\K_p(p))=\log (H \cap \K_p (p)).
\]

Let $\lie{s}=\lie{h}/\lie{p}$. Our task is to show that the canonical surjective map $\pi: \lie{s}\rightarrow\bar{\lie{s}} = \bar{\lie{h}} / \bar{\lie{p}}$
is an isomorphism.
Note first that the adjoint action of $H$ on $\lie{h}$ preserves $\lie{p}$ and therefore induces an action of $H$ on $\lie{s}$. Moreover, for $h \in H_{\resunip}$ and $p \in P$
we have $\Ad (p) (\log h) = \log p h p^{-1} = \log [p,h] h \in \log (P h) = \log h + \lie{p}$ by Lemma \ref{lem: untitled}. Therefore, $P$ acts trivially on $\lie{s}$,
and the $H$-action on $\lie{s}$ descends to an action of $S$. Under the map $\pi : \lie{s} \to \bar{\lie{s}}$, this action is compatible with the
canonical action of $\bar H / \bar P \simeq S$ on $\bar{\lie{s}} = \bar{\lie{h}} / \bar{\lie{p}}$.

To study the injectivity of $\pi$, we apply Nori's correspondence (Theorem \ref{TheoremNori}) to the groups $\bar H$ and $\bar P$ and associate to them their algebraic envelopes $\tilde H$ and
$\tilde P$ inside $G_{\F_p}$. By Lemma \ref{LemmaComplementNori}, the group $\tilde P$ is the unipotent radical of $\tilde H$, and the quotient group $\tilde H / \tilde P$ is
therefore a semisimple algebraic group defined over $\F_p$. Let $S^{\SC}$ be the simply connected covering group of the group $\tilde H / \tilde P$.
The quotient $S \simeq \bar{H} / \bar{P}$ is isomorphic to the quotient of $S^{\SC} (\F_p)$ by a central subgroup $K$.
We can therefore regard $\lie{s}$ and $\bar{\lie{s}}$ as representation spaces of $S^{\SC} (\F_p)$.
The action on $\bar{\lie{s}}$ is given by the adjoint representation of $S^{\SC}$ on its Lie algebra $\Lie_{\F_p} S^{\SC}$.

Let $N \supset P$ be a Sylow pro-$p$ subgroup of $H$ and $\lie{n} = \log N \subset \lie{h}$ the associated Lie algebra.
By Lemma \ref{LemmaComplementNori}, the image of $\lie{n}$ in $\bar{\lie{h}}$ is the Lie algebra of a maximal unipotent subgroup of $\tilde{H}$.
Passing to the quotient by $\bar{\lie{p}}$, we obtain that the image of $\lie{n}$ in $\bar{\lie{s}}$ is the Lie algebra of a maximal unipotent subgroup of $S^{\SC}$.
Consider the kernel $\ker \pi = (\lie{p} + p \lie{g}_{\Z_p}) \cap \lie{h} / \lie{p} \subset \lie{s}$ of the map $\pi$.
If an element $\log n + \lie{p}$, where $n \in N$, lies in $\ker \pi$, then $\log n \in \lie{p} + p \lie{g}_{\Z_p}$, which by Lemma \ref{lem: untitled} implies that
$\log (n m) \in p \lie{g}_{\Z_p}$ for a suitable $m \in P$. Consequently, $nm \in H \cap \K_p (p) \subset P$ and therefore $n \in P$.
This means that the restriction of $\pi$ to the subspace $\lie{n} / \lie{p} \subset \lie{s}$ is injective.
Finally, by the very definition of $\lie{h}$, the space $\lie{n} / \lie{p}$ spans $\lie{s}$ under the action of $S^{\SC} (\F_p)$.

The assertion \eqref{step: fromgroup} immediately follows from the following result on characteristic $p$ representations of the group $S^{\SC} (\F_p)$.
Note that $\dim\lie{s}\le \dim G$ in the case at hand.

\begin{proposition} \label{PropAdjoint}
Let $S^{\SC}$ be a simply connected semisimple algebraic group defined over $\F_p$,
$\bar{\lie{s}}=\Lie_{\F_p}S^{\SC}$ with the adjoint representation and
$\pi: \lie{s} \to \bar{\lie{s}}$ a surjection of $\F_p$-representations of $S^{\SC} (\F_p)$. Assume that $p\ge 2\dim\lie{s}$ and that
there exists a subspace $\lie{n} \subset \lie{s}$ such that
\begin{enumerate}
\item The space $\lie{s}$ is spanned as a representation of $S^{\SC} (\F_p)$ by $\lie{n}$.
\item The image $\bar{\lie{n}} = \pi (\lie{n}) \subset \bar{\lie{s}}$ of $\lie{n}$ under $\pi$ is the Lie algebra of a maximal unipotent subgroup of $S^{\SC}$.
\item The induced map $\pi|_{\lie{n}}: \lie{n} \to \bar{\lie{n}}$ is an isomorphism.
\end{enumerate}
Then the map $\pi$ is an isomorphism.
\end{proposition}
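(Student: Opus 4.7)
The plan is to deduce $\ker \pi = 0$ by a splitting-plus-extension argument. First, the bound $p \ge 2\dim\lie{s}$ allows me to apply Jantzen's semisimplicity theorem and conclude that $\lie{s}$ is a semisimple $\F_p[S^{\SC}(\F_p)]$-module. Setting $\lie{k} = \ker \pi$, the short exact sequence $0 \to \lie{k} \to \lie{s} \to \bar{\lie{s}} \to 0$ then splits, yielding an $S^{\SC}(\F_p)$-equivariant section $\sigma: \bar{\lie{s}} \to \lie{s}$ with $\lie{s} = \sigma(\bar{\lie{s}}) \oplus \lie{k}$. Condition~(ii) lets me identify $\lie{n}$ with the graph of a unique linear map $\psi: \bar{\lie{n}} \to \lie{k}$, namely
\[
\lie{n} = \{\sigma(\bar n) + \psi(\bar n) : \bar n \in \bar{\lie{n}}\}.
\]

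The key reformulation is that the $S^{\SC}(\F_p)$-invariant complements of $\lie{k}$ in $\lie{s}$ are precisely the graphs of equivariant linear maps $\tilde\psi : \bar{\lie{s}} \to \lie{k}$. Hence condition~(i) is equivalent to $\psi$ \emph{not} being the restriction to $\bar{\lie{n}}$ of any such equivariant $\tilde\psi$, for otherwise $\lie{n}$ would be contained in a proper subrepresentation of $\lie{s}$. To conclude $\lie{k} = 0$ it therefore suffices to argue by contradiction: assuming $\lie{k} \ne 0$, I would show that conditions~(ii) and~(iii) force $\psi$ to admit such an equivariant extension, contradicting (i). To build the extension, I would use Steinberg's algebraicity theorem to realize each simple summand of $\lie{k}$ as the restriction of an irreducible rational representation of $S^{\SC}$ with restricted highest weight, and then combine this with the Curtis--Steinberg--Tits presentation of $S^{\SC}(\F_p)$ by root subgroups $U_\alpha(\F_p)$ subject to the Chevalley--Steinberg relations. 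Since $\bar{\lie{n}}$ is the sum of the positive root spaces in the adjoint representation $\bar{\lie{s}}$, and $\bar{\lie{s}}$ is generated as a Lie algebra by $\bar{\lie{n}}$ and the opposite nilpotent subalgebra, the prescribed values of $\psi$ on the positive root spaces should propagate, via the action of the negative root subgroups, to candidate values of $\tilde\psi$ on the Cartan part and on the negative root spaces.

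The main obstacle is the consistency check: one must verify that the candidate $\tilde\psi$ so produced respects all the Chevalley--Steinberg defining relations of $S^{\SC}(\F_p)$, so that it really extends equivariantly. This is where the specific structure of $\bar{\lie{s}}$ as the adjoint representation is essential (the relevant weights are exactly the roots, and the weight multiplicities match those of a single copy of the adjoint representation), and where the primality bound $p \ge 2\dim\lie{s}$ is used to ensure both that Jantzen semisimplicity applies to every relevant subquotient and that the integral Chevalley relations reduce correctly modulo $p$.
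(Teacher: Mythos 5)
Your opening moves (Jantzen semisimplicity, the splitting $\lie{s}=\sigma(\bar{\lie{s}})\oplus\lie{k}$ with $\lie{k}=\ker\pi$, and the identification of $\lie{n}$ with the graph of $\psi:\bar{\lie{n}}\to\lie{k}$) are fine, but the proof stops exactly where the mathematical content begins. The statement you still need --- that (ii) and (iii) force $\psi$ to extend to an $S^{\SC}(\F_p)$-equivariant map $\bar{\lie{s}}\to\lie{k}$ --- is, when $\lie{k}$ is irreducible, essentially a restatement of the proposition itself: a proper subrepresentation containing $\lie{n}$, surjecting onto $\bar{\lie{s}}$ and meeting $\lie{k}$ trivially, is precisely the graph of such an extension (and when the span of $\lie{n}$ meets $\lie{k}$ nontrivially, $\psi$ itself need not extend at all, so even the reformulation has to be handled per irreducible quotient of $\lie{k}$). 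You propose to produce $\tilde\psi$ by propagating the values of $\psi$ from the positive root spaces through the negative root subgroups and then verifying the Curtis--Steinberg--Tits relations, but you give no argument for that verification --- which you yourself flag as the main obstacle --- and the ``candidate values'' on the Cartan part and the negative root spaces are not even well defined before equivariance is known. So the core of the argument is missing, not merely deferred.

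Concretely, the case your sketch never confronts is the one the proposition is really about. After Jantzen semisimplicity and Steinberg's classification, a weight analysis shows that the only possible constituents of $\ker\pi$ are Frobenius twists $L(p^i\tilde\alpha_{\rm short})$ of the ``little adjoint'' representations attached to the non-simply-laced factors: this uses the hypothesis $p\ge 2\dim\lie{s}$ not to ``reduce the integral Chevalley relations mod $p$'' but to guarantee (Lemma \ref{LemmaTorus}) that distinct $\mathcal{T}$-weights of small representations restrict to distinct characters of $\mathcal{T}^\sigma$, and then the multiplicity bound of Lemma \ref{LemmaMultiplicitiesLlambda}, which says the highest weight of any constituent of $\lie{s}$ must already occur in the $\mathcal{T}^\sigma$-module $\lie{n}$ with at least the same multiplicity. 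Ruling out the remaining candidates $L(p^i\tilde\alpha_{\rm short})$ is the real work: one shows that the $p^i\tilde\alpha_{\rm short}$-weight line of $\lie{n}$ already lies in the canonical lift of $\bar{\lie{s}}$ inside $\lie{s}$, because it is reached from the long-simple-root lines of $\lie{n}$ (whose weights occur only in adjoint constituents) by successive applications of elements of $\mathcal{B}^\sigma$ --- this is the root-combinatorial content of Lemmas \ref{LemmaRootSequences} and \ref{LemmaLongRoots}. Nothing in your proposal plays this role, so the possibility $\ker\pi\simeq L(p^i\tilde\alpha_{\rm short})$, i.e.\ a twist of $\lie{n}$ into a short-root direction, is never excluded; any completion of your consistency check would have to reprove precisely this point.
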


\subsection{Proof of Proposition \ref{PropAdjoint}}
To finish the proof of \eqref{step: fromgroup}, it remains to prove Proposition \ref{PropAdjoint}.
For this we need to recall the representation theory of $S^{\SC} (\F_p)$ in characteristic $p$.

Write $S^{\SC} = \prod_{i} \Res_{\F_{q_i} / \F_p} \mathcal{S}_i$, where each $q_i$ is a power of $p$ and $\mathcal{S}_i$ is an absolutely simple, simply connected
algebraic group defined over $\F_{q_i}$.
Set $\mathcal{S} = \prod_i \mathcal{S}_i$, a semisimple, simply connected group defined over $\clos$, and let $\sigma = \prod_i \operatorname{Frob}_{q_i}$,
a Steinberg endomorphism of $\mathcal{S}$ \cite{MR0230728}.
We have then $S^{\SC} (\F_p) = \mathcal{S}^\sigma$.
The $\sigma$-stable Borel subgroups of $\mathcal{S}$ are in one-to-one correspondence with the Borel subgroups of $S^{\SC}$ defined over $\F_p$.
We take the $\sigma$-stable Borel subgroup $\mathcal{B}$ of $\mathcal{S}$ corresponding to the normalizer of the subspace $\bar{\lie{n}} \subset \bar{\lie{s}}$.
We also fix a $\sigma$-stable maximal torus $\mathcal{T}$ of $\mathcal{S}$ contained in $\mathcal{B}$,
and let $\mathcal{U} \subset \mathcal{B}$ be the unipotent radical of $\mathcal{B}$, a maximal unipotent subgroup of $\mathcal{S}$ stable under $\sigma$.

Let $\Phi$ be the root system of $\mathcal{S}$ with respect to $\mathcal{T}$, $\Phi^+ \subset \Phi$ the system of positive roots associated to $\mathcal{B}$,
and $\Delta$ the set of simple roots.
For any root $\alpha\in\Phi$ we set $q(\alpha)=q_i$ if $\alpha$ belongs to the simple factor $\mathcal{S}_i$ in the factorization $\mathcal{S} = \prod_i \mathcal{S}_i$ above.
By \cite[\S11.2, 11.6]{MR0230728}, there exists a permutation $\rho$ of $\Phi$, preserving $\Phi^+$ and $\Delta$, such that
$\sigma^* \rho \alpha = q (\alpha) \alpha$ for all $\alpha \in \Phi$, where $\sigma^*$ denotes the action of $\sigma$ on the weight lattice
$X = X^* (\mathcal{T})$. Let $X^+ \subset X$ the set of dominant weights and
let
\[
X^+_{\sigma} = \{ \lambda \in X : 0 \le \sprod{\lambda}{\alpha^\vee} < q (\alpha), \quad \alpha \in \Delta \} \subset X^+.
\]

For each $\lambda \in X^+$ let $L(\lambda)$ be the irreducible representation of $\mathcal{S}$ of highest weight $\lambda$ with coefficients in $\clos$.
By Steinberg's algebraicity theorem \cite[Theorem 13.3]{MR0230728} the
irreducible representations of $\mathcal{S}^\sigma$ over the field $\clos$ are precisely the restrictions to $\mathcal{S}^\sigma$ of the $\mathcal{S}$-representations $L(\lambda)$ for $\lambda\in X^+_{\sigma}$.

For a $\mathcal{T}$-representation $V$ and a character $\chi \in X$ we write $V^{(\chi)}$ for the $\chi$-eigenspace in $V$, and similarly for $\mathcal{T}^\sigma$-representations and $\clos$-valued characters
of $\mathcal{T}^\sigma$.

\begin{lemma} \label{lem: N}
For any $\lambda \in X^+_{\sigma}$ we have
$L(\lambda)^{\mathcal{U}^\sigma} = L(\lambda)^{\mathcal{U}}= L(\lambda)^{(\lambda)}$, and this space is one-dimensional.
Moreover, any non-trivial $\mathcal{U}^\sigma$-invariant subspace of $L(\lambda)$ contains $L(\lambda)^{\mathcal{U}}$.
\end{lemma}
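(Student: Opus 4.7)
The plan is to split the lemma into three pieces. First I establish $L(\lambda)^{\mathcal{U}} = L(\lambda)^{(\lambda)}$ and that this space is one-dimensional; then I upgrade this to $L(\lambda)^{\mathcal{U}^\sigma} = L(\lambda)^{\mathcal{U}}$; finally I deduce the statement about arbitrary non-trivial $\mathcal{U}^\sigma$-invariant subspaces.

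For the first step, $\mathcal{T}$ normalizes $\mathcal{U}$, so $L(\lambda)^{\mathcal{U}}$ is a $\mathcal{T}$-stable subspace and decomposes as a sum of $\mathcal{T}$-weight spaces. If $v \ne 0$ is a $\mathcal{U}$-fixed weight vector of weight $\mu$, the $\mathcal{S}$-submodule generated by $v$ has all of its weights of the form $\mu - \sum_{\alpha \in \Delta} n_\alpha \alpha$ with $n_\alpha \in \Z_{\ge 0}$; irreducibility of $L(\lambda)$ forces this submodule to equal $L(\lambda)$, and hence $\mu = \lambda$. This gives $L(\lambda)^{\mathcal{U}} \subset L(\lambda)^{(\lambda)}$, and the reverse inclusion is the familiar fact that the highest weight space of an irreducible highest weight module is annihilated by every positive root subgroup. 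One-dimensionality of $L(\lambda)^{(\lambda)}$ is standard.

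For the second step, Steinberg's algebraicity theorem \cite[Theorem 13.3]{MR0230728} applied to $\lambda \in X^+_\sigma$ shows that the restriction of $L(\lambda)$ to the finite group $\mathcal{S}^\sigma$ is irreducible over $\bar{\F}_p$. A classical theorem on irreducible $\bar{\F}_p$-representations of finite groups of Lie type in defining characteristic (due to Curtis and Richen, and related to \cite{MR1635685}) asserts that the fixed space under a Sylow $p$-subgroup, here $\mathcal{U}^\sigma$, is one-dimensional. Since $L(\lambda)^{\mathcal{U}} \subset L(\lambda)^{\mathcal{U}^\sigma}$ and the former is non-zero and one-dimensional by the first step, the two must coincide.

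For the last step, any non-trivial $\mathcal{U}^\sigma$-invariant subspace $W \subset L(\lambda)$ is a non-zero $\bar{\F}_p$-representation of the finite $p$-group $\mathcal{U}^\sigma$, and so $W^{\mathcal{U}^\sigma} \ne 0$ (the only irreducible representation of a $p$-group in its defining characteristic being the trivial one). The inclusion $W^{\mathcal{U}^\sigma} \subset L(\lambda)^{\mathcal{U}^\sigma} = L(\lambda)^{\mathcal{U}}$ together with one-dimensionality of $L(\lambda)^{\mathcal{U}}$ forces $L(\lambda)^{\mathcal{U}} = W^{\mathcal{U}^\sigma} \subset W$, as required. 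The main obstacle is the one-dimensionality of $L(\lambda)^{\mathcal{U}^\sigma}$ invoked in the second step: Steinberg's theorem alone only delivers irreducibility of $L(\lambda)|_{\mathcal{S}^\sigma}$, and the sharper statement that the $\mathcal{U}^\sigma$-fixed subspace is one-dimensional is a non-trivial input from the modular representation theory of finite groups of Lie type, which is presumably where Jantzen's semisimplicity theorem (flagged as a key tool in the introduction) enters.
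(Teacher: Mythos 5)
Your argument is correct and takes essentially the same route as the paper: the paper likewise gets $L(\lambda)^{\mathcal{U}}=L(\lambda)^{(\lambda)}$ (one-dimensional) from standard highest-weight theory, quotes Curtis's theorem on modular representations of groups with split $(B,N)$-pairs \cite[Theorem 4.3]{MR0262383} for $L(\lambda)^{\mathcal{U}^\sigma}=L(\lambda)^{\mathcal{U}}$ (your Steinberg-plus-Curtis--Richen derivation is the same input), and finishes with the $p$-group fixed-vector fact. Only your closing aside is off: the sharper statement about $\mathcal{U}^\sigma$-fixed vectors is the Curtis--Richen theorem, not Jantzen's semisimplicity theorem, which the paper uses only later, for the complete reducibility underlying Lemma \ref{LemmaMultiplicitiesLlambda}.
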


\begin{proof}
The first equality is contained in \cite[Theorem 4.3]{MR0262383}, and the second equality is standard (cf. \cite[Theorem 5.3]{MR0258838}).
The second part follows from the first one and the well-known fact that any representation of a $p$-group over a field of characteristic $p$
admits a non-trivial vector fixed under the action \cite[\S 8.3, Proposition 26]{MR0450380}.
\end{proof}

\begin{lemma} \label{LemmaTorus}
Let $V$ be a representation of $\mathcal{S}^\sigma$ over $\clos$ with $\dim V < \frac{p+1}{2}$. Then for every $\mu \in X$ with
$V^{(\mu)} \neq 0$ we have $V^{(\mu)} = V^{(\mu|_{\mathcal{T}^\sigma})}$.
\end{lemma}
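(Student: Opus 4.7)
The plan is to pass from the $\mathcal{S}^\sigma$-representation $V$ to an algebraic representation of $\mathcal{S}$, translate the hypothesis $\mu|_{\mathcal{T}^\sigma} = \mu'|_{\mathcal{T}^\sigma}$ into the condition $\mu - \mu' \in (\sigma^*-1)X$ via Lang--Steinberg, and derive a contradiction by restricting to an $\SL_2$-subgroup.

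First, since $\dim V < (p+1)/2 \le p$, Jantzen's semisimplicity theorem ensures that $V$ is semisimple as a representation of $\mathcal{S}^\sigma$. By Steinberg's algebraicity theorem each irreducible summand is uniquely the restriction of some $L(\lambda_i)$ with $\lambda_i \in X^+_\sigma$, so $V$ acquires an algebraic $\mathcal{S}$-module structure $V \simeq \bigoplus_i L(\lambda_i)$ and a well-defined $\mathcal{T}$-weight decomposition $V = \bigoplus_{\mu \in X} V^{(\mu)}$. Next, Lang--Steinberg applied to $\mathcal{T}$ gives the short exact sequence $1 \to \mathcal{T}^\sigma \to \mathcal{T} \xrightarrow{\sigma-1} \mathcal{T} \to 1$, which dualises to $\widehat{\mathcal{T}^\sigma} \simeq X/(\sigma^*-1)X$; hence two weights $\mu,\mu'\in X$ agree on $\mathcal{T}^\sigma$ if and only if $\mu - \mu' \in (\sigma^*-1)X$.

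The claim thus reduces to showing that no two distinct weights of $V$ are congruent modulo $(\sigma^*-1)X$. Suppose for contradiction such a pair $\mu, \mu'$ exists and write $\mu' - \mu = (\sigma^*-1)\nu$ with $\nu \ne 0$. Since $\mathcal{S}$ is simply connected the simple coroots form a $\Z$-basis of $X^\vee$; exploiting the defining relation $\sigma^*\rho\alpha = q(\alpha)\alpha$ together with its dual on coroots, one produces a simple root $\alpha$ for which $\langle (\sigma^*-1)\nu, \alpha^\vee\rangle$ is a non-zero integer divisible by $q(\alpha)-1 \ge p-1$.

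Finally, restrict $V$ to the $\SL_2$-type subgroup $X_\alpha \subset \mathcal{S}$. Under the torus $T_\alpha \subset X_\alpha$ the weights of $V$ project to the integers $\langle \mu, \alpha^\vee\rangle$, two of which therefore differ by a non-zero multiple of $p-1$. A dimension estimate based on $\beta$-strings through the highest weights $\lambda_i$ (using $\dim L(\lambda_i) \ge \langle \lambda_i,\beta^\vee\rangle + 1$ whenever $\langle \lambda_i,\beta^\vee\rangle < p$), combined with $\dim L(\lambda_i) < (p+1)/2$, constrains all $T_\alpha$-weights of $V|_{X_\alpha}$ to lie in the interval $(-p,p)$, so by Weyl symmetry the maximal $T_\alpha$-weight $M$ satisfies $(p-1)/2 \le M < p$. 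A highest-weight vector of weight $M$ then generates a subrepresentation isomorphic to the simple Weyl module $L(M) = V(M)$ of dimension $M+1 \ge (p+1)/2$, contradicting $\dim V < (p+1)/2$. The main obstacle will be the weight bookkeeping in the last two steps---the divisibility $\langle (\sigma^*-1)\nu, \alpha^\vee\rangle \in (p-1)\Z\setminus\{0\}$ in the twisted cases where $\rho$ acts non-trivially on $\Delta$, and the uniform bound $M < p$ on the $T_\alpha$-weights of $V$, which requires combining $\lambda_i \in X^+_\sigma$ with the smallness of $\dim V$.
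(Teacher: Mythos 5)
Your opening moves coincide with the paper's: semisimplify $V$ via Jantzen, algebraize the constituents via Steinberg, and reduce the lemma to showing that two distinct $\mathcal{T}$-weights of $V$ cannot have the same restriction to $\mathcal{T}^\sigma$; your identification of the kernel of restriction with $(\sigma^*-1)X$ (Lang--Steinberg) is a correct, equivalent form of the congruence conditions the paper writes out. The gap is in the step where you pass from $\mu'-\mu=(\sigma^*-1)\nu\neq 0$ to "a simple root $\alpha$ with $\sprod{(\sigma^*-1)\nu}{\alpha^\vee}$ a non-zero integer divisible by $q(\alpha)-1$". Since $\sigma_*\alpha^\vee=q(\alpha)(\rho\alpha)^\vee$, one has $\sprod{(\sigma^*-1)\nu}{\alpha^\vee}=q(\alpha)\sprod{\nu}{(\rho\alpha)^\vee}-\sprod{\nu}{\alpha^\vee}$, and when $\rho$ permutes $\Delta$ non-trivially this is not a multiple of $q(\alpha)-1$ in general. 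Concretely, for $S^{\SC}=\Res_{\F_{p^2}/\F_p}\SL(2)$ (so $\mathcal{S}=\SL_2\times\SL_2$, $\rho$ swapping the two simple roots, $q(\alpha)=p$), taking $\nu$ with $\sprod{\nu}{\alpha_1^\vee}=p-1$, $\sprod{\nu}{\alpha_2^\vee}=1$ gives $\sprod{(\sigma^*-1)\nu}{\alpha_1^\vee}=1$; no single simple coroot sees a gap of size $p-1$, so the contradiction you aim for cannot be extracted from one $\SL_2$. The second load-bearing claim, that all $T_\alpha$-weights of $V$ lie in $(-p,p)$, also fails outside the split-over-$\F_p$ case: again for $\Res_{\F_{p^2}/\F_p}\SL(2)$, the module $L(1+p)\simeq L(1)\otimes L(1)^{[p]}$ has dimension $4<\frac{p+1}2$ but carries the weight $1+p>p$. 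So while your argument does work when $q(\alpha)=p$ and $\rho=1$ (there $(\sigma^*-1)\nu=(p-1)\nu$ and the Jantzen bound $\abs{\sprod{\mu}{\alpha^\vee}}<\frac{p-1}2$ finishes it immediately, with no need for the Weyl-module dimension count), the lemma is needed precisely for the general $S^{\SC}=\prod_i\Res_{\F_{q_i}/\F_p}\mathcal{S}_i$, where both steps break.

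The repair forces you back to the paper's argument: use Steinberg's tensor product theorem together with Jantzen's bound to show that every weight $\mu$ of $V$ satisfies $\abs{\sprod{\mu}{\alpha^\vee}}<\frac{q(\alpha)-1}2$ for all $\alpha\in\Delta$, and then test $\mu'-\mu$ not against a single coroot but against the whole $\rho$-orbit: the kernel of restriction to $\mathcal{T}^\sigma$ is cut out by the conditions $q(\alpha)^{d(\alpha)}-1\mid\sum_{j=0}^{d(\alpha)-1}q(\alpha)^j\sprod{\mu}{\rho^j\alpha^\vee}$, and since these orbit sums are base-$q(\alpha)$ expansions with digits of absolute value $<\frac{q(\alpha)-1}2$, equal restrictions force equal digits, hence $\mu=\mu'$. (Equivalently, in your notation: the telescoping identity $\sum_j q^j\bigl(q\,n_{j+1}-n_j\bigr)=(q^d-1)n_0$ shows that if all the individual pairings of $(\sigma^*-1)\nu$ with the coroots in the orbit are of absolute value $<q-1$, then $\nu$ pairs trivially with the whole orbit.) Once this is in place, the $\SL_2$-restriction and Weyl-module dimension estimate are superfluous.
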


\begin{proof}
We need to show that for $\mu, \mu' \in X$ with $V^{(\mu)}$, $V^{(\mu')} \neq 0$ the identity $\mu|_{\mathcal{T}^\sigma} = \mu'|_{\mathcal{T}^\sigma}$ implies that $\mu=\mu'$.

For this, we first claim that every $\mu \in X$ with $V^{(\mu)} \neq 0$ satisfies
\[
\abs{\sprod{\mu}{\alpha^\vee}}\le \frac{q(\alpha)-1}2 \text{ for all } \alpha \in \Delta.
\]
Since $V\mapsto V^{(\mu)}$ is an exact functor, we may assume without loss of generality that $V$ is irreducible, say $V=L(\lambda)$.
We may also assume that $\mathcal{S}$ is simple.
Then $q (\alpha) = q = p^r$ for some $r \ge 1$ and we may write
$\lambda=\sum_{i=0}^{r-1}p^i\lambda_i$ with $0 \le \sprod{\lambda_i}{\alpha^\vee} < p$ for all $\alpha \in \Delta$. By Steinberg's tensor product theorem \cite{MR0155937},
the $\mathcal{S}$-representation $L (\lambda)$ is isomorphic to the tensor product of the representations $L (p^i \lambda_i)$
for $i = 0, \ldots, r-1$, and $L (p^i \lambda_i)$ is isomorphic to the $i$-th Frobenius twist of $L (\lambda_i)$.
Therefore $\dim L(\lambda)=\prod_{i=0}^{r-1}\dim L(\lambda_i)$ and in particular $\dim L(\lambda_i)<(p+1)/2$ for all $i$.
By \cite[Lemma 1.2]{MR1635685}, this implies that $\sprod{\lambda_i}{\alpha^\vee}<(p-1)/2$ for all $\alpha\in\Phi$.
Hence, $\abs{\sprod{\mu_i}{\alpha^\vee}}<(p-1)/2$ for any weight $\mu_i$ of $L(\lambda_i)$.
Suppose that $\mu \in X$ with $L (\lambda)^{(\mu)}\simeq\otimes L(p^i \lambda_i)^{(p^i \mu_i)} \neq 0$.
Then $\mu=\sum_{i=0}^{r-1}p^i\mu_i$ with $L(\lambda_i)^{(\mu_i)} \neq 0$ for all $i$ and therefore
\[
\abs{\sprod{\mu}{\alpha^\vee}}\le\sum_{i=0}^{r-1}p^i\abs{\sprod{\mu_i}{\alpha^\vee}}<\frac{p-1}2 \sum_{i=0}^{r-1}p^i=\frac{q-1}2,
\]
which establishes the claim.

For each $\alpha \in \Phi$ let $d (\alpha)$ be the number of elements in the $\rho$-orbit of $\alpha$.
The kernel of the restriction map $\mu\mapsto\mu|_{\mathcal{T}^\sigma}$ is then given by
\[
\{\mu\in X:q (\alpha)^{d (\alpha)}-1\big|\sum_{j=0}^{d (\alpha)-1}\sprod{\mu}{\sigma^j\alpha^\vee}\text{ for all }\alpha\in\Delta\}.
\]

For all $\mu \in X$ with $V^{(\mu)} \neq 0$ we have here
\[
\abs{\sum_{j=0}^{d (\alpha)-1} \sprod{\mu}{\sigma^j\alpha^\vee}} =
\abs{\sum_{j=0}^{d (\alpha)-1} q (\alpha)^j \sprod{\mu}{\rho^j\alpha^\vee}}
<\frac12\sum_{j=0}^{d (\alpha)-1}q (\alpha)^j(q (\alpha)-1)=\frac12(q (\alpha)^{d (\alpha)}-1).
\]
Thus if $\mu,\mu'\in X$ are weights of $V$ with $\mu|_{\mathcal{T}^\sigma} = \mu'|_{\mathcal{T}^\sigma}$, then necessarily
\[
\sum_{j=0}^{d (\alpha)-1} \sprod{\mu}{\sigma^j\alpha^\vee}=\sum_{j=0}^{d (\alpha)-1} \sprod{\mu'}{\sigma^j\alpha^\vee},
\]
which necessitates that $\sprod{\mu}{\sigma^j\alpha^\vee}=\sprod{\mu'}{\sigma^j\alpha^\vee}$ for all $j$.
Thus, $\sprod{\mu}{\alpha^\vee}=\sprod{\mu'}{\alpha^\vee}$ for all $\alpha\in\Delta$, so that $\mu=\mu'$ as required.
\end{proof}

We will now use a rather deep result of Jantzen \cite{MR1635685} (which crucially relies on earlier work by Cline--Parshall--Scott--van den Kallen \cite{MR0439856}).
Namely, every $\clos$-representation of $\mathcal{S}^\sigma$ of dimension less than $p-1$ is completely reducible, and consequently isomorphic to a direct sum of representations
$L (\lambda)$ with $\lambda\in X^+_{\sigma}$.
(See \cite{MR1753813} for an extension of this result. Note that by \cite{MR684821} there exists an $\clos$-representation of $\SL(2,\F_p)$ of dimension $p-1$ that is not completely reducible.)

\begin{lemma} \label{LemmaMultiplicitiesLlambda} Let $V$ be a representation of $\mathcal{S}^\sigma$ over $\clos$ with $\dim V < \frac{p+1}{2}$.
Let $U \subset V$ be a $\mathcal{B}^\sigma$-invariant subspace that spans $V$ as a $\mathcal{S}^\sigma$-representation.
\begin{enumerate}
\item Let $\phi: V \to L (\lambda)$, $\lambda \in X^+_{\sigma}$, be a non-trivial $\mathcal{S}^\sigma$-map. Then
$\phi (U^{(\lambda|_{\mathcal{T}^\sigma})}) = L(\lambda)^{(\lambda|_{\mathcal{T}^\sigma})}$.
\item The multiplicity of $L (\lambda)$, $\lambda \in X^+_{\sigma}$, in $V$ is at most $\dim U^{(\lambda|_{\mathcal{T}^\sigma})}$.
In particular, the restriction to $\mathcal{T}^\sigma$ of the highest weight of any irreducible constituent
of $V$ appears in the $\mathcal{T}^\sigma$-space $U$.
\end{enumerate}
\end{lemma}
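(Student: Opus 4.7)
The approach is to reduce both assertions to two facts: (i) for every non-trivial $\mathcal{S}^\sigma$-equivariant map $\phi : V \to L(\lambda)$, the image $\phi(U)$ contains the one-dimensional highest weight space $L(\lambda)^{(\lambda)}$, and (ii) the finite group $\mathcal{T}^\sigma$ has order coprime to $p$, so that any $\clos$-representation of $\mathcal{T}^\sigma$ decomposes as a direct sum of weight spaces. Fact (ii) is the standard determinant formula for the order of $\mathcal{T}^\sigma$ attached to a Steinberg endomorphism.

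For Part 1, I first observe that $\phi(U)\neq 0$: otherwise $\phi(V)=\phi(\mathcal{S}^\sigma\cdot U)=\mathcal{S}^\sigma\cdot\phi(U)=0$, contradicting the hypothesis. Since $U$ is $\mathcal{B}^\sigma$-invariant, so is $\phi(U)$, and in particular $\phi(U)$ is a non-trivial $\mathcal{U}^\sigma$-invariant subspace of $L(\lambda)$. By Lemma~\ref{lem: N}, $\phi(U)\supset L(\lambda)^{\mathcal{U}}=L(\lambda)^{(\lambda)}$, which is one-dimensional. Now by (ii) the decomposition $U=\bigoplus_\chi U^{(\chi)}$ into $\mathcal{T}^\sigma$-weight spaces makes sense, and $\mathcal{T}^\sigma$-equivariance of $\phi$ gives $\phi(U^{(\chi)})\subset L(\lambda)^{(\chi)}$ for each character $\chi$ of $\mathcal{T}^\sigma$. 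By Lemma~\ref{LemmaTorus} (whose hypothesis $\dim L(\lambda)\le\dim V<(p+1)/2$ holds) we have $L(\lambda)^{(\lambda|_{\mathcal{T}^\sigma})}=L(\lambda)^{(\lambda)}$. Combining,
\[
\phi(U^{(\lambda|_{\mathcal{T}^\sigma})})=\phi(U)\cap L(\lambda)^{(\lambda|_{\mathcal{T}^\sigma})}\supset L(\lambda)^{(\lambda)}=L(\lambda)^{(\lambda|_{\mathcal{T}^\sigma})},
\]
and since the outer inclusion is automatic the claim follows.

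For Part 2, I would invoke Jantzen's semisimplicity theorem, which applies because $\dim V<(p+1)/2<p-1$, to write $V\cong\bigoplus_{\mu\in X^+_{\sigma}} L(\mu)^{m_\mu}$. Combined with Steinberg's classification, Schur's lemma over the algebraically closed field $\clos$ gives $\dim\Hom_{\mathcal{S}^\sigma}(V,L(\lambda))=m_\lambda$. The $\clos$-linear restriction map
\[
\Hom_{\mathcal{S}^\sigma}(V,L(\lambda))\longrightarrow\Hom_{\clos}(U^{(\lambda|_{\mathcal{T}^\sigma})},L(\lambda)^{(\lambda|_{\mathcal{T}^\sigma})})
\]
is injective by Part 1: its kernel consists of $\mathcal{S}^\sigma$-maps vanishing on $U^{(\lambda|_{\mathcal{T}^\sigma})}$, which Part 1 forces to be trivial. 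Since the target has $\clos$-dimension $\dim U^{(\lambda|_{\mathcal{T}^\sigma})}\cdot 1$, we conclude $m_\lambda\le\dim U^{(\lambda|_{\mathcal{T}^\sigma})}$. The final assertion is then immediate: if $L(\lambda)$ appears as a constituent of $V$ then $m_\lambda\ge 1$, so the character $\lambda|_{\mathcal{T}^\sigma}$ necessarily occurs among the $\mathcal{T}^\sigma$-weights of $U$.

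The main point requiring care is the availability of the $\mathcal{T}^\sigma$-weight decomposition of $U$, which hinges on the standard fact that Steinberg endomorphisms fix tori of order prime to $p$; everything else reduces to direct bookkeeping with the already-established Lemmas~\ref{lem: N} and~\ref{LemmaTorus} together with Jantzen's semisimplicity result. There is no serious obstacle beyond correctly matching the ambient $\mathcal{T}$-weight $\lambda$ with its restriction $\lambda|_{\mathcal{T}^\sigma}$, which Lemma~\ref{LemmaTorus} handles in the small-dimension regime.
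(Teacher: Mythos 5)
Your proof is correct and follows essentially the same route as the paper: Part 1 combines Lemmas \ref{lem: N} and \ref{LemmaTorus} with the $\mathcal{T}^\sigma$-equivariance of $\phi$ exactly as in the paper's argument, and Part 2 rests on the same ingredients (Jantzen's semisimplicity theorem, Steinberg's classification plus Schur's lemma, and Part 1). The only difference is cosmetic: where the paper argues by contradiction via a surjection $V \to L(\lambda)^n$ and the isomorphism $\Hom_{\mathcal{S}^\sigma}(L(\lambda)^n, L(\lambda)) \simeq \Hom_{\clos}\bigl((L(\lambda)^n)^{(\lambda)}, L(\lambda)^{(\lambda)}\bigr)$, you bound the multiplicity directly through the injectivity of the restriction map on $\Hom$-spaces, which is an equivalent dimension count.
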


\begin{proof}
Consider the first assertion.
Since $\phi (U)$ spans $L(\lambda)$ as an $\mathcal{S}^\sigma$-space, we have $\phi (U) \neq 0$.
Combining Lemmas \ref{lem: N} and \ref{LemmaTorus}, and noting that $\phi(U)$ is $\mathcal{U}^\sigma$-invariant, we have
$\phi(U)^{\mathcal{U}^\sigma}=L(\lambda)^{(\lambda)}=L(\lambda)^{(\lambda|_{\mathcal{T}^\sigma})}$,
which is a one-dimensional space. Thus, $\phi(U)^{(\lambda|_{\mathcal{T}^\sigma})} = L(\lambda)^{(\lambda|_{\mathcal{T}^\sigma})}$.
Since $U$ is $\mathcal{B}^\sigma$-invariant, we also have $\phi(U^{(\lambda|_{\mathcal{T}^\sigma})}) = \phi(U)^{(\lambda|_{\mathcal{T}^\sigma})}$.
The first part follows.

For the second assertion assume on the contrary that the multiplicity $n$ of $L(\lambda)$ in $V$ is strictly greater than $\dim U^{(\lambda|_{\mathcal{T}^\sigma})}$.
By the semisimplicity of $V$ there exists a surjection $\phi: V \to W = L(\lambda)^n$ of $\mathcal{S}^\sigma$-modules. Then $\phi(U)$ spans $W$ as an $\mathcal{S}^\sigma$-space while
$\phi(U)^{(\lambda)}$ is a proper subspace of the $n$-dimensional space $W^{(\lambda)}$.
Now the restriction map from $\Hom_{\mathcal{S}^\sigma} (W, L (\lambda))$ to $\Hom_{\clos} (W^{(\lambda)}, L (\lambda)^{(\lambda)})$ is an isomorphism. Therefore there exists
a surjective $\mathcal{S}^\sigma$-homomorphism $\psi: W \to L (\lambda)$ with $\psi (\phi (U)^{(\lambda)}) = 0$. But then the composition $\psi \circ \phi: V \to L (\lambda)$ contradicts the first assertion,
which finishes the proof.
\end{proof}

For the proof of Proposition \ref{PropAdjoint}
we have to study the adjoint representation of $\mathcal{S}^\sigma = S^{\SC} (\F_p)$ on the Lie algebra $\Lie_{\F_p} (S^{\SC}) \otimes \clos$. It decomposes as the direct sum of the
representations $L(p^i \tilde\alpha)$, where $\tilde\alpha$ ranges over the highest roots of the irreducible components of $\Phi$ and $i$ is such that $p^i | q (\tilde\alpha)$.

We will prove a lemma on the $\mathcal{B}^\sigma$-representations $L(p^i \tilde\alpha)$.
For this we first need the following easy lemma on root systems, a variant of a standard result on sums of roots \cite[\S VI.1.6, Proposition 19]{MR1890629}.

\begin{lemma} \label{LemmaRootSequences}
Let $\Psi$ be a root system, $\Psi^+$ a system of positive roots for $\Psi$, and $\Delta$ the associated set of simple roots.
Let $\beta \in \Psi^+$ and write $\beta = \sum_{\alpha \in \Delta} n_\alpha \alpha$ with non-negative integers $n_\alpha$.
Then the coefficient $n_\alpha$ of a simple root $\alpha \in \Delta$ is positive if and only if there
exists a sequence $\beta_1, \ldots, \beta_k \in \Psi^+$ of positive roots starting with $\beta_1 = \alpha$ and ending with $\beta_k = \beta$ such that
$\beta_{i+1}-\beta_i \in \Psi^+$ for all $i=1,\ldots,k-1$.
\end{lemma}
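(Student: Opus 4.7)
The $(\Leftarrow)$ direction is straightforward. I expand each $\beta_i$ in the basis of simple roots and write $\beta_i = \sum_{\gamma \in \Delta} m_\gamma^{(i)} \gamma$ with $m_\gamma^{(i)} \geq 0$. Since $\beta_{i+1} - \beta_i \in \Psi^+$, all its simple-root coefficients are non-negative, and so $m_\alpha^{(i)}$ is non-decreasing in $i$. Because $\beta_1 = \alpha$ gives $m_\alpha^{(1)} = 1$, I conclude $n_\alpha = m_\alpha^{(k)} \geq 1 > 0$.

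For the $(\Rightarrow)$ direction I proceed by induction on the height $h(\beta) = \sum_{\gamma \in \Delta} n_\gamma(\beta)$. The main tool is the classical result of Bourbaki (Ch.~VI, \S 1.6, Prop.~19) that any $\beta \in \Psi^+$ of height $k$ admits a decomposition $\beta = \alpha_{i_1} + \cdots + \alpha_{i_k}$ as a sum of simple roots (with repetition allowed) such that every partial sum $\alpha_{i_1} + \cdots + \alpha_{i_j}$ lies in $\Psi^+$ for $1 \leq j \leq k$.

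The base case $h(\beta) = 1$ is trivial: $\beta$ is a simple root, and $n_\alpha(\beta) > 0$ forces $\beta = \alpha$, so the singleton sequence works. For the inductive step, assume $h(\beta) \geq 2$ and apply Bourbaki's decomposition. Since $n_\alpha(\beta) > 0$, the simple root $\alpha$ appears at least once among $\alpha_{i_1}, \ldots, \alpha_{i_k}$. Consider the last summand $\alpha_{i_k}$. If $\alpha_{i_k} = \alpha$, then $\beta - \alpha = \alpha_{i_1} + \cdots + \alpha_{i_{k-1}}$ is a partial sum, hence in $\Psi^+$, and the two-element sequence $\alpha, \beta$ does the job since $\beta - \alpha \in \Psi^+$. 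If $\alpha_{i_k} \neq \alpha$, I set $\beta' = \beta - \alpha_{i_k}$, which again is a partial sum and hence in $\Psi^+$; it satisfies $n_\alpha(\beta') = n_\alpha(\beta) > 0$ and $h(\beta') = k-1 < h(\beta)$. By the inductive hypothesis I obtain a valid sequence $\alpha = \gamma_1, \ldots, \gamma_m = \beta'$ in $\Psi^+$, which I extend by appending $\beta$; the final difference $\beta - \beta' = \alpha_{i_k} \in \Delta \subset \Psi^+$ keeps the chain condition intact.

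There is no serious obstacle to this argument: everything reduces cleanly to Bourbaki's classical decomposition. The only minor subtlety is the case distinction on whether $\alpha$ occupies the last position in the Bourbaki decomposition of $\beta$, which is resolved by either terminating the chain with a single step from $\alpha$ or by dropping the last simple root and invoking induction. Note that in fact the construction produces a sequence in which each $\beta_{i+1} - \beta_i$ is a \emph{simple} root, a slightly stronger conclusion than the lemma requires.
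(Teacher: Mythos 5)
Your proof is correct and follows essentially the same route as the paper: an induction on the height of $\beta$ in which one simple root is subtracted at a time and one distinguishes whether that simple root equals $\alpha$ (terminate with the two-step chain) or not (apply the inductive hypothesis, noting the $\alpha$-coefficient is unchanged). The only difference is cosmetic: you invoke Bourbaki's Proposition 19 as a black box to produce the simple root to peel off, whereas the paper re-derives that step directly by choosing $\gamma \in \Delta$ with $\langle \beta, \gamma^\vee \rangle > 0$, so that $\beta = \gamma$ or $\beta - \gamma \in \Psi^+$.
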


\begin{proof}
Clearly, the existence of a sequence $\beta_1, \ldots, \beta_k$ as above implies that $n_\alpha > 0$. It remains to show the reverse implication.
For this we proceed by induction on $\sum_{\gamma \in \Delta} n_\gamma$. There has to exist a simple root $\gamma \in \Delta$ with $\sprod{\beta}{\gamma^\vee} > 0$, which
implies that either $\beta = \gamma$ or $\beta - \gamma \in \Psi^+$. In the first case, $\beta = \gamma = \alpha$ and there is nothing to prove.
If in the second case we have $\gamma = \alpha$, then we may simply take $k = 2$. Otherwise, we may apply the induction hypothesis to $\beta - \gamma$ and obtain a sequence
$\beta_1, \ldots, \beta_{k-1} \in \Psi^+$ with $\beta_1 = \alpha$, $\beta_{k-1} = \beta - \gamma$ and $\beta_{i+1}-\beta_i \in \Psi^+$. Setting $\beta_k = \beta$ yields the assertion.
\end{proof}

For any irreducible root system $\Psi$ we denote by $\Psi_{\rm ns}$ the set of all roots which are not contained in the $\Z$-span of the short simple roots.
(By convention, if $\Psi$ is simply laced then all roots are long, so that $\Psi_{\rm ns}=\Psi$.)
If $\Psi^+$ is a system of positive roots, we write $\Psi^+_{\rm ns} = \Psi_{\rm ns} \cap \Psi^+$. A root $\beta = \sum_{\alpha \in \Delta} n_\alpha \alpha \in \Psi^+$ is contained in $\Psi^+_{\rm ns}$
if and only if $n_\alpha > 0$ for at least one long simple root $\alpha \in \Delta \cap \Psi^{\rm long}$.
Note that for any non-zero dominant weight the coefficients with respect to the basis $\Delta$ are all positive,
since this holds for the fundamental weights \cite[\S III.13, exercise 8]{MR499562}.
It follows that the highest short root $\tilde\alpha_{\rm short}$ of $\Psi^+$ belongs to $\Psi^+_{\rm ns}$.

\begin{lemma} \label{LemmaLongRoots}
Assume that $p \ge 2 \dim S^{\SC}$.
Let $\tilde\alpha$ be the highest root of an irreducible component $\Psi$ of $\Phi$ and $p^i$ a divisor of $q(\tilde\alpha)$.
Then in the $\mathcal{S}^\sigma$-representation $L(p^i \tilde\alpha)$,
the $\mathcal{B}^\sigma$-span $D$ of the subspace
\[
\sum_{\alpha \in \Delta \cap \Psi^{\rm long}} L (p^i \tilde\alpha)^{(p^i \alpha)} \subset L (p^i \tilde\alpha)
\]
is the sum of the weight spaces $L (p^i \tilde\alpha)^{(p^i \beta)}$ for $\beta \in \Psi^+_{\rm ns}$.
\end{lemma}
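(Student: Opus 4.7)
The plan is to establish the two inclusions separately, replacing the $\mathcal{B}^\sigma$-span by the $\mathcal{U}^\sigma$-span (permissible since the starting subspace is $\mathcal{T}$-invariant, hence $\mathcal{T}^\sigma$-invariant, and $\mathcal{B}^\sigma = \mathcal{T}^\sigma \mathcal{U}^\sigma$). A preliminary reduction lets us assume $\mathcal{S}$ is simple with $\Phi = \Psi$: the representation $L(p^i \tilde\alpha)$ factors through the projection of $\mathcal{S}$ onto the unique simple factor containing $\tilde\alpha$, and this factor is preserved (as a whole) by $\sigma$.

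The containment ``$\subset$'' is elementary. An element in the $\mathcal{U}^\sigma$-orbit of a weight vector $v \in L(p^i \tilde\alpha)^{(p^i \alpha)}$ with $\alpha \in \Delta \cap \Psi^{\rm long}$ is a sum of $\mathcal{T}$-eigenvectors whose weights have the form $p^i \alpha + \sum_{\gamma \in \Psi^+} m_\gamma \gamma$ with $m_\gamma \in \Z_{\ge 0}$, since root subgroups only shift weights by non-negative integer multiples of their roots. For such a weight to actually appear in $L(p^i \tilde\alpha)$, it must equal $p^i \beta$ for some $\beta \in \Psi$, whence $\beta = \alpha + p^{-i} \sum_\gamma m_\gamma \gamma$ retains a positive coefficient on the long simple root $\alpha$ in its $\Delta$-expansion, placing $\beta$ in $\Psi^+_{\rm ns}$.

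For ``$\supset$'', fix $\beta \in \Psi^+_{\rm ns}$ with nonzero coefficient on a long simple root $\alpha$, and use Lemma \ref{LemmaRootSequences} to choose a chain $\alpha = \beta_1, \ldots, \beta_k = \beta$ in $\Psi^+$ with $\gamma_j := \beta_{j+1} - \beta_j \in \Psi^+$. I prove by induction on $j$ that the whole weight space $L(p^i \tilde\alpha)^{(p^i \beta_j)}$ lies in the $\mathcal{B}^\sigma$-span $M$. For the inductive step, take $0 \neq v \in L(p^i \tilde\alpha)^{(p^i \beta_j)} \subset M$; since $L(p^i \tilde\alpha)$ is obtained from $L(\tilde\alpha)$ by the $p^i$-th Frobenius twist, a root subgroup element $x_{\gamma_j}(t) \in \mathcal{U}^\sigma$ acts as $v \mapsto \sum_{k \ge 0} t^{k p^i} e_{\gamma_j}^{(k)} v$ (with $e_{\gamma_j}^{(k)}$ the divided power acting on the untwisted $L(\tilde\alpha)$), the $k$-th summand having $\mathcal{T}$-weight $p^i (\beta_j + k \gamma_j)$. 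Projecting onto these distinct weight spaces and varying $t$ over sufficiently many values in the underlying field (Vandermonde interpolation), the $k = 1$ contribution $e_{\gamma_j} v$ of weight $p^i \beta_{j+1}$ lies in $M$.

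The main obstacle will be verifying that $e_{\gamma_j} v \neq 0$. Under the hypothesis $p \ge 2 \dim S^{\SC}$, the irreducible $L(\tilde\alpha)$ coincides with the adjoint Weyl module $V(\tilde\alpha)$ on every non-zero weight space: each $L(\tilde\alpha)^{(\mu)}$ for $\mu \in \Psi$ is one-dimensional, and the induced map $e_{\gamma_j} : L(\tilde\alpha)^{(\beta_j)} \to L(\tilde\alpha)^{(\beta_{j+1})}$ is given by a Chevalley structure constant $N_{\gamma_j, \beta_j}$, a nonzero integer of absolute value at most $3$, hence nonzero modulo $p$. In twisted cases (Suzuki, Ree, or twisted Chevalley groups), the individual root subgroups of $\mathcal{U}^\sigma$ must be replaced by their $\sigma$-orbit analogues, and the same linear-algebraic extraction applies after decomposing the action along the $\sigma$-twists of $\gamma_j$.
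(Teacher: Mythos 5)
Your overall route is the paper's: reduce to $i=0$ (adjoint representation) by Frobenius twist, get the easy inclusion from the fact that root subgroups only raise weights, and get the reverse inclusion by walking along a chain from Lemma \ref{LemmaRootSequences} and extracting, at each step, the first-order term of a unipotent acting on a weight vector, using that Chevalley structure constants are nonzero mod $p$. Your Vandermonde-in-the-parameter extraction is a legitimate alternative to the paper's device of projecting to $\mathcal{T}^\sigma$-eigenspaces (which rests on Lemma \ref{LemmaTorus}), and in the split case your argument is complete.

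The genuine gap is the twisted case, which is the only delicate point of the lemma and which you dispatch in one sentence. If $\rho\gamma_j \neq \gamma_j$, then no element $x_{\gamma_j}(t)$ with $t \neq 0$ lies in $\mathcal{U}^\sigma$ at all, so the inductive step as written does not apply; the $\sigma$-fixed substitutes are products $x = \prod_{i=1}^n x_{\gamma_i}(\xi_i)$ supported on the $\rho$-orbit of $\gamma_j$ (and, in types such as ${}^2A_{2n}$ and the Suzuki/Ree groups, on sums of orbit roots as well), in which the parameters $\xi_2,\dots,\xi_n$ are determined by $\xi_1$ via $\sigma$. Saying that ``the same linear-algebraic extraction applies after decomposing along the $\sigma$-twists'' does not address the two points that actually need proof: (i) there is only one free parameter, so one cannot vary the factors independently, and (ii) the component of $x\,u_{\beta_j}$ in the weight space of $p^i(\beta_j+\gamma_j)$ now receives contributions from every multiset of the roots $\gamma_1,\dots,\gamma_n$ summing to $\gamma_j$, and one must rule out that these cross-terms cancel the structure-constant term $C_{1,\beta_j\gamma_j}\xi_1$. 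The paper handles exactly this by choosing the product so that $\sum_i \nu_i\gamma_i = \gamma_j$ with $\nu_i \ge 0$ forces $\nu_1=1$, $\nu_2=\dots=\nu_n=0$; this can be justified, e.g., by a height argument, since all $\gamma_i$ in the orbit have the same height as $\gamma_j$ and the extra roots have strictly larger height, so the only contribution to the target weight is the single first-order term. Without some such verification your inductive step is unproved precisely for the twisted groups ${}^2A_n$, ${}^2D_n$, ${}^3D_4$, ${}^2E_6$, ${}^2B_2$, ${}^2G_2$, ${}^2F_4$, i.e., whenever $d(\gamma_j)>1$.
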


\begin{proof}
By Frobenius twist, we reduce to the case $i=0$. Note that the $\mathcal{S}$-representation
$L (\tilde\alpha)$ is nothing else than the adjoint representation of the simple factor of $\mathcal{S}$ containing $\tilde\alpha$.
For each $\beta \in \Psi$ the corresponding weight space $L (\tilde\alpha)^{(\beta)} = L(\tilde\alpha)^{(\beta|_{\mathcal{T}^\sigma})}$ (where we use Lemma \ref{LemmaTorus}
to identify $\mathcal{T}$- and $\mathcal{T}^\sigma$-eigenspaces) is one-dimensional.

Since it is stable under the action of $\mathcal{T}^\sigma$, the $\mathcal{B}^\sigma$-span of any sum of weight spaces is again a sum of weight spaces. Furthermore, it is clear
that the weights appearing in $D$ are roots in $\Psi^+_{\rm ns}$.
It remains to show that each weight space $L (\tilde\alpha)^{(\beta)}$, $\beta \in \Psi^+_{\rm ns}$ is contained in $D$.

For this consider the following claim: for any $\beta \in \Psi^+$, the $\mathcal{B}^\sigma$-span of $L (\tilde\alpha)^{(\beta)}$
contains $L (\tilde\alpha)^{(\beta')}$ for all $\beta' \in \Psi^+$ for which $\beta' - \beta$ is a positive root.
Granted this claim, we may argue as follows:
by Lemma \ref{LemmaRootSequences}, for every $\beta \in \Psi^+_{\rm ns}$ there exists a sequence $\beta_1, \ldots, \beta_k \in \Psi^+$, where $\beta_1 = \alpha$ is a long simple root,
$\beta_k = \beta$ and $\beta_{i+1}-\beta_i \in \Psi^+$ for all $i=1,\ldots,k-1$. By our claim, for any $i = 1, \ldots, k-1$, the $\mathcal{B}^\sigma$-span of
$L (\tilde\alpha)^{(\beta_i)}$ contains $L (\tilde\alpha)^{(\beta_{i+1})}$. This obviously implies the assertion.

It remains to prove the claim above. For each $\beta \in \Psi$ let $u_\beta \in L (\tilde\alpha)^{(\beta)}$ be a non-trivial element of the one-dimensional weight space.
The action of a root unipotent $x_\gamma (\xi) \in \mathcal{S}$ on $u_\beta$ is given by
\begin{equation} \label{eqn: unipaction}
x_\gamma (\xi) u_\beta = u_\beta + C_{1, \beta\gamma} \xi u_{\beta+\gamma} + \sum_{i \ge 2} C_{i,\beta\gamma} \xi^i u_{\beta+i\gamma}
\end{equation}
with $C_{i,\beta\gamma} \in \clos$ and $C_{1, \beta\gamma} \neq 0$, since $p > 3$ and therefore $p$ does not divide any structure constant of a semisimple Lie algebra.

Let now $\beta, \beta' \in \Psi^+$ with $\gamma = \beta'-\beta \in \Psi^+$ and let $d (\gamma)$ be the cardinality of the $\rho$-orbit of $\gamma$.
It is enough to show that the $\mathcal{U}^\sigma$-span of $u_\beta$ contains an element with non-vanishing projection to the $\beta'$-weight space, since
we can then use the $\mathcal{T}^\sigma$-action to get the desired inclusion.

Note that by \cite[11.2]{MR0230728} the $\sigma$-action on the root unipotents $x_\alpha (\xi)$ is given by
$\sigma x_\alpha (\xi) = x_{\rho\alpha} (c (\alpha) \xi^{q(\alpha)})$ for suitable $c (\alpha) \in \clos^\times$.
For $d (\gamma) = 1$, application of a suitable $x_\gamma (\xi) \in \mathcal{U}^\sigma$ with $\xi \neq 0$ to $u_\beta$ provides an element in the $\mathcal{U}^\sigma$-span of $u_\beta$ with
non-vanishing projection to $L (\tilde\alpha)^{(\beta')}$, as asserted.
In the general case, we can always find a product $x = \prod_{i=1}^n x_{\gamma_i} (\xi_i) \in \mathcal{U}^\sigma$ with $\xi_1 \neq 0$ and positive roots $\gamma_1, \ldots, \gamma_n \in \Psi^+$,
such that $\gamma_1 = \gamma$, and $\sum_{i=1}^n \nu_i \gamma_i = \gamma$ for integers $\nu_i \ge 0$ if and only if $\nu_1 = 1$, $\nu_2 = \dots = \nu_n = 0$.
By repeated application of \eqref{eqn: unipaction} we obtain that $x u_\beta$ has non-vanishing projection to $L (\tilde\alpha)^{(\beta')}$, which finishes the proof.
\end{proof}

\begin{proof}[Proof of Proposition \ref{PropAdjoint}]
By tensoring the spaces $\lie{s}$, $\bar{\lie{s}}$, $\lie{n}$ and $\bar{\lie{n}}$ with $\clos$, we may pass to representations of $\mathcal{S}^\sigma$ and $\mathcal{B}^\sigma$ over $\clos$,
which for the remainder of this proof we will denote by the same letters.
(Note that $\lie{n}$ is $\mathcal{B}^\sigma$-invariant since $\bar{\lie{n}}$ is.)
The same applies to the surjection $\pi: \lie{s} \to \bar{\lie{s}}$.
Obviously, to establish the proposition it is equivalent to show that $\pi$ is an isomorphism in the new setting.

As noted above, $\bar{\lie{s}} \simeq \Lie_{\F_p} (S^{\SC}) \otimes \clos$ decomposes as the direct sum of the
representations $L(p^i \tilde\alpha)$, where $\tilde\alpha$ ranges over the highest roots of the irreducible components of $\Phi$ and $p^i | q (\tilde\alpha)$.
The weights of $\mathcal{T}^\sigma$ on $\lie{n} \simeq \bar{\lie{n}}$ are given by $p^i \alpha$, where $\alpha \in \Phi^+$ and $p^i$ divides $q (\alpha)$,
and they all have multiplicity one.

Applying Lemma \ref{LemmaMultiplicitiesLlambda}
to $U=\lie{n} \subset V=\lie{s}$, we obtain that
the possible highest weights of the irreducible constituents of $\lie{s}$ are $p^i \tilde\alpha$, $p^i | q (\tilde\alpha)$, and $p^i \tilde\alpha_{\rm short}$,
$p^i | q (\tilde\alpha_{\rm short})$, where $\tilde\alpha$ ranges over the highest roots of
the irreducible components of $\Phi$ and $\tilde\alpha_{\rm short}$ over the highest short roots of the components that are not simply laced.
Moreover, all irreducible constituents of $\lie{s}$ appear with multiplicity one.

It only remains to show that no representation $L(p^i \tilde\alpha_{\rm short})$ can be a quotient of $\lie{s}$.
Denote by $\lie{l}$ the (unique) lift of the representation $\bar{\lie{s}}$ to a subspace of $\lie{s}$.
Note that for any long root $\beta$ the corresponding weight space $\lie{s}^{(p^i \beta)}$ is necessarily contained in $\lie{l}$
since the non-zero weights in $L(\tilde\alpha_{\rm short})$ are the short roots.

Let $\tilde\alpha_{\rm short}$ be the highest short root of a component $\Psi$ of $\Phi$ that is not simply laced. For any long simple root $\alpha$ of $\Psi$
the weight space $\lie{n}^{(p^i \alpha)}$ is contained in $\lie{l}$. By Lemma \ref{LemmaLongRoots}, the weight space
$\lie{n}^{(p^i \tilde\alpha_{\rm short})} \simeq \bar{\lie{n}}^{(p^i \tilde\alpha_{\rm short})}$ lies in the $\mathcal{B}^\sigma$-span of the spaces $\lie{n}^{(p^i \alpha)} \simeq \bar{\lie{n}}^{(p^i \alpha)}$,
$\alpha \in \Delta \cap \Psi^{\rm long}$. Therefore
$\lie{n}^{(p^i \tilde\alpha_{\rm short})}$ is contained in $\lie{l}$.
By the first part of Lemma \ref{LemmaMultiplicitiesLlambda}, it follows that the representation $\lie{s} / \lie{l}$ does not admit $L(p^i \tilde\alpha_{\rm short})$ as a quotient.

This shows that $\lie{s} = \lie{l}$, and that $\pi$ is an isomorphism, as asserted.
\end{proof}

\subsection{From subalgebras to subgroups}
For the proof of the remaining identity \eqref{step: fromalgebra}, we need the following easy consequence of the standard presentations
of the groups $S^{\SC} (\F_p)$, where $S^{\SC}$ is a simply connected semisimple group defined over $\F_p$.

\begin{lemma} \label{LemmaPresentation}
Let $S^{\SC}$ be a simply connected semisimple group defined over $\F_p$.
Let $\Gamma = \Gamma (S^{\SC})$ be the group defined by generators $\gamma_u$, $u\in S^{\SC} (\F_p)_{\unip}$, and relations
\begin{enumerate}
\item (restricted multiplication) $\gamma_{u_1} \gamma_{u_2} = \gamma_{u_1u_2}$ for any unipotent subgroup $\tilde U$ of $S^{\SC}$ defined over $\F_p$ and $u_1,u_2\in\tilde U(\F_p)$,
\item (conjugation) $\gamma_{u_1} \gamma_{u_2} \gamma_{u_1}^{-1} = \gamma_{u_1u_2u_1^{-1}}$ for any $u_1,u_2\in S^{\SC} (\F_p)_{\unip}$.
\end{enumerate}
Then the map $\gamma_u \mapsto u$, $u\in S^{\SC} (\F_p)_{\unip}$, extends to an isomorphism of groups
$s:\Gamma\rightarrow S^{\SC} (\F_p)$.
\end{lemma}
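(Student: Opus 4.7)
The plan is to exhibit the obvious map $s: \Gamma \to S^{\SC}(\F_p)$, $\gamma_u \mapsto u$, and verify that it is an isomorphism. Both defining relations of $\Gamma$ hold tautologically in $S^{\SC}(\F_p)$ under this assignment, so $s$ is a well-defined group homomorphism. Surjectivity of $s$ follows from the identity $S^{\SC}(\F_p) = S^{\SC}(\F_p)^+$, i.e.\ the fact that $S^{\SC}(\F_p)$ is generated by its unipotent elements, which was already used in the proof of Theorem \ref{TheoremAlgebraicLevel0}.

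For injectivity I would appeal to the Curtis--Steinberg--Tits presentation of $S^{\SC}(\F_p)$ \cite{MR0153677,MR0188299,MR630615,MR637803}. After fixing the $\sigma$-stable pair $\mathcal{B} \supset \mathcal{T}$ as above, together with the Chevalley/Steinberg parametrizations $x_\alpha(t)$ of the root subgroups $U_\alpha(\F_p)$, this presentation gives $S^{\SC}(\F_p)$ as the group generated by the symbols $x_\alpha(t)$ modulo three families of relations: (i) additivity of $t \mapsto x_\alpha(t)$ within each root subgroup, (ii) the Chevalley commutator formulas $[x_\alpha(s), x_\beta(t)] = \prod x_{i\alpha+j\beta}(\ldots)$ inside the unipotent subgroup $\langle U_\alpha, U_\beta\rangle$ whenever $\alpha + \beta \neq 0$, and (iii) the Weyl-normalizer relations expressed via $n_\alpha(t) = x_\alpha(t) x_{-\alpha}(-t^{-1}) x_\alpha(t)$. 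Relations (i) and (ii) are instances of the restricted-multiplication relation applied to the $\F_p$-unipotent subgroups $U_\alpha$ and $\langle U_\alpha, U_\beta\rangle$, respectively, while relation (iii) follows from the conjugation relation, since each $n_\alpha(t)$ is a product of three unipotent elements and conjugating a unipotent element by any group element yields another unipotent element. Letting $\Gamma_0 \subset \Gamma$ denote the subgroup generated by the $\gamma_{x_\alpha(t)}$, the universal property of the presentation therefore yields a homomorphism $S^{\SC}(\F_p) \to \Gamma_0$ inverse to $s|_{\Gamma_0}$ on generators, so that $s|_{\Gamma_0}$ is an isomorphism.

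It then remains to see that $\Gamma_0 = \Gamma$. Any $u \in S^{\SC}(\F_p)_{\unip}$ lies in $B(\F_p)$ for some $\F_p$-Borel $B$, and by Lang's theorem $B$ is $S^{\SC}(\F_p)$-conjugate to $\mathcal{B}$; write $u = g u' g^{-1}$ with $u' \in \mathcal{U}(\F_p)$ and factor $g = v_1 \cdots v_n$ with unipotent $v_i$ using the already-established surjectivity of $s$. Iterated application of the conjugation relation (permissible at each stage because conjugates of unipotents are unipotent) yields $\gamma_u = (\gamma_{v_1}\cdots \gamma_{v_n})\, \gamma_{u'}\, (\gamma_{v_1}\cdots\gamma_{v_n})^{-1}$ in $\Gamma$. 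Since $\mathcal{U}$ is generated by the positive root subgroups, restricted multiplication expresses $\gamma_{u'}$ as a product of generators of $\Gamma_0$, and the same argument applied to each $v_i$ gives $\gamma_{v_i} \in \Gamma_0$; hence $\gamma_u \in \Gamma_0$.

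The main obstacle will be the meticulous verification that the full list of Curtis--Steinberg--Tits relations --- in particular the Weyl-normalizer relations and their twisted analogues corresponding to the factorization $S^{\SC} = \prod_i \Res_{\F_{q_i}/\F_p} \mathcal{S}_i$ --- really are consequences of the two defining relations of $\Gamma$. Modulo this bookkeeping, the remaining steps are essentially formal.
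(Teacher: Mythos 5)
Your overall strategy coincides with the paper's: surjectivity of $s$ from generation by unipotent elements, injectivity by checking that the relations of a Curtis--Steinberg--Tits type presentation are consequences of the two defining relations of $\Gamma$, together with the claim $\Gamma_0=\Gamma$. Your treatment of the relations themselves is fine; in particular the Weyl-normalizer relations do follow from the conjugation relation, because conjugation by $n_\alpha(t)=x_\alpha(t)x_{-\alpha}(-t^{-1})x_\alpha(t)$ can be carried out as three successive conjugations by unipotent symbols, each intermediate conjugate being again unipotent. The genuine gap is in your argument that $\Gamma_0=\Gamma$: as written it is circular. You write $u=gu'g^{-1}$ with $u'\in\mathcal{U}(\F_p)$, factor $g=v_1\cdots v_n$ into \emph{arbitrary} unipotent elements, and conclude $\gamma_u\in\Gamma_0$ provided $\gamma_{v_1},\dots,\gamma_{v_n}\in\Gamma_0$; but for the latter you invoke ``the same argument'', which again requires symbols of arbitrary unipotent conjugators to lie in $\Gamma_0$ --- exactly the statement being proved. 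The recursion never terminates (an induction on $n$ does not close either, since the conjugator $v_1$ is an arbitrary unipotent element), and what your computation actually establishes is only the tautology $\gamma_u\in\Gamma$. The paper's proof avoids this precisely by the Bruhat decomposition: writing the conjugator as $g=u_1n_wu_2$ with $u_1,u_2\in\mathcal{U}(\F_p)$, one gets $u\in u_1\left(n_w\mathcal{U}n_w^{-1}\right)(\F_p)u_1^{-1}$, where now the conjugator $u_1$ lies in $\mathcal{U}(\F_p)$, whose symbols are in $\Gamma_0$ by restricted multiplication, and $n_w\mathcal{U}n_w^{-1}$ is a unipotent $\F_p$-subgroup generated by root subgroups relative to $\mathcal{T}$, so the symbols of its elements are in $\Gamma_0$ as well; in the rank-one case this is simply the statement that every unipotent element is either in $U(\F_p)$ or of the form $u\bar{u}u^{-1}$ with $u\in U(\F_p)$, $\bar u\in\bar U(\F_p)$. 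So the conjugator must be chosen inside $\mathcal{U}(\F_p)$, not as an arbitrary product of unipotent elements.

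A secondary point you should address: the presentation you invoke, with the additivity and commutator relations plus only the conjugation-type relations (iii) (and no torus relations $h_\alpha(t)h_\alpha(u)=h_\alpha(tu)$), presents in general the universal central extension (the Steinberg group), not $S^{\SC}(\F_p)$ itself; the two coincide exactly when the Schur multiplier of $S^{\SC}(\F_p)$ is trivial. This is harmless for $p$ large with respect to $G$, which is the standing assumption where the lemma is used, but it must be said --- and note that you cannot simply append the torus relation to your list, since that relation is not visibly a consequence of the two defining relations of $\Gamma$. The paper deals with this by quoting Curtis's Theorems 1.2 and 1.4 with precisely this multiplier caveat, and by the alternative references of Steinberg and Tits for the remaining small primes.
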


\begin{proof}
It is clear that the map $\gamma_u \mapsto u$ extends to a group homomorphism $s$, which is surjective since $S^{\SC} (\F_p)$ is generated by its unipotent elements.
It remains to show that $s$ is injective. For this, we may use any presentation $(X,R)$ of $S^{\SC} (\F_p)$ with generators $X \subset S^{\SC} (\F_p)_{\unip}$ and relations $R$.
We only need to verify that the elements $\gamma_u$, $u\in X$ generate $\Gamma$ and satisfy the relations $R$ in $\Gamma$, i.e. that $R$ is a consequence of the relations above.

We may assume that $S^{\SC}$ is simple over $\F_p$. For $S^{\SC}$ of $\F_p$-rank one the assertion follows (at least for $p > 7$, when the Schur multiplier of $S^{\SC} (\F_p)$ is trivial)
from \cite[Theorem 1.2]{MR0188299}. Alternatively, it follows from
\cite[Th\'{e}or\`{e}me 3.3]{MR0153677} and \cite[Theorem 5.1]{MR630615} without restriction on $p$.
Namely, we consider a maximal unipotent subgroup $U \subset S^{\SC}$ and the opposite unipotent subgroup $\bar U$.
By the Bruhat decomposition, every unipotent element of $S^{\SC} (\F_p)$ is either contained in $U (\F_p)$ or of the form $u \bar{u} u^{-1}$ for $u \in U (\F_p)$ and $\bar u \in \bar U (\F_p)$, and therefore
$\Gamma$ is generated by the $\gamma_u$'s for $u \in U (\F_p) \cup \bar U (\F_p)$. Furthermore, the relations of \cite[Theorem 1.2]{MR0188299} follow from the conjugation relations in $\Gamma$.

In rank $>1$ the assertion follows from the Curtis--Steinberg--Tits presentation [ibid., Theorem 1.4] (at least for $p \ge 5$, when the Schur multiplier of $S^{\SC} (\F_p)$ is trivial,
cf. also \cite{MR0153677, MR630615, MR637803}).
Fix a maximal split torus $T$ of $S^{\SC}$ and a Borel subgroup $B$ containing $T$, and let $U$ be the unipotent radical of $B$, which is a maximal unipotent subgroup of $S^{\SC}$.
Then every element of $U (\F_p)$ is a product of root unipotents $u$ with respect to the roots of $T$ on $U$.
The Bruhat decomposition and the above relations imply that
$\Gamma$ is generated by the elements $\gamma_{u}$, where $u$ ranges over the elements of the unipotent root subgroups with respect to $T$.
Moreover, the Curtis--Steinberg--Tits relations certainly follow from the relations between the $\gamma_u$'s. This finishes the proof of the lemma.
\end{proof}

We now show \eqref{step: fromalgebra}.
Let $\lie{h}\in\LSAN$ and let $\lie{p}$ be the largest ideal of $\lie{h}$ contained in $\lie{h}_{\resnilp}$.
Let $H=\grpc(\lie{h})$ and $P=\exp\lie{p}$, and let as usual $\bar H$ and $\bar P$ be the images of these groups in $G (\F_p)$. It is clear that $P$ is a normal pro-$p$ subgroup of $H$.
Note that $\lie{p}$ is the inverse image under reduction modulo $p$ of the largest ideal $\bar{\lie{p}}$ of $\bar{\lie{h}}$ contained in the set $\bar{\lie{h}}_{\nilp}$.
In particular, $\lie{p}\supset\lie{h}\cap p\lie{g}_{\Z_p}$ and therefore
$P\supset\exp(\lie{h}\cap p\lie{g}_{\Z_p})$.

It remains to prove that $H\cap\K_p(p)\subset P$, since in this case
\[
H\cap\K_p(p)=P\cap\K_p(p)=\exp(\log (P\cap\K_p(p)))=\exp(\lie{p}\cap p\lie{g}_{\Z_p})=
\exp(\lie{h}\cap p\lie{g}_{\Z_p}).
\]

Let $\tilde H \supset \tilde P$ be the algebraic subgroups of $G_{\F_p}$ associated by Nori's correspondence to the nilpotently generated Lie subalgebras $\bar{\lie{h}} \supset \bar{\lie{p}}$
of $\lie{g}_{\F_p}$. The corresponding subgroups of $G (\F_p)$ are $\bar H$ and $\bar P$, respectively.
By Lemma \ref{LemmaComplementNori}, $\tilde P$ is the unipotent radical of $\tilde H$. Let $S^{\SC}$ be the simply connected covering group of
the semisimple group $\tilde H / \tilde P$
and let $K$ be the kernel of the surjective covering map $\kappa: S^{\SC} (\F_p)\rightarrow \tilde H (\F_p)^+ / \tilde P (\F_p) = \bar H/\bar P $. Note that
$\kappa$ restricts to a bijection between $S^{\SC} (\F_p)_{\unip}$ and the image of $\bar H_{\unip}$ in $\bar H / \bar P$.
We may assume that $p$ does not divide the size of the center of $S^{\SC}$, and in particular the size of $K$.

We need to show that the canonical surjective homomorphism $r: H/P \rightarrow \bar H / \bar P$ is an isomorphism of groups.
Using Lemma \ref{LemmaPresentation},
we will show this by constructing a surjective homomorphism $\phi: \Gamma = \Gamma (S^{\SC}) \to H / P$ with $r \circ \phi= \kappa \circ s$.
Note that in any case the kernel of $r$ equals $(H \cap P\K_p(p))/P$, and that it is therefore a $p$-group.

For any residually nilpotent subalgebra $\lie{u} \subset \lie{h}$ containing $\lie{p}$, the group $U = \exp \lie{u}$ is a pro-$p$ subgroup of $H$ containing $P$
and $r$ restricts to an isomorphism $r|_{U/P}: U/P \rightarrow \bar U / \bar P$, where $\bar U \subset \bar H$ is the image of $U$ in $G (\F_p)$. Given any $p$-group $\bar U \subset \bar H$, we can lift
the associated Lie algebra
$\bar{\lie{u}} = \bar{\Liec} (\bar{U}) = \log^{(p)} \bar U \subset \bar{\lie{h}} = \bar{\Liec} (\bar{H})$ to its inverse image $\lie{u}$ under the reduction map $\lie{h} \to \bar{\lie{h}}$ and obtain a
pro-$p$ group $U \subset H$ containing $P$ which projects onto $\bar U$ under reduction modulo $p$ and for which $r|_{U/P}: U/P \rightarrow \bar U / \bar P$ is an isomorphism.

As a consequence, for any unipotent subgroup $\tilde N$ of $S^{\SC}$ defined over $\F_p$, we may consider the preimage
$\tilde U$ of $\kappa(\tilde N)$ in the group $\tilde H$ and lift the $p$-group $\tilde U (\F_p) \subset \bar H$
to a pro-$p$ subgroup $U \subset H$ such that $r$ maps $U/P$ isomorphically onto $\tilde N (\F_p) \simeq \tilde U (\F_p) / \bar P \subset \bar H / \bar P$.

In particular, we may lift any unipotent element $u$ of $S^{\SC} (\F_p)$ to an element $f(u) \in H/P$ with $r(f(u)) = \kappa (u)$, namely to
$f(u) := (\exp v) P$ for some $v \in \lie{h}_{\resnilp}$ such that $\exp v$ maps under reduction modulo $p$ to a representative of $\kappa (u) \in \bar H / \bar P$ in $\bar H$.
This lift does not depend on the choice of $v$: for any $v_1$, $v_2 \in \lie{h}_{\resnilp}$ it follows
from Lemma \ref{lem: untitled} that $\exp v_1$ and $\exp v_2$ map to the same element of $\bar H / \bar P$ if and only if $v_1 - v_2 \in \lie{p} + \lie{h} \cap p \lie{g}_{\Z_p} = \lie{p}$,
which is in turn equivalent to $\exp v_1 \in (\exp v_2) P$. Therefore, we obtain a well-defined map $f: S^{\SC} (\F_p)_{\unip} \rightarrow H / P$ with $r \circ f = \kappa|_{S^{\SC} (\F_p)_{\unip}}$.
Moreover, for any unipotent $\F_p$-subgroup $\tilde N \subset S^{\SC}$ the restriction of $f$ to $\tilde N (\F_p)$ is the inverse of the group isomorphism $r|_{U/P}: U/P \to \tilde N (\F_p)$ constructed above,
and in particular it is a group homomorphism.

We now claim that in the setting of Lemma \ref{LemmaPresentation} the definition
$\phi (\gamma_u) := f (u)$, $u \in S^{\SC} (\F_p)_{\unip}$, extends to a group homomorphism $\phi: \Gamma \rightarrow H/P$, i.e.,
that it respects the relations defining $\Gamma$.

Clearly,
\begin{equation} \label{EqnCommutativityPresentation}
r (\phi (\gamma_u)) = r (f(u)) = \kappa (u) \text{ for all } u \in S^{\SC} (\F_p)_{\unip}.
\end{equation}
For a unipotent subgroup $\tilde N \subset S^{\SC}$ defined over $\F_p$ we know
that the restriction $f|_{{\tilde N} (\F_p)}$ is a group homomorphism, which means that
$\phi(\gamma_{u_1u_2})= f(u_1 u_2) = f (u_1) f (u_2) = \phi(\gamma_{u_1})\phi(\gamma_{u_2})$ for any $u_1,u_2\in\tilde N(\F_p)$.

Let us check the conjugation relations. Suppose that Lie algebra elements $v_i\in\lie{h}_{\resnilp}$ reduce modulo $p$ to $\log^{(p)}u'_i$, where $u'_i \in {\bar H}_{\unip}$, $i = 1,2$,
and that $u'_i$ maps to $\kappa (u_i) \in \bar H / \bar P$ for some $u_i \in S^{\SC} (\F_p)_{\unip}$. Then $\phi (\gamma_{u_i}) = f(u_i) = (\exp v_i) P$.
Now $\Ad(\exp v_1)v_2 \in \lie{h}_{\resnilp}$ maps under reduction modulo $p$ to $\Ad(u'_1)\log^{(p)}(u'_2)=\log^{(p)}(u'_1u'_2 (u'_1)^{-1})$, and
$u'_1 u'_2 (u'_1)^{-1}$ maps to $\kappa (u_1 u_2 u_1^{-1})$ in $\bar H / \bar P$. Hence,
\[
\phi(\gamma_{u_1u_2u_1^{-1}})=\exp(\Ad(\exp v_1)v_2)P=\exp v_1\exp v_2\exp(-v_1)P=\phi(\gamma_{u_1})\phi(\gamma_{u_2})\phi(\gamma_{u_1})^{-1},
\]
which finishes the proof of the claim.

It is clear from the definition of $H$ that the homomorphism $\phi$ is surjective.
Moreover, it is clear from \eqref{EqnCommutativityPresentation} that $r \circ \phi= \kappa \circ s$. Since $s$ is an isomorphism by Lemma \ref{LemmaPresentation},
we conclude that the order of $\Ker r$ divides the order of $K$. Since $\Ker r$ is also a $p$-group, we conclude that $r$ is an isomorphism.

This finishes the proof of \eqref{step: fromalgebra}.

\section{Intersections of conjugacy classes and open compact subgroups} \label{MainApplication}

\subsection{The global bound}
As an application of the approximation theorem, we prove in this section an estimate for the volume of the intersection of a conjugacy class
in the group of $\hat{\Z}$-points of a reductive group defined over $\Q$ with an arbitrary open subgroup.
This is a key technical result in our approach to the limit multiplicity problem for arbitrary congruence subgroups of an arithmetic group,
which is the subject matter of \cite{1504.04795}.
It is convenient to formulate the result in a slightly more general way, namely in terms of the commutator map.
Theorem \ref{thm: mainbnd} below gives the most general formulation, and Corollary \ref{CorConjugacyClass} the main
application. Corollary \ref{CorLattices} is a variant of this result in the language of lattices in semisimple Lie groups.
At the end of this section we also prove some auxiliary results that
will be applied to the limit multiplicity problem.

We will temporarily consider more general groups $G$ than before. (However, in Theorem \ref{thm: mainbnd} and its proof $G$ will be assumed to be semisimple and simply connected.)
For the following definitions let $G$ be a (possibly non-connected) reductive group defined over $\Q$ such that its adjoint group $G^{\ad}$ as well as its derived group $G^{\der}$ are connected.
(The connectedness of $G^{\der}$ is not essential, and we assume it only for simplicity.)
We fix an embedding $\rho_0: G \to \GL (N_0)$.
We denote the canonical action of $G^{\ad}$ on $G$ by $\ad$ and the adjoint representation of $G^{\ad}$ on $\lie{g} = \Lie_{\Q} G$ by $\Ad$.
We have the commutator map
\[
[\cdot,\cdot]: \quad G^{\rm ad} \times G^{\rm ad} \to G^{\der},
\]
which is a morphism of algebraic varieties over $\Q$.

For every prime $p$ we set $\K_p = \rho_0^{-1} (\GL (N_0, \Z_p))$
as before, and let $\K = \prod_p \K_p$, an open compact subgroup of $G (\A_{\rm fin})$.
Let
\[
\K (N) = \{g \in \K \, : \, \rho_0 (g) \equiv 1 \pmod{N} \} = \prod_p \K_p (p^{v_p (N)}), \quad \quad N \ge 1,
\]
be the principal congruence subgroups of $\K$. They are normal open subgroups of $\K$ and form a neighborhood base of the identity element.
For an open subgroup $K \subset \K$ (or, more generally, an open compact subgroup $K \subset G (\A_{\rm fin})$)
let its \emph{level} $\level (K) = \prod_p \level_p (K)$ be the smallest integer $N$ for which $\K (N) \subset K$.
We note that a change of the representation $\rho_0$ changes the level of a given open compact subgroup $K$
only by a factor that is bounded from above and below.

Sometimes we will need a generalization of this notion. If $H \subset G (\A_{\fin})$ is an arbitrary closed subgroup, then we let $\level (K; H) = \prod_p \level_p (K; H)$
be the smallest integer $N$ for which $\K (N) \cap H \subset K$. In particular, this applies to the group $H = G (\A_{\fin})^+$, the
image of $G^{\rm sc} (\A_{\fin})$ under the natural homomorphism $p^{\rm sc}: G^{\rm sc} \to G$, where $G^{\rm sc}$ is the simply connected covering group of $G^{\der}$.

We also fix an embedding $\rho_0^{\ad}: G^{\ad} \to \GL (N_0^{\ad})$ and let $\K_p^{\ad} = (\rho_0^{\ad})^{-1} (\GL (N_0^{\ad}, \Z_p)) \subset G^{\ad} (\Q_p)$ and
$\K^{\ad} = \prod_p \K_p^{\ad} \subset G^{\ad} (\A_{\rm fin})$. Let $\K_p^{\ad} (p^n) \subset \K_p^{\ad}$ be the principal congruence subgroups with respect to $\rho_0^{\ad}$.
We normalize the Haar measures on the groups $\K_p^{\rm ad}$ and on their product $\K^{\rm ad}$ to have volume one.

\begin{definition}
Let $\tilde{\K}$ be a compact subgroup of $G^{\ad} (\A_{\rm fin})$ with its normalized Haar measure.
For an open subgroup $K \subset G (\A_{\rm fin})$ define
\[
\phi_{K, \tilde{\K}}(x) = \vol \left( \{ k \in \tilde{\K} \, : \, [k,x] \in K \} \right), \quad \quad x \in G^{\ad} (\A_{\rm fin}).
\]
For $\tilde{\K} = \K^{\ad}$ we simply write $\phi_K (x) = \phi_{K, \K^{\ad}} (x)$.
Analogously, we write
\[
\phi_{K_p, \tilde{\K}_p} (x_p) = \vol \left( \{ k \in \tilde{\K}_p \, : \, [k,x_p] \in K_p \} \right), \quad \quad x_p \in G^{\ad} (\Q_p),
\]
for open subgroups $K_p \subset G (\Q_p)$ and compact subgroups $\tilde{\K}_p \subset G^{\ad} (\Q_p)$, and set $\phi_{K_p} (x_p) = \phi_{K_p, \K_p^{\ad}} (x)$.
\end{definition}

We will estimate the function $\phi_K (x)$ (indeed $\phi_{K,\tilde{\K}} (x)$ for certain subgroups $\tilde{\K} \subset \K^{\ad}$) for $x \in \K^{\ad}$
and arbitrary open subgroups $K \subset \K$ in terms of the level of $K$.
To state the dependence of the bound on $x$ in a convenient manner, we introduce the following notation.
Fix a $\Z$-lattice $\Lambda$ in $\lie{g}$ such that $\Lambda \otimes \hat{\Z}$ is $\K^{\ad}$-stable.

\begin{definition}
For $x_p \in G^{\ad} (\Q_p)$ set
\[
\lambda^G_p (x_p) = \max \{ n \in \Z \cup \{ \infty \} \, : \, (\Ad (x_p) - 1) \Pro_{\lie{h}} (\Lambda \otimes \Z_p) \subset p^n (\Lambda \otimes \Z_p)
\text{ for some $\lie{h} \neq 0$ } \},
\]
where $\lie{h}$ ranges over the non-trivial semisimple ideals
of the Lie algebra $\lie{g} \otimes \Q_p$, and $\Pro_{\lie{h}}$ denotes the corresponding projection $\lie{g} \otimes \Q_p \to \lie{h} \subset \lie{g} \otimes \Q_p$.
\end{definition}

See Remark \ref{RemarkRestrictionOfScalars} below for an alternative, somewhat more concrete expression for this function.
Note that $\lambda^G_p (x_p) \ge 0$ for $x_p \in \K^{\ad}_p$.
A change of the lattice $\Lambda$ will change $\lambda^G_p (x_p)$ at most by a constant $c_p$, where $c_p = 0$
for almost all $p$. Similarly, for any compact set $\Omega \subset G^{\ad} (\A_{\rm fin})$
there exist constants $A_p$, with $A_p=0$ for almost all $p$, such that
$\abs{\lambda^G_p(y^{-1}xy)-\lambda^G_p(x)}\le A_p$
for all primes $p$, $x\in G^{\ad} (\A_{\rm fin})$, $y\in \Omega$.

\begin{theorem} \label{thm: mainbnd}
Let $G$ be semisimple and simply connected.
Let $\tilde{\K} = \prod_p \tilde{\K}_p$, where for each $p$ the group $\tilde{\K}_p$ is an open
subgroup of $\K_p^{\ad}$ and the indices $[\K_p^{\ad} : \tilde{\K}_p]$ are bounded.
Then there exist constants $\varepsilon, \delta > 0$, depending only on $G$, $\rho_0$, $\rho_0^{\ad}$ and $\tilde{\K}$, such that
for all open subgroups $K$ of $\K$ of level $N = \level (K) = \prod_p p^{n_p}$
and all $x \in \K^{\ad}$ we have
\begin{equation} \label{EqnMainBound}
\phi_{K,\tilde{\K}} (x)\ll
\beta^G (N, x, \delta)^{-\varepsilon} \quad \text{with} \quad \beta^G (N, x, \delta) =
\prod_{p|N, \, \lambda^G_p (x) < \delta n_p} p^{n_p}.
\end{equation}
Here, the implied constant depends on $G$, $\rho_0$, $\rho_0^{\ad}$, $\tilde{\K}$ and $\Lambda$.
\end{theorem}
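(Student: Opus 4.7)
The plan is to prove the bound by a local–global reduction followed by a local estimate based on the approximation theorem of \S\ref{SectionApproxLevel0}, linearization of the commutator map, and an elementary divisor count tied to the hypothesis on $\lambda^G_p$. First, since $\tilde{\K}=\prod_p\tilde{\K}_p$ has product Haar measure and $K\supset\K(N)$, let $L_p\subset\K_p$ be the projection of $K$ onto the $p$-factor. Then $L_p\supset\K_p(p^{n_p})$ and $K\subset\prod_p L_p$, so
\[
\phi_{K,\tilde{\K}}(x)\le\prod_p\phi_{L_p,\tilde{\K}_p}(x_p).
\]
For $p\nmid N$ the local factor equals $1$. For $p\mid N$ with $\lambda^G_p(x_p)\ge\delta n_p$ we use the trivial bound $\phi_{L_p,\tilde{\K}_p}(x_p)\le 1$; these primes are precisely those omitted from $\beta^G(N,x,\delta)$. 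It therefore suffices to establish the local estimate $\phi_{L_p,\tilde{\K}_p}(x_p)\ll p^{-\varepsilon n_p}$ uniformly in $p$ for the active primes (those with $p\mid N$ and $\lambda^G_p(x_p)<\delta n_p$).

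For an active prime, set $n=n_p$ and apply Theorem \ref{TheoremAlgebraicLevel0} to $L_p$: there exist a proper connected algebraic subgroup $X\subset G$ over $\Q_p$ and an open normal subgroup $L_p'\triangleleft L_p$ of index bounded by a constant $J=J(G)$ with $L_p'\subset(X(\Q_p)\cap\K_p)\K_p(p^m)$, where $m=\lceil\varepsilon_0 n\rceil$. By Corollary \ref{CorollaryMaximalFiniteness} we may enlarge $X$ to a maximal proper connected subgroup, preserving uniformity in $p$ via the \BG\ property. The coset decomposition of $L_p$ over $L_p'$ together with bounded index reduces the estimate of $\phi_{L_p,\tilde{\K}_p}(x_p)$, up to the constant $J$ and a union over $G(\Q_p)$-conjugates of $X$ indexed by the finitely many cosets, to bounding the measure of $\{k\in\tilde{\K}_p:[k,x_p]\in X(\Q_p)\K_p(p^m)\}$.

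To bound this set, parametrize $k=k_0\exp(\xi)$ with $k_0$ running over representatives of $\tilde{\K}_p/(\tilde{\K}_p\cap\K_p^{\ad}(p))$ and $\xi\in p\Lieg_{\Z_p}^{\ad}/p^m$. Using $[k_0e^\xi,x_p]=k_0[e^\xi,x_p]k_0^{-1}\cdot[k_0,x_p]$ and the expansion $[e^\xi,x_p]\equiv\exp((1-\Ad(x_p))\xi)\pmod{\K_p(p^{2m-O(1)})}$, membership decomposes into a constraint on $k_0$ (namely $[k_0,x_p]\in X(\Q_p)\K_p(p^m)$, which cuts out a subvariety related to the transporter of $x_p$ into $X$) together with a linear congruence on $\xi$ of the form $(1-\Ad(x_p))\xi\in\Lie X^{k_0}+p^{m-O(1)}\Lieg_{\Z_p}$, where $\Lie X^{k_0}=\Ad(xk_0^{-1}x^{-1})\Lie X$. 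The quadratic and higher-order errors are absorbed by Greenberg-type lifting bounds and the polynomial-congruence estimates of the appendix. The elementary divisor theorem applied to the $\Z_p$-linear map $\xi\mapsto(1-\Ad(x_p))\xi\bmod\Lie X^{k_0}$ then bounds the $\xi$-count in terms of the divisors $a_1\le\dots\le a_r$ of this map.

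The key non-degeneracy step converts the hypothesis on $\lambda^G_p(x_p)$ into an effective bound on some $a_i$. Since $\Lieg_{\Q_p}$ is the sum of its non-trivial semisimple ideals and $X$ is proper, there exists a non-trivial semisimple ideal $\Lieh\subset\Lieg_{\Q_p}$ with $\Lie X\not\supset\Lieh$ (and this survives conjugation by $x k_0^{-1}x^{-1}$ because semisimple ideals are $\Ad$-stable). The condition $\lambda^G_p(x_p)<\delta n$ ensures that $(1-\Ad(x_p))\Pro_{\Lieh}(\Lambda\otimes\Z_p)\not\subset p^{\delta n}(\Lambda\otimes\Z_p)$, and combining with the proper inclusion $\Lie X^{k_0}\cap\Lieh\subsetneq\Lieh$ forces at least one elementary divisor $a_i$ to be $\le\delta n+O(1)$. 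Choosing $\delta<\varepsilon_0$ yields the savings $p^{-\varepsilon n}$ with $\varepsilon=\varepsilon_0-\delta$ (up to constants), completing the local estimate and, with the product over active primes, the theorem.

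The main obstacle is precisely this last transfer: linking the non-degeneracy of $1-\Ad(x_p)$ on the semisimple ideal $\Lieh$ to non-degeneracy modulo the subalgebra $\Lie X^{k_0}$, which need not intersect $\Lieh$ in a controlled way. The degenerate case $x_p\in X$ (where the linear congruence on $\xi$ is vacuous) must be handled separately using the transporter constraint on $k_0$ — a subvariety whose codimension is governed by the centralizer dimension, again controlled via $\lambda^G_p(x_p)$. The \BG\ finiteness of maximal proper connected subgroups (Corollary \ref{CorollaryMaximalFiniteness}) is essential to make the constants uniform in $p$.
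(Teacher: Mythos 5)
Your global reduction has a genuine gap. You pass from $K$ to the product of its projections $L_p$ and then treat $L_p$ as a subgroup of level $p^{n_p}$. The containments $K\subset\prod_p L_p$ and $\K_p(p^{n_p})\subset L_p$ are correct, but $\level(L_p)$ can be strictly smaller than $p^{n_p}$: an open subgroup of $\K$ is in general \emph{not} the product of its projections, because congruence conditions at $p$ can be linked, Goursat-style, to conditions at other primes through common finite quotients, and such linked conditions contribute to $n_p$ while being invisible in $L_p$. For instance, for $G=\SL(2)$ one can link the $\Z/3\Z$-quotient of $\K_3$ (through the abelianization of $\SL(2,\F_3)$) with the $\Z/3\Z$-quotient of the preimage in $\K_5$ of a nonsplit torus of $\SL(2,\F_5)$; the resulting $K$ has $3\mid\level(K)$ although its projection to $\K_3$ is all of $\K_3$. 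Consequently your invocation of Theorem \ref{TheoremAlgebraicLevel0} ``applied to $L_p$ with $m=\lceil\varepsilon_0 n\rceil$, $n=n_p$'' is unjustified, and repairing it would require bounding the possible depth of common quotients of open subgroups of $\K_p$ and $\K_q$ for $p\neq q$ --- a point you never address. The paper's decomposition is designed precisely to avoid this: it sets $L=K\cap\K(N_1)$ with $N_1=\prod_{p|N}p$, which is pronilpotent and hence genuinely factors as $\prod_p L_p$ with $\prod_p\level(L_p)=\level(L)=N$, and it recovers $K$ from $L$ via the coset identity of Lemma \ref{lem: cosets}, applied simultaneously to $L\subset K$ and to $\K(N_1)\subset K\K(N_1)$, the residual factor $\phi_{K\K(N_1),\tilde\K}(x)$ being controlled by the mod-$p$ bound of Proposition \ref{PropositionModp} (which rests on Nori's theorem and is also what gives uniformity in $p$ when $n_p$ is small).

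The local estimate is not established either. The implication you call the ``key non-degeneracy step'' is false as stated: $\lambda^G_p(x_p)=0$ together with $\Lie_{\Q_p}X\not\supset\Lieh$ does not force a small elementary divisor of $\xi\mapsto(1-\Ad(x_p))\xi$ taken modulo a conjugate of $\Lie_{\Q_p}X$. For $G=\SL(2)$, $X$ a Borel subgroup and $x_p$ a regular unipotent element of $X(\Z_p)$ not congruent to $1$ modulo $p$, one has $\lambda_p(x_p)=0$ and yet the image of $\Ad(x_p)-1$ is exactly $\Lie_{\Q_p}X$, so your linear congruence on $\xi$ is vacuous; the actual saving comes from quadratic terms, which is exactly why the exponent $\lceil(n-r)/2\rceil$, and not $n-r$, appears in \eqref{eqn: SL2example}. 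You acknowledge this degeneracy as ``the main obstacle'' but only gesture at a transporter argument; in this example the mod-$p$ transporter constraint on $k_0$ saves only a factor $p^{-1}$, far from $p^{-\varepsilon n}$, so the degenerate case is the actual content of the local bound, not a routine supplement. The paper's local argument (Proposition \ref{PropositionHighLevel}) bypasses it: after Theorem \ref{TheoremAlgebraic}, Corollary \ref{CorollaryMaximalFiniteness} and Lemma \ref{LemmaFiniteness} produce a maximal $X$ realized, uniformly in $p$, as the stabilizer of a line in a representation $\rho$ of bounded dimension, it replaces $(X(\Q_p)\cap\K_p)\K_p(p^{\lceil\varepsilon n\rceil})$ by the group $\K_p(\rho,v;m)$ of Definition \ref{DefKpRhovm} (with level control from Lemma \ref{LevelBound}), converts the hypothesis on $\lambda_p(x_p)$ into non-vanishing modulo high powers of $p$ of explicit bounded-degree matrix coefficients $f_{ij,v}([\,\cdot\,,x_p])$ via the Larsen--Lubotzky inclusion $\mathcal{C}_p(x_p)\supset\K_p(p^{\lambda_p(x_p)+n_0})$ of Lemma \ref{LemmaLambdapx}, and concludes with the polynomial-congruence volume bounds of the appendix (Corollary \ref{CorollaryVolumeBound}). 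Some version of these ingredients, or an actual treatment of your degenerate case, is needed before your outline becomes a proof.
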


\begin{remark} \label{RemarkSinglex}
Let
\[
\K^{\ad}_{\reg}:=\{x\in\K^{\ad}:\lambda_p (x) < \infty\text{ for all $p$ and $\lambda_p (x) = 0$ for almost all $p$}\},
\]
which is a dense subset of $\K^{\ad}$.
For $x\in\K^{\ad}_{\reg}$ we obtain from Theorem
\ref{thm: mainbnd} the estimate
\[
\phi_{K,\tilde{\K}} (x) \ll_{\tilde{\K}}\big(\prod_pp^{\lambda_p(x)}\big)^{\varepsilon/\delta}\level (K)^{-\varepsilon}.
\]
By Lemma \ref{LemmaLambdapFiniteness} below, $x\in\K^{\ad}_{\reg}$ whenever $x$ is an element of $G^{\ad} (\Q) \cap \K^{\ad}$ that is not contained in any proper normal subgroup of $G^{\ad}$
(which we may of course assume to be defined over $\Q$).
\end{remark}

\begin{remark} \label{RemarkBetterBound}
In fact, the proof of Theorem \ref{thm: mainbnd} will yield the sharper estimate
\begin{equation} \label{EqnBetterEstimate}
\phi_{K,\tilde{\K}} (x)\le
\prod_{p|N, \, \lambda^G_p (x) = 0} \min (1, \frac{C}{p}) \prod_{p|N} \min (1, p^{\varepsilon' (c + \lambda^G_p (x) - \varepsilon n_p)}),
\end{equation}
where $\varepsilon > 0$ is the constant provided by the approximation theorem (Theorem \ref{TheoremAlgebraic}) for $G$ and $\rho_0$, and $\varepsilon', C >0$ and $c$ are additional
constants depending only on $G$, $\rho_0$, $\rho_0^{\ad}$ and $\tilde{\K}$.
\end{remark}

\begin{remark} \label{rem: slexam}
For square-free level $N$ and $x \in \K^{\ad}_{\reg}$
we obtain from \eqref{EqnBetterEstimate} the estimate $\phi_{K,\tilde{\K}} (x) \ll_{x, \tilde{\K}, \varepsilon} N^{-\varepsilon}$ for any $\varepsilon < 1$.
Up to the determination of the constant $C$, the estimate \eqref{EqnBetterEstimate} is best possible
even for groups of arbitrarily large rank. (It might not be best possible for particular groups $G$.)
Indeed, if $G=\SL (n)$ over $\Q$, $G^{\ad}=\operatorname{PGL}(n)$, $\K_p^{\ad}=\operatorname{PGL}(n,\Z_p)$ and $K_p \subset \K_p = \SL (n, \Z_p)$ is the stabilizer of a point
$\bar{\xi} \in \mathbb{P}^{n-1} (\F_p)$ under the natural action, then
\[
\phi_{K_p} (x_p) = \frac{1}{p^n-1} \sum_{i=1}^r \left( p^{n_i} - 1 \right),
\]
if $x_p \in \GL (n, \Z_p)$ stabilizes $\bar{\xi}$ and the dimensions of the $\F_p$-eigenspaces of the
image $\bar{x}_p$ of $x_p$ in $\GL (n, \F_p)$ are $n_1$, \ldots, $n_r$.
In particular, for $n_1=n-1$ and $n_2=1$ we get $\phi_{K_p} (x_p) \ge \frac1{p}$.
Note that for $\Lambda=\mathfrak{sl}(n,\Z)$, we have $\lambda_p (x_p) = 0$ if $\bar{x}_p$ is not a scalar matrix.
\end{remark}

\begin{remark}
For an arbitrary $N$, it is not possible to take $\varepsilon$ arbitrarily close to $1$ in \eqref{EqnMainBound}.
For example, consider for $G = \SL (2)$ the congruence subgroups $K_p = \Gamma_0 (p^n) \subset \K_p = \SL (2, \Z_p)$, $n \ge 1$. Let $x_p =
\left(\begin{smallmatrix}{a}&{b}\\{0}&{d}\end{smallmatrix}\right) \in \SL (2, \Z_p)$ and $r = \lambda_p (x_p) = \min (v_p (d-a), v_p (b))$.
Then $\phi_{K_p} (x_p)$ is just the number of fixed points of $x_p$ on $\mathbb{P}^1 (\Z / p^n \Z)$ divided by
$\card{\mathbb{P}^1 (\Z / p^n \Z)} = p^n (1 + \frac1{p})$.
Assume that $r < n$. Write $x_p = a + p^r y$ with $y = \left(\begin{smallmatrix}{0}&{b'}\\{0}&{d'}\end{smallmatrix}\right)$ and
$b'$, $d' \in \Z_p$ not both divisible by $p$. Then $\phi_{K_p} (x_p)$ is also
the number of eigenvectors of $y$ in $\mathbb{P}^1 (\Z / p^{n-r} \Z)$ divided by $p^{n-r} (1 + \frac1{p})$. After possibly conjugating by
an upper triangular matrix in $\SL (2, \Z_p)$, we may assume that $b'=1$. Then the number of eigenvectors in question is just the number of solutions
to the quadratic congruence $\xi^2 - d' \xi \equiv 0 \pmod{p^{n-r}}$, where $v_p (d') = v_p (d-a) - r$.
Direct computation yields the result
\begin{equation} \label{eqn: SL2example}
\phi_{K_p} (x_p) = \begin{cases} 2 \left(1+ \frac1{p} \right)^{-1} p^{-(n-v_p (d-a))}, & v_p (d-a) < \frac{n+r}{2}, \\
\left(1+ \frac1{p} \right)^{-1} p^{-\lceil \frac{n-r}2 \rceil}, & v_p (d-a) \ge \frac{n+r}{2}.
\end{cases}
\end{equation}
This implies that $\varepsilon\le\frac12$ for $G = \SL (2)$.
\end{remark}

The following corollary concerning conjugacy classes in $G (\hat{\Z})$ for an arbitrary reductive group $G$
has applications to the limit multiplicity problem.
Essentially the same result has been obtained independently in \cite[\S 5]{ABBGNRS} without using the approximation theorem.
We note that in the case of $G = \SL (2)$ or the group of norm one elements of a quaternion algebra over $\Q$,
and for particular choices of $x$, very explicit estimates of this type have been already obtained in \cite{MR749678}.

\begin{corollary} \label{CorConjugacyClass}
Let $G$ be a (possibly non-connected) reductive group defined over $\Q$ whose derived group $G^{\rm der}$ and adjoint group $G^{\ad}$ are connected.
Let $\rho_0: G \to \GL (N_0)$ be a faithful $\Q$-rational representation and $\K = \rho_0^{-1} (\GL (N_0, \hat{\Z})) \subset G (\A_{\rm fin})$.
Then there exist constants $\varepsilon, \delta>0$ such that
for all open compact subgroups $\K_0 \subset G (\A_{\rm fin})$
we have
\[
\vol \left( \{ k \in \K \, : \, k x k^{-1} \in K \} \right) \ll_{\rho_0, \, \K_0}\beta^G (\level ( K; G (\A_{\fin})^{+} ) , x, \delta)^{-\varepsilon}
\]
for all open subgroups $K \subset \K_0$ and all $x \in G (\A_{\rm fin})$. (Here we pull back $\lambda_p$ to $G$ via the projection $G\rightarrow G^{\ad}$.)
\end{corollary}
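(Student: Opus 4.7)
The plan is to reduce Corollary~\ref{CorConjugacyClass} to Theorem~\ref{thm: mainbnd} applied to the simply connected covering $G^{\SC}$ of $G^{\der}$. First I would rewrite the conjugation condition as a commutator condition. If the set $\{k \in \K : kxk^{-1} \in K\}$ is non-empty, pick some $k_0$ in it; the substitution $k = k_0 h$ (with $k_0 \in \K$) is measure-preserving and converts $kxk^{-1} \in K$ into $hyh^{-1} \in K$, where $y = k_0 x k_0^{-1} \in K$. Replacing $x$ by $y$ we may assume $x \in K$, so $kxk^{-1} \in K$ is equivalent to $[k,x] \in K$. Since the commutator depends on $k$ only through $\pi(k) \in G^{\ad}(\A_{\fin})$, we are essentially in the setting of the function $\phi_K$ from Theorem~\ref{thm: mainbnd}, except that $k$ ranges over $\K$ rather than $\K^{\ad}$.

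Next I pass to the simply connected cover. Let $p^{\SC}: G^{\SC} \to G^{\der}$ and $\pi^{\SC} = \pi \circ p^{\SC}$, fix a faithful representation $\rho_0^{\SC}$ defining $\K^{\SC} \subset G^{\SC}(\A_{\fin})$. Because $G^{\SC} \to G^{\ad}$ is a central isogeny, commutators in $G^{\SC}$ descend to a regular morphism $[\cdot,\cdot]^{\SC}: G^{\ad} \times G^{\ad} \to G^{\SC}$ lifting the usual commutator under $p^{\SC}$. Set $K^{\SC} = (p^{\SC})^{-1}(K) \cap \K^{\SC}$. Since $p^{\SC}(G^{\SC}(\A_{\fin})) = G(\A_{\fin})^+$ and $p^{\SC}(\K^{\SC}_p)$ has bounded (and for almost all $p$ trivial) index in $\K_p \cap G(\Q_p)^+$, the level of $K^{\SC}$ in $\K^{\SC}$ is comparable, up to a constant depending only on $\rho_0$ and $\rho_0^{\SC}$, to $\level(K; G(\A_{\fin})^+)$. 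For $\pi(k)$ in a suitable open subgroup $\tilde{\K}_p \subset \K^{\ad}_p$ of bounded index (which is all of $\K^{\ad}_p$ at good primes, by smoothness of the integral commutator morphism), and $\pi(x) \in \K^{\ad}$, the lifted commutator $[\pi(k),\pi(x)]^{\SC}$ lies automatically in $\K^{\SC}_p$, so that $[k,x] \in K$ is equivalent to $[\pi(k),\pi(x)]^{\SC} \in K^{\SC}$. Splitting the integration over $\K$ into the finitely many cosets of $\pi^{-1}(\tilde{\K})$, the volume reduces at a cost of a uniformly bounded factor to $\phi_{K^{\SC},\tilde{\K}}(\pi(x))$ (for $G^{\SC}$), which Theorem~\ref{thm: mainbnd} bounds by $\beta^{G^{\SC}}(\level(K^{\SC}),\pi(x),\delta)^{-\varepsilon}$.

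Finally, I identify $\beta^{G^{\SC}}$ with $\beta^G$: the non-trivial semisimple ideals of $\Lieg \otimes \Q_p$ coincide with those of $\Lieg^{\SC} \otimes \Q_p$, since the abelian summand of $\Lieg$ contributes no such ideals and $\Ad$ is compatible under $\pi^{\SC}$. Hence $\lambda^{G^{\SC}}_p(\pi(x))$ and $\lambda^G_p(x)$ differ by a bounded constant that vanishes for almost all $p$, and a slight adjustment of $\delta$ converts $\beta^{G^{\SC}}$ into $\beta^G$. The dependence on the auxiliary compact $\K_0$ is absorbed into the implied constant via the indices $[\K : \K \cap \K_0]$ and $[\K_0 : \K \cap \K_0]$, together with the bounded deviation of $\lambda^G_p$ on the compact set $\pi(\K_0)$. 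The principal technical hurdle is the bookkeeping at the finitely many bad primes around the two non-bijective maps $\K \to \pi(\K) \subset \K^{\ad}$ and $p^{\SC}: \K^{\SC} \to \K \cap G(\A_{\fin})^+$: one must verify that their failure to be surjective or injective on maximal compacts only costs bounded multiplicative factors uniformly in $p$, which ultimately reduces to the uniform finiteness of the relevant local Galois cohomology groups $H^1(\Q_p, \ker p^{\SC})$ together with the classical finiteness-of-class-numbers arguments for group schemes over $\Z_p$.
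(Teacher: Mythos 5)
Your proposal is correct and follows essentially the same route as the paper: reduce to a commutator condition via a base point $k_0$ with $k_0 x k_0^{-1}\in K$, pass to the simply connected cover $G^{\SC}$, apply Theorem \ref{thm: mainbnd} with $\tilde\K$ the image of $\K$ in $G^{\ad}(\A_{\fin})$, and compare the levels and the functions $\lambda_p$ (hence $\beta^{G^{\SC}}$ with $\beta^G$). The only differences are cosmetic: the paper sidesteps your bad-prime and coset bookkeeping by choosing $\rho^{\SC}$ so that $(p^{\SC})^{-1}(\K)\subset\K^{\SC}$, which makes $K^{\SC}=(p^{\SC})^{-1}(K)$ automatically an open subgroup of $\K^{\SC}$ and renders the Galois-cohomology considerations unnecessary, and your change of variables should read $k=hk_0$ rather than $k=k_0h$.
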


The simple proof will be given in \S \ref{SubsectionIntersections} below.

For the convenience of the reader, and
to facilitate comparison with \cite{ABBGNRS}, we also give a variant of this result concerning lattices in semisimple Lie groups.
For an arbitrary group $\Gamma$, a finite index subgroup $\Delta$, and an element $\gamma \in \Gamma$ set
\[
c_\Delta (\gamma) =  \abs{\{ \delta \in \Gamma / \Delta \, : \, \delta^{-1} \gamma \delta \in \Delta\}},
\]
which is also the number of fixed points of $\gamma$ in the permutation representation of $\Gamma$ on the finite set $\Gamma / \Delta$.

\begin{corollary} \label{CorLattices}
Let $G$ be a semisimple and simply connected group defined over $\Q$ such that for no $\Q$-simple factor $H$ of $G$ the group $H (\R)$ is compact,
and let $\K \subset G (\A_{\fin})$ be as above.
Let $\Gamma = G (\Q) \cap \K$, which is a lattice in the Lie group $G (\R)$.
For any open subgroup $K \subset \K$ let $\Delta = G (\Q) \cap K$ be the associated finite index subgroup of $\Gamma$. Then
there exist constants $\varepsilon$, $\delta > 0$, depending only on $G$ and $\rho_0$, such that
for all open subgroups $K$ and all $\gamma \in \Gamma$ we have
\[
\frac{c_\Delta (\gamma)}{[\Gamma : \Delta]} \ll \beta^G (\level (K), \gamma, \delta)^{-\varepsilon}.
\]
In particular, if $\gamma$ is not contained in any proper normal subgroup of $G$ (which we may assume to be defined over $\Q$) then we have
\[
\frac{c_\Delta (\gamma)}{[\Gamma : \Delta]} \ll
\big(\prod_pp^{\lambda_p(\gamma)}\big)^C\,[\Gamma : \Delta]^{-\varepsilon},
\]
where $C>0$ depends only on $G$.
\end{corollary}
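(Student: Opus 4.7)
The plan is to deduce Corollary \ref{CorLattices} from Corollary \ref{CorConjugacyClass} via strong approximation, which applies because $G$ is semisimple and simply connected with no $\Q$-simple factor having compact real points. The main obstacle is essentially bookkeeping: reconciling the combinatorial quantity $c_\Delta(\gamma)/[\Gamma:\Delta]$ with the adelic volume bounded in Corollary \ref{CorConjugacyClass}, and then deducing the index-level estimate at the end.

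First I would invoke strong approximation: for any open subgroup $K \subset \K$ we have $G(\A_\fin) = G(\Q) K$, and intersecting with $\K$ (which contains $K$) gives $\K = \Gamma K$. Since $\Gamma \cap K = \Delta$, the natural map $\Gamma/\Delta \to \K/K$ is a bijection; normalizing Haar measure so that $\vol(\K) = 1$ yields $\vol(K) = 1/[\Gamma:\Delta]$. For $\gamma \in \Gamma$, a direct check shows that the condition $k^{-1}\gamma k \in K$ on $k \in \K$ depends only on the coset $kK$, and via the bijection translates to $\delta^{-1}\gamma\delta \in \Delta$ on $\delta \Delta \in \Gamma/\Delta$ (using $\Gamma \cap K = \Delta$). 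Multiplying the count by $\vol(K)$ therefore yields
\[
\frac{c_\Delta(\gamma)}{[\Gamma:\Delta]} = \vol\bigl(\{k \in \K : k^{-1}\gamma k \in K\}\bigr) = \vol\bigl(\{k \in \K : k \gamma k^{-1} \in K\}\bigr),
\]
the last equality following from the invariance of Haar measure under $k \mapsto k^{-1}$.

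Next I would invoke Corollary \ref{CorConjugacyClass} with $\K_0 = \K$ and $x = \gamma$. Since $G$ is simply connected, $G^{\sc} = G$ and thus $G(\A_\fin)^+ = G(\A_\fin)$, so $\level(K; G(\A_\fin)^+) = \level(K)$. This immediately yields the first assertion $c_\Delta(\gamma)/[\Gamma:\Delta] \ll \beta^G(\level(K), \gamma, \delta)^{-\varepsilon}$, with implied constant depending only on $G$ and $\rho_0$.

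For the second assertion I would combine two elementary estimates. By the hypothesis that $\gamma$ lies in no proper normal $\Q$-subgroup of $G$, together with Lemma \ref{LemmaLambdapFiniteness} (cf.\ Remark \ref{RemarkSinglex}), we have $\lambda^G_p(\gamma) < \infty$ for all $p$ and $\lambda^G_p(\gamma) = 0$ for almost all $p$. Setting $N = \level(K) = \prod_p p^{n_p}$, the definition of $\beta^G$ then shows that only the finitely many $p$ with $\lambda^G_p(\gamma) > 0$ can be excluded from the product, so $\beta^G(N, \gamma, \delta) \ge N/C_\gamma$ for a constant $C_\gamma$ depending only on $\gamma$. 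Combined with the standard bound $[\Gamma:\Delta] = [\K:K] \le [\K:\K(N)] \ll_G N^{\dim G}$, this gives $c_\Delta(\gamma)/[\Gamma:\Delta] \ll_\gamma N^{-\varepsilon} \ll_\gamma [\Gamma:\Delta]^{-\varepsilon/\dim G}$, which is the asserted bound after renaming $\varepsilon$.
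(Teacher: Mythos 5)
Your proof is correct and follows essentially the same route as the paper: strong approximation to identify $c_\Delta(\gamma)/[\Gamma:\Delta]$ with the adelic volume, then Corollary \ref{CorConjugacyClass} (with $\level(K;G(\A_{\fin})^+)=\level(K)$ since $G$ is simply connected), and finally the finiteness of $\lambda^G_p(\gamma)$ together with $[\Gamma:\Delta]\ll\level(K)^{\dim G}$ to pass to the index. The only difference is that you spell out the coset bookkeeping and the lower bound $\beta^G(N,\gamma,\delta)\ge N/C_\gamma$, which the paper delegates to Remark \ref{RemarkSinglex}.
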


\begin{proof}
The assumptions on $G$ imply (and in fact, are equivalent to) that $G$ has the strong approximation property, i.e., that $G (\Q)$ is dense in $G (\A_{\fin})$ \cite[Theorem 7.12]{MR1278263}.
From this we get that $[\Gamma : \Delta]^{-1} c_\Delta (\gamma) =
\vol \left( \{ k \in \K \, : \, k \gamma k^{-1} \in K \} \right)$, and we may apply Corollary \ref{CorConjugacyClass} to deduce the first inequality.
For the second, we invoke Remark \ref{RemarkSinglex} to get the estimate $\ll (\prod_pp^{\lambda_p(\gamma)})^C\,\level (K)^{-\varepsilon}$.
However, we have $[\Gamma : \Delta] = [\K : K ] \le \vol (\K (\level (K)))^{-1} \ll \level (K)^{\dim G}$,
and we may therefore replace $\level (K)$ by $[\Gamma : \Delta]$ (at the price of replacing $\varepsilon$ by $\varepsilon / \dim G$).
\end{proof}

The groups $\Delta = G (\Q) \cap K \subset \Gamma$ are called the congruence subgroups of $\Gamma$. Under the assumptions of the
corollary, they are in one-to-one correspondence with the open subgroups $K$ of $\K$.

\subsection{Reduction to two statements on open subgroups of $\K_p$}
We will now derive Theorem \ref{thm: mainbnd} (and the refined estimate \eqref{EqnBetterEstimate}) from the
following two statements concerning open subgroups of the groups $\K_p$ for arbitrary $p$.
As in the theorem, $G$ will be assumed to be semisimple and simply connected. (The assumption that $G$ is simply connected is relevant only for Proposition \ref{PropositionModp} below.)
The first statement bounds $\phi_{K_p} (x_p)$ for all proper subgroups $K_p$ of $\K_p$, while the second one concerns subgroups of level $p^n$ for large $n$.
The propositions will be proved in \S \ref{SubsectionLocalBounds} below.

\begin{proposition} \label{PropositionModp}
For any prime $p$, any proper subgroup $K_p$ of $\K_p$, and any $x_p \in \K^{\rm ad}_p$ with $\lambda_p (x_p) = 0$, we have $\phi_{K_p} (x_p) \ll_{\rho_0,\rho_0^{\ad}} p^{-1}$.
\end{proposition}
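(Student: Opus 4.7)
The plan is to control $\phi_{K_p}(x_p)$ by reduction modulo $p$, combining Nori's algebraization (Proposition~\ref{PropositionApproxLevelp}) with a Lang--Weil-type count and a differential-geometric argument on the commutator morphism $\alpha_{\bar x_p}\colon G^{\ad}_{\F_p}\to G_{\F_p}$, $\bar k\mapsto[\bar k,\bar x_p]$. For $p$ bounded in terms of $G$, $\rho_0$, $\rho_0^{\ad}$, the estimate is trivial ($\phi_{K_p}(x_p)\le 1$), so I will assume $p>\nmbr(G)$, large enough that $G_{\Z_p}$ is smooth with $G_{\F_p}$ connected semisimple simply connected, Proposition~\ref{PropositionApproxLevelp} applies, and the simple factors of $\Lieg_{\Q_p}$ correspond to those of $\Lieg_{\F_p}$. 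Since $[k,x_p]\in K_p$ implies $[\bar k,\bar x_p]\in\bar K_p$,
\[
\phi_{K_p}(x_p)\le\frac{|\{\bar k\in G^{\ad}(\F_p):[\bar k,\bar x_p]\in\bar K_p\}|}{|G^{\ad}(\F_p)|},
\]
and by Remark~\ref{rem: Ilani} the image $\bar K_p\subset G(\F_p)$ is a proper subgroup. Proposition~\ref{PropositionApproxLevelp} then yields a proper connected algebraic subgroup $X\subset G_{\F_p}$ from a boundedly generated family $\mathcal{X}$ together with cosets $a_1,\dots,a_r\in G(\F_p)$, $r\le\nmbr(G)$, such that $\bar K_p\subset\bigcup_i a_iX(\F_p)$. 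It therefore suffices to bound $|\alpha_{\bar x_p}^{-1}(a_iX)(\F_p)|$ for each~$i$.

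The heart of the argument will be to show that $\alpha_{\bar x_p}^{-1}(a_iX)\subsetneq G^{\ad}_{\F_p}$ under the hypothesis $\lambda^G_p(x_p)=0$. Since $\alpha_{\bar x_p}(1)=1$, an equality $\alpha_{\bar x_p}^{-1}(a_iX)=G^{\ad}$ forces $a_i\in X$ and hence $a_iX=X$, so it is enough to rule out $\alpha_{\bar x_p}(G^{\ad})\subset X$. Suppose for contradiction that this inclusion holds. A direct calculation using right-translations shows that the derivative of $\alpha_{\bar x_p}$ at $g$ applied to $g\eta$ equals $\Ad(g)(1-\Ad(\bar x_p))\eta\cdot\alpha_{\bar x_p}(g)$, so the inclusion is equivalent to
\[
\Ad(g)(1-\Ad(\bar x_p))\Lieg_{\F_p}\subset\Lie X\quad\text{for all }g\in G^{\ad}(\bar\F_p).
\]
The $\bar\F_p$-span of these subspaces is $\Ad(G^{\ad})$-stable, hence for $p$ large an ideal of the semisimple Lie algebra $\Lieg_{\F_p}$. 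The condition $\lambda^G_p(x_p)=0$ is precisely the statement that $(1-\Ad(\bar x_p))\Lieg_{\F_p}$ has non-zero projection to every simple ideal of $\Lieg_{\F_p}$, so this ideal must coincide with $\Lieg_{\F_p}$. Hence $\Lie X=\Lieg_{\F_p}$ and $X=G$, contradicting the properness of $X$.

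Once $\alpha_{\bar x_p}^{-1}(a_iX)$ is shown to be a proper subvariety, its defining equations have complexity bounded only in terms of $G$, $\rho_0$, $\rho_0^{\ad}$ (since $\mathcal{X}$ is boundedly generated and $\alpha_{\bar x_p}$ has bounded degree), so the $\F_p$-point count from the appendix gives $|\alpha_{\bar x_p}^{-1}(a_iX)(\F_p)|\ll p^{\dim G-1}$. Summing over the $r\le\nmbr(G)$ translates and dividing by $|G^{\ad}(\F_p)|\asymp p^{\dim G}$ then yields $\phi_{K_p}(x_p)\ll p^{-1}$. The main obstacle will be the differential argument in the middle paragraph: the inclusion at the identity alone, $(1-\Ad(\bar x_p))\Lieg_{\F_p}\subset\Lie X$, is not enough to contradict properness of $X$ (one can have a proper $X$ containing this tangent subspace---consider for instance $\bar x_p$ unipotent in a Borel $X$). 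Differentiating at every point of the image and averaging the resulting tangent conditions over the $\Ad(G^{\ad})$-orbit is essential, as only then can one pass to an $\Ad$-stable subspace and invoke the simplicity of the ideals of $\Lieg_{\F_p}$.
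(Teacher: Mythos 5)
Your overall architecture coincides with the paper's proof: reduce modulo $p$ (using that the image of a proper subgroup of $\K_p$ in $G(\F_p)$ is proper for large $p$), invoke Proposition \ref{PropositionApproxLevelp} to replace $\bar K_p$ by boundedly many translates of a proper connected subgroup $X$ from a boundedly generated family, and conclude with the point count of Lemma \ref{LemmaVolumeBoundModp} against the lower bound of Lemma \ref{LemmaPointCounting}. The difference is the key non-degeneracy step. The paper argues with finite groups: the subgroup $\bar{\mathcal C}(\bar x)=\langle[G^{\ad}(\F_p),\bar x]\rangle$ is normal in $G(\F_p)$, its projection to each factor $G_i(\F_p)$ is non-central because $\lambda_p(x_p)=0$, hence it equals $G(\F_p)$ since all proper normal subgroups of $G_i(\F_p)$ are central for large $p$ (Lemma \ref{LemmaAlmostAllp}); this shows $[G^{\ad}(\F_p),\bar x]\not\subset X(\F_p)$, produces a defining function of $X$ whose pullback under $[\cdot,\bar x]$ is non-zero already at an $\F_p$-point, and lets the cosets be handled by Lemma \ref{lem: cosets}. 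You instead rule out the Zariski inclusion $\alpha_{\bar x}(G^{\ad})\subset a_iX$ by differentiating the commutator morphism at every point: your derivative formula is correct, the span of the subspaces $\Ad(g)(1-\Ad(\bar x))\Lieg_{\F_p}$ is $\Ad(G^{\ad})$-stable, hence for large $p$ an ideal which by $\lambda_p(x_p)=0$ meets every minimal ideal non-trivially, forcing $\Lie X=\Lieg_{\F_p}$. This is a legitimate alternative: it trades the quasi-simplicity of the finite groups $G_i(\F_p)$ for the description of $G$-invariant subspaces of $\Lieg_{\F_p}$ (a fact the paper uses anyway in the proof of Proposition \ref{PropositionApproxLevelp}), it connects more directly to the Lie-algebraic definition of $\lambda_p$, and your closing remark that the tangent condition at the identity alone would not suffice is exactly right. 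What the paper's route buys is that it stays entirely at the level of $\F_p$-points and avoids any characteristic-$p$ geometry of $X$.

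Two small points should be addressed to make your middle paragraph airtight. First, $\lambda_p(x_p)=0$ controls the projections of $(1-\Ad(\bar x))\Lieg_{\F_p}$ to the minimal ideals defined over $\F_p$ (via part (3) of Lemma \ref{LemmaAlmostAllp}), whereas your $\bar\F_p$-span decomposes a priori along the absolutely simple summands; either note that the span is Galois-stable, hence defined over $\F_p$, so only $\F_p$-rational ideals occur, or observe that for the $\F_p$-rational element $\bar x$ triviality on one Frobenius-conjugate summand forces triviality on the whole $\F_p$-simple factor. Second, to deduce $X=G$ from $\Lie X=\Lieg_{\F_p}$ you should say that $X$, being a reduced algebraic subgroup over the perfect field $\F_p$, is smooth, so $\dim X=\dim\Lie X=\dim G$. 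Both are one-line fixes; the remaining bookkeeping (left translation by $a_i$ does not raise degrees, and the commutator with fixed $\bar x$ has degree bounded in terms of $\rho_0$ and $\rho_0^{\ad}$) is fine as you indicate.
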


\begin{proposition} \label{PropositionHighLevel}
There exist constants $\varepsilon$, $\varepsilon' > 0$ and $c$, depending only on $G$, $\rho_0$ and $\rho_0^{\ad}$, such that for any prime $p$,
any subgroup $K_p$ of $\K_p$ of level $p^n$ and any $x_p \in \K^{\rm ad}_p$, we have $\phi_{K_p} (x_p) \le p^{\varepsilon' (c + \lambda_p(x_p) - \varepsilon n)}$.
\end{proposition}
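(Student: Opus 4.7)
The plan is to deduce this proposition from Theorem \ref{TheoremAlgebraicLevel0} by linearizing the commutator condition, and then invoking the definition of $\lambda_p(x_p)$ together with the semisimplicity of $\Lieg_{\Q_p}$ to produce the required volume bound.

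\textbf{Step 1 (Approximation).} First, I would apply Theorem \ref{TheoremAlgebraicLevel0} to $K_p$ to produce a proper connected algebraic subgroup $X$ of $G$ defined over $\Q_p$ and an open subgroup $K_p'\subset K_p$ of index bounded by some constant $J=J(G,\rho_0)$ such that $K_p'\subset \tilde K_p:= (X(\Q_p)\cap\K_p)\K_p(p^m)$ with $m=\lceil\varepsilon n\rceil$. Then $\phi_{K_p}(x_p)\le J\,\phi_{\tilde K_p}(x_p)$ and it suffices to estimate the latter. Partitioning $\K^{\rm ad}_p$ into a bounded number of cosets of $\K^{\rm ad}_p(\altp)$ and writing $k=k_0u$ with $u\in\K^{\rm ad}_p(\altp)$, the commutator condition transforms under conjugation to a condition of the same shape, with $x_p$ replaced by $k_0x_p k_0^{-1}$ and $X$ by $k_0^{-1}Xk_0$; both $\lambda_p^G$ and the algebraic approximation change only by a bounded constant, which can be absorbed into the constant $c$ of the proposition.

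\textbf{Step 2 (Linearization).} Writing $u=\exp\xi$ with $\xi\in\altp\Lambda$, where $\Lambda=\Lieg_{\Z_p}=\Lieg^{\rm ad}_{\Z_p}$ (for semisimple $G$), I would use
\[
[u,x_p]=\exp(\xi)\,x_p\exp(-\xi)\,x_p^{-1}=\exp(\xi)\exp(-\Ad(x_p)\xi),
\]
and apply the Hausdorff series (convergent on $\altp\Lambda$) to obtain
\[
\log[u,x_p]=-B\xi+Q(\xi),\qquad B:=\Ad(x_p)-1,
\]
where $Q(\xi)$ is a sum of iterated commutators of degree $\ge2$ in $\xi$. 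By parts \ref{part: expcongruence} and \ref{eqnalgebraicsubgroup2} of Proposition \ref{prop: unifg}, the condition $[u,x_p]\in\tilde K_p$ is equivalent to $\log[u,x_p]\in\lie i+p^m\Lambda$, where $\lie i=\Lie_{\Q_p}X\cap\Lambda$ is a proper isolated subalgebra. Splitting the integration over $\xi$ by the $p$-adic valuation, on the range $\xi\in p^a\Lambda\setminus p^{a+1}\Lambda$ the nonlinear term $Q(\xi)$ lies in $p^{2a-c_0}\Lambda$ (for a constant $c_0$ coming from BCH denominators), so whenever $2a-c_0\ge m$ the condition reduces to the linear one $B\xi\in\lie i+p^m\Lambda$. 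The regime $a>m/2$ contributes measure at most $p^{-d(\lceil m/2\rceil-1)}$, which is negligible compared to the sought bound; the complementary regime is treated by an iterative lifting scheme, solving the linear congruence at each scale and pushing the nonlinear error into a higher power of $p$.

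\textbf{Step 3 (Counting via the definition of $\lambda_p$).} This reduces the problem to bounding, for the linearized condition, the measure of $\xi\in\altp\Lambda$ with $B\xi\in\lie i+p^m\Lambda$. Let $\pi\colon\Lambda\to\Lambda/\lie i$ be the projection; the sought measure equals $1/|\mathrm{Image}(\pi\circ B)\pmod{p^m}|$. Since $G$ is semisimple and $\lie i$ is a proper subalgebra of $\Lieg_{\Q_p}=\bigoplus_j\lie h_j$, some simple ideal $\lie h_{j_0}$ is not contained in $\lie i$; let $\lie h$ be the sum of all such ideals, a non-trivial semisimple ideal with $\lie h+\lie i=\Lieg_{\Q_p}$, so that $\pi|_{\lie h\cap\Lambda}\otimes\Q_p$ is surjective and consequently the elementary divisors of $\pi|_{\lie h\cap\Lambda}\colon\lie h\cap\Lambda\to\Lambda/\lie i$ are bounded by a constant depending only on $(G,\rho_0)$ (using Lemma \ref{LemmaGeometricFiniteness} and the boundedness of the family of $\lie i$'s produced by the approximation theorem, Corollary \ref{CorollaryMaximalFiniteness}). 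By definition of $\lambda_p(x_p)$, the inclusion $B(\Pro_{\lie h}\Lambda)\subset p^{\lambda_p(x_p)+1}\Lambda$ fails, so there exists $v\in\Lambda$ with $v_p(B\Pro_{\lie h}v)\le\lambda_p(x_p)$. Pushing through $\pi$ produces an element of $\mathrm{Image}(\pi\circ B)$ of valuation at most $\lambda_p(x_p)+c$, hence $|\mathrm{Image}(\pi\circ B)\bmod p^m|\ge p^{m-\lambda_p(x_p)-c}$, and the measure is at most $p^{\lambda_p(x_p)+c-m}=p^{\lambda_p(x_p)+c-\lceil\varepsilon n\rceil}$.

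\textbf{Main obstacle.} The principal technical difficulty is Step 2, controlling the nonlinear BCH terms $Q(\xi)$ without losing the factor $p^{-d\lceil m/2\rceil}$ that would arise from naively restricting to $\xi\in p^{\lceil m/2\rceil}\Lambda$: an iterative (Newton-style) argument is needed to separate linear and nonlinear contributions scale by scale. A secondary point requiring care is that the family of proper algebraic subgroups $X$ produced by Theorem \ref{TheoremAlgebraicLevel0} must be boundedly generated (Corollary \ref{CorollaryMaximalFiniteness}) in order for the elementary divisors of $\pi|_{\lie h\cap\Lambda}$, and hence the constant $c$, to be uniform in $p$.
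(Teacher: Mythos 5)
There is a genuine gap, and it sits exactly where you placed your ``main obstacle'': the reduction to the linearized condition in Step 2 together with the image bound in Step 3 cannot work as stated. The definition of $\lambda^G_p(x_p)$ only controls $B=\Ad(x_p)-1$ composed with projections onto semisimple \emph{ideals} of $\Lieg\otimes\Q_p$; it gives no lower bound on the composite $\pi\circ B$ where $\pi$ is the projection modulo an arbitrary proper subalgebra $\lie{i}=\Lie_{\Q_p}X\cap\Lambda$. Concretely, take $G=\SL(2)$, $x_p$ the image of the regular unipotent $\sm{1}{1}{0}{1}$ and $X$ a Borel subgroup containing it: then $B\Lambda\subset\lie{b}$ identically (so the linearized condition $B\xi\in\lie{i}+p^m\Lambda$ is satisfied by \emph{every} $\xi$ and $\operatorname{Im}(\pi\circ B)=0$), while $\lambda_p(x_p)=0$. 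So the inequality $\lvert\operatorname{Im}(\pi\circ B)\bmod p^m\rvert\ge p^{m-\lambda_p(x_p)-c}$ claimed in Step 3 is false, and no Newton-style iteration in Step 2 can rescue it, because in this example the whole constraint $[u,x_p]\in(X(\Q_p)\cap\K_p)\K_p(p^m)$ lives in the higher-order Hausdorff terms $Q(\xi)$, not in the linear term. The single commutator, to first order, simply does not see $\lambda_p$.

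This is precisely why the paper's proof is organized differently: instead of one linearized commutator it uses the closed group $\mathcal{C}_p(x_p)$ generated by \emph{all} commutators $[\K_p^{\ad}(p),x_p]$, and invokes the (nontrivial, genuinely non-abelian) result of Larsen--Lubotzky (Lemma \ref{LemmaLambdapx}) that $\mathcal{C}_p(x_p)\supset\K_p(p^{\lambda_p(x_p)+n_0})$. The approximation step uses Theorem \ref{TheoremAlgebraic} applied to $K_p\cap\K_p(p^{\epsilon_p})$ (no bounded-index loss, so Theorem \ref{TheoremAlgebraicLevel0} is not needed), then replaces $(X(\Q_p)\cap\K_p)\K_p(p^{\lceil\varepsilon n\rceil})$ by a line-stabilizer congruence group $\K_p(\rho,v;m)$ for a representation $\rho$ of bounded dimension (this is where Corollary \ref{CorollaryMaximalFiniteness} and Lemma \ref{LemmaFiniteness} enter, as you anticipated), computes its level (Lemma \ref{LevelBound}), and concludes via Lemma \ref{LemmaLambdapx} that if $m$ exceeds $\lambda_p(x_p)$ by a bounded amount then some bounded-degree matrix coefficient $f_{ij,v}(g)=\sprod{\rho([g,x_p])v}{v_je_i-v_ie_j}$ is not identically $\equiv 0\pmod{p^{\lambda_p(x_p)+O(1)}}$ on $\K^{\ad}_p(p)$; the volume bound then comes from the polynomial-congruence estimate on the variety $G^{\ad}$ (Corollary \ref{CorollaryVolumeBound} and the appendix), which is also why the exponent $\varepsilon'<1$ appears rather than the exponent $1$ your linear count would give. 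Your Step 1 and the use of bounded generation are in the right spirit, but the core of the argument needs the group-theoretic input and the polynomial volume bound, not a linearization.
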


As in Remark \ref{RemarkBetterBound}, the constant $\varepsilon$ in Proposition \ref{PropositionHighLevel} is the constant provided by Theorem \ref{TheoremAlgebraic} applied to $G$ and $\rho_0$.

In fact Proposition \ref{PropositionModp} is essentially already known.
More precisely, at least in the case where $x_p$ lies in the image of $K_p$ in $G^{\ad} (\Q_p)$, it is shown in \cite{MR1114511} that for any $G$ one can take the implied constant
(say $C$) to be $2$
for almost all $p$.\footnote{Lemma \ref{LemmaAlmostAllp} below shows that the framework of [loc.~cit.] is applicable for almost all $p$.}
This estimate is optimal
for $G = \SL (2)$, as one sees from \eqref{eqn: SL2example} for $n = 1$ and $r=v_p (d-a) = 0$.
If one excludes the case where $G$ has a factor of type $A_1$, then one may lower the value of $C$ further.
(By Remark \ref{rem: slexam} above, we need to have $C \ge 1$ even if we omit finitely many possibilities for $G$.)
However, the proof in [loc.~cit.] is based on a detailed analysis of a great number of particular cases, and uses explicit information on the maximal subgroups
of the finite simple groups of Lie type, while our proof, which does not give the optimal value of $C$, uses only Nori's theorem.
For more refined recent bounds
see \cite{MR1639620,MR2301233,MR2301234,MR2344583,MR2344584} (concerning classical groups) and \cite{MR1922740} (concerning exceptional groups), as well as the references cited therein.

We will now show how to deduce Theorem \ref{thm: mainbnd} from Propositions \ref{PropositionModp} and \ref{PropositionHighLevel}.
Arguments of this type can be found already in \cite{MR749678} and \cite[\S 6.1]{MR1978431}.
We first state a simple property of the functions $\phi_{K,\tilde{\K}}$ which follows by straightforward calculation.

\begin{lemma} \label{lem: cosets}
Let $L \subset K$ be open subgroups of $\K$.
Then
\[
\phi_{K,\tilde{\K}} (x) = \sum_{\eta \in L \bs K: \, [\tilde{\K}, x] \cap L \eta \neq \emptyset}
\phi_{\ad (k_\eta)^{-1} (L),\tilde{\K}} (x), \quad \quad x \in \K^{\rm ad},
\]
where $k_\eta \in \tilde{\K}$ is an arbitrary element with $[k_\eta,x] \in L \eta$.
\end{lemma}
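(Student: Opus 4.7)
The plan is to decompose the indicator set $S = \{k \in \tilde{\K} : [k,x] \in K\}$ according to the partition $K = \bigsqcup_{\eta \in L \bs K} L\eta$ into left $L$-cosets. Writing $S = \bigsqcup_{\eta \in L \bs K} S_\eta$ with $S_\eta = \{k \in \tilde{\K} : [k,x] \in L\eta\}$, only those $\eta$ with $[\tilde{\K},x] \cap L\eta \ne \emptyset$ contribute, and for each such $\eta$ we may pick $k_\eta \in \tilde{\K}$ with $[k_\eta, x] \in L\eta$.

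The key step is to identify $\vol(S_\eta)$ with $\phi_{\ad(k_\eta)^{-1}(L), \tilde{\K}}(x)$. For this I would parametrize $k \in \tilde{\K}$ as $k = k_\eta k'$ with $k' \in \tilde{\K}$ (this is a measure-preserving bijection $\tilde{\K} \to \tilde{\K}$ since $k_\eta \in \tilde{\K}$ and the Haar measure is left-invariant). Using the standard commutator identity
\[
[ab,c] = a[b,c]a^{-1} \cdot [a,c],
\]
we obtain $[k_\eta k', x] = k_\eta [k',x] k_\eta^{-1} \cdot [k_\eta, x]$. Since $[k_\eta, x] \in L\eta$, the condition $[k,x] \in L\eta$ becomes $k_\eta [k',x] k_\eta^{-1} \in L \eta \cdot [k_\eta,x]^{-1} = L$, i.e., $[k',x] \in k_\eta^{-1} L k_\eta = \ad(k_\eta)^{-1}(L)$.

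Summing the resulting identity $\vol(S_\eta) = \phi_{\ad(k_\eta)^{-1}(L), \tilde{\K}}(x)$ over all relevant $\eta$ yields the claim. There is no real obstacle here; the only things to check are the measure-preserving property of left translation by $k_\eta$ and the commutator manipulation, both of which are routine. The independence of the final formula from the particular choice of $k_\eta$ is then automatic, since any two choices differ by an element $l \in \tilde{\K} \cap L \cap k_\eta L k_\eta^{-1}$, and conjugation by such an $l$ does not change the value of $\phi_{\ad(k_\eta)^{-1}(L), \tilde{\K}}(x)$.
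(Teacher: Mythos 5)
Your proof is correct and is exactly the ``straightforward calculation'' the paper alludes to (it records no further details): split the condition $[k,x]\in K$ over the finitely many cosets $L\eta$, and for each nonempty piece left-translate by $k_\eta$ and use $[k_\eta k',x]=\ad(k_\eta)([k',x])\,[k_\eta,x]$ to identify its volume with $\phi_{\ad(k_\eta)^{-1}(L),\tilde{\K}}(x)$. The only blemish is your closing justification of independence of the choice of $k_\eta$: the intersection $\tilde{\K}\cap L$ is not meaningful, since $\tilde{\K}\subset G^{\ad}(\A_{\fin})$ while $L\subset G(\A_{\fin})$ (two different choices of $k_\eta$ in fact differ by right multiplication by some $m\in\tilde{\K}$ with $[m,x]\in\ad(k_\eta)^{-1}(L)$), but this remark is superfluous because your main computation already shows that every admissible $k_\eta$ gives the same value, namely $\vol(S_\eta)$.
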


In the following lemma we collect some standard facts on the behavior of the groups $G$ and $G^{\ad}$ modulo $p$ for almost all primes $p$.

\begin{lemma} \label{LemmaAlmostAllp}
Given $G$, $\rho_0$, $G^{\ad}$, $\rho^{\ad}_0$, for almost all primes $p$ we have:
\begin{enumerate}
\item The group schemes $G$ and $G^{\ad}$ are smooth over $\Z_p$ and thus $\K_p / \K_p (p) \simeq G (\F_p)$ and
$\K^{\ad}_p / \K^{\ad}_p (p) \simeq G^{\ad} (\F_p)$. Moreover, the group schemes $G_{\F_p}$ and $G^{\ad}_{\F_p}$ are semisimple algebraic groups over $\F_p$.
\item The maps $\ad: G^{\ad} \times G \to G$ and $[\cdot,\cdot]: G^{\ad} \times G^{\ad} \to G$ map
$\K^{\ad}_p \times \K_p$ and $\K^{\ad}_p \times \K^{\ad}_p$ to $\K_p$, and moreover descend modulo $p$ to
corresponding maps of the groups of $\F_p$-points.
\item Let $\lie{h}_i$, $i = 1$, \ldots, $r_p$, be the minimal non-zero ideals of $\lie{g} \otimes \Q_p$.
Then the $\Z$-lattice $\Lambda \subset \lie{g}$ used to define $\lambda_p (x)$ satisfies
$\Lambda \otimes \Z_p = \bigoplus_{i=1}^{r_p} (\Lambda \otimes \Z_p) \cap \lie{h}_i$.
The corresponding factorizations $G=G_1\times\dots\times G_{r_p}$
and $G^{\ad} = G_1^{\rm ad} \times \dots\times G_{r_p}^{\ad}$ of $G$ and $G^{\ad}$ as products of
$\Q_p$-simple algebraic groups extend to factorizations of
group schemes that are smooth over $\Z_p$. In particular, we have
corresponding factorizations of $G$ and $G^{\ad}$ over $\F_p$ (cf. \cite[pp. 392--393]{MR1978431}).
\item All proper normal subgroups of the groups $G_i (\F_p)$, $i = 1, \dots, r_p$, are central
\cite[Proposition 7.5]{MR1278263}, and the
center of $G(\F_p)$ is the set of $\F_p$-points of the center of $G$, i.e., the kernel of the adjoint representation of $G (\F_p)$.
\item \label{part: 1645} For any $m \ge 1$ the $m$-th iterated Frattini subgroup of $\K_p$ is equal to $\K_p (p^m)$. In particular,
the Frattini subgroup $\Phi (\K_p)$ is $\K_p (p)$  \cite[Window 9, Lemma 5 and Corollary 6]{MR1978431}.
\end{enumerate}
\end{lemma}

We also need a standard estimate for the number of $\F_p$-points of a linear algebraic group \cite[Lemma 3.5]{MR880952}.
\begin{lemma} \label{LemmaPointCounting}
For almost all $p$, depending on $G^{\ad}$ and $\rho_0^{\ad}$, we have
\[
(p-1)^{\dim G} \le | G^{\ad} (\F_p) | = \left( \vol \K_p^{\ad} (p) \right)^{-1} \le (p+1)^{\dim G}.
\]
\end{lemma}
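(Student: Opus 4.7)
The plan is to handle the middle equality and the two outer inequalities separately: the former follows from smoothness of $G^{\ad}$ over $\Z_p$, while the latter follows from the Chevalley--Steinberg order formula combined with elementary binomial estimates.

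First, by Lemma \ref{LemmaAlmostAllp}, for almost all $p$ the group scheme $G^{\ad}_{\Z_p}$ is smooth over $\Z_p$, so the reduction map $\K_p^{\ad} \to G^{\ad}(\F_p)$ is surjective with kernel $\K_p^{\ad}(p)$. Since the Haar measure on $\K_p^{\ad}$ is normalized so that $\vol(\K_p^{\ad}) = 1$, this yields
\[
| G^{\ad}(\F_p) | = [\K_p^{\ad} : \K_p^{\ad}(p)] = \bigl( \vol \K_p^{\ad}(p) \bigr)^{-1},
\]
which is the middle identity of the lemma.

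For the inequalities, we may assume in addition that $G^{\ad}_{\F_p}$ is a connected semisimple group over $\F_p$, so by Steinberg's theorem it is quasi-split. The Chevalley--Steinberg order formula (see \cite[11.16]{MR0230728}) then gives
\[
| G^{\ad}(\F_p) | \;=\; p^N \prod_{i=1}^r \bigl( p^{d_i} - \epsilon_i \bigr),
\]
where $r$ is the rank, $N = (\dim G - r)/2$ is the number of positive roots, $d_1, \ldots, d_r$ are the fundamental degrees of the Weyl group of $G^{\ad}$ (so that $\sum_i(d_i - 1) = N$, and consequently $\dim G = 2N + r = \sum_i (2d_i - 1)$), and the $\epsilon_i$ are roots of unity encoding the Frobenius twist. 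The product itself is a positive integer, so $\prod_i (p^{d_i} - \epsilon_i) = \prod_i |p^{d_i} - \epsilon_i|$ in $\R$.

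It remains to combine this with the elementary bounds
\[
(p-1)^{d_i} \;\le\; p^{d_i} - 1 \;\le\; \bigl| p^{d_i} - \epsilon_i \bigr| \;\le\; p^{d_i} + 1 \;\le\; (p+1)^{d_i},
\]
which hold for all $p \ge 2$ and all $d_i \ge 1$ (the outer estimates follow from the binomial expansions of $(p\pm 1)^{d_i}$, noting $p^{d_i} - (p-1)^{d_i} = \int_{p-1}^{p} d_i\, t^{d_i-1}\,dt \ge d_i(p-1)^{d_i - 1} \ge 1$). Multiplying over $i = 1, \ldots, r$ and using $(p-1)^N \le p^N \le (p+1)^N$, together with $N + \sum_i d_i = 2N + r = \dim G$, one obtains
\[
(p-1)^{\dim G} \;\le\; p^N \prod_{i=1}^r \bigl| p^{d_i} - \epsilon_i \bigr| \;=\; | G^{\ad}(\F_p) | \;\le\; (p+1)^{\dim G},
\]
as required. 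There is no substantive obstacle; the only point meriting care is that the Chevalley--Steinberg formula applies uniformly in $p$, which is guaranteed by the quasi-split property of connected reductive groups over finite fields.
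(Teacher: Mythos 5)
Your argument is correct, but it is a genuinely different route from the paper's: the paper does not prove this lemma at all, it simply quotes it as a standard point-counting estimate from Nori \cite[Lemma 3.5]{MR880952}, which is stated there for arbitrary connected linear algebraic groups over $\F_p$. You instead derive it directly: the middle equality from smoothness of $G^{\ad}$ over $\Z_p$ (Lemma \ref{LemmaAlmostAllp}) together with the normalization $\vol(\K_p^{\ad})=1$, and the outer inequalities from Steinberg's order formula $|G^{\ad}(\F_p)| = p^N\prod_i(p^{d_i}-\epsilon_i)$ for the connected semisimple special fiber, combined with $\sum_i(d_i-1)=N$, $\dim G = 2N+r$, and the elementary bounds $(p-1)^{d_i}\le p^{d_i}-1\le |p^{d_i}-\epsilon_i|\le p^{d_i}+1\le(p+1)^{d_i}$. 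All of these steps check out (in particular the reduction of the complex product to a product of moduli, since the total product is a positive integer). What your approach buys is a self-contained proof, at the cost of needing the special fiber to be reductive, which is exactly what Lemma \ref{LemmaAlmostAllp} supplies for almost all $p$; Nori's cited lemma is more general in that it covers arbitrary connected groups. Two cosmetic remarks: quasi-splitness over a finite field is a consequence of Lang's theorem (and is in fact not needed for the order formula, which holds for any connected reductive group over $\F_q$), and the precise numbering in \cite{MR0230728} for the order formula should be double-checked, though the formula itself is standard.
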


For convenience, we also isolate a key technical consequence of Proposition \ref{PropositionHighLevel} as a separate lemma.
\begin{lemma} \label{LemmaTechnicalPhi}
Let $\varepsilon$ and $\varepsilon'$ be as in Proposition \ref{PropositionHighLevel}.
Let $\tilde{\K}_p \subset \K_p^{\ad}$ be open subgroups and $B$ a positive integer with $[\K_p^{\ad} : \tilde{\K}_p] \le B$ for all $p$.
There exists a constant
$c$, depending only on $G$, $\rho_0$, and $B$, such that for any prime $p$, for all subgroups $K_p \subset \K_p (p)$
of level $p^n$ and for all $x_p \in \K_p^{\ad}$, we have the estimate
\[
\phi_{\ad (k_p) K_p, \tilde{\K}_p} (x_p) \le \min (1, p^{\varepsilon' (c + \lambda_p(x_p) - \varepsilon n)}) \phi_{\ad (k_p) \K (p), \tilde{\K}_p} (x_p).
\]
\end{lemma}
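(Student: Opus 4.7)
The plan is to reduce the statement to the case $k_p = e$ by a change of variables, and then to derive the quantitative bound by combining Proposition \ref{PropositionHighLevel} with an elementary lower bound on $\phi_{\K(p), \tilde{\K}_p'}(x_p')$. Setting $x_p' = k_p^{-1} x_p k_p$ and $\tilde{\K}_p' = k_p^{-1} \tilde{\K}_p k_p$, the identity $[k_p l k_p^{-1}, x_p] = k_p [l, x_p'] k_p^{-1}$ together with the bi-invariance of the Haar measure on $\K_p^{\ad}$ gives
\[
\phi_{\ad(k_p) K_p, \tilde{\K}_p}(x_p) = \phi_{K_p, \tilde{\K}_p'}(x_p'), \qquad \phi_{\ad(k_p) \K(p), \tilde{\K}_p}(x_p) = \phi_{\K(p), \tilde{\K}_p'}(x_p').
\]
Moreover $\lambda_p(x_p') = \lambda_p(x_p)$: each non-trivial semisimple ideal $\Lieh \subset \Lieg \otimes \Q_p$ is preserved by the adjoint action of $\K_p^{\ad}$ (since $G^{\ad}(\Q_p)$ acts by inner automorphisms of each $\Q_p$-simple factor), so $\Pro_\Lieh$ commutes with $\Ad(k_p)$, and $k_p$ preserves $\Lambda \otimes \Z_p$.

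The factor $1$ in the $\min$ is immediate from $K_p \subset \K(p)$. For the quantitative factor I bound numerator and denominator separately. Passing from the normalized measure on $\tilde{\K}_p'$ to the one on $\K_p^{\ad}$ gives
\[
\phi_{K_p, \tilde{\K}_p'}(x_p') \le [\K_p^{\ad} : \tilde{\K}_p'] \, \phi_{K_p}(x_p') \le B \, \phi_{K_p}(x_p'),
\]
and Proposition \ref{PropositionHighLevel} provides constants $c_0, \varepsilon', \varepsilon$ (depending only on $G, \rho_0, \rho_0^{\ad}$) such that $\phi_{K_p}(x_p') \le p^{\varepsilon'(c_0 + \lambda_p(x_p') - \varepsilon n)}$.

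For the denominator I would observe that for each prime $p$ there exists an integer $m_0(p) \ge 1$ such that $[\K_p^{\ad}(p^{m_0(p)}), \K_p^{\ad}] \subset \K(p)$: by Lemma \ref{LemmaAlmostAllp} one may take $m_0(p) = 1$ for almost all $p$, and for the remaining finitely many primes $m_0(p)$ exists by continuity of the commutator morphism on the compact group $\K_p^{\ad}$. Setting $\bar m := \sup_p m_0(p) < \infty$, the subgroup $\tilde{\K}_p' \cap \K_p^{\ad}(p^{\bar m})$ lies inside $\{l \in \tilde{\K}_p': [l, x_p'] \in \K(p)\}$, so the standard index inequality $[\tilde{\K}_p' : \tilde{\K}_p' \cap \K_p^{\ad}(p^{\bar m})] \le [\K_p^{\ad} : \K_p^{\ad}(p^{\bar m})]$ combined with Lemma \ref{LemmaPointCounting} yields
\[
\phi_{\K(p), \tilde{\K}_p'}(x_p') \ge \bigl[\K_p^{\ad} : \K_p^{\ad}(p^{\bar m})\bigr]^{-1} \gg p^{-\bar m \dim G}.
\]
Taking the ratio then gives
\[
\frac{\phi_{\ad(k_p) K_p, \tilde{\K}_p}(x_p)}{\phi_{\ad(k_p) \K(p), \tilde{\K}_p}(x_p)} \ll B \, p^{\bar m \dim G + \varepsilon'(c_0 + \lambda_p(x_p) - \varepsilon n)},
\]
and the polynomial factor in $p$ can be absorbed into $p^{\varepsilon'(c - c_0)}$ by enlarging $c$ by a constant depending only on $B, \dim G, \varepsilon',$ and $\bar m$.

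The main obstacle is really just careful bookkeeping: verifying the conjugation invariance of $\lambda_p$ and the finiteness of $\bar m$ uniformly in $p$ are both mild consequences of the setup, so the argument proceeds with no deeper difficulty beyond Proposition \ref{PropositionHighLevel} itself. The one subtlety worth double-checking is that the $\K_p^{\ad}$-adjoint action genuinely preserves each $\Q_p$-simple (and hence each semisimple) ideal of $\Lieg \otimes \Q_p$, which rests on the fact that normal $\Q_p$-subgroups of $G^{\ad}$ are preserved by $G^{\ad}(\Q_p)$-conjugation.
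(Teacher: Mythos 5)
Your argument is correct, but it handles the conjugation by $k_p$ differently from the paper, so a comparison is in order. The paper keeps $x_p$ and $\tilde{\K}_p$ fixed and moves the twist onto the subgroup: it observes that $\level(\ad(k_p)K_p)=p^n$ for almost all $p$ (since there $\ad(k_p)$ acts on $\K_p$ and the $\K_p(p^m)$ are characteristic) and $\ge p^{n-c_1}$ at the finitely many remaining primes by a compactness argument, applies Proposition \ref{PropositionHighLevel} to the conjugated group $\ad(k_p)K_p$, and lower-bounds the denominator $\phi_{\ad(k_p)\K_p(p),\tilde{\K}_p}(x_p)\ge p^{-c_3}$ by point counting for almost all $p$ and compactness for the rest. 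You instead conjugate everything back: the exact identities $\phi_{\ad(k_p)K_p,\tilde{\K}_p}(x_p)=\phi_{K_p,\tilde{\K}_p'}(x_p')$ and $\lambda_p(x_p')=\lambda_p(x_p)$ (valid because $\Ad(\K_p^{\ad})$ stabilizes $\Lambda\otimes\Z_p$ and, $G^{\ad}$ being connected, fixes each semisimple ideal -- in fact mere permutation of the ideals would already suffice, since $\lambda_p$ is a maximum over them) let you apply Proposition \ref{PropositionHighLevel} directly to $K_p$ with its given level, and you replace the compactness step for the denominator by the uniform congruence statement $[\K_p^{\ad}(p^{\bar m}),\K_p^{\ad}]\subset\K_p(p)$, which parallels the constants $l_p$ appearing in the paper's proof of Proposition \ref{PropositionHighLevel}. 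What your route buys: the level of the group fed into Proposition \ref{PropositionHighLevel} is exactly $p^n$, you never need to worry whether $\ad(k_p)K_p$ is a subgroup of $\K_p$ at the bad primes, and the constants are in principle explicit; the paper's route avoids checking invariance of $\lambda_p$ and working with the conjugated groups $\tilde{\K}_p'$. One small imprecision on your side: Lemma \ref{LemmaPointCounting} as stated only controls $[\K_p^{\ad}:\K_p^{\ad}(p)]$ for almost all $p$, so for $\bar m>1$ (and at the bad primes) you should bound $[\K_p^{\ad}:\K_p^{\ad}(p^{\bar m})]$ directly, e.g.\ by $\abs{\GL(N_0^{\ad},\Z/p^{\bar m}\Z)}\le p^{\bar m (N_0^{\ad})^2}$; since any bound of the form $p^{C}$ with $C$ depending only on the fixed data is absorbed into $c$, this does not affect the conclusion.
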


\begin{proof}
By the second and fifth parts of Lemma \ref{LemmaAlmostAllp},
for all but finitely many primes $p$ the operators $\ad (k_p)$, $k_p \in \K_p^{\ad}$, act on $\K_p$, and
each of the groups $\K_p (p^m)$ is a characteristic subgroup of $\K_p$.
We therefore have $\ad (k_p) \K_p (p^m) = \K_p (p^m)$ as well as
$\level (\ad (k_p) K_p) = \level (K_p) = p^n$ for almost all $p$. Treating the remaining finitely many primes $p$
one at a time, an easy compactness argument shows that $\level (\ad (k_p) K_p) \ge p^{n - c_1}$ for all $p$,
with a constant $c_1$ depending only on $G$ and $\rho_0$.
Again by compactness, for each single $p$ the values $\phi_{\ad (k_p) (\K_p (p)), \tilde{\K}_p} (x_p)$ for $x_p, k_p \in \K_p^{\rm ad}$ are bounded away from zero.
Moreover, by Lemma \ref{LemmaPointCounting}, for almost all $p$ we have
$\phi_{\ad (k_p) \K_p (p), \tilde{\K}_p} (x_p) = \phi_{\K_p (p), \tilde{\K}_p} (x_p) \ge \vol (\K^{\ad}_p (p))) \ge p^{-c_2}$, with a suitable constant $c_2$, and therefore
$\phi_{\ad (k_p) \K_p (p), \tilde{\K}_p} (x_p) \ge p^{-c_3}$ for all $p$ with a suitable $c_3$.
Since $\phi_{\ad (k_p) K_p, \tilde{\K}_p} (x_p) \le B \phi_{\ad (k_p) K_p} (x_p)$,
the lemma follows now from Proposition \ref{PropositionHighLevel}.
\end{proof}

\begin{proof}[Proof of Theorem \ref{thm: mainbnd}]
Let $\tilde{\K} \subset \K^{\ad}$ be as in Theorem \ref{thm: mainbnd}.
Let $K \subset \K$ be an arbitrary open subgroup and write $N = \level (K) = \prod_p p^{n_p}$.
Set $N_1 = \prod_{p | N} p$ and consider the groups
$\tilde{K} = K \K (N_1)$ and $L = K \cap \K (N_1)$. Clearly, $L$ is a normal subgroup of $K$
of level $N$ and
\begin{equation} \label{eq: cosetcorrespondence}
L \bs K \simeq \K (N_1) \bs \tilde{K}.
\end{equation}
Note that we can factor $L = \prod_{p|N} L_p \prod_{p \not| N} \K_p$, where $L_p \subset \K_p (p)$ is a pro-$p$ group.

We can now apply Lemma \ref{lem: cosets} to $L \subset K$ to obtain
\begin{equation} \label{eq: phiK}
\phi_{K,\tilde{\K}} (x)  = \sum_{\eta \in L \bs K: \, [\tilde{\K}, x] \cap L \eta \neq \emptyset}
\phi_{\ad (k_\eta)^{-1} (L), \tilde{\K}} (x).
\end{equation}
By \eqref{eq: cosetcorrespondence}, we may
choose the same representatives $\eta \in K$ and $k_\eta \in \tilde{\K}$ also for the pair of groups
$\K (N_1) \subset \tilde{K}$. We obtain the corresponding equation
\begin{equation} \label{eq: phitildeK}
\phi_{\tilde{K},\tilde{\K}} (x)  = \sum_{\eta}\phi_{\ad (k_\eta)^{-1} (\K (N_1)),\tilde{\K}} (x).
\end{equation}

Consider $\phi_{\ad (k_\eta)^{-1} L, \tilde{\K}} (x) = \prod_{p | N} \phi_{\ad (k_{\eta,p})^{-1} L_p, \tilde{\K}_p} (x_p)$.
Applying Lemma \ref{LemmaTechnicalPhi} yields
\[
\phi_{\ad (k_\eta)^{-1} L, \tilde{\K}} (x) \le \phi_{\ad (k_\eta)^{-1} \K (N_1), \tilde{\K}} (x) \prod_{p|N} \min (1, p^{\varepsilon' (c + \lambda_p(x_p) - \varepsilon n_p)}).
\]
Inserting this into \eqref{eq: phiK} and using \eqref{eq: phitildeK}, we get
\begin{equation} \label{eq: phiKestimate}
\phi_{K,\tilde{\K}} (x) \le \phi_{\tilde{K},\tilde{\K}} (x) \prod_{p|N} \min (1, p^{\varepsilon' (c + \lambda_p(x_p) - \varepsilon n_p)}).
\end{equation}
It remains to estimate $\phi_{\tilde{K}, \tilde{\K}} (x)$.
There exists a constant $C \ge 1$, depending only on $G$ and $\rho_0$, such that for all primes $p | N$ with $p \ge C$ the image of $K$ in
the factor group $\K_p (p) \bs \K_p$ is a proper subgroup of this group \cite[p.~116]{MR1978431}. Therefore,
\[
\tilde{K} \subset \prod_p \tilde{K}_p,
\]
where for the primes $p | N$ with $p \ge C$ the group $\tilde{K}_p$ is a proper subgroup of $\K_p$
and $\tilde{K}_p = \K_p$ for all other primes $p$.
For a suitable value of $C$ we can therefore apply Proposition \ref{PropositionModp} to estimate
\[
\phi_{\tilde{K}, \tilde{\K}} (x) \le \prod_{p | N: \lambda_p (x_p) = 0} \min (1, \frac{C}{p}).
\]
Combining this with \eqref{eq: phiKestimate} yields immediately \eqref{EqnBetterEstimate}.

It is now a routine matter to derive \eqref{EqnMainBound} from \eqref{EqnBetterEstimate}.
For this assume without loss of generality that $c \ge 0$ and let $0 < \delta < (c+1)^{-1} \varepsilon$.
Observe first that we can estimate
\[
\prod_{p | N: \lambda_p (x_p) = 0} \min (1, \frac{C}{p})
\le \prod_{p | N: p \ge C^2, \, \lambda_p (x_p) = 0} p^{-\frac12} \\
\le C_2 \prod_{p | N: \lambda_p (x_p) = 0} p^{-\frac12},
\]
where we set $C_2 = \prod_{p < C^2} p^{\frac{1}{2}}$. Therefore, we obtain
\[
\phi_{\tilde{K}, \tilde{\K}} (x) \le C_2 \prod_{p | N: \lambda_p (x_p) = 0} p^{-\frac12}
\prod_{p|N: \lambda_p (x) < \delta n_p} \min (1, p^{\varepsilon' (c + \lambda_p(x_p) - \varepsilon n_p)}).
\]
Consider now any prime $p|N$ for which $\lambda_p (x) < \delta n_p$.
In the case $n_p \le \delta^{-1}$, the inequality $\lambda_p (x) < \delta n_p$ implies that $\lambda_p (x) = 0$ and the first product contains therefore the factor $p^{-\frac12} \le p^{-\frac{\delta}2 n_p}$.
In case $n_p > \delta^{-1}$, the second product contains the factor
\[
p^{\varepsilon' (c + \lambda_p(x_p) - \varepsilon n_p)} \le p^{\varepsilon' ((c+1) \delta - \varepsilon) n_p} = p^{-\varepsilon'' n_p}
\]
with $\varepsilon'' = \varepsilon' (\varepsilon - (c+1) \delta) > 0$.
This clearly implies \eqref{EqnMainBound}.
\end{proof}

\subsection{Proof of the local bounds} \label{SubsectionLocalBounds}
We now prove Propositions \ref{PropositionModp} and \ref{PropositionHighLevel}.
For this we use the general estimates of Appendix \ref{sec: numbersol} for the number of solutions of polynomial congruences.
The first proposition is easily deduced from Proposition \ref{PropositionApproxLevelp}.

\begin{proof}[Proof of Proposition \ref{PropositionModp}]
We can assume that $p$ is sufficiently large (depending on $G$). In particular, we can assume that we are in the situation
of Lemma \ref{LemmaAlmostAllp}.

Since under this assumption on $p$ we have $\Phi (\K_p) = \K_p (p)$,
the projection $H$ of the proper subgroup $K_p$ of $\K_p$ to $G (\F_p)$
is a proper subgroup. Replacing $K_p$ by the preimage of $H$
and taking into account the first two parts of Lemma \ref{LemmaAlmostAllp}, we are reduced to proving that
\begin{equation} \label{EqnCommutatorModp}
\frac{\card{\{ k \in G^{\rm ad} (\F_p) \, : \, [k,\bar{x}] \in H \}}}{\card{G^{\rm ad} (\F_p)}} \le \frac{C}{p}
\end{equation}
for a suitable constant $C$, where $\bar{x}$ denotes the image of $x_p \in \K^{\ad}_p$ in $G^{\ad} (\F_p)$.

Recall that by the algebraization result of Proposition \ref{PropositionApproxLevelp},
there exist an integer $N$, depending only on $G$, and
a proper, connected algebraic subgroup $X$ of $G_{\F_p}$, defined over $\F_p$,
such that $[H:H\cap X(\F_p)]\le N$. Moreover, the ideal defining the subvariety $X$ of $G_{\F_p}$ is
generated by regular functions of degree $\le N$ (with respect to the affine embedding of $G_{\F_p}$ provided by $\rho_0$).
Given this, it follows easily from Lemma \ref{lem: cosets} that it is enough to establish the estimate \eqref{EqnCommutatorModp} for the group $X (\F_p)$ instead of $H$.

The condition $\lambda_p(x_p)=0$ implies that the group
\[
\bar{\mathcal{C}} (\bar{x}) := \left\langle [G^{\ad} (\F_p), \bar{x}] \right\rangle \subset G (\F_p)
\]
is the full group $G (\F_p)$.
Indeed, by the identity $[yx,z]= (\ad (y) [x,z])[y,z]$ the group $\bar{\mathcal{C}} (\bar{x})$ is a normal subgroup of $G (\F_p)$ for any $\bar{x}$.
It is also clearly the product of its projections $\bar{\mathcal{C}}_i (\bar{x})$ to the factors $G_i (\F_p)$, $i = 1, \ldots, r_p$.
Moreover, by our assumption on $x_p$ the projections of $\bar{x} \in G^{\ad} (\F_p)$
to the factors $G_i^{\ad} (\F_p)$ are all non-trivial.
Therefore the normal subgroups $\bar{\mathcal{C}}_i (\bar{x}) \subset G_i (\F_p)$ are all non-central. By the fourth part of Lemma \ref{LemmaAlmostAllp} they therefore
have to be the full factor groups $G_i (\F_p)$, and we obtain that $\bar{\mathcal{C}} (\bar{x}) = G (\F_p)$.

Thus we have $[G^{\rm ad} (\F_p),\bar{x}]\not\subset X (\F_p)$. Therefore,
in any generating set of the defining ideal of $X$, there exists an element $f$ (a regular function on $G_{\F_p}$)
such that $g = f([\cdot,\bar{x}])$ (a regular function on $G^{\ad}_{\F_p}$) does not vanish on $G^{\rm ad} (\F_p)$.
Since the degree of $f$ can be bounded by $N$, the degree of the function $g$ is
clearly also bounded in terms of $G$ and $N$. Since we have
\[
\card{\{ k \in G^{\rm ad} (\F_p) \, : \, [k,\bar{x}] \in X (\F_p) \}} \le \card{\{ k \in G^{\rm ad} (\F_p) \, : \, g (k) = 0 \}},
\]
and moreover
\[
\card{G^{\rm ad} (\F_p)} \ge c p^{\dim G}
\]
for a suitable constant $c > 0$ by Lemma \ref{LemmaPointCounting},
it remains to invoke Lemma \ref{LemmaVolumeBoundModp} to establish \eqref{EqnCommutatorModp} for $X (\F_p)$ and to finish the proof.
\end{proof}

We now turn to the proof of Proposition \ref{PropositionHighLevel},
which is based on the combination of Theorem \ref{TheoremAlgebraic} with Lemma \ref{VolumeBound}.

We first clarify the group-theoretic meaning of $\lambda_p(x)$ following Larsen and Lubotzky.

\begin{definition} \label{DefLambdapx}
For $x_p \in \K_p^{\ad}$ let
\[
\mathcal{C}_p (x_p) := \overline{\left\langle [\K_p^{\ad} (p), x_p] \right\rangle} \subset G (\Q_p).
\]
\end{definition}
Note that the group $\mathcal{C}_p (x_p)$ is invariant under $\ad (\K_p^{\ad} (p))$. (For almost all $p$ it is by Lemma \ref{LemmaAlmostAllp} therefore a normal subgroup of $\K_p (p)$.)

\begin{lemma} \label{LemmaLambdapx}
There exists a constant $n_0$, depending only on $G$, such that for all $x_p \in \K^{\ad}_p$ with $\lambda_p (x_p) < \infty$ we have
\[
\mathcal{C}_p (x_p) \supset \K_p (p^{\lambda_p (x_p)+n_0}).
\]
\end{lemma}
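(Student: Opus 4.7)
We plan to argue in two stages: a reduction to the $\Q_p$-simple case, followed by an infinitesimal analysis via the exponential correspondence and the simplicity of $\mathfrak{g}$.

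For all but finitely many primes $p$, by Lemma \ref{LemmaAlmostAllp} the group scheme $G_{\Z_p}$ decomposes as a product $\prod_{i=1}^{r_p} G_i$ of $\Q_p$-simple factors compatibly with the lattice $\Lambda \otimes \Z_p$, and the factorization is respected by $\K^{\ad}_p(p)$, the principal congruence subgroups, and the commutator map; consequently $\mathcal{C}_p(x_p)$ factors as $\prod_i \mathcal{C}_{p,i}(x_{p,i})$. A direct analysis of the projections onto subsets of simple summands yields the identity $\lambda_p(x_p) = \max_i \lambda_{p,i}(x_{p,i})$, and hence the simple-case assertion $\mathcal{C}_{p,i}(x_{p,i}) \supset \K_{p,i}(p^{\lambda_{p,i}(x_{p,i})+n_0})$ for each $i$ implies $\mathcal{C}_p(x_p) \supset \K_p(p^{\lambda_p(x_p) + n_0})$. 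A compactness argument absorbs the finitely many exceptional primes into $n_0$, so we may assume $G$ is $\Q_p$-simple and $p$ is large enough that $\K^{\ad}_p(p) = \exp(p\mathfrak{g}^{\ad}_{\Z_p})$ via Lemma \ref{lem: explog}, and that $\mathfrak{g}_{\F_p}$ is a simple $\F_p$-Lie algebra.

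Write $\lambda = \lambda_p(x_p)$ and set $T = p^{-\lambda}(\Ad(x_p) - 1)$, a $\Z_p$-linear endomorphism of $\mathfrak{g}_{\Z_p}$ whose mod-$p$ reduction $\bar{T}$ is nonzero. For $k = \exp(p\xi) \in \K^{\ad}_p(p)$, the Baker–Campbell–Hausdorff formula gives
\[
[k,x_p] = \exp(p\xi)\exp(-p\Ad(x_p)\xi) = \exp\bigl(-p^{\lambda+1}T\xi + O(p^{\lambda+2})\bigr),
\]
placing $\mathcal{C}_p(x_p)$ inside $\K_p(p^{\lambda+1})$ with first-order log-image spanning $-p^{\lambda+1}T(\mathfrak{g}^{\ad}_{\Z_p})$ modulo $p^{\lambda+2}\mathfrak{g}_{\Z_p}$. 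The key algebraic ingredient is that, since $\mathfrak{g}_{\F_p}$ is simple and $\bar{T}(\mathfrak{g}_{\F_p})$ is nonzero, the Lie ideal $J_0 \subset \mathfrak{g}_{\Z_p}$ generated by $T(\mathfrak{g}_{\Z_p})$ satisfies $J_0 + p\mathfrak{g}_{\Z_p} = \mathfrak{g}_{\Z_p}$, whence $J_0 = \mathfrak{g}_{\Z_p}$ by Nakayama.

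To upgrade the first-order image to the full ideal $J_0$, we exploit that $\mathcal{C}_p(x_p)$ is closed under conjugation by $\K^{\ad}_p(p)$: this follows from the identity $k_0[k,x_p]k_0^{-1} = [\ad(k_0)k,\,x_p\cdot w_0]$ with $w_0 = [x_p^{-1},k_0^{-1}] \in \mathcal{C}_p(x_p)$, combined with the standard expansion $[a,bc] = [a,b]\,\ad(b)[a,c]$. Conjugating $\exp(-p^{\lambda+1}T\xi) \in \mathcal{C}_p(x_p)$ by $\exp(pa) \in \K^{\ad}_p(p)$ and multiplying by its inverse produces via BCH an element of $\mathcal{C}_p(x_p)$ whose log is $-p^{\lambda+2}[a,T\xi]$ modulo $p^{\lambda+3}\mathfrak{g}_{\Z_p}$. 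Iterating this bracket-generation at most $\dim G$ times, with a fixed $p$-power loss per iteration, exhibits $\exp(p^{\lambda+1+d_0}\mathfrak{g}_{\Z_p}) = \K_p(p^{\lambda+1+d_0})$ inside $\mathcal{C}_p(x_p)$ for a constant $d_0 = d_0(G)$, yielding the lemma with $n_0 = 1 + d_0$.

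The main obstacle lies in this last step: rigorously propagating the BCH error terms through the iterative conjugation argument while maintaining a uniform bound on the $p$-power loss per iteration. One must verify at each depth that the generators produced by conjugation are genuine elements of $\mathcal{C}_p(x_p)$ (not merely coset representatives), and that the accumulated errors remain strictly of lower order than the leading terms; this requires a careful multi-step induction on the depth of the congruence filtration, with the simplicity of $\mathfrak{g}_{\F_p}$ guaranteeing termination after boundedly many steps.
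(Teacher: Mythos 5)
Your plan is, in substance, an attempt to reprove the result that the paper does not prove from scratch at all: the paper's proof of Lemma \ref{LemmaLambdapx} is a citation of Larsen--Lubotzky \cite[p.~453--454]{MR2094120} for the $\Q_p$-simple case, plus the remarks that the reduction to that case is easy and that the argument there gives $n_0$ uniform in $p$. Your linearization strategy (commutators $[\exp(p\xi),x_p]$ have $\log \equiv -p^{\lambda+1}T\xi \pmod{p^{\lambda+2}}$ with $T=p^{-\lambda}(\Ad(x_p)-1)$, then bracket-generate using simplicity) is the right idea and close in spirit to the cited argument. But as written the crucial step is a genuine gap, and you say so yourself: the iterative conjugation with BCH error propagation is never carried out. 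It can be closed cleanly with tools already in the paper, avoiding the error bookkeeping entirely: for $p$ large, $\lie{c}=\log\bigl(\mathcal{C}_p(x_p)\cap\K_p(p)\bigr)$ is a closed Lie subalgebra of $p\Lieg_{\Z_p}$ (Theorem \ref{TheoremIlaniKlopsch}, Remark \ref{rem: Ilani}) which is stable under $\Ad(\K_p^{\ad}(p))$ and hence, by Corollary \ref{cor: expsinv}, satisfies $[p\Lieg_{\Z_p},\lie{c}]\subset\lie{c}$. Your first-order computation (which is correct, but only because the pure-$\xi$ BCH terms cancel, $\Phi_n(u,-u)=0$ -- this needs to be said) shows that the image $V_0$ of $\lie{c}\cap p^{\lambda+1}\Lieg_{\Z_p}$ in $p^{\lambda+1}\Lieg_{\Z_p}/p^{\lambda+2}\Lieg_{\Z_p}\simeq\Lieg_{\F_p}$ contains $\bar T(\Lieg_{\F_p})\neq0$; the subspaces $V_j$ defined analogously satisfy $V_j\subseteq V_{j+1}$ and $[\Lieg_{\F_p},V_j]\subseteq V_{j+1}$, so by simplicity the chain reaches $\Lieg_{\F_p}$ after at most $d_0\le\dim G$ steps, and Nakayama then gives $\lie{c}\supseteq p^{\lambda+1+d_0}\Lieg_{\Z_p}$, whence $\mathcal{C}_p(x_p)\supseteq\K_p(p^{\lambda+1+d_0})$.

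The second genuine problem is in your reduction. First, after reducing to a $\Q_p$-simple factor $\Res_{F_v/\Q_p}\mathcal{G}$ (cf.\ Remark \ref{RemarkRestrictionOfScalars}), the assertion ``$\Lieg_{\F_p}$ is a simple $\F_p$-Lie algebra'' fails at the primes ramified in the fields $F_i$: there $\Lieg_{\F_p}$ contains the nonzero nilpotent ideal $\varpi_v\Lieg_{\mathfrak{o}_{F_v}}/p\Lieg_{\mathfrak{o}_{F_v}}$, and $\bar T$ may well land inside it, so the mod-$p$ bracket generation stalls; the fix is to run the argument over $\mathfrak{o}_{F_v}$, measuring congruences in powers of $\varpi_v$, which costs only the bounded ramification index $e_v$ in $n_0$. (Also note explicitly that $\lambda_p(x_p)=\max_i\lambda_{p,i}(x_{p,i})$ -- a max, not a min -- which is exactly why the product reduction goes in the right direction.) Second, ``a compactness argument absorbs the finitely many exceptional primes into $n_0$'' is not an argument: even for a single fixed prime the statement is uniform over $x_p$ with $\lambda_p(x_p)$ unbounded, and compactness gives no such uniformity; moreover for small $p$ the exponential/BCH estimates you use on $\K_p^{\ad}(p)$ are unavailable. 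These finitely many primes must instead be handled by the same argument run at a deeper level $\K_p^{\ad}(p^{c_p})\subset\K_p^{\ad}(p)$ (which only shrinks $\mathcal{C}_p(x_p)$, so is harmless for the lower bound) and over $\mathfrak{o}_{F_v}$, again at a bounded cost in $n_0$.
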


\begin{proof}
The assertion is essentially proven in \cite[p.~453--454]{MR2094120}, where the case of $\Q_p$-simple groups $G$ is treated.
Since $G_{\Q_p}$ can be factored as a product of $\Q_p$-simple groups, we can easily reduce to this case.
Although it is not explicitly stated there, the proof in [ibid.] also shows that $n_0$ may be chosen independently of $p$.
\end{proof}

In the proof of Proposition \ref{PropositionHighLevel} it turns out to be convenient to consider
a variant of the principal congruence subgroups which is provided by the following definition.

\begin{definition} \label{DefKpRhovm}
Let
\[
\rho: G \to \GL (N)
\]
be a $\Q_p$-rational representation of $G$ with $\rho (\K_p) \subset \GL (N, \Z_p)$,
and assume that no non-zero vector in $\Q_p^N$ is fixed by $\rho(G)$.
For any primitive $v \in \Z_p^N$ (i.e., $v\notin p \Z_p^N$) and any $m \ge 1$ set
\[
\K_p (\rho, v; m) = \{ g \in \K_p \, : \, \exists \lambda \in \Z_p^*: \, \rho (g) v \equiv \lambda v \pmod{p^m} \}.
\]
\end{definition}

Clearly, the groups $\K_p (\rho, v; m)$ are open subgroups of $\K_p$.
We can estimate their levels as follows.

\begin{lemma} \label{LevelBound}
There exist constants $m_0$ and $m_1$, depending only on $G$ and $N$, such that
\begin{equation} \label{EqnLevelKprho}
p^{m+m_1} \ge \level (\K_p (\rho, v; m)) \ge p^{m-m_0}
\end{equation}
for all $\rho$ as in Definition \ref{DefKpRhovm}, all primitive $v \in \Z_p^N$ and all $m \ge 1$.
Moreover, for all $p$ outside a finite set of primes depending only on $G$ and $N$, we have $\level (\K_p (\rho, v; m)) = p^m$.
\end{lemma}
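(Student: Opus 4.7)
\textbf{Plan for Lemma \ref{LevelBound}.} The proof will run through the exponential map of Proposition~\ref{prop: unifg}, combined with a uniform integrality estimate for the derived representation $d\rho: \Lieg_{\Q_p} \to \gl(N,\Q_p)$. The key preliminary step is to establish the following sub-claim: there exists a constant $c = c(G,N)$ such that, for every prime $p$, every representation $\rho$ as in Definition~\ref{DefKpRhovm}, and every $Y \in \Lieg_{\Z_p}$, one has $d\rho(Y) \in p^{-c}\gl(N,\Z_p)$; moreover $c = 0$ for all $p$ outside a finite set depending only on $G$ and $N$. To see this, note that the matrix coefficients of $\rho$ are regular functions on $G$ over $\Q_p$ whose degrees (with respect to the embedding $\rho_0$) are bounded uniformly in terms of $N$ by Lemma~\ref{LemmaFiniteness}-type considerations (since $\rho$ is a sub-comodule of $\Q_p[G]$ of dimension $N$, it is generated by matrix coefficients of bounded degree — cf.\ the proof of Lemma \ref{LemmaFiniteness}). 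The condition $\rho(\K_p)\subset \GL(N,\Z_p)$ forces such bounded-degree functions to lie in a fixed finitely generated $\Z_p$-submodule of $\Q_p[G]$ after rescaling by at most $p^c$; differentiating produces the stated control on $d\rho$.

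\emph{Upper bound.} Pick $m_1 \ge c + \epsilon_p$ (this adjustment absorbs the finitely many small $m$). For $g \in \K_p(p^{m+m_1})$, part~\ref{eqnalgebraicsubgroup2} of Proposition~\ref{prop: unifg} gives $g = \exp Y$ with $Y \in p^{m+m_1}\Lieg_{\Z_p}$, so that $d\rho(Y) \in p^{m+m_1-c}\gl(N,\Z_p) \subset p^m \gl(N,\Z_p)$. Invoking Lemma~\ref{lem: explog} (or Proposition~\ref{prop: unif}) we get $\rho(g) = \exp d\rho(Y) \equiv 1 \pmod{p^m}$ inside $\GL(N,\Z_p)$, hence $\rho(g)v \equiv v \pmod{p^m}$ and $g \in \K_p(\rho,v;m)$ with $\lambda = 1$. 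This proves $\level(\K_p(\rho,v;m)) \le p^{m+m_1}$.

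\emph{Lower bound.} Since $G$ is semisimple and simply connected it has no non-trivial characters over $\Q_p$, so the assumption that $\rho(G)$ fixes no non-zero vector implies that the line $\Q_p v$ is not $G$-invariant. Consequently the algebraic stabilizer $H_\rho = \{g \in G : \rho(g)v \in \Q_p v\}$ is a proper algebraic subgroup of $G_{\Q_p}$ with $\Lie H_\rho = \{X \in \Lieg_{\Q_p} : d\rho(X)v \in \Q_p v\}$, a proper $\Q_p$-subspace. Applying the elementary divisor theorem to the pair $(\Lieg_{\Z_p}, \Lieg_{\Z_p} \cap \Lie H_\rho)$ we produce $X \in \Lieg_{\Z_p}$ and a linear form $\phi \in \Z_p^N$-dual with $\phi(v) = 0$ such that $v_p(\phi(d\rho(X)v)) \le c'$, where $c'= c'(G,N)$ comes from the same boundedness of degrees as in the sub-claim (it measures how far $X$ can be pushed into $\Lieg_{\Z_p} \setminus \Lie H_\rho$ while keeping $d\rho(X)v$ bounded). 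Set $m_0 = c' + c + 1$ and $n = m - m_0 - 1$. The element $g = \exp(p^n X) \in \K_p(p^n)$ satisfies $\rho(g)v = v + p^n d\rho(X) v + O(p^{2n-c})$. For any $\lambda \in \Z_p^*$ the difference $\rho(g)v - \lambda v$ paired with $\phi$ equals $p^n \phi(d\rho(X)v) + O(p^{2n-c})$, whose $p$-adic valuation is exactly $n + v_p(\phi(d\rho(X)v)) \le n + c' < m$. Hence $g \notin \K_p(\rho,v;m)$, and $\level(\K_p(\rho,v;m)) \ge p^{m-m_0}$.

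\emph{Equality for almost all $p$.} For primes outside a finite exceptional set (depending on $G$ and $N$) we have $c = 0$ in the sub-claim, the integral exponential/logarithm bijection of Proposition~\ref{prop: unifg} is valid without denominators, and a basis element $X \in \Lieg_{\Z_p}$ can be chosen so that $d\rho(X)v$ is primitive transverse to $v$ (i.e.\ $c' = 0$). The two arguments above then yield $m_0 = m_1 = 0$, i.e.\ $\level(\K_p(\rho,v;m)) = p^m$. \emph{The main obstacle} is the uniformity across all representations $\rho$ of fixed dimension $N$ and all primes $p$ in the integrality sub-claim — this is what forces us to invoke a degree-boundedness argument of the Seidenberg/Lemma~\ref{LemmaFiniteness} flavor rather than an argument that depends on a fixed $\rho$.
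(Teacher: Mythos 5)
Your upper bound is sound in outline and close in spirit to the paper's: the paper proves $\rho(\K_p(p^{e_p}))\subset\Gamma(N,p^{\epsilon_p})$ with $e_p-\epsilon_p$ bounded in terms of $N$ (a $p$-Sylow argument combined with $\K_p(p^n)=\K_p(p^{\epsilon_p})^{\{p^{n-\epsilon_p}\}}$), which yields the inclusion $\K_p(p^{m+m_1})\subset\K_p(\rho,v;m)$ without any integrality sub-claim for $d\rho$. Your sub-claim is in fact true, but the sentence ``the condition $\rho(\K_p)\subset\GL(N,\Z_p)$ forces such bounded-degree functions to lie in a fixed finitely generated $\Z_p$-submodule after rescaling by at most $p^c$'' is itself an assertion that would need the mod-$p$ point count of Lemma \ref{LemmaVolumeBoundModp} (for almost all $p$) together with a lattice/compactness argument at the finitely many remaining primes; as written it is not proved.

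The genuine gap is in the lower bound and in the ``equality for almost all $p$'' statement. Everything there hinges on your constant $c'$: you need, \emph{uniformly} in $p$, in $\rho$ together with its integral structure $\rho(\K_p)\subset\GL(N,\Z_p)$, and in the primitive vector $v$, an element $X\in\Lieg_{\Z_p}$ and a functional $\phi$ with $\phi(v)=0$ and $v_p(\phi(d\rho(X)v))\le c'(G,N)$. The elementary divisor theorem applied to $\Lieg_{\Z_p}\cap\Lie_{\Q_p}H_\rho$ gives no quantitative information (this sublattice is saturated, so the elementary divisors are trivial), and ``the same boundedness of degrees as in the sub-claim'' does not give it either: degree bounds on matrix coefficients control denominators of $d\rho$, not how deeply the lattice $d\rho(\Lieg_{\Z_p})v$ can hug the line $\Z_p v$. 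This uniform transversality is exactly the content of the claim \eqref{EqnStatementfp} in the paper's proof, which is established there by a compactness/Zariski-density argument in $v$ for each $p$, and --- crucially for the exact statement $\level(\K_p(\rho,v;m))=p^m$ at almost all $p$ --- by showing that the reduction of $\rho$ modulo $p$ contains no trivial constituent and stabilizes no line: this uses the Brauer--Nesbitt theorem together with the fact that the irreducible constituents of $\rho$ remain irreducible modulo $p$ for large $p$ \cite[Corollary II.5.6]{MR899071}, and Nori's \cite[Lemma 1.4]{MR880952}. Your proposal contains none of these inputs; the sentence ``a basis element $X\in\Lieg_{\Z_p}$ can be chosen so that $d\rho(X)v$ is primitive transverse to $v$ (i.e.\ $c'=0$)'' for almost all $p$ is precisely the statement that has to be proved, not a consequence of what precedes it.
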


\begin{proof}
We first observe that there exists $e_p \ge 1$ such that
\[
\rho (\K_p (p^{e_p})) \subset \Gamma (N, p^{\epsilon_p}),
\]
where $\Gamma (N, p^m)$, $m\ge1$, are the principal congruence subgroups of $\GL (N, \Z_p)$.
Indeed, the reduction modulo $p$ of the representation $\rho$ induces a homomorphism $\K_p (p^{\epsilon_p}) \to \GL (N, \F_p)$,
whose image is contained in a $p$-Sylow subgroup of $\GL (N, \F_p)$.
Recall that
\begin{equation} \label{EquationKpnPowers}
\K_p (p^n) = \K_p (p^{\epsilon_p})^{\{p^{n-\epsilon_p}\}}
\end{equation}
for all $n \ge \epsilon_p$. Hence we can take $e_p = \epsilon_p + \lceil \log_p N \rceil$ if $p$ is odd,
while for $p=2$ we consider the reduction modulo $4$ and set $e_2 = 2 + \nu$, where $2^\nu$ is the exponent of
the $2$-Sylow subgroup of the finite group $\GL (N, \Z / 4 \Z)$.

Using \eqref{EquationKpnPowers} again, for any $m \ge \epsilon_p$ we have
\[
\rho (\K_p (p^{m+e_p-\epsilon_p}))
= \rho (\K_p (p^{e_p})^{\{p^{m-\epsilon_p}\}}) \subset \Gamma (N, p^{\epsilon_p})^{\{p^{m-\epsilon_p}\}} = \Gamma (N, p^m).
\]
Consequently, $\K_p (p^{m+e_p-\epsilon_p} ) \subset \K_p (\rho, v ; m)$ for all $v$. This shows that
$\level (\K_p (\rho, v; m)) \le p^{m+m_{1,p}}$ for $m_{1,p} = e_p-\epsilon_p$.

Moreover, $\rho (\K_p (p^{e_p}))$ is a uniform subgroup of
the uniform pro-$p$ group $\Gamma (N,p^{\epsilon_p})$, and by Proposition \ref{prop: unif}
we have therefore $\rho (\K_p (p^{e_p})) = \exp (L_{\rho, e_p})$ for
a powerful Lie subalgebra $L_{\rho, e_p} \subset p^{\epsilon_p} \gl (N, \Z_p)$.
We claim that there exists $f_p \ge \epsilon_p$ with the property:
\begin{equation} \label{EqnStatementfp}
\text{For all primitive $v \in \Z_p^N$ there exists $l \in L_{\rho,e_p}$ with $l v \notin \Z_p v + p^{f_p+1} \Z_p^N$.}
\end{equation}
For assume the contrary. Because the set of primitive elements of $\Z_p^N$ is compact, there would then exist
a primitive $v \in \Z_p^N$ with $L_{\rho,e_p} v \subset \Z_p v$, and therefore
$\rho (\K_p (p^{e_p}))$ would stabilize the line $\Q_p v \subset \Q_p^N$.
Since $\K_p (p^{e_p})$ is Zariski dense in $G$ \cite[Lemma 3.2]{MR1278263},
this would imply that the representation $\rho$ stabilizes a line in $\Q_p^N$, and therefore contains the trivial representation (since $G$ is semisimple),
contrary to our assumption. This proves the claim.

Assume now that $n \ge e_p$ and $m \ge \epsilon_p$ are such that
\[
\K_p (p^n) \subset \K_p (\rho,v;m).
\]
Since $\rho (\K_p (p^n)) = \exp (p^{n-e_p} L_{\rho, e_p})$, we can rewrite this relation as
\[
\exp (p^{n-e_p} L_{\rho, e_p}) v \subset \Z_p v + p^m \Z_p^N.
\]
Let $l \in L_{\rho, e_p}$ be arbitrary and let $\xi = \exp (p^{n-e_p} l) \in \Gamma (N, p^{\epsilon_p})$.
Since $\xi v \in \Z_p v + p^m \Z_p^N$ we can write $\xi\in \xi'\Gamma(N,p^m)$ for some $\xi' \in \Gamma (N, p^{\epsilon_p})$ with $\xi' v \in \Z_p v$.
Therefore, by part \ref{part: expcongruence1} of Proposition \ref{prop: unif} we have
$p^{n-e_p} l = \log \xi \in \log \xi' +p^m\gl (N, \Z_p)$ and $(\log \xi') v \in \Z_p v$, which together implies
that $p^{n-e_p} lv \in \Z_p v + p^m \Z_p^N$, and therefore $lv\in\Z_p v+p^{m-n+e_p}\Z_p^N$, since $v$ is primitive.
On the other hand, by \eqref{EqnStatementfp} there exists $l \in L_{\rho,e_p}$ such that $l v \notin \Z_p v + p^{f_p+1} \Z_p^N$.
We infer that $n \ge m + e_p - f_p$. We conclude that $\level (\K_p (\rho,v;m)) \ge p^{m-m_{0,p}}$ for all $m \ge \epsilon_p$ with $m_{0,p} = f_p - e_p$.

It remains to see that for almost all $p$ we can choose $e_p=f_p=1$.
Consider the set $\K_{p,\resuni}$ of residually unipotent elements of $\K_p$.
Note that the proof of \cite[Window 9, Lemma 5]{MR1978431} shows that for almost all $p$, the group $\K_p (p)$ is topologically generated by the elements
$u^p$, $u\in\K_{p,\resuni}$.
Clearly, if $u\in\K_{p,\resuni}$ then its image $\rho (u) \in \GL (N, \Z_p)$ is also residually unipotent,
and for $p \ge N$ this implies that in fact $\rho (u)^p \equiv 1 \pmod{p}$. Therefore, after excluding a
finite set of primes that depends only on $G$ and $N$, we have $\rho (\K_p (p)) \subset \Gamma (N, p)$,
so that we can take $e_p = 1$.

Moreover, for all such $p$ the reduction modulo $p$ of the representation $\rho$ gives a representation $\bar{\rho}: G (\F_p) \to \GL (N, \F_p)$.
On the other hand, the representation $\rho$ decomposes over $\bar{\Q}_p$ into irreducibles, which are parametrized by their highest weights.
For almost all $p$, the root coordinates of these weights are small with respect to $p$, which implies that the reduction modulo $p$ of each irreducible constituent of $\rho$
remains irreducible \cite[Corollary II.5.6]{MR899071}.
By the Brauer--Nesbitt theorem, the semisimplification of the representation $\bar{\rho}$ (considered over $\clos$) is given by the direct sum of the reductions modulo $p$
of the irreducible constituents of $\rho$.
Since by assumption the trivial representation is not a constituent of $\rho$, we conclude that if we exclude a finite set of primes $p$ that depends only on $G$ and $N$,
then the characteristic $p$ representation $\bar{\rho}$ does not contain the trivial representation.

Therefore, for any $0\ne\bar{v}\in\F_p^N$ the line $\F_p \bar{v} \subset \F_p^N$ is not stabilized by the operators
$\overline{\rho (u)}$, $u\in\K_{p,\resuni}$, and hence by the operators $\log^{(p)}\overline{\rho (u)}$, $u\in\K_{p,\resuni}$ \cite[Lemma 1.4]{MR880952}.
By the commutativity of \eqref{eq: logcomm}, we conclude that the line $\F_p \bar{v}$ is not stabilized by
the reduction mod $p$ of the logarithms $\log \rho (u)$, $u\in\K_{p,\resuni}$. However, the Lie algebra $L_{\rho,1}$ contains
the elements $p \log \rho (u)$, $u\in\K_{p,\resuni}$. This means that we may take $f_p = 1$ above and conclude that $\level (\K_p (\rho, v; m)) = p^m$.
This proves the lemma.
\end{proof}

Next, we reformulate Lemma \ref{VolumeBound} in the case at hand.
Recall that $\rho_0^{\ad}$ fixes an affine embedding of the affine variety $G^{\ad}$, which
allows us to speak of the degree of regular function on $G^{\ad}$ (and similarly for $G$).

\begin{corollary} \label{CorollaryVolumeBound}
For any integer $d > 0$ there exists a constant $\varepsilon (d)>0$, depending only on $G^{\ad}$ and $\rho_0^{\ad}$, such that
for any $m,n\in\Z$, and a regular function $f$ on $G^{\ad}$, defined over $\Q_p$, of degree $\le d$ such that $f (\K^{\ad}_p (p)) \not\subset p^m \Z_p$, we have
\[
\vol \left( \{ g \in \K^{\ad}_p (p): f (g) \equiv 0 \pmod{p^n} \} \right) \ll_{d, G^{\ad}} p^{-\varepsilon (d) (n-m+1)}.
\]
\end{corollary}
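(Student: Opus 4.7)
The plan is to derive Corollary \ref{CorollaryVolumeBound} from the general polynomial congruence estimate of Lemma \ref{VolumeBound} by giving a suitable polynomial parametrization of $\K_p^{\ad}(p)$ so that the restriction of $f$ becomes a polynomial of bounded degree in the parameters.

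First I would construct, for each prime $p$, a measure-preserving (up to a bounded Jacobian factor) polynomial map $\phi_p : p\Z_p^{d^{\ad}} \to \K_p^{\ad}(p)$, where $d^{\ad} = \dim G^{\ad}$. For the cofinite set of primes at which $G^{\ad}_{\Z_p}$ is smooth, this can be done uniformly: choose once and for all rational sections cut out by $d^{\ad}$ of the matrix coordinates (via the implicit function theorem applied at the identity), so that $\phi_p$ is given by polynomials of degree bounded by a constant depending only on $G^{\ad}$ and $\rho_0^{\ad}$, with Jacobian invertible modulo $p$. Since $\phi_p$ takes values in $G^{\ad}$, the restriction $P := f \circ \phi_p$ is a polynomial in $d^{\ad}$ variables of total degree $\le D$, where $D$ depends only on $d$, $G^{\ad}$ and $\rho_0^{\ad}$. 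For the remaining finitely many primes one can argue separately by compactness, adjusting the implied constants.

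Next I would translate the hypothesis $f(\K_p^{\ad}(p)) \not\subset p^m\Z_p$ into a lower bound on the minimum $p$-adic valuation of the coefficients of $P$. Since $\phi_p$ surjects onto a full open subset of $\K_p^{\ad}(p)$ of bounded index (and the remaining pieces can be covered by finitely many translates), there exists $z \in p\Z_p^{d^{\ad}}$ with $P(z) = f(\phi_p(z)) \notin p^m\Z_p$. By a standard observation (e.g.\ a controlled polynomial identity/Taylor argument), this forces the minimum valuation of the coefficients of $P$ to be at most $m - 1 + c_0$, where $c_0$ depends only on $D$.

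Finally, I would apply Lemma \ref{VolumeBound} of the appendix to $P$ and the congruence $P \equiv 0 \pmod{p^n}$ on the polydisk $p\Z_p^{d^{\ad}}$ to obtain
\[
\vol\{z \in p\Z_p^{d^{\ad}} : P(z) \equiv 0 \pmod{p^n}\} \ll_{D,G^{\ad}} p^{-\varepsilon(D)(n - m + 1)},
\]
and pull this back through $\phi_p$ (absorbing the bounded Jacobian factor into the implied constant) to obtain the claimed estimate on $\vol\{g \in \K_p^{\ad}(p) : f(g) \equiv 0 \pmod{p^n}\}$, with $\varepsilon(d)$ depending only on $d$ and $G^{\ad}$ (via the fixed bound $D$ on the pulled-back degree). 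The main technical hurdle is establishing the parametrization $\phi_p$ with uniform degree and Jacobian bounds in $p$; once that is in place the rest is routine.
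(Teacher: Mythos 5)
There is a genuine gap at the heart of your construction: the uniform ``polynomial parametrization'' $\phi_p: p\Z_p^{\dim G^{\ad}} \to \K^{\ad}_p(p)$ of bounded degree does not exist in general. The implicit function theorem (equivalently Hensel's lemma) applied at the identity to a coordinate projection only yields an \emph{analytic} section, given by power series whose truncations have degree growing with the required precision; it does not produce polynomial (or bounded-degree rational with unit-denominator) formulas for the remaining coordinates, because the chosen projection is generically finite of degree $>1$, not birational. Making your argument rigorous would essentially require $G^{\ad}$ to be a rational variety over $\Q_p$ with a parametrization having uniform integral control in $p$ — and the paper explicitly remarks, right after the corollary, that $G^{\ad}$ need not be rational over $\Q_p$, while noting that in the split case precisely your strategy works (parametrize $\K^{\ad}_p(p)$ inside an affine space and apply Lemma \ref{LemmaPolynomialCongruence}), and there it even gives the sharper exponent $\varepsilon(d)$ arbitrarily close to $1/d$. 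Truncating the analytic section modulo $p^n$ does not rescue the argument, since the degree of the truncated map grows with $n$ and the exponent coming from Lemma \ref{LemmaPolynomialCongruence} would degenerate to $0$.

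The paper's route avoids parametrization altogether: Lemma \ref{VolumeBound} is formulated for an arbitrary flat affine $\Z$-scheme with smooth, absolutely irreducible generic fiber, and is proved in the appendix via Noether normalization — a \emph{finite} morphism $\xi: V \to \A^s$ rather than a birational one — together with the norm map, so that the congruence $f \equiv 0 \pmod{p^n}$ on $V(\Z_p)$ is absorbed into the polynomial congruence $N(f)\circ\xi \equiv 0 \pmod{p^n}$ on $\Z_p^s$ with $\deg N(f) \ll \deg f$, to which Lemma \ref{LemmaPolynomialCongruence} applies. Given that lemma, the proof of the corollary consists only of taking $V = G^{\ad}$ with the embedding provided by $\rho_0^{\ad}$ (translated so the identity sits at the origin, so that $\K^{\ad}_p(p)$ becomes $V(\Z_p)\cap p\Z_p^r$) and comparing, uniformly in $p$, the normalized Haar measure on $\K^{\ad}_p$ with the measure induced by the affine embedding: smoothness handles each of the finitely many bad primes, and for the remaining primes the two measures differ on $\K^{\ad}_p(p)$ by the factor $p^{\dim G}/\abs{G^{\ad}(\F_p)}$, which is bounded above and below by Lemma \ref{LemmaPointCounting}. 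This measure normalization, which you dismiss as a ``bounded Jacobian factor,'' is in fact the only content of the corollary once Lemma \ref{VolumeBound} is in place; the substantive difficulty your sketch tries to handle by parametrization is exactly what the appendix's norm-map argument was designed to circumvent.
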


\begin{proof}
We take $V = G^{\ad}$ in Lemma \ref{VolumeBound} together with the affine embedding provided by $\rho_0^{\ad}$.
Let $\mu_p$ be the normalized Haar measure on $G^{\ad} (\Q_p)$ such that $\vol(\K^{\ad}_p)=1$
and $\tilde{\mu}_p$ the measure on $G^{\ad} (\Q_p)$ induced by the fixed affine embedding of $G^{\ad}$ (cf. \cite[\S 3.3]{MR644559}, \cite[\S 3]{MR656627}).
Since $G^{\ad}$ is a smooth variety over $\Q_p$, for any $p$ we clearly have
\[
c_p \tilde{\mu}_p|_{\K_p (p)} \le \mu_p|_{\K^{\ad}_p (p)} \le c'_p \tilde{\mu}_p|_{\K^{\ad}_p (p)}
\]
for suitable $c'_p \ge c_p > 0$. Moreover, for almost all $p$ the measure
$\tilde{\mu}_p|_{\K^{\ad}_p (p)}$ is a constant multiple of the measure $\mu_p$, and since it gives $\K^{\ad}_p (p)$ total volume
$p^{-\dim G}$, while $\mu_p (\K^{\ad}_p (p)) = | G^{\ad} (\F_p) |^{-1}$ for almost all $p$, we have
\[
\mu_p|_{\K^{\ad}_p (p)} = \frac{p^{\dim G}}{| G^{\ad} (\F_p) |} \tilde{\mu}_p|_{\K^{\ad}_p (p)}
\]
for almost all $p$.
By Lemma \ref{LemmaPointCounting}, the normalizing factor satisfies
\[
\left(1+\frac1p\right)^{-\dim G} \le \frac{p^{\dim G}}{| G^{\ad} (\F_p) |} \le \left(1-\frac1p\right)^{-\dim G},
\]
and is therefore bounded in both directions in terms of $\dim G$ only. The lemma follows therefore from Lemma \ref{VolumeBound}.
\end{proof}

\begin{remark}
Assume that $G$ is split over $\Q$. Then we can realize $\K^{\ad}_p (p)$ as an explicit compact subset of an affine space of dimension $\dim G$ and
by Lemma \ref{LemmaPolynomialCongruence} it is therefore
possible to take any $\varepsilon (d) < \frac1d$ in Corollary \ref{CorollaryVolumeBound}.
In general, the affine variety $G^{\ad}$ does not need to be rational over $\Q$ (or even over $\Q_p$).
\end{remark}

We can now prove a variant of Proposition \ref{PropositionHighLevel} for the groups $K_p = \K_p (\rho, v; m)$.
\begin{lemma} \label{LemmaKprho}
There exist constants $\varepsilon' > 0$ and $c' \ge 0$, depending only on $G$ and $N$, such that
for any prime $p$, any $\Q_p$-rational representation $\rho: G \to \GL (N)$ with $\rho (\K_p) \subset \GL (N, \Z_p)$ without fixed vectors, any
primitive $v \in \Z_p^N$, $m \ge 1$, and any $x_p \in \K^{\ad}_p$ we have
\[
\vol \left( \{ k \in \K^{\ad} (p) \, : \, [k,x_p] \in \K_p (\rho, v; m) \} \right) \le p^{\varepsilon' (c' + \lambda_p(x_p) - m)}.
\]
\end{lemma}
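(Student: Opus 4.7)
The strategy is to express the condition $[k,x_p] \in \K_p(\rho,v;m)$ as a system of polynomial congruences of bounded degree in $k$, show that these congruences cannot all hold to depth substantially greater than $\lambda_p(x_p)$, and then invoke Corollary \ref{CorollaryVolumeBound}. Since $\rho([k,x_p]) \in \GL(N,\Z_p)$ preserves primitive vectors, the vector $\rho([k,x_p])v$ is again primitive, and the condition that it be congruent to a unit multiple of $v$ modulo $p^m$ is equivalent, on fixing any index $i$ with $v_i \in \Z_p^\times$, to the simultaneous congruences $F_{ij}(k) \equiv 0 \pmod{p^m}$ for $j = 1, \ldots, N$, where
\[
F_{ij}(k) = v_i \cdot (\rho([k,x_p])v)_j - v_j \cdot (\rho([k,x_p])v)_i.
\]
Each $F_{ij}$ is a regular function on $G^{\ad}$ over $\Q_p$, whose degree with respect to $\rho_0^{\ad}$ is bounded by some $d = d(G,N)$ independent of $p$, $\rho$, $v$, $x_p$: the commutator map has bounded degree, and matrix coefficients of any $\Q_p$-rational representation of $G$ of dimension $\le N$ have degrees controlled only in terms of $G$ and $N$ by standard representation-theoretic arguments.

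The heart of the proof is to produce some $F_{ij}$ with $F_{ij}(\K^{\ad}_p(p)) \not\subset p^{M_0} \Z_p$ for $M_0 = \lambda_p(x_p) + n_0 + m_0 + 1$, where $n_0$ is as in Lemma \ref{LemmaLambdapx} and $m_0$ as in Lemma \ref{LevelBound}. To this end I argue by contradiction: if the inclusion held for every $F_{ij}$, then $[k,x_p] \in \K_p(\rho,v;M_0)$ for all $k \in \K^{\ad}_p(p)$. A direct verification shows that $\K_p(\rho,v;M_0)$ is a closed subgroup of $\K_p$, so it must contain the closed subgroup $\mathcal{C}_p(x_p) = \overline{\langle [\K^{\ad}_p(p), x_p] \rangle}$, which in turn contains $\K_p(p^{\lambda_p(x_p)+n_0})$ by Lemma \ref{LemmaLambdapx}. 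Combining this with the lower bound $\level(\K_p(\rho,v;M_0)) \ge p^{M_0 - m_0}$ from Lemma \ref{LevelBound} yields
\[
p^{M_0 - m_0} \le \level(\K_p(\rho,v;M_0)) \le p^{\lambda_p(x_p) + n_0},
\]
forcing $M_0 \le \lambda_p(x_p) + n_0 + m_0$, which contradicts the choice of $M_0$.

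Granted the existence of such a non-trivial $F_{ij}$, Corollary \ref{CorollaryVolumeBound} applied with the parameters $(m,n) = (M_0, m)$ of that corollary gives
\[
\vol\bigl\{k \in \K^{\ad}_p(p) : F_{ij}(k) \equiv 0 \pmod{p^m}\bigr\} \ll p^{-\varepsilon(d)(m - M_0 + 1)} = p^{\varepsilon(d)(\lambda_p(x_p) + n_0 + m_0 - m)},
\]
and since the set we wish to bound is contained in this one, the claim follows upon setting $\varepsilon' = \varepsilon(d)$ and choosing $c' \ge n_0 + m_0$ large enough to absorb the implied constant uniformly in $p$ (noting that whenever $m \le c' + \lambda_p(x_p)$ the inequality is trivially satisfied, so only the complementary range needs the above argument). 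The main obstacle is the non-vanishing step of the middle paragraph: it is crucial that $\K_p(\rho,v;m)$ be a genuine subgroup rather than merely a coset, so that the hypothetical pointwise inclusion $[k,x_p] \in \K_p(\rho,v;M_0)$ propagates to the entire closed subgroup generated by these commutators, thereby allowing Lemma \ref{LemmaLambdapx} to translate the "effective rank" $\lambda_p(x_p)$ of $\Ad(x_p)-1$ into a bound on how deeply the family of congruences can be satisfied.
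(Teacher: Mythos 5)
Your proposal is correct and follows essentially the same route as the paper: the same test functions (the paper's $f_{ij,v}(g)=\sprod{\rho([g,x_p])v}{v_je_i-v_ie_j}$ are your $F_{ij}$ up to sign and choice of index pairs), the same translation of "all congruences hold to depth $M$ on $\K^{\ad}_p(p)$" into $\mathcal{C}_p(x_p)\subset\K_p(\rho,v;M)$, the same combination of Lemma \ref{LemmaLambdapx} with the level lower bound of Lemma \ref{LevelBound} to force non-vanishing beyond depth $\lambda_p(x_p)+n_0+m_0$, the same degree bound (which the paper justifies via the finiteness of isomorphism classes of $N$-dimensional representations over $\bar{\Q}$), and the same application of Corollary \ref{CorollaryVolumeBound}. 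The only differences are presentational (a contradiction at a specific depth $M_0$ rather than a direct implication, and fixing one unit coordinate of $v$ instead of using all pairs $i\neq j$).
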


\begin{proof}
Let $\sprod{\cdot}{\cdot}$ be the standard bilinear form on $\Q_p^N$.
Write $v = \sum_{i=1}^N v_i e_i$ and
consider the regular functions
\[
f_{ij,v} (g) = \sprod{\rho ([g,x_p]) v}{v_j e_i - v_i e_j}, \quad i \neq j,
\]
on $G^{\ad}$.
It follows directly from Definition \ref{DefKpRhovm} that for any primitive $v \in \Z_p^N$ , $k \in \K^{\ad}_p$ and $m \ge 1$ we have
\[
f_{ij,v} (k) \in p^m \Z_p \ \ \forall i\neq j \quad \text{ if and only if } \quad [k,x_p] \in \K_p (\rho, v; m),
\]
and therefore (see Definition \ref{DefLambdapx})
\[
f_{ij,v} (\K^{\ad}_p (p)) \subset p^m \Z_p \ \ \forall i\neq j \quad \text{ if and only if } \quad \mathcal{C}_p (x_p) \subset \K_p (\rho, v; m).
\]
Combining Lemma \ref{LemmaLambdapx} with Lemma \ref{LevelBound}, the inclusion
$\mathcal{C}_p (x_p) \subset \K_p (\rho, v; m)$ implies the inequality
$m \le \lambda_p (x_p) + n_0 + m_0$. Therefore, for any $v$ there exist indices $i\neq j$ with
\[
f_{ij,v} (\K^{\ad}_p (p)) \not\subset p^{\lambda_p (x_p) + n_0 + m_0+1} \Z_p.
\]

Over the algebraic closure of $\Q$, there are only finitely many isomorphism classes of representations of $G$ of dimension $N$.
Since $\rho$ is necessarily $\bar{\Q}_p$-isomorphic to such a representation,
we can bound the degrees of the functions $f_{ij,v}$ in terms of $G$ and $N$.
Therefore, we can apply Corollary \ref{CorollaryVolumeBound} to estimate
\begin{multline*}
\vol \left( \{ k \in \K^{\ad}_p (p) \, : \, [k,x_p] \in \K_p (\rho, v; m)  \} \right)
\le \\\min_{i \neq j} \vol \left( \{ k \in \K^{\ad}_p (p) \, :  \, f_{ij,v} (k) \in p^m \Z_p \} \right)
\ll_G p^{-\varepsilon (m-\lambda_p (x_p) - n_0 - m_0)},
\end{multline*}
for suitable $\varepsilon$ as required.
\end{proof}

We are now ready to finish the proof of Proposition \ref{PropositionHighLevel} (and therefore of Theorem \ref{thm: mainbnd}).

\begin{proof}[Proof of Proposition \ref{PropositionHighLevel}]
Let $K_p$ be an open subgroup of $\K_p$ of level $p^n$. Without loss of generality we may assume that $n \ge 2$.
Let $x_p \in \K_p^{\ad}$.

First note that by the continuity of the map $[\cdot, \cdot]$, there exist integers $l_p \ge 1$ with the property that
$[\K^{\ad}_p(p^{l_p}), \K^{\ad}_p] \subset \K_p (p^{\epsilon_p})$.
Moreover, since by Lemma \ref{LemmaAlmostAllp}, for almost all $p$ the map $[\cdot,\cdot]$ descends to a map
$G^{\ad} (\F_p) \times G^{\ad} (\F_p) \to G (\F_p)$, we may take $l_p = 1$ for almost all $p$. We may then write
\begin{equation} \label{EqnPhiKp}
\phi_{K_p} (x_p) = \sum_\xi \vol \left( \{ k \in \K^{\ad}_p (p^{l_p}) \, : \, [k, \xi x_p \xi^{-1}] \in K_p \cap \K_p (p^{\epsilon_p}) \} \right),
\end{equation}
where the summation is over those classes in $\K^{\ad}_p (p^{l_p}) \bs \K^{\ad}_p$ for which there exists a representative $\xi$ with
$[\xi, x_p] \in K_p$.
By Lemma \ref{LemmaPointCounting}, the total number of summands in \eqref{EqnPhiKp} is clearly bounded by $p^{c''}$ for
a constant $c'' \ge 0$ depending only on $G$.

From Theorem \ref{TheoremAlgebraic}, applied to the group $K_p \cap \K_p (p^{\epsilon_p})$ of level $p^n$,
we obtain the existence of a proper, connected algebraic subgroup $X$ of $G$,
defined over $\Q_p$, such that
$K_p \cap \K_p (p^{\epsilon_p}) \subset (X (\Q_p) \cap \K_p) \K_p (p^{\lceil \varepsilon n \rceil})$, where $\varepsilon$ depends only on $G$.
Of course, here we may assume $X$ to be maximal among proper, connected subgroups defined over $\Q_p$, i.e., $X \in \sgrmx_{\Q_p} (G)$ in the notation of \S \ref{subsectionfiniteness}.
We can find a $\Q_p$-rational representation $\rho: G \to \GL (N)$, not containing the trivial representation, such that
$X$ is the stabilizer of a line $\Q_p v \subset \Q_p^N$. Moreover, by Corollary \ref{CorollaryMaximalFiniteness} (taken together with
Lemma \ref{LemmaFiniteness}) we can bound here $N$ in terms of $G$ only.
By conjugating $\rho$ and adjusting $v$, we can also assume that
$\rho (\K_p) \subset \GL (N, \Z_p)$ and $v \in \Z_p^N \setminus p \Z_p^N$.
In this situation we have from Lemma \ref{LevelBound} that
$\level (\K_p (\rho, v; m)) \le p^{m+m_1}$ with $m_1$ depending only on $G$ and $\rho_0$.

Clearly, we have now $X (\Q_p) \cap \K_p \subset \K_p (\rho, v; m)$ for any $m \ge 1$,
and in addition $\K_p (p^{\lceil \varepsilon n \rceil})\subset \K_p (\rho, v; m)$ for
\[
m := \lceil \varepsilon n \rceil - m_1,
\]
if $n > \frac{m_1}{\varepsilon}$, as we may assume.
Therefore,
\[
K_p \cap \K_p (p^{\epsilon_p}) \subset (X (\Q_p) \cap \K_p) \K_p (p^{\lceil \varepsilon n \rceil}) \subset \K_p (\rho, v; m).
\]
We can now apply Lemma \ref{LemmaKprho} to the individual summands in \eqref{EqnPhiKp} to obtain
\[
\phi_{K_p} (x_p) \le p^{c'' + \varepsilon' (c' + \lambda_p (x_p) - m)}.
\]
The proposition follows.
\end{proof}

\subsection{Some supplementary results} \label{SubsectionIntersections}
We turn to the proof of Corollary \ref{CorConjugacyClass}.
Until the end of this section we assume that $G$ and $\K$ are as in Corollary \ref{CorConjugacyClass}, i.e., that $G$ is a (possibly non-connected) reductive
group defined over $\Q$ whose derived group $G^{\der}$ and adjoint group $G^{\ad}$ are connected,
and that $\K = \rho_0^{-1} (\GL (N_0, \hat{\Z})) \subset G (\A_{\rm fin})$ for a faithful $\Q$-rational representation $\rho_0: G \to \GL (N_0)$.

\begin{proof}[Proof of Corollary \ref{CorConjugacyClass}]
Writing
\[
\vol \left( \{ k \in \K \, : \, k x k^{-1} \in K \} \right)\le
\sum_{y \in \K \cap \K_0 \backslash \K}
\vol \left( \{ k \in \K_0 \, : \, k y x y^{-1} k^{-1} \in K \} \right),
\]
we may first reduce to the case where $\K_0 = \K$ and $x \in \K$.

Let $G^{\rm sc}$ be the simply connected covering group of the derived group $G^{\der}$ of $G$
and let $p^{\rm sc}: G^{\rm sc} \to G$ be the associated canonical homomorphism. Let $\rho^{\rm sc}: G^{\rm sc} \to \GL (N^{\rm sc})$ be
a faithful $\Q$-rational representation such that $(p^{\rm sc})^{-1} (\K) \subset \K^{\rm sc} = (\rho^{\rm sc})^{-1} (\GL (N^{\rm sc}, \hat{\Z}))$.

We can assume that there exists $k_0 \in \K$ with $k_0 x k_0^{-1} \in K$, for otherwise the bound is trivial.
Then
\[
\vol \left( \{ k \in \K \, : \, k x k^{-1} \in K \} \right) = \vol \left( \{ k \in \K \, : \, [k, k_0 x k_0^{-1}] \in K \} \right).
\]
To estimate the right-hand side, we can apply Theorem \ref{thm: mainbnd} to the group $G^{\rm sc}$, the open subgroup
$(p^{\rm sc})^{-1} (K)$ of $\K^{\rm sc}$ and the image of $k_0 x k_0^{-1}$ in $G^{\ad} (\A_{\fin})$.
Here we let $\tilde{\K}_p$ be the image of $\K_p$ in $G^{\ad} (\Q_p)$ and take any representation $\rho_0^{\ad}$ such that
$\tilde{\K}_p \subset \K^{\ad}_p$ for all $p$.
In the final estimate we may replace the level of $(p^{\rm sc})^{-1} (K) \subset \K^{\rm sc}$ with respect to $\rho^{\rm sc}$ by $\level (K; G (\A_{\fin})^+)$, since the quotient of these
two quantities is bounded from above and below in terms of $G$, $\rho_0$ and $\rho_0^{\rm sc}$.
\end{proof}

We now give an alternative description of the functions $\lambda_p (x)$, as well as three lemmas on their behavior, which are useful in the application to limit multiplicities.

\begin{remark} \label{RemarkRestrictionOfScalars}
We can write $G^{\rm ad} = \prod_{i=1}^r G_i^{\ad}$ with $G_i^{\ad} = {\rm Res}_{F_i / \Q} \mathcal{G}_i^{\rm ad}$ for finite extensions $F_1, \ldots, F_r$ of $\Q$ and absolutely simple
adjoint $F_i$-groups $\mathcal{G}_i^{\rm ad}$ \cite[\S 6.21 (ii)]{MR0207712}.
The individual factors $G^{\ad}_i$ are then the $\Q$-simple factors of $G^{\ad}$.
The Lie algebra $\lie{g} = \bigoplus_{i=1}^r \lie{g}_i$ acquires naturally the structure of a module over the semisimple algebra $A = \prod_{i=1}^r F_i$.
Let $\mathfrak{o}_A$ be the ring of integers of $A$.
Furthermore, we can take $\K^{\rm ad} = \prod_v \K^{\rm ad}_v$, where $v$ ranges over the prime ideals of $\mathfrak{o}_A$, which implies
the factorizations $\K^{\rm ad}_p = \prod_{v|p} \K^{\rm ad}_v$ for all primes $p$.
Assume finally that $\Lambda$ is an $\mathfrak{o}_A$-lattice in $\lie{g}$. For $v|p$ let $\varpi_v$ be a prime element of
the local field $A_v$ and $e_v$ the corresponding ramification index. For $n \ge 0$ set $\K^{\ad}_v (\varpi_v^n) = \{ x_v \in \K^{\ad}_v \, : \, \Ad (x_v) l \equiv l \pmod{\varpi_v^n \Lambda} \,
\forall l \in \Lambda \}$.
Then for $x_p = (x_v)_{v|p} \in \K^{\rm ad}_p$ we can compute $\lambda_p (x_p)$
as the largest integer $n \ge 0$ for which there exists a place $v$ above $p$ with
$x_v \in \K^{\ad}_v (\varpi_v^{e_v n})$ (and $\lambda_p (x_p) = \infty$ if $x_v = 1$ for some $v|p$).
\end{remark}

\begin{definition}
We say that an element $\gamma\in G(\Q)$ is \nd\ if the smallest normal subgroup of $G$ containing $\gamma$ (which is necessarily defined over $\Q$)
contains the derived group $G^{\der}$.
\end{definition}

\begin{lemma} \label{LemmaLambdapFiniteness}
Let $\gamma \in G (\Q)$ be \nd. Then
$\lambda_p (\gamma) < \infty$ for all $p$ and $\lambda_p (\gamma) = 0$ for almost all $p$ (depending on $\gamma$).
\end{lemma}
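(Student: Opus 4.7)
The plan has two parts corresponding to the two claims in the lemma. Throughout, I will reduce to the situation of Remark \ref{RemarkRestrictionOfScalars}: write $G^{\der}_{\bar\Q} = \prod_\alpha G_\alpha$ as a product of absolutely simple $\bar\Q$-factors, so that non-degeneracy of $\gamma$ is equivalent to the projection $\gamma_\alpha$ being non-central in $G_\alpha$ for every $\alpha$ (since a normal $\bar\Q$-subgroup of $G^{\der}_{\bar\Q}$ is a product of simple factors together with some central piece). Base-changing to $\bar\Q_p$ gives the same set of absolutely simple factors, so non-degeneracy is equivalent to the analogous statement over $\bar\Q_p$.

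\textit{Claim 1.} First I would translate the condition $\lambda_p(\gamma) = \infty$ into group-theoretic language. Since $\Pro_{\Lieh}(\Lambda \otimes \Z_p)$ spans $\Lieh$ as a $\Q_p$-vector space, $\lambda_p(\gamma) = \infty$ exactly when $\Ad(\gamma)$ acts trivially on some non-zero semisimple ideal $\Lieh$ of $\Lieg \otimes \Q_p$; such an ideal is the Lie algebra of a non-trivial connected semisimple $\Q_p$-normal subgroup $H$ of $G^{\der}_{\Q_p}$. Because $H$ is a product of absolutely simple factors of $G^{\der}_{\bar\Q_p}$ (equivalently, of $G^{\der}_{\bar\Q}$), the condition says that the projection $\gamma_\alpha$ is central in $G_\alpha$ for at least one $\alpha$, directly contradicting non-degeneracy.

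\textit{Claim 2.} Fix the decomposition $\Lieg^{\der} = \bigoplus_k \Lieg_k$ into $\Q$-simple factors with $\Lieg_k = \Res_{F_k/\Q}\mathcal{L}_k$, $\mathcal{L}_k$ absolutely simple over $F_k$. For $p$ outside a finite exceptional set the following all hold: (i) $\Lambda \otimes \Z_p$ splits as a direct sum of lattices $\Lambda_k \otimes \Z_p$ in $\Lieg_k \otimes \Q_p$ plus a summand in $\Lie Z(G) \otimes \Q_p$; (ii) each $\Lambda_k \otimes \Z_p$ is a free $\mathcal{O}_{F_k} \otimes \Z_p = \prod_{v|p} \mathcal{O}_{F_{k,v}}$-module and hence decomposes as $\bigoplus_{v|p} \Lambda_{k,v}$ with $\Lambda_{k,v}$ an $\mathcal{O}_{F_{k,v}}$-lattice in $\Lieh_k^v := \mathcal{L}_k \otimes_{F_k} F_{k,v}$, the $\Q_p$-simple factors of $\Lieg_k \otimes \Q_p$. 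Using (i)--(ii), if $\Lieh = \bigoplus_j \Lieh_j$ is any non-zero semisimple $\Q_p$-ideal, then picking one $\Q_p$-simple summand $\Lieh_{j_0} = \Lieh_{k_0}^{v_0}$ and a test vector $v \in \Lambda_{k_0,v_0}$ (so that $\Pro_\Lieh(v) = v$) reduces the problem to showing $(\Ad(\gamma)-1)\Lambda_{k_0,v_0} \not\subset p\Lambda_{k_0,v_0}$.

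Finally I would establish the last claim. The action of $\gamma$ preserves the $F_k$-structure of $\mathcal L_k$ (it is an inner automorphism after base change), so $\Ad(\gamma)-1$ is an $F_k$-linear endomorphism of $\mathcal{L}_k$; by non-degeneracy applied to the absolutely simple factors $\mathcal L_k \otimes_{F_k,\sigma} \bar\Q$ one sees this endomorphism is non-zero. Its matrix in a fixed $F_k$-basis of $\mathcal{L}_k \cap \Lambda_k$ has some non-zero entry $c \in F_k$; for all primes $p$ outside a finite set depending only on $c$ (and on the denominators of the basis), every valuation $v|p$ of $F_k$ satisfies $c \in \mathcal{O}_{F_{k,v}}^\times$, so the $\mathcal{O}_{F_{k,v}}$-linear map $\Ad(\gamma)-1$ on $\Lambda_{k,v}$ is non-zero modulo the maximal ideal, hence in particular modulo $p$. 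Taking the intersection of the finite exceptional sets over the finitely many indices $k$ concludes the proof. The main conceptual point to nail down is the direct sum decomposition (ii), which is precisely what allows one to reduce from the ideal $\Lieh$ to a single $\Q_p$-simple summand; the rest is linear algebra over $F_k$.
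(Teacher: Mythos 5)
Correct, and essentially the paper's proof: your first claim is the same observation that $\lambda_p(\gamma)=\infty$ makes $\Ad(\gamma)$ trivial on a nonzero semisimple ideal of $\Lieg\otimes\Q_p$ and hence places $\gamma$ in the kernel of the action of $G$ on that ideal, a normal subgroup not containing $G^{\der}$; your second claim is the restriction-of-scalars argument of Remark \ref{RemarkRestrictionOfScalars}, which the paper phrases contrapositively ($\lambda_p(\gamma)>0$ for infinitely many $p$ would force $\Ad(\gamma)=1$ on a $\Q$-simple summand) and you phrase directly (a fixed nonzero entry of $\Ad(\gamma)-1$ over $F_k$ is a unit at almost all places). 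The lattice splittings you spell out for almost all $p$ are exactly the routine facts the paper delegates to that remark, so the two proofs coincide in substance.
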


\begin{proof}
If $\lambda_p (\gamma) = \infty$ for some $p$, then there exists a semisimple ideal $0 \neq \lie{h}$ of $\lie{g} \otimes \Q_p$ such that
$\Ad (\gamma)|_{\lie{h}} = 1$, i.e., $\gamma$ lies in the kernel of
the corresponding projection $\pi: G \to H$ of reductive algebraic groups defined over $\Q_p$ given by the action of $G$ on $\lie{h}$. This
contradicts the assumption on $\gamma$.

Consider the description of $\lambda_p$ given in Remark \ref{RemarkRestrictionOfScalars}. To have $\lambda_p (\gamma) > 0$ means
that $\Ad (\gamma) l \equiv l \pmod{\varpi_v \Lambda}$ for some place $v$ of $A$ above $p$. If this is the case for infinitely many $p$, then
we may conclude that the linear map $\Ad (\gamma)$ acts as the identity on a $\Q$-simple summand $\lie{g}_i$ for some $i$, which again contradicts the assumption on $\gamma$.
\end{proof}

\begin{lemma}
Let $P$ be a parabolic subgroup of $G$ defined over $\Q$, $U$ its unipotent radical, and $M$ a Levi subgroup of $P$ defined over $\Q$.
Let $\mu \in M (\Q)$ be \nd\ in $G$.
Then for every $p$ we have $\lambda_p(\mu u)\ll_{\mu}1$ for all $u\in U (\A_{\fin}) \cap \K$.
Moreover, for almost all $p$ (depending on $\mu$) we have $\lambda_p (\mu u) = 0$ for all $u \in U (\A_{\fin}) \cap \K$.
\end{lemma}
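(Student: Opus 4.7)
The plan is to unpack $\lambda_p$ via Remark~\ref{RemarkRestrictionOfScalars}, use the Levi decomposition of the ambient absolutely simple factor to derive Lie-algebraic constraints on $\bar\mu_i$, and then apply standard height bounds for rational points in algebraic groups over number fields. Writing $G^{\ad}=\prod_i\Res_{F_i/\Q}\mathcal{G}_i^{\ad}$ with $\mathcal{G}_i^{\ad}$ absolutely simple adjoint over $F_i$, the condition $\lambda_p(\mu u)\ge n$ amounts to the existence of a place $v$ of some $F_i$ above $p$ such that $\Ad(\bar\mu_iu_v)\equiv 1\pmod{\varpi_v^{e_vn}}$ on $\Lambda_v\subset\mathfrak{g}_v$, where $\bar\mu_i\in\mathcal{G}_i^{\ad}(F_i)$ is the fixed image of $\mu$ and $u_v\in\mathcal{U}_v(F_{i,v})$ is the image of $u$ in the unipotent radical of the corresponding parabolic $\mathcal{P}_v=\mathcal{M}_v\mathcal{U}_v$. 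Non-degeneracy of $\mu$ in $G$ gives $\bar\mu_i\ne 1$ for each $i$.

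From the Levi decomposition $\mathfrak{g}_v=\mathfrak{m}_v\oplus\mathfrak{u}_v\oplus\bar{\mathfrak{u}}_v$ I would extract two consequences of the displayed congruence. Since all iterated brackets of $\log u_v\in\mathfrak{u}_v$ with elements of $\mathfrak{m}_v$ remain in $\mathfrak{u}_v$, we have $(\Ad(u_v)-1)\mathfrak{m}_v\subset\mathfrak{u}_v$; projecting onto the $\mathfrak{m}_v$-component therefore yields $\Ad(\bar\mu_i)\equiv 1\pmod{\varpi_v^{e_vn}}$ on $\mathfrak{m}_v\cap\Lambda_v$, with a bounded-index correction at finitely many $p$. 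Analogously, $\Ad(u_v)$ acts trivially on $\mathfrak{u}_v^{\mathrm{ab}}:=\mathfrak{u}_v/[\mathfrak{u}_v,\mathfrak{u}_v]$ because $[\log u_v,\mathfrak{u}_v]\subset[\mathfrak{u}_v,\mathfrak{u}_v]$, so projecting $\mathfrak{u}_v\cap\Lambda_v$ onto $\mathfrak{u}_v^{\mathrm{ab}}$ gives $\alpha(\bar\mu_i)\equiv 1\pmod{\varpi_v^{e_vn}}$ for every simple relative root $\alpha$ of $\mathcal{P}_v$.

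If $\bar\mu_i\notin Z(\mathcal{M}_i)(F_i)$, then its image in $\mathcal{M}_i^{\ad}=\mathcal{M}_i/Z(\mathcal{M}_i)$ is nontrivial and the matrix $\Ad(\bar\mu_i)|_{\mathfrak{m}}-1$ is a fixed nonzero element of $\End_{F_i}(\mathfrak{m})$; its coordinates have $v$-adic valuations that are uniformly bounded in $v$ and vanish for almost all $v$, so the first consequence yields the desired uniform bound on $n$ as well as $n=0$ for almost all $v$. If instead $\bar\mu_i\in Z(\mathcal{M}_i)(F_i)\setminus\{1\}$, a weight argument (choose a maximal torus $T\supset Z(\mathcal{M}_i)^\circ$ containing the semisimple $\bar\mu_i$) shows that some simple relative root $\alpha$ must satisfy $\alpha(\bar\mu_i)\ne 1$ in $F_i^\times$: otherwise every positive root $\beta$ of $\mathcal{G}_i^{\ad}$ w.r.t.~$T$, being either a root of $\mathcal{M}_i$ or a $\Z_{\ge 0}$-combination of simple relative roots, would satisfy $\beta(\bar\mu_i)=1$, forcing $\Ad(\bar\mu_i)=1$ and hence $\bar\mu_i\in Z(\mathcal{G}_i^{\ad})=\{1\}$ by adjointness. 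The same height argument applied to the nonzero element $\alpha(\bar\mu_i)-1\in F_i$, via the second consequence, then completes the proof.

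The hard part will be the second case, where the primary Levi projection carries no information and one must use adjointness to produce at least one simple relative root with non-trivial value; a secondary technical point is the bounded-index comparison between $\Lambda_v$ and its projections onto $\mathfrak{m}_v$ and $\mathfrak{u}_v^{\mathrm{ab}}$, which is trivial for almost all $p$ and contributes only an additive constant absorbed into the bound $\ll_\mu 1$.
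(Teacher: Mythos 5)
Your proof is correct in essence, but it takes a genuinely different, more computational route than the paper. The paper argues softly: for fixed $p$, since $U(\A_{\fin})\cap\K$ is compact and $p^{-\lambda_p(\mu\,\cdot)}$ is continuous, unboundedness would produce some $u$ with $\lambda_p(\mu u)=\infty$, i.e.\ $\Ad(\mu u)$ trivial on a nonzero semisimple ideal $\lie{h}$ of $\lie{g}\otimes\Q_p$; projecting to the corresponding quotient $H$ and using that the image of $\mu$ is semisimple while that of $u$ is unipotent, one gets $\pi(\mu)=\pi(u)=1$, so $\mu$ lies in a proper normal subgroup, contradicting non-degeneracy; the ``almost all $p$'' statement is then obtained by reducing modulo $p$ (Lemma \ref{LemmaAlmostAllp}) and quoting Lemma \ref{LemmaLambdapFiniteness}. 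You instead work quantitatively through Remark \ref{RemarkRestrictionOfScalars}: from the congruence satisfied by $\Ad(\bar\mu_i u_v)$ you extract congruences for $\Ad(\bar\mu_i)$ alone, on $\lie{m}$ and on $\lie{u}/[\lie{u},\lie{u}]$, and then exploit the fact that a fixed nonzero entry of a matrix over $F_i$ has finite valuation at every place and valuation zero at almost all places. This buys an effective bound, treats both assertions at once, and needs no semisimplicity of $\mu$ (in your central case it comes for free), at the price of more structure theory (the central/non-central dichotomy, root combinatorics, and the lattice-projection constants you rightly flag). Two small points to tidy up: the statement ``$\alpha(\bar\mu_i)\equiv 1$ for every simple relative root'' is only valid (and only needed) when $\bar\mu_i$ is central in the Levi, and the values $\alpha(\bar\mu_i)$ need not lie in $F_i$ since the relevant weights may only be defined over an extension; it is cleaner to argue in the central case exactly as in your non-central one, with the nonzero $F_i$-linear map $\Ad(\bar\mu_i)|_{\lie{u}/[\lie{u},\lie{u}]}-1$, using your weight argument (with absolute roots, noting that simple roots outside the Levi have level one) only to show that this map is nonzero, i.e.\ that triviality on $\lie{m}$ and on $\lie{u}/[\lie{u},\lie{u}]$ would force $\Ad(\bar\mu_i)=1$ and hence $\bar\mu_i=1$ in the adjoint group. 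With these repairs your argument is complete.
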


\begin{proof}
Assume on the contrary that $\lambda_p (\mu u)$ is unbounded for some $p$. Since $U (\A_{\fin}) \cap \K$ is compact and $p^{-\lambda_p (\mu \cdot)}$ is a continuous function, we can then find
$u \in U (\A_{\fin}) \cap \K$ with $\lambda_p (\mu u) = \infty$, or $\Ad (\mu u)|_{\lie{h}} = 1$ for some semisimple ideal $0 \neq \lie{h}$ of
$\lie{g} \otimes \Q_p$. Consider the corresponding projection $\pi: G \to H$ of reductive algebraic groups defined over $\Q_p$.
We have $\pi (\mu u) = 1$, and therefore $\pi (\mu) = \pi (u) = 1$, since $\pi (\mu)$ is semisimple and $\pi (u)$ unipotent.
This means that $\mu$ is contained in the kernel of $\pi$, a normal subgroup of $G$ not containing $G^{\der}$, which contradicts the assumption on $\mu$.

To show the second assertion, we may assume that $p$ is such that we are in the situation of Lemma \ref{LemmaAlmostAllp} (applied to $G^{\ad}$)
and that $\mu \in \K_p$. If $\lambda_p (\mu u) > 0$ for some $u \in U (\A_{\fin}) \cap \K$,
then we can apply the previous argument to the reduction modulo $p$ of $\Ad (\mu u) \in \K_p^{\ad}$ and conclude that we have in fact $\lambda_p (\mu) > 0$.
By Lemma \ref{LemmaLambdapFiniteness}, this is only possible for finitely many $p$ under our assumption on $\mu$.
\end{proof}

We now consider $\lambda_p$ on the unipotent radical $U$ of a parabolic subgroup $P$ of $G$ that is defined over $\Q$.
Recalling the description of Remark \ref{RemarkRestrictionOfScalars},
if $\lie{u}$ is the Lie algebra of $U$, then we can write
$\lie{u} = \bigoplus_{i=1}^r \lie{u}_i$, where each $\lie{u}_i$ is an $F_i$-vector space. Moreover, the spaces $\lie{u}_i$
are non-trivial for all $i = 1, \ldots, r$ if and only if the smallest normal
subgroup of $G$ containing $U$ is the full derived group $G^{\der}$.

\begin{lemma} \label{LemmaLambdaUnipotent}
Let $P$ be a parabolic subgroup of $G$ defined over $\Q$ and $U$ its unipotent radical, and assume that the smallest normal
subgroup of $G$ containing $U$ is the derived group $G^{\rm der}$. Let $\lie{u}$ be the Lie algebra of
$U$ and write $\lie{u} = \bigoplus_{i=1}^r \lie{u}_i$, where $\lie{u}_i$ is a non-trivial $F_i$-vector space.

Let $L_{\lie{u}} \subset \lie{u}$ be an $\mathfrak{o}_A$-lattice and set
\[
\tilde{\lambda}_p (\exp x) := \max \{ k \ge 0 \, : \, \exists v|p : x \in \varpi_v^{e_v k} L_{\lie{u}} \otimes \Z_p \}, \quad x \in L_{\lie{u}} \otimes \hat{\Z},
\]
where $v$ ranges over the places of $A$ lying over $p$, $\varpi_v$ is a prime element of the associated local field $A_v$ and $e_v$ the
corresponding ramification index.

Then we have the following:
\begin{enumerate}
\item For every prime $p$ the difference $\abs{\lambda_p (\exp x) - \tilde{\lambda}_p (\exp x)}$ is bounded on $L_{\lie{u}} \otimes \hat{\Z}$.
\item For almost all $p$ we have
$\lambda_p (\exp x) = \tilde{\lambda}_p (\exp x)$, $x \in L_{\lie{u}} \otimes \hat{\Z}$.
\item For a suitable choice of $L_{\lie{u}}$ we have
$\log (U (\A_{\fin}) \cap \K) \subset L_{\lie{u}} \otimes \hat{\Z}$.
\end{enumerate}
\end{lemma}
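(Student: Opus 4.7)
I will treat the three parts in succession, relying on the explicit description of $\lambda_p$ given in Remark \ref{RemarkRestrictionOfScalars}. The basic idea is that for $y = \exp x$ with $x \in \mathfrak{u}$ small, the operator $\Ad(y)-1 = \ad(x) + \tfrac12\ad(x)^2 + \cdots$ is, up to bounded error, the same as $\ad(x)$, and the injectivity of $\ad|_{\mathfrak{u}}$ transfers $\varpi_v$-valuation information between $x$ and $\ad(x)$.

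For part (iii), pick any $\mathfrak{o}_A$-lattice $L_0 \subset \mathfrak{u}$ (which exists because $\mathfrak{u} = \bigoplus \mathfrak{u}_i$ is a finitely generated $A$-module). Since $U$ is a connected unipotent algebraic group defined over $\Q$, the logarithm map $\log: U \to \mathfrak{u}$ is a polynomial isomorphism of affine varieties over $\Q$ (its coefficients involve only finitely many rational denominators). Hence the set $\log(U(\A_{\fin}) \cap \K) \subset \mathfrak{u}(\A_{\fin})$ lies in $L_0 \otimes \Z_p$ for all but finitely many $p$, and by enlarging $L_0$ at the remaining primes by bounded powers of $p$ we obtain an $\mathfrak{o}_A$-lattice $L_{\mathfrak{u}}$ with $\log(U(\A_{\fin}) \cap \K) \subset L_{\mathfrak{u}} \otimes \hat{\Z}$.

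For parts (i) and (ii), fix $p$ and decompose $\mathfrak{u} \otimes \Q_p = \bigoplus_{v|p} \mathfrak{u}_{i(v)} \otimes_{F_{i(v)}} A_v$ according to Remark \ref{RemarkRestrictionOfScalars}, so that $x \in L_{\mathfrak{u}} \otimes \Z_p$ writes as $x = \sum_{v|p} x_v$ with $x_v \in L_{\mathfrak{u},v} := L_{\mathfrak{u}} \cap \mathfrak{u}_{i(v)} \otimes_{F_{i(v)}} A_v$. Because $\ad(x_v)$ annihilates every simple summand of $\Lieg \otimes \Q_p$ other than the one at $v$, the description of $\lambda_p$ from Remark \ref{RemarkRestrictionOfScalars} reduces to
\[
\lambda_p(\exp x) = \max_{v|p}\, n_v(x_v), \qquad n_v(x_v) := \max\{n \ge 0 : (\Ad(\exp x_v)-1)\Lambda_v \subset \varpi_v^{e_v n}\Lambda_v\}.
\]
Now I compare $n_v(x_v)$ with $k_v(x_v) := \max\{k \ge 0 : x_v \in \varpi_v^{e_v k}L_{\mathfrak{u},v}\}$ in two steps. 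First, $\Ad(\exp x_v) - 1 = \ad(x_v) + \tfrac12\ad(x_v)^2 + \cdots$ is a finite sum since $x_v$ is nilpotent, and the denominators involved (namely $k!$ for $k$ bounded by the nilpotency class) are supported at finitely many primes; for almost all $p$ this gives $\ad(x_v)\Lambda_v \subset \varpi_v^{e_v k_v - C}\Lambda_v$ implying $n_v(x_v) \ge k_v(x_v) - C$ and vice versa, with $C$ depending on the relation of $L_{\mathfrak{u}}$ to $\Lambda$ (and $C = 0$ for almost all $p$). Second, because the smallest normal subgroup of $G$ containing $U$ is $G^{\der}$, the restriction $\ad|_{\mathfrak{u}_i}$ is injective on each simple component, so $\ad: L_{\mathfrak{u},v} \hookrightarrow \End_{\mathfrak{o}_{A_v}}(\Lambda_v)$ is an injective map of free $\mathfrak{o}_{A_v}$-modules of the same rank as its image up to finite cokernel (which is trivial modulo $\varpi_v$ for almost all $p$); by the elementary divisor theorem over $\mathfrak{o}_{A_v}$, the valuation of $\ad(x_v)$ in $\End(\Lambda_v)$ equals $k_v(x_v)$ up to an additive constant that vanishes for almost all $p$.

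Taking the maximum over $v|p$, we obtain $|\lambda_p(\exp x) - \tilde\lambda_p(\exp x)| \le C_p$ with $C_p = 0$ for almost all $p$, which yields (i) and (ii). The main technical point is the second comparison above: verifying that the $\mathfrak{o}_{A_v}$-linear map $\ad: L_{\mathfrak{u},v} \to \End(\Lambda_v)$ is unimodular modulo $\varpi_v$ for almost all $p$. This ultimately reduces to the faithfulness of $\ad|_{\mathfrak{u}}$ over $\Q$ combined with standard spreading-out arguments (the locus where an injective map of free modules fails to remain injective modulo $\varpi_v$ is cut out by finitely many nonzero minors), and presents no real obstacle beyond bookkeeping.
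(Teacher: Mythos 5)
Your argument is correct and follows essentially the same route as the paper: part (iii) by compactness/integrality of $\log$, and parts (i)--(ii) by comparing $\Ad(\exp x)-1$ with $\ad(x)$ through the truncated exponential and logarithm series, whose denominators are globally bounded, together with a lattice comparison based on the injectivity of $\ad|_{\mathfrak{u}}$. The only difference is presentational: the paper works one $\Q$-simple factor at a time and absorbs your saturation/elementary-divisor step into a single global nonzero integer $M$, whereas you make that step explicit place by place; both yield additive constants whose $p$-valuation vanishes for almost all $p$.
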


\begin{proof}
The third assertion is clear, since $\log (U (\A_{\fin}) \cap \K)$ is a compact subset of $\lie{u} \otimes \A_{\fin}$,
and is therefore contained in a set of the form $L_{\lie{u}} \otimes \hat{\Z}$.

To show the first and second assertions,
it suffices to consider the $\Q$-simple factors of $G^{\ad}$ one at a time. So, we may assume that $G^{\ad}$ is $\Q$-simple
and therefore of the form ${\rm Res}_{F / \Q} \mathcal{G}^{\rm ad}$ for an absolutely simple adjoint group $\mathcal{G}^{\rm ad}$ defined over a finite extension $F$ of $\Q$.
Consider the restriction to $\lie{u}$ of the adjoint representation $\ad: \lie{g} \to \End_F(\lie{g})$ of $F$-Lie algebras.
Clearly $\ad (L_{\lie{u}})$ is a finitely generated $\mathfrak{o}_F$-submodule of $\End_F(\lie{g})$ and therefore
there exists a positive integer $N$ such that $\ad (L_{\lie{u}}) \Lambda \subset N^{-1} \Lambda$.
Let $r$ be a positive integer such that $\ad (u)^i = 0$ for all $u \in \lie{u}$ and $i > r$.

If we now have $x \in \varpi_v^k L_{\lie{u}} \otimes \Z_p$ for some $k \ge 0$, then
we obtain $\ad (x) \Lambda \otimes \Z_p \subset N^{-1} \varpi_v^k \Lambda \otimes \Z_p$, and
therefore $\Ad (\exp x) = \exp \ad (x)$ satisfies
$\Ad (\exp x) l - l \in (r!)^{-1} N^{-r} \varpi_v^k \Lambda \otimes \Z_p$ for all $l \in \Lambda$.
We conclude that $\lambda_p (\exp x) \ge \tilde{\lambda}_p (\exp x) - v_p (r! N^r)$.
Using the logarithm map on $\ad (\lie{u})$, which is again given by polynomials, we obtain also the opposite inequality $\tilde{\lambda}_p (\exp x) \ge \lambda_p (\exp x) - v_p (M)$
for a suitable non-zero integer $M$.
Taken together, these inequalities amount to the first two assertions of the lemma.
\end{proof}

Finally, we show how to bound the unipotent orbital integrals which appear naturally in the limit multiplicity problem.

\begin{corollary}
There exists a constant $\varepsilon > 0$, depending only on $G$ and $\rho_0$, with the following property.
Let $\K_0$ be an open compact subgroup of $G (\A_{\rm fin})$.
Let $P$ be a parabolic subgroup of $G$ defined over $\Q$ and $U$ its unipotent radical, and let $H \subset G^{\der}$ be the smallest normal
subgroup of $G$ containing $U$. Then
\[
\int_{U (\A_{\fin})} \int_{\K} \mathbf{1}_K (k^{-1} u k) \ dk\, du \ll_{P, \, \K_0} \level (K, H (\A_{\fin})^{+})^{-\varepsilon}
\]
for all open subgroups $K \subset \K_0$.
\end{corollary}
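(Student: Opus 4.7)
The plan is to reduce the double integral to a function of $u$ alone, apply the commutator bound of Theorem \ref{thm: mainbnd} with $H^{\rm sc}$ in place of $G^{\rm sc}$, and then integrate the resulting local estimate against an explicit description of its $u$-dependence on $U(\A_{\fin})$.

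First I would set $\phi(u) := \int_\K \mathbf{1}_K(k^{-1}uk)\, dk$, so that the quantity to be bounded equals $\int_{U(\A_{\fin})}\phi(u)\, du$. Since the integrand vanishes unless $u$ is $\K$-conjugate to an element of $K\subset\K_0$ and $\K$ is compact, the support of $\phi$ lies in the compact subset $C := U(\A_{\fin}) \cap \bigcup_{k\in\K} k\K_0 k^{-1}$. After replacing $\K_0$ by a slightly larger product $\prod_p \K_{0,p}$ (which only alters the implied constant) we may assume $C=\prod_p C_p$, with each $C_p\subset U(\Q_p)$ compact and $C_p = U(\Q_p)\cap\K_p$ for all but finitely many $p$.

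Next, since $u\in U\subset H$ and $H$ is normal in $G$, we have $k^{-1}uk\in H$ for every $k\in\K$. I would rerun the argument of Corollary \ref{CorConjugacyClass} with $H^{\rm sc}$ and a faithful $\Q$-representation $\rho^{\rm sc}_H$ in place of $G^{\rm sc}$ and $\rho^{\rm sc}$, taking $\tilde\K_p$ to be the image of $\K_p$ in $H^{\ad}(\Q_p)$, which is an open subgroup of bounded index in the maximal compact defined by $\rho_0^{\ad}\rest_{H^{\ad}}$. Theorem \ref{thm: mainbnd} then produces constants $\varepsilon_1,\delta > 0$ depending only on $G$ and $\rho_0$ such that
\[
\phi(u)\ll_{\K_0}\beta^H(N,u,\delta)^{-\varepsilon_1},\qquad N := \level(K;H(\A_{\fin})^+) = \prod_p p^{n_p},
\]
where $\beta^H$ is defined through the semisimple ideals of $\Lieh\otimes\Q_p$ in complete analogy with $\beta^G$. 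Using the product structure of $\beta^H$ and Fubini, it then suffices to establish, for every prime $p\mid N$, a bound
\[
\int_{C_p}\bigl(\mathbf{1}_{\{\lambda^H_p(u_p)<\delta n_p\}}\, p^{-\varepsilon_1 n_p}+\mathbf{1}_{\{\lambda^H_p(u_p)\ge\delta n_p\}}\bigr)\, du_p\ll p^{-\varepsilon n_p}
\]
with $\varepsilon>0$ independent of $p$. The first summand contributes at most $p^{-\varepsilon_1 n_p}\vol(C_p)$, which already has this shape. For the second, Lemma \ref{LemmaLambdaUnipotent} together with the hypothesis that every simple factor of $H$ meets $U$ non-trivially converts the condition $\lambda^H_p(u_p)\ge\delta n_p$ into the arithmetic condition that $\log u_p\in\varpi_v^{e_v(\delta n_p - O(1))}L_\uuu\otimes\Z_p$ for some place $v$ of $A$ above $p$; a direct volume computation in the standard $\mathfrak{o}_A$-lattice gives a bound $\ll p^{-c\delta n_p}$ with a positive $c$ depending only on $H$. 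Setting $\varepsilon = \tfrac12\min(\varepsilon_1,c\delta)$ and multiplying over $p\mid N$, while absorbing a factor $C^{\omega(N)}$ into an arbitrarily small loss in the exponent, yields the stated estimate.

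The hard step here is the second one: a direct appeal to Corollary \ref{CorConjugacyClass} would only produce $\beta^G(\level(K;G(\A_{\fin})^+),u,\delta)$, which is identically one for $u\in U$ since $u$ acts trivially on the complement of $\Lieh$ in $\Lieg$, forcing $\lambda^G_p(u)=\infty$. Rerunning the proof of Corollary \ref{CorConjugacyClass} with $H^{\rm sc}$ in place of $G^{\rm sc}$ (rather than invoking it as a black box) is precisely what registers $u$ non-trivially on every simple factor of $H$ and simultaneously replaces $G(\A_{\fin})^+$ by $H(\A_{\fin})^+$ in the level.
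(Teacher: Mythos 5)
Your argument is correct and follows essentially the same route as the paper: bound the inner integral by the commutator estimate of Theorem \ref{thm: mainbnd} run relative to $H$ (yielding the level $\level(K;H(\A_{\fin})^{+})$ and the function $\beta^{H}$), then integrate over $U(\A_{\fin})$ by splitting according to the size of $\lambda_p(u_p)$ and bounding the volume of the set where $\lambda_p$ is large via Lemma \ref{LemmaLambdaUnipotent}. The only real difference is presentational: the paper simply invokes Corollary \ref{CorConjugacyClass} with the $H$-level, whereas you spell out the substitution of $H^{\rm sc}$ for $G^{\rm sc}$ (and of the image of $\K_p$ in $H^{\ad}(\Q_p)$ for $\tilde{\K}_p$) that this citation tacitly requires when $H\subsetneq G^{\der}$.
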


\begin{proof}
Estimating the integral on the left-hand side by
\[
[\K_0 : \K \cap \K_0] \int_{U (\A_{\fin})} \int_{\K} \mathbf{1}_{K \cap \K} (k^{-1} u k) \ dk\, du,
\]
we may first reduce to the case where $K$ is contained in $\K$.

We may assume that the Haar measure on $U (\A_{\fin})$ is the product of the measures on $U (\Q_p)$ that
assign the open compact subgroups $U (\Q_p) \cap \K_p$ measure one.
Let $N = \level (K, H (\A_{\fin})^{+}) = \prod_p p^{n_p}$.
We apply Corollary \ref{CorConjugacyClass} to the group $H$ to estimate
\[
\int_{\K} \mathbf{1}_K (k^{-1} u k) \ dk \ll \beta (N, u, \delta)^{-\varepsilon} = \prod_p \beta_p (p^{n_p}, u_p, \delta)^{-\varepsilon}
\]
for all $u \in U (\A_{\fin}) \cap \K$, where
\[
\beta_p (p^{n_p}, u_p, \delta) =
\begin{cases} p^{n_p}, & \lambda_p^H (u_p) < \delta n_p, \\
              1, & \text{otherwise.} \\
\end{cases}
\]
We obtain
\begin{multline*}
\int_{U (\A_{\fin})} \int_{\K} \mathbf{1}_K (k^{-1} u k) \ dk\, du \ll \int_{U (\A_{\fin}) \cap \K}
\beta (N, u, \delta)^{-\varepsilon} du \\= \prod_{p|N} \int_{U (\Q_p) \cap \K_p} \beta_p (p^{n_p}, u_p, \delta)^{-\varepsilon} du_p.
\end{multline*}
To estimate the integral over $U (\Q_p) \cap \K_p$, we write
\begin{equation} \label{EqnIntegralUQp}
\int_{U (\Q_p) \cap \K_p} \beta_p (p^{n_p}, u_p, \delta)^{-\varepsilon} du_p = \mu_p (\lceil \delta n_p \rceil) + p^{- \varepsilon n_p} (1 - \mu_p (\lceil \delta n_p \rceil)),
\end{equation}
where
\[
\mu_p (n) = \vol \{ u_p \in U (\Q_p) \cap \K_p \, : \, \lambda_p^H (u_p) \ge n \}, \quad n \ge 0.
\]
Therefore it only remains to estimate $\mu_p (n)$.
It follows from Lemma \ref{LemmaLambdaUnipotent} that we have the bound $\mu_p (n) \ll_{G,U,\rho_0} p^{-n}$.
This yields a bound for \eqref{EqnIntegralUQp} of the form $C_p p^{-\varepsilon' n_p}$, where $\varepsilon', C_p >0$ and $C_p = 1$ for almost all $p$, which shows the assertion.
\end{proof}

\appendix
\section{Bounds for the number of solutions of polynomial congruences} \label{sec: numbersol}
In this appendix we give some simple general bounds for the number of solutions of polynomial congruences that
are essential for the argument in \S \ref{MainApplication} of the body of the paper. The emphasis lies on general non-trivial bounds which are uniform in all parameters
and are obtained by elementary methods, and not on the quality of the bounds.

Let $V = \Spec B_{\Z}$ be an affine scheme that is flat and of finite type over $\Spec \Z$. Fix once and for all generators $y_1, \ldots, y_r$ of
the affine coordinate ring $B_{\Z}$ of $V$, or equivalently a closed embedding of $V$ into
an affine space $\A^r$ over $\Z$.
\emph{Henceforth, all implied constants will depend on $V$ and on this embedding.}
For any commutative ring $R$ let $B_R=B_{\Z} \otimes R$ be the base change of the ring $B_{\Z}$ to $R$, $V_R = \Spec B_R$ the
base change of $V$, and $V(R)$ the set of $R$-points of $V$.
We assume throughout that the
generic fiber $V_{\Q}$ of $V$ is an absolutely irreducible variety, i.e., that the ring $B_{\bar{\Q}}$ is an integral domain. Let $s$ be the dimension of the variety $V_{\Q}$.

For any commutative ring $R$ and any integer $d \ge 0$ we let $B_{R,\le d}$ be the $R$-submodule of $B_R$ generated by the monomials of degree $\le d$ in the generators
$y_1, \ldots, y_r$.
We define the \emph{degree} $\deg f$ of an element $f \in B_R$ as the smallest integer $d \ge 0$ such that $f \in B_{R,\le d}$.

The main results of this appendix are the following two lemmas. The first lemma concerns polynomial congruences modulo $p$.

\begin{lemma} \label{LemmaVolumeBoundModp}
For all primes $p$ and all non-zero $f \in B_{\F_p}$ we have
\[
\# \{ x \in V (\F_p) \, : \, f (x) = 0 \} \ll_{V,y_i} \deg f\cdot p^{\dim V_{\Q} - 1}.
\]
\end{lemma}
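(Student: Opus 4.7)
My plan is to reduce to the classical Schwartz--Zippel lemma on $\F_p^s$ via a Noether normalization $\pi: V \to \A^s$, using the norm along $\pi$ to convert $f$ on $V$ into a polynomial on $\A^s$ whose zero locus controls the zero locus of $f$.

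\textbf{Setup and reduction to generic $p$.} Since $V_\Q$ is absolutely irreducible, I would apply Noether normalization to $B_\Q$ to find linear forms $u_1,\dots,u_s \in \Z[y_1,\dots,y_r]$ (with small integer coefficients obtained from a generic projection) such that $u_1,\dots,u_s$ restrict to algebraically independent elements on $V_\Q$, and $B_\Q$ is module-finite over $\Q[u_1,\dots,u_s]$ with generic degree $e = [\Frac B_\Q : \Q(u_1,\dots,u_s)]$. Writing down monic integral relations for each $y_i$ over $\Z[u_1,\dots,u_s]$ and clearing denominators by inverting a positive integer $M_0$, I get that $B_{\Z[1/M_0]}$ is module-finite over $\Z[1/M_0][u_1,\dots,u_s]$. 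Enlarging $M_0$ if needed (using that absolute irreducibility is preserved modulo all but finitely many primes), I arrange that for every prime $p \nmid M_0$ the reduction $V_{\F_p}$ is absolutely irreducible of dimension $s$, so $B_{\F_p}$ is a domain and $\Frac B_{\F_p} / \F_p(u_1,\dots,u_s)$ is a degree-$e$ field extension, and the fibers of $\pi: V(\F_p) \to \F_p^s$ have at most $e$ points.

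\textbf{Main argument for $p \nmid M_0$.} Given $\bar f \in B_{\F_p}$ nonzero, set $\tilde f = N_{\Frac B_{\F_p} / \F_p(u_1,\dots,u_s)}(\bar f)$. Because $\bar f$ is integral over $\F_p[u_1,\dots,u_s]$ and the latter is integrally closed, $\tilde f$ lies in $\F_p[u_1,\dots,u_s]$; because $B_{\F_p}$ is a domain and the norm of a nonzero element of a field extension is nonzero, $\tilde f \ne 0$. Moreover, one can bound $\deg \tilde f \le C_V \cdot \deg f$ by unwinding the integral relations. If $\bar f(x) = 0$ for $x \in V(\F_p)$, then $\bar f$ lies in the maximal ideal at $x$, so every coefficient of its minimal polynomial over $\F_p[u_1,\dots,u_s]$, evaluated at $\pi(x)$, vanishes — in particular $\tilde f(\pi(x)) = 0$. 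Applying Schwartz--Zippel to the nonzero polynomial $\tilde f$ on $\F_p^s$ gives at most $\deg \tilde f \cdot p^{s-1} \ll_V \deg f \cdot p^{s-1}$ zeros, and the fiber bound $|\pi^{-1}(u) \cap V(\F_p)| \le e$ then yields the desired estimate.

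\textbf{Small primes and main obstacle.} For the finitely many primes $p \mid M_0$ (with $p$ thus bounded in terms of $V$), the crude estimate $\#\{\bar f = 0\} \le \#V(\F_p) \le p^r$ is absorbed into the implied constant provided $s \ge 1$. The main technical obstacle is to justify that the norm construction behaves uniformly: specifically, one must check that $\tilde f$ is nonzero (which needs $B_{\F_p}$ to be a domain, hence the restriction to good $p$), and that its total degree is controlled linearly by $\deg f$ with a constant depending only on $V$ and the embedding. A cleaner alternative, which avoids any appeal to reducedness of $V_{\F_p}$ at bad primes, is to replace $\tilde f$ by the determinant of multiplication by $f$ on a chosen finite free $\Z[1/M_0][u_1,\dots,u_s]$-submodule of $B_{\Z[1/M_0]}$ that generates $\Frac B_\Q$ as a $\Q(u_1,\dots,u_s)$-vector space; the nonvanishing at the generic fibre then propagates modulo $p$ whenever $p \nmid M_0$.
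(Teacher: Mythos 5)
Your proposal is correct and follows essentially the same route as the paper: Noether normalization $\xi:V\to\A^s$, passing to the norm $\bar N(f)\in\F_p[x_1,\dots,x_s]$ (nonzero since $B_{\F_p}$ is a domain for good $p$, with $\deg \bar N(f)\ll_V \deg f$, and vanishing on the image of the zero set of $f$), the elementary Schwartz--Zippel bound on $\F_p^s$, bounded fibers of $\xi$, and absorbing the finitely many bad primes into the implied constant. Even your "cleaner alternative" via the determinant of multiplication on a finite module matches the paper's handling of the compatibility of the norm with reduction modulo $p$.
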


For the next lemma, which treats congruences modulo $p^n$,
note that the given affine embedding $V \hookrightarrow \A^r$ induces for every $p$ a measure on the smooth part of the $p$-adic analytic set
$V (\Q_p) \subset \Q_p^r$ (cf. \cite[\S 3.3]{MR644559}, \cite[\S 3]{MR656627}).

\begin{lemma} \label{VolumeBound}
Assume that the generic fiber $V_{\Q}$ of $V$ is a smooth variety containing the origin of $\A^r$.
Then for each $d > 0$ there exists a constant $\varepsilon (d) >0$, depending only on $V$, such that
for any $f \in B_{\Q_p}$ of degree $\le d$ and
any $m,n\in\Z$ with $f (V (\Z_p))\not\subset p^m \Z_p$ we have
\[
\vol \left( \{ x \in V (\Z_p) \cap p \Z_p^r : f (x) \equiv 0 \pmod{p^n} \} \right) \ll_{V,y_i,d} p^{-\varepsilon (d) (n-m+1)}.
\]
\end{lemma}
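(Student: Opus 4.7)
My plan is to exploit the smoothness of $V_\Q$ at the origin to set up a $p$-adic analytic parametrization of $V(\Z_p) \cap p\Z_p^r$ by $p\Z_p^s$ valid for all but finitely many primes $p$, and then to reduce the problem to a polynomial congruence bound in $s$ variables on $p\Z_p^s$, which I expect to be provided by Lemma \ref{LemmaPolynomialCongruence}.

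For the setup, using smoothness and a preliminary invertible $\Z$-linear change of coordinates on $\A^r$---which alters only the implied constants---I will arrange that the tangent space to $V_\Q$ at $0$ is the subspace $\{y_{s+1} = \cdots = y_r = 0\}$. I can then choose polynomials $g_{s+1},\dots,g_r \in I(V) \cap \Z[y_1,\dots,y_r]$ vanishing at $0$ for which
\[
\Delta := \det\!\left( \frac{\partial g_i}{\partial y_j}(0) \right)_{\!\!i,j=s+1}^{r} \in \Z \setminus \{0\}.
\]
For any prime $p$ with $p \nmid \Delta$, multivariable Hensel's lemma applied to this system with initial guess zero will produce, for each $(y_1,\dots,y_s) \in p\Z_p^s$, a unique $(y_{s+1},\dots,y_r) \in p\Z_p^{r-s}$ with $g_i(y) = 0$ for all $i > s$. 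This will give a $p$-adic analytic bijection $\phi_p : p\Z_p^s \to V(\Z_p) \cap p\Z_p^r$, measure-preserving because the Jacobian $J(\phi_p(y))$ is congruent to $\Delta$ modulo $p$ and hence a $p$-adic unit on the whole domain.

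To replace the analytic pullback $f \circ \phi_p$ by a polynomial on $p\Z_p^s$, I will use that $\pi : V \to \A^s$ is étale at the origin, hence, after localization at a polynomial $h \in \Z[y_1,\dots,y_s]$ with $h(0) \neq 0$, finite étale of some degree $e = e(V)$. I will then form the polynomial
\[
F(y_1,\dots,y_s) := h^{M}\cdot N_{B / \Z[y_1,\dots,y_s]}(f) \in \Z[y_1,\dots,y_s]
\]
for a sufficient exponent $M = M(V,d)$; its degree is bounded by $C(V) \cdot d$. The key assertion is that for $p$ outside a finite set depending only on $V$, one has $|F(\pi(x))|_p \le |f(x)|_p$ up to a bounded factor depending only on $V$ and $d$, for all $x \in V(\Z_p) \cap p\Z_p^r$---the point being that the remaining preimages of $\pi(x)$ lie in $V(\bar{\Z}_p)$, on which $|f|_p \le 1$. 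Consequently, $f(x) \equiv 0 \pmod{p^n}$ will imply $F(\pi(x)) \equiv 0 \pmod{p^{n-O(1)}}$, while the hypothesis $f(V(\Z_p)) \not\subset p^m\Z_p$ will transfer to $F(p\Z_p^s) \not\subset p^{m + O(1)}\Z_p$. Applying Lemma \ref{LemmaPolynomialCongruence} to $F$ on $p\Z_p^s$ will then yield the asserted bound $\ll_{V,d} p^{-\varepsilon(d)(n-m+1)}$.

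The finitely many bad primes dividing $\Delta \cdot h(0)$ (together with any other auxiliary integers arising in the construction of $F$) will be handled separately: for each such $p$, a compactness argument on $V(\Z_p) \cap p\Z_p^r$, which has $p$-adic dimension $\le s$ and on which $f$ induces a nonzero regular function, yields a rate $\varepsilon_p(d) > 0$, and one takes $\varepsilon(d)$ to be the minimum of the generic rate and these finitely many $\varepsilon_p(d)$. The main obstacle I anticipate is the uniformity in the comparison $|F(\pi(x))|_p \ll |f(x)|_p$ across good primes: the norm construction is natural, but ensuring that the choice of $h$, $M$, and the implied constant depend only on $V$ and $d$---not on the individual $f$ or $p$---requires an effective form of étale descent, carefully tracking the denominators that appear in the integral equation satisfied by $f$ over $\Z[y_1,\dots,y_s][1/h]$.
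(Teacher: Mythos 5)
Your outline follows the same strategy as the paper's proof: project $V$ to an affine $s$-space by a map that is smooth at the origin, descend $f$ to a polynomial $F$ in $s$ variables via a norm so that vanishing of $f$ forces vanishing of $F$, apply Lemma \ref{LemmaPolynomialCongruence}, and treat finitely many bad primes by a compactness argument. However, there is a genuine gap at the decisive step, namely your claim that the hypothesis $f (V (\Z_p))\not\subset p^m \Z_p$ ``transfers'' to $F(p\Z_p^s)\not\subset p^{m+O(1)}\Z_p$ — equivalently, that the content $v_p(F)$ is at most $m+O_{V,d}(1)$, which is exactly what is needed to invoke Lemma \ref{LemmaPolynomialCongruence} with the right exponent. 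The only mechanism you offer is that the remaining preimages of $\pi(x)$ are integral, so $|f|_p\le 1$ there. That argument (even granting an integral normalization of $f$, which you never set up: $f$ has $\Q_p$-coefficients and the hypothesis controls only its \emph{values}) gives the inequality $|F(\pi(x))|_p\le |f(x)|_p$, i.e., an \emph{upper} bound for $F$ along the image of $V(\Z_p)\cap p\Z_p^r$. It cannot give the required \emph{lower} bound on $\sup|F|$: at a witness point $x_0$ with $|f(x_0)|_p>p^{-m}$ the norm is a product over all preimages, and the other factors $|f(x')|_p$ can be arbitrarily small, so $|F(\pi(x_0))|_p$ may be tiny; moreover the witness $x_0$ need not lie in $p\Z_p^r$ at all, so $\pi(x_0)$ need not lie in $p\Z_p^s$. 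So the transfer, which is the heart of the lemma, is asserted but not proved, and the heuristic you give points in the wrong direction.

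This is precisely where the paper invests its effort. One first normalizes $f$ by the integral closure $\tilde{A}_{\Z_p}$ of $\Z_p[x_1,\dots,x_s]$ in $B_{\Q_p}$: since elements of $\tilde{A}_{\Z_p}$ take values in $\Z_p$ on $V(\Z_p)$, the hypothesis gives $\nu_p(f)\le m-1$ for $\nu_p(f)=\max\{i: p^{-i}f\in\tilde{A}_{\Z_p}\}$, so one may assume $f\in\tilde{A}_{\Z_p}\setminus p\tilde{A}_{\Z_p}$ at a cost of $p^{m-1}$. Then one must bound $v_p(N(f))$ \emph{uniformly over all such} $f$ of degree $\le d$: for almost all $p$ this valuation is $0$, because $\tilde{A}_{\Z_p}=B_{\Z_p}$, the norm commutes with reduction mod $p$, and $B_{\F_p}$ is a domain ($V_{\F_p}$ absolutely irreducible), so $\overline{N(f)}=\bar N(\bar f)\ne 0$; for the finitely many remaining primes one uses compactness of $\tilde A_{\le d}\setminus p\tilde A_{\le d}$ to get a finite bound $n(p,d)$. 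Neither this argument nor any substitute for it appears in your proposal; you flag the uniformity as the ``main obstacle'' but do not resolve it, and resolving it is not a matter of bookkeeping denominators in an \'etale presentation — it requires the mod-$p$ irreducibility/norm-compatibility input (or a per-prime compactness argument) described above. A smaller point: your Hensel map parametrizes the zero locus of the chosen $r-s$ equations, which may strictly contain $V$; this is harmless for an upper bound (injectivity and measure comparison of the projection on $V(\Z_p)\cap p\Z_p^r$ suffice), but the bijection onto $V(\Z_p)\cap p\Z_p^r$ as stated is not justified.
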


\begin{remark}
For any Zariski closed subset $W \subset \A_{\F_p}^r$ defined by polynomials of degree $\le n$, the cardinality of $W(\F_p)$
can be bounded by $C (r,n) p^{\dim W}$ (cf.~\cite[Proposition 3.3]{MR1162433}, which is based on the Lang-Weil estimates).
This implies the statement of Lemma \ref{LemmaVolumeBoundModp} with $\deg f$ replaced by an unspecified function of $\deg f$,
which would be sufficient for our purposes. However, the proof given below is much simpler than the proof of the Lang-Weil estimates.
A proof of the lemma can also be obtained by combining \cite[Proposition 1]{MR656627} (applied to the set $\Omega = \F_p^r$) with basic intersection theory in the affine space $\A_{\F_p}^r$.
\end{remark}

\begin{remark}
In Lemma \ref{VolumeBound}, the smoothness of the variety $V_{\Q}$ is not essential, but it simplifies the argument considerably.
In the main part of the paper we are interested in the case of linear algebraic groups, which are smooth varieties.
\end{remark}

\begin{remark}
For fixed $p$ and $f$, the Poincar\'{e} series associated to the sequence
\[
v_n = \vol \left( \{ x \in V (\Z_p) \cap p \Z_p^r : f (x) \equiv 0 \pmod{p^n} \} \right), \quad n \ge 0,
\]
is given by a rational function \cite[Theorem 1]{MR1020825}. (In fact, we may allow general varieties $V_{\Q_p}$ and more general functions $f$ in this statement.) We will not make use of this fact.
\end{remark}

\begin{remark}
An estimate of the form $\ll_{V, f} p^{-n}$ for the volume of the subset
\[
W(f,n) = \{ x \in V (\Z_p) : \exists y \in V (\Z_p) : f (y) = 0, \, y \equiv x \pmod{p^n} \}
\]
of $V (\Z_p)$ (and for the volume of its intersection with $p \Z_p^r$) can be obtained from \cite[Th\'{e}or\`{e}me 1]{MR656627}. For $f \in B_{\Z_p}$ we obviously have the inclusion
$W (f, n) \subset \{ x \in V (\Z_p)  : f (x) \equiv 0 \pmod{p^n} \}$, but in general no equality.
\end{remark}

The proofs of Lemmas \ref{LemmaVolumeBoundModp} and \ref{VolumeBound} are based on the Noether normalization lemma
(e.g., \cite[Theorem 14.4]{MR0155856}), which gives the existence of algebraically independent elements
$x_1, \ldots, x_s \in B_\Z$ and of a positive integer $D$ such that $B_\Z [1/D]$ is integral over $A_\Z [1/D]$, where
$A_\Z = \Z [x_1, \ldots, x_s]$ is the polynomial ring over $\Z$ generated by $x_1, \ldots, x_s$ inside $B_\Z$. This means
that each generator $y_j$, $j = 1, \ldots, r$, satisfies a monic polynomial equation over $A_\Z [1/D]$.
In fact, one may take $x_1, \ldots, x_s$ to be elements of the $\Z$-module generated by $y_1, \ldots, y_r$ inside $B_\Z$ (cf. [ibid., \S 14, Exercise]).
Let
\[
\xi = (x_1,\ldots,x_s): \quad V \to \A^s
\]
be the associated finite morphism, and for any commutative ring $R$ write $\xi_R: V_R \to \A_R^s$ for its base change to $R$.
If we are given a field extension $K$ of $\Q$ and a smooth point $v$ of $V (K)$, then we may arrange that the morphism $\xi_K$ is smooth at $v$.
(To see this, note that if we write $x_1, \ldots, x_s$ as linear combinations of the generators $y_1, \ldots, y_r$ with coefficients $c_{ij} \in \Z$, then the conclusion of the normalization
lemma holds for all $(c_{ij})$ in an open dense subset of the space of all matrices. The smoothness condition also defines an open dense subset. Since the intersection of both subsets
is still open and dense, it contains a point with coordinates in $\mathbb{Z}$.)
In the following we write $A_R = A_{\Z} \otimes R = R [x_1, \ldots, x_s] \subset B_R$ for a commutative ring $R$.

Denote by $\QF (R)$ the quotient field of an integral domain $R$.
Recall a simple fact: if $A$ is an integrally closed integral domain and $L$ a finite extension of $K = \QF (A)$,
then the integral closure $B$ of $A$ in $L$ is precisely the set of all elements $x \in L$ whose minimal polynomial over $K$
has coefficients in $A$. In particular, we have then $N_{L/K} (x) \in A$ and $N_{L/K} (x) \in x A[x]$ for all $x \in B$.

We apply this to the field extension $\QF (B_{\Z_p}) / \QF (A_{\Z_p})$ for arbitrary $p$ and to the corresponding norm map
\[
N = N_{\QF (B_{\Z_p}) / \QF (A_{\Z_p})}:\QF (B_{\Z_p})\rightarrow \QF (A_{\Z_p}).
\]
We record the following facts.

\begin{lemma} \label{LemmaNoetherNorm}
Let $\tilde{A}_{\Z_p}$ be the integral closure of the polynomial ring $A_{\Z_p}$ inside the ring $B_{\Q_p}$. We have then:
\begin{enumerate}
\item $N (B_{\Q_p}) \subset A_{\Q_p}$.
\item $\Q_p \tilde{A}_{\Z_p} = B_{\Q_p}$.
\item $N (\tilde{A}_{\Z_p}) \subset A_{\Z_p}$.
\item \label{part: EqnNorm} $N (f) \in f \tilde{A}_{\Z_p} \quad \text{for all } f \in \tilde{A}_{\Z_p}$.
\item $\deg N (f) \ll \deg f$ for all $f \in B_{\Q_p}$.
\end{enumerate}
\end{lemma}

\begin{proof}
The first assertion follows immediately from the fact that $B_{\Q_p}$ is integral over $A_{\Q_p}$. The second to fourth assertions are a consequence of the remarks
preceding the lemma.
For the fifth assertion, recall that
each $y_j$, $j = 1, \ldots, r$, satisfies a monic polynomial equation over $A_\Z [1/D]$, of degree $n_j$, say.
Let $z_k$, $k=1,\dots,l$ be an enumeration of the monomials $y_1^{\alpha_1}\dots y_r^{\alpha_r}$ where $0\le\alpha_i<n_i$, $i=1,\dots,r$.
Then the $z_k$'s generate the $A_{\Q}$-module $B_{\Q}$, and therefore
also the $A_{\Q_p}$-module $B_{\Q_p}$ for any $p$. Moreover, an arbitrary monomial
of degree $n$ in $y_1,\dots,y_r$ can be expressed as a linear combination of the $z_k$'s with coefficients in $A_{\Q}$ of degree $\ll n$.
In addition, if $f \in B_{\Q_p}$ is expressed as a linear combination $\sum f_k z_k$ with $f_k \in A_{\Q_p}$, then
$N (f)$ is given by a fixed polynomial in the $f_k$'s with coefficients in $A_{\Q}$. This clearly shows the assertion.
\end{proof}

We now list several additional properties that hold (after fixing $x_1, \ldots, x_s$) for almost all $p$.
For all $p$ prime to the positive integer $D$ introduced above we have $\tilde{A}_{\Z_p} \supset B_{\Z_p}$
and the ring $B_{\F_p}$ is integral over $A_{\F_p} = \F_p [x_1, \ldots, x_s]$.
In addition, for almost all $p$ the scheme $V_{\F_p}$ is an absolutely irreducible variety over $\F_p$, and
in particular the ring $B_{\F_p}$ is an integral domain.
For all such $p$ we can consider the characteristic $p$ norm map
\[
\bar{N} = N_{\QF (B_{\F_p}) / \QF (A_{\F_p})}:\QF (B_{\F_p})^\times \rightarrow  \QF (A_{\F_p})^\times.
\]

\begin{lemma} \label{LemmaNoetherNormAlmostAllp}
We have for almost all $p$:
\begin{enumerate}
\item $N (B_{\Z_p}) \subset A_{\Z_p}$.
\item $\tilde{A}_{\Z_p} = B_{\Z_p}$.
\item $\bar{N} (B_{\F_p}) \subset A_{\F_p}$.
\item $\bar{N} (f) \in f B_{\F_p}$ for all $f \in B_{\F_p}$.
\item The diagram
\[
\begin{CD}
B_{\Z_p} @>N= N_{\QF (B_{\Z_p}) / \QF (A_{\Z_p})}>> A_{\Z_p}\\
@VVV           @VVV\\
B_{\F_p} @>\bar{N} = N_{\QF (B_{\F_p}) / \QF (A_{\F_p})}
>> A_{\F_p}
\end{CD}
\]
commutes, where the vertical lines are the reduction maps modulo $p$.
\end{enumerate}
\end{lemma}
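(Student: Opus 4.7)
The plan is to establish part (2) as the core structural statement via a spreading-out argument, then derive parts (1), (3), (4) formally from Lemma \ref{LemmaNoetherNorm} combined with (2) and (5), and handle part (5) by a characteristic-polynomial / generic-freeness argument.

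For part (2): let $C$ denote the integral closure of $A_\Z[1/D]$ in the fraction field $L$ of $B_\Z$, which is well-defined since $V_\Q$ absolutely irreducible implies $B_\Q$ (and hence $B_\Z$) is a domain. Because $A_\Z[1/D]$ is a Noetherian normal domain and $L/\QF(A_\Q)$ is a finite separable extension (characteristic zero), $C$ is a finitely generated $A_\Z[1/D]$-module containing $B_\Z[1/D]$. The quotient $C/B_\Z[1/D]$ has trivial $\Q$-tensorization (since both submodules span $B_\Q$), so its finitely many generators are each killed by some integer, and the whole module is killed by their least common multiple $D_1$. Consequently $C[1/D_1] = B_\Z[1/DD_1]$, and for any $p\nmid DD_1$, tensoring with $\Z_p$ yields $B_{\Z_p} = C\otimes_\Z\Z_p$. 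By the standard fact that integral closure commutes with flat base change for finite extensions, this coincides with $\tilde A_{\Z_p}$.

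For part (5): after possibly excluding more primes, $B_{\Z_p}$ is generically free of rank $m = [\QF(B_\Q):\QF(A_\Q)]$ over $A_{\Z_p}$. For $f\in B_{\Z_p}$, multiplication by $f$ is an $A_{\Z_p}$-linear endomorphism, and its characteristic polynomial --- computed over the rank-$m$ free locus --- has coefficients in $A_{\Z_p}$ by a Cayley-Hamilton argument carried out on each affine chart of freeness. The norm $N(f)$ is (up to sign) its constant term. For almost all $p$, $V_{\F_p}$ is absolutely irreducible by spreading out, so $B_{\F_p}$ is a domain, $\QF(B_{\F_p})/\QF(A_{\F_p})$ has the same degree $m$, and the reduction modulo $p$ of the integral characteristic polynomial is the characteristic polynomial of multiplication by $\bar f$ over $A_{\F_p}$. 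Extracting constant terms gives $\overline{N(f)} = \bar N(\bar f)$, which is (5).

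With (2) and (5) in hand, the remaining three parts are immediate. For (1): by (2) and Lemma \ref{LemmaNoetherNorm}(3), $N(B_{\Z_p}) = N(\tilde A_{\Z_p}) \subset A_{\Z_p}$. For (3): lift $\bar f\in B_{\F_p}$ to $f\in B_{\Z_p}$ using the surjectivity of reduction, then $\bar N(\bar f) = \overline{N(f)} \in A_{\F_p}$ by (1) and (5). For (4): Lemma \ref{LemmaNoetherNorm}(4) combined with (2) gives $N(f) \in f\tilde A_{\Z_p} = f B_{\Z_p}$, and applying (5) yields $\bar N(\bar f) \in \bar f B_{\F_p}$. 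The main obstacle is part (2), specifically the identification $C\otimes_\Z\Z_p = \tilde A_{\Z_p}$: a priori $C\otimes_\Z\Z_p$ is only a subring integral over $A_{\Z_p}$, and the equality requires the finite generation of $C$ over $A_\Z[1/D]$ (so that integral closure commutes with the flat base change $A_\Z[1/DD_1]\to A_{\Z_p}$) together with absolute irreducibility of $V_\Q$ (ensuring $B_{\Q_p}$ is a domain with a well-defined ambient fraction field).
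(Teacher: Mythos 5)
There is a genuine gap, and it sits exactly where you flag the ``main obstacle'': your proof of part (2), on which your part (4) also rests. You take $C$ to be the integral closure of $A_\Z[1/D]$ in the fraction field $L$ of $B_\Z$ and assert that $C/B_\Z[1/D]$ is $\Q$-torsion because ``both submodules span $B_\Q$''. In fact $C\otimes\Q$ is the integral closure of $A_\Q$ in $L$, which by transitivity of integrality is the normalization of $B_\Q$; it equals $B_\Q$ only when $V_\Q$ is normal. Normality is not assumed here: the lemma is stated, and applied in Lemma \ref{LemmaVolumeBoundModp}, for an arbitrary absolutely irreducible generic fiber (smoothness enters only in Lemma \ref{VolumeBound}). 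For a non-normal $V_\Q$, say the cuspidal cubic $y^2=x^3$, the quotient $C/B_\Z[1/D]$ is not torsion, no $D_1$ exists, and the identification $C\otimes\Z_p=B_{\Z_p}$ fails. Two further problems: $\tilde A_{\Z_p}$ is by definition the integral closure of $A_{\Z_p}$ \emph{inside the ring} $B_{\Q_p}$, not inside $\QF(B_{\Q_p})$, so the object you would have to spread out is the closure of $A_\Z[1/D]$ inside $B_\Q$; and the concluding appeal to ``integral closure commutes with flat base change'' is not a standard fact for $\Z[1/DD_1]\to\Z_p$ --- flatness alone does not preserve integral closures, and justifying the commutation (regular base change, excellence) is essentially the content of (2) itself.

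The paper's proof runs in the opposite order and needs none of this machinery. It proves (5) first: choose a basis $b_1,\dots,b_t$ of $\QF(B_\Z)$ over $\QF(A_\Z)$ consisting of elements of $B_\Z$, represent multiplication by $f$ by a matrix $M(f)$ over $\QF(A_\Z)$, observe that only finitely many primes occur in denominators because the $y_j$ generate $B_\Z$, and reduce modulo $p$; your characteristic-polynomial argument for (5) is essentially this and is fine, up to the same routine assertion (also made without detail in the paper) that the generic degree does not drop modulo $p$ for almost all $p$. Parts (1), (3), (4) then follow directly from the easy inclusion $B_{\Z_p}\subset\tilde A_{\Z_p}$ for $p\nmid D$ and the elementary norm facts over the integrally closed rings $A_{\Z_p}$ and $A_{\F_p}$; in particular (3) and (4) need only that $B_{\F_p}$ is a domain integral over $A_{\F_p}$, with no use of (2), whereas your derivation of (4) inherits the gap. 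Finally, the hard inclusion $\tilde A_{\Z_p}\subset B_{\Z_p}$ in (2) is a two-line consequence of (5): if $f\in B_{\Z_p}\setminus pB_{\Z_p}$ and $p^{-1}f\in\tilde A_{\Z_p}$, then $N(p^{-1}f)\in A_{\Z_p}$ forces $N(f)\in p^tA_{\Z_p}$, contradicting $\overline{N(f)}=\bar N(\bar f)\neq0$. To repair your write-up you must either impose normality of $V_\Q$ (enough for the application to $G^{\ad}$, but not for the lemma as stated and as used in Lemma \ref{LemmaVolumeBoundModp}) or prove (2) after (5), as the paper does.
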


\begin{proof}
The first, third and fourth assertions are an immediate consequence of the remarks preceding Lemmas \ref{LemmaNoetherNorm} and \ref{LemmaNoetherNormAlmostAllp}.
To prove the last assertion, fix a basis $b_1, \ldots, b_t$ of $\QF (B_\Z)$ over $\QF (A_\Z)$ consisting of elements of $B_\Z$.
For any $f \in B_\Z$ let $M (f)$ be the matrix (with entries in the field $\QF (A_\Z) = \Q (x_1, \ldots, x_s)$) representing multiplication
by $f$ in the basis $b_1, \ldots, b_t$. Then $N (f) = \det M (f)$.
Let $S$ be the finite set of all primes $p$ that divide the denominator of an entry of $M (y_j)$ for some $j = 1, \ldots, r$.
Since $B_\Z$ is generated by the elements $y_1, \ldots, y_r$ as a $\Z$-algebra, it follows that only primes in $S$ appear in the denominator of $M(f)$ for any
$f \in B_\Z$.
For almost all $p$, the images $\bar b_1, \ldots, \bar b_t$
of $b_1, \ldots, b_t$ in $B_{\F_p}$ form a basis of $\QF (B_{\F_p})$ over $\QF(A_{\F_p})$, and under the assumption $p \notin S$
we may reduce $M (f)$ modulo $p$ and obtain the matrix representing multiplication by $\bar f$ in the basis $\bar b_1, \ldots, \bar b_t$.
Taking determinants we obtain the fifth assertion.

As for the second assertion, it only remains to show that $\tilde{A}_{\Z_p} \subset B_{\Z_p}$ for almost all $p$.
Assuming the contrary, there exists $f \in B_{\Z_p}$, $f \notin p B_{\Z_p}$, with $p^{-1} f \in \tilde{A}_{\Z_p}$. By Lemma \ref{LemmaNoetherNorm},
we have then $N (p^{-1} f) = p^{-t} N (f) \in A_{\Z_p}$, while on the other hand $\overline{N (f)} = \bar{N} (\bar{f}) \neq 0$ by the fifth assertion of the present lemma,
which is a contradiction. This proves the second assertion.
\end{proof}

We can now prove the first of the two lemmas stated at the beginning.

\begin{proof}[Proof of Lemma \ref{LemmaVolumeBoundModp}]
Our proof is based on the following elementary estimate (see \cite[Ch. IV, Lemma 3A]{MR0429733})
for the cardinality of the zero set in $\F_p^s$ of a non-zero polynomial $g \in \F_p [x_1,\ldots,x_s]$:
\begin{equation} \label{EqnSchmidtBound}
\# \{ x \in \F_p^s \, : \, g (x) = 0 \}  \le (\deg g) p^{s-1}.
\end{equation}

In the situation of Lemma \ref{LemmaVolumeBoundModp}
it is clearly enough to prove the assertion for almost all $p$ (depending on $V$).
We can therefore assume the existence of algebraically independent elements $x_1, \ldots, x_s \in B_{\Z}$ such that the assertions of Lemma \ref{LemmaNoetherNormAlmostAllp} hold.
Let $\bar{\xi} = \xi_{\F_p}: V_{\F_p} \to \A_{\F_p}^s$ be the reduction modulo $p$ of the morphism $\xi$.
Note that since the generators $y_j$ satisfy some fixed monic equations with coefficients in $A_\Z [1/D]$,
the cardinality of the fibers of $\bar{\xi}$ is
bounded by a constant that is independent of $p$ (namely by the product of the degrees of these equations).

By Lemma \ref{LemmaNoetherNormAlmostAllp}, for any $0\ne f \in B_{\F_p}$ we have $g = \bar{N} (f) \in A_{\F_p} = \F_p [x_1,\ldots,x_s]$, $g \neq 0$, and $g \in f B_{\F_p}$. Therefore, the zero set of
$f$ is contained in the zero set of $g \circ \bar{\xi}$.
Invoking \eqref{EqnSchmidtBound}, this implies that
\begin{align*}
\# \{ v \in V (\F_p) \, : \, f (v) = 0 \}  & \le \# \{ v \in V (\F_p) \, : \, g (\bar{\xi}(v)) = 0 \} \\
& \ll \# \{ x \in \F_p^s \, : \, g (x) = 0 \}  \\
& \le (\deg g) p^{s-1}.
\end{align*}
Combining the last assertions of Lemmas \ref{LemmaNoetherNorm} and \ref{LemmaNoetherNormAlmostAllp},
we have the degree bound $\deg g \ll \deg f$ and obtain the assertion.
\end{proof}

We now turn to the proof of Lemma \ref{VolumeBound}. Again, we first consider the case of an affine space
and replace \eqref{EqnSchmidtBound} by the following estimate.

\begin{lemma} \label{LemmaPolynomialCongruence}
Let $f \in \Z_p [X_1, \ldots, X_s]$ be a polynomial of degree $\le d$ and suppose that $f$ is not congruent to zero modulo $p$. Then
we have
\begin{equation} \label{eq: bndsolmodpm}
\vol \left( \{ x \in \Z_p^s: f (x) \equiv 0 \pmod{p^n} \} \right) \le d^s \binom{n+s-1}{s-1} p^{-\frac{n}{d}}
\end{equation}
for any $n \ge 0$.
In other words, the number of solutions to $f(x_1,\dots,x_s)=0$ in $(\Z/p^n\Z)^s$ is bounded by $d^s \binom{n+s-1}{s-1} p^{n(s-\frac1d)}$.
\end{lemma}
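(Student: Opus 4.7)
The plan is to proceed by induction on the number of variables $s$. The base case $s=1$ is a classical estimate for the number of $p$-adic solutions of a univariate polynomial congruence, and the inductive step slices $f$ along the last variable, applies the base case to the $X_{s+1}$-fiber, and integrates out the remaining variables.

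For the base case $s=1$, I would show that for any primitive $f \in \Z_p[X]$ of degree at most $d \ge 1$,
\[
\vol\{x \in \Z_p : v_p(f(x)) \ge n\} \le d \cdot p^{-n/d},
\]
which is the $s=1$ instance of the claim since $\binom{n}{0}=1$. The standard route is to factor $f(X) = c \prod_{i=1}^{d'}(X - \alpha_i)$ over $\bar{\Q}_p$: if $v_p(f(x)) \ge n$ then $\sum_i v_p(x - \alpha_i) \ge n - v_p(c)$, so at least one factor satisfies $v_p(x - \alpha_i) \ge (n - v_p(c))/d$, placing $x$ in a $p$-adic ball of measure at most $p^{-(n - v_p(c))/d}$; summing over the (at most $d$) roots that lie at non-negative distance from $\Z_p$ and controlling $v_p(c)$ via the Newton polygon of $f$ together with its primitivity yields the estimate. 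I expect this $s=1$ case to be the main technical obstacle; the rest of the argument is essentially formal.

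For the inductive step $s \to s+1$, I would write $f(X_1,\ldots,X_{s+1}) = \sum_{i=0}^d g_i(X_1,\ldots,X_s) X_{s+1}^i$ and fix an index $k$ for which $g_k$ is primitive (which exists since $f$ is primitive). For $\mathbf{x} \in \Z_p^s$ set $m(\mathbf{x}) := \min_i v_p(g_i(\mathbf{x}))$ and $V_j := \vol\{\mathbf{x} \in \Z_p^s : m(\mathbf{x}) \ge j\}$. Whenever $m(\mathbf{x}) < n$, the polynomial $p^{-m(\mathbf{x})} f(\mathbf{x}, X_{s+1})$ is primitive of degree at most $d$, so the base case bounds the $X_{s+1}$-fiber volume by $d \cdot p^{-(n-m(\mathbf{x}))/d}$; if $m(\mathbf{x}) \ge n$ the fiber volume is trivially at most $1$. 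Fubini then gives
\[
\vol\{f \equiv 0 \pmod{p^n}\} \le d\, p^{-n/d} \sum_{j=0}^{n-1} p^{j/d}(V_j - V_{j+1}) + V_n \le d\, p^{-n/d} \sum_{j=0}^{n-1} p^{j/d} V_j + V_n.
\]

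Finally, the induction hypothesis applied to $g_k$ (a polynomial in $s$ variables of degree at most $d$) gives $V_j \le d^s \binom{j+s-1}{s-1} p^{-j/d}$, so $p^{j/d} V_j \le d^s \binom{j+s-1}{s-1}$. Invoking the hockey-stick identity $\sum_{j=0}^{n-1} \binom{j+s-1}{s-1} = \binom{n+s-1}{s}$ and bounding $V_n \le d^s \binom{n+s-1}{s-1} p^{-n/d}$, one obtains
\[
\vol\{f \equiv 0 \pmod{p^n}\} \le d^{s+1} \binom{n+s-1}{s} p^{-n/d} + d^s \binom{n+s-1}{s-1} p^{-n/d} \le d^{s+1}\binom{n+s}{s} p^{-n/d}
\]
by Pascal's rule $\binom{n+s-1}{s} + \binom{n+s-1}{s-1} = \binom{n+s}{s}$ (using $d \ge 1$), which is exactly the required bound for $s+1$ variables and closes the induction.
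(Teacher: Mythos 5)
Your proposal is correct and follows essentially the same route as the paper: induction on $s$, with the one-variable case handled by factoring over an extension of $\Q_p$, and the inductive step slicing along the last variable, stratifying by the minimal valuation of the coefficient polynomials, applying the base case fiberwise and the induction hypothesis to a primitive coefficient, and summing via the hockey-stick identity. The only cosmetic difference is in the base case, where the paper avoids your Newton-polygon bookkeeping for the leading coefficient by factoring $f(x)=\prod_{i=1}^{d}(\alpha_i x+\beta_i)$ with $\max(\abs{\alpha_i},\abs{\beta_i})=1$, which packages the same primitivity argument more directly.
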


\begin{remark}
This result is essentially \cite[Lemma 7.1]{MR932848}. For convenience we include a proof.
Asymptotic estimates for the left-hand side of \eqref{eq: bndsolmodpm}
as $n \to \infty$ have been obtained by Igusa \cite{MR0404215,MR0441933} and Loeser \cite{MR842046}. Here, we are just interested in a non-trivial bound that
depends in a very transparent way on the polynomial $f$.
\end{remark}

\begin{proof}
We may assume that $n>0$.
We prove the statement by induction on the number of variables $s$.
In the case $s=1$ we can factorize $f$ over a suitable algebraic extension $F$ of $\Q_p$ as
\[
f(x)=\prod_{i=1}^d(\alpha_ix+\beta_i),
\]
where $\max(\abs{\alpha_i},\abs{\beta_i})=1$ for all $i = 1, \ldots, d$ and $\abs{\cdot}$ denotes the extension of the standard absolute value of $\Q_p$ to $F$.
If $x\in\Z_p$ and $\abs{f(x)}\le p^{-n}$ then $\abs{\alpha_ix+\beta_i}\le p^{-n/d}$ for some $i$, where necessarily $\abs{\alpha_i}=1$.
Thus
\[
\vol \left( \{ x \in \Z_p: f (x) \equiv 0 \pmod{p^n} \} \right) \le dp^{-\frac{n}{d}},
\]
as claimed.

For the induction step we write $f=\sum_{i=0}^df_i(x_1,\dots,x_{s-1})x_s^i$ where at least one of the polynomials $f_i$, say $f_{i_0}$,
is non-zero modulo $p$.
Fix $x_1,\dots,x_{s-1}$. Let
\[
j=\max\{l\ge0:f_i(x_1,\dots,x_{s-1})\equiv0\pmod{p^l}\text{ for all }i\}.
\]
Applying the one-variable case to $p^{-j}f(x_1,\dots,x_{s-1},\cdot)$ we get
\[
\vol(\{x_s\in\Z_p:f(x_1,\dots,x_{s})\equiv0\pmod{p^n}\})\le\min(dp^{-\frac{n-j}d},1).
\]
Thus, the left-hand side of \eqref{eq: bndsolmodpm} is bounded by
\begin{align*}
&\sum_{j=0}^n dp^{-\frac{n-j}d}\vol\left( \{ x \in \Z_p^{s-1}: f_i(x_1,\dots,x_{s-1}) \equiv 0 \pmod{p^j}\text{ for all } i \} \right) \\
\le d&\sum_{j=0}^n p^{-\frac{n-j}d}\vol\left( \{ x \in \Z_p^{s-1}: f_{i_0}(x_1,\dots,x_{s-1}) \equiv 0 \pmod{p^j}\} \right).
\end{align*}
It remains to apply the induction hypothesis and the binomial identity $\sum_{j=0}^n\binom{j+s-2}{s-2}=\binom{n+s-1}{s-1}$.
\end{proof}

\begin{proof}[Proof of Lemma \ref{VolumeBound}]
Note first that it is enough to establish the following two claims.
\begin{enumerate}
\item For every $p$ and any $v \in V (\Z_p)$ there exists an integer $N(p,v) \ge 0$ such that
for all $d\ge1$ we have
\[
\vol \left( \{ u \in V (\Z_p) \cap ( v + p^{N(p,v)} \Z_p^r ) : f (u) \equiv 0 \pmod{p^n} \} \right) \ll_{p,d,v} p^{-\varepsilon (p, d, v) (n-m+1)}
\]
for any $f \in B_{\Q_p}$ of degree $\le d$ and $m,n\in\Z$ with $f (V (\Z_p)) \not\subset p^m \Z_p$,
where $\varepsilon (p,d,v)>0$ depends on $p$, $d$ and $v$.
\item The assertion of the lemma is true for almost all $p$ (depending only on $V$).
\end{enumerate}

Namely, granted the second claim, it only remains to consider the assertion of the lemma for finitely many primes $p$, which we may treat one at a time.
For each such $p$ we invoke the first claim and cover the compact set
$V (\Z_p) \cap p \Z_p^r$ by finitely many sets of the form $V (\Z_p) \cap ( v + p^{N(p,v)} \Z_p^r )$. It is then enough to sum the resulting
estimates to obtain the assertion.

It remains to prove the two claims. We start with the first one.
Let $p$ be arbitrary and $v \in V (\Z_p)$. As noted above, we can choose $x_1, \ldots, x_s$ (depending on $p$ and $v$) such that the associated morphism $\xi_{\Q_p}: V_{\Q_p} \to \A_{\Q_p}^s$ is smooth at $v$.
This implies the existence of an integer $N(p,v) \ge 0$ such that $\xi_{\Q_p}$ induces an injection of
$V (\Z_p) \cap ( v + p^{N(p,v)} \Z_p^r )$ into $\Z_p^s$ that multiplies volumes by a positive constant $C (p,v)$.

For any $p$ and $f \in B_{\Q_p}$, $f \neq 0$, set
 \[
\nu_p (f) := \max \{ i \in \Z : p^{-i} f \in \tilde{A}_{\Z_p} \}.
\]
Clearly, $v_p (f(x)) \ge \nu_p (f)$ for all $x \in V (\Z_p)$.
In the situation of the first claim we have therefore $\nu_p (f) \le m-1$.
We are reduced to prove the estimate
\begin{equation} \label{EqnVolumeClaim}
\vol \left( \{ u \in V (\Z_p) \cap ( v + p^{N(p,v)} \Z_p^r ) : f (u) \equiv 0 \pmod{p^n} \} \right) \ll_d p^{-\varepsilon (p,d,v) n}
\end{equation}
for all $f \in B_{\Q_p}$ of degree $\le d$ with
$\nu_p (f) = 0$, or equivalently for
all $f \in \tilde{A}_{\Z_p} \setminus p \tilde{A}_{\Z_p}$ with $\deg f \le d$.

Let $\tilde{A}_{\le d} = \tilde{A}_{\Z_p} \cap B_{\Z_p,\le d}$, which is a free $\Z_p$-module of finite rank.
Since $N (f) \neq 0$ for all $f \neq 0$, and the set $\tilde{A}_{\le d} \setminus p \tilde{A}_{\le d}$ is compact, we see that
\[
n (p, d) := \max \{ v_p (N (f)) \, : \, f \in \tilde{A}_{\le d} \setminus p \tilde{A}_{\le d} \} < \infty.
\]
(Here $v_p$ is the usual $p$-valuation on $A_{\Z_p}$.)

It follows from Lemma \ref{LemmaNoetherNorm}, part \ref{part: EqnNorm}, that the set of all $u \in V (\Z_p)$ for which $f (u) \in p^n \Z_p$
is contained in the set of all $u$ with $F (\xi (u)) \in p^n \Z_p$, where
$F = N (f) \in A_{\Z_p}$.
This implies that
\begin{align*}
& \vol \left( \{ u \in V (\Z_p) \cap ( v + p^{N(p,v)} \Z_p^r ) : f (u) \in p^n \Z_p \} \right)  \\
\mbox{} \le &
\vol \left( \{ u \in V (\Z_p) \cap ( v + p^{N(p,v)} \Z_p^r ) : F (\xi(u)) \in p^n \Z_p \} \right) \\
\mbox{} \le & C (p,v)^{-1} \vol \left( \{ x \in \Z_p^s : F (x) \in p^n \Z_p \} \right).
\end{align*}
Invoking Lemma \ref{LemmaPolynomialCongruence} and using that $v_p (F) \le n (p,d)$, we obtain the desired estimate \eqref{EqnVolumeClaim}.
This finishes the proof of the first claim.

We now consider the second claim.
By a suitable choice of $x_1, \ldots, x_s$ we can arrange that the morphism $\xi_{\Q}: V_{\Q} \to \A_{\Q}^s$ is smooth at the origin.
For almost all $p$ the map $\xi_{\Q_p}$ induces then a volume preserving injection of $V (\Z_p) \cap p \Z_p^r$ into $\Z_p^s$.
Moreover, by Lemma \ref{LemmaNoetherNormAlmostAllp} for almost all $p$ we have
$\tilde{A}_{\Z_p} = B_{\Z_p}$ and furthermore $f \in \tilde{A}_{\Z_p} \setminus p \tilde{A}_{\Z_p} = B_{\Z_p} \setminus p B_{\Z_p}$ implies
that $\overline{N(f)} = \bar{N} (\bar{f}) \neq 0$, or $v_p (N(f)) = 0$.
This means that $n (p,d) = 0$ for all $d$.

For all such $p$ we can replace in the previous argument $V (\Z_p) \cap ( v + p^{N(p,v)} \Z_p^r )$
by $V (\Z_p) \cap p \Z_p^r$, and $C(p,v)$ by $1$, and since we have $n (p,d) = 0$, we obtain
\[
\vol \left( \{ u \in V (\Z_p) \cap p \Z_p^r : f (u) \equiv 0 \pmod{p^n} \} \right) \ll_d p^{-\varepsilon (d) n}
\]
for all $f \in B_{\Q_p}$ of degree $\le d$ with
$\nu_p (f) = 0$, which establishes the second claim and finishes the proof.
\end{proof}

\bigskip
\footnotesize
\noindent\textit{Acknowledgments.}
T.F. partially supported by DFG Heisenberg grant \# FI 1795/1-1.
E.L. partially supported by grant \#711733 from the Minerva Foundation.





\newcommand{\etalchar}[1]{$^{#1}$}
\providecommand{\bysame}{\leavevmode\hbox to3em{\hrulefill}\thinspace}
\providecommand{\MR}{\relax\ifhmode\unskip\space\fi MR }
\providecommand{\MRhref}[2]{%
  \href{http://www.ams.org/mathscinet-getitem?mr=#1}{#2}
}
\providecommand{\href}[2]{#2}

\end{document}